\tikzstyle{edge}=[very thick]
\definecolor{bostonuniversityred}{rgb}{0.8, 0.0, 0.0}
\definecolor{arsenic}{rgb}{0.23, 0.27, 0.29}
\tikzstyle{diredge}=[postaction={decorate,decoration={markings,
\tikzset{
    arrow/.style={decoration={markings, mark=at position 0.5 with
    {\fill(-0.18*#1,-0.06*#1) -- (0,0) -- (-0.18*#1,0.06*#1) -- cycle;}, black}, postaction={decorate}},
    arrow/.default=1
}
\tikzset{
    arow/.style={decoration={markings, mark=at position 1 with
    {\fill(-0.09*#1,-0.03*#1) -- (0,0) -- (-0.09*#1,0.03*#1) -- cycle;}}, postaction={decorate}},
    arow/.default=1
}
\tikzset{
    arrrow/.style={decoration={markings, mark=at position 0.9 with
    {\fill(-0.09*#1,-0.03*#1) -- (0,0) -- (-0.09*#1,0.03*#1) -- cycle;}}, postaction={decorate}},
    arow/.default=1
}
\newcommand{\fitellipsis}[2] 
{\draw [fill=white]let \p1=(#1), \p2=(#2), \n1={atan2(\y2-\y1,\x2-\x1)}, \n2={veclen(\y2-\y1,\x2-\x1)}
    in ($ (\p1)!0.5!(\p2) $) ellipse [ x radius=\n2/2+0cm, y radius=1.1cm, rotate=\n1];
}
\newcommand{\Fitellipsis}[2] 
{\draw [fill=white]let \p1=(#1), \p2=(#2), \n1={atan2(\y2-\y1,\x2-\x1)}, \n2={veclen(\y2-\y1,\x2-\x1)}
    in ($ (\p1)!0.5!(\p2) $) ellipse [ x radius=\n2/2+0cm, y radius=1.4cm, rotate=\n1];
}
\theoremstyle{plain}
\newtheorem*{thm*}{Theorem}
\newtheorem{thm}{Theorem}[section]
\Crefname{thm}{Theorem}{Theorems}
\newtheorem*{lem*}{Lemma}
\newtheorem{lem}[thm]{Lemma}
\Crefname{lem}{Lemma}{Lemmas}
\newtheorem*{claim*}{Claim}
\newtheorem{claim}[thm]{Claim}
\crefname{claim}{Claim}{Claims}
\Crefname{claim}{Claim}{Claims}
\newtheorem{prop}[thm]{Proposition}
\Crefname{prop}{Proposition}{Propositions}
\Crefname{remar}{Remark}{Remarks}
\newtheorem{cor}[thm]{Corollary}
\crefname{cor}{Corollary}{Corollaries}
\newtheorem*{conj*}{Conjecture}
\newtheorem{conj}[thm]{Conjecture}
\crefname{conj}{Conjecture}{Conjectures}
\Crefname{qn}{Question}{Questions}
\newtheorem*{obs*}{Observation}
\newtheorem{obs}[thm]{Observation}
\Crefname{obs}{Observation}{Observations}
\Crefname{ex}{Example}{Examples}
\theoremstyle{definition}
\Crefname{prob}{Problem}{Problems}
\newtheorem{defn}[thm]{Definition}
\Crefname{defn}{Definition}{Definitions}
\theoremstyle{remark}
\renewenvironment{proof}[1][]{\begin{trivlist}
\item[\hspace{\labelsep}{\bf\noindent Proof#1.\/}] }{\qed\end{trivlist}}
\newcommand{\remove}[1]{}
\newcommand{\eps}{\varepsilon}
\title{\vspace{-1 cm}
Hamilton cycles in pseudorandom graphs}
\date{}
\author{
Stefan Glock\thanks{
Fakultät für Informatik und Mathematik, Universität Passau, Germany.
\emph{Email}: \textbf{stefan.glock@uni-passau.de}.
}
\and
David Munh\'a Correia\thanks{
Department of Mathematics, ETH, Z\"urich, Switzerland. Research supported in part by SNSF grant 200021\_196965.
\newline
\emph{Emails}: \textbf{\{david.munhacanascorreia, benjamin.sudakov\}@math.ethz.ch}.
}
\and
Benny Sudakov\footnotemark[2]}
\begin{document} 
\maketitle
\begin{abstract}
Finding general conditions which ensure that a graph is Hamiltonian is a central topic in graph theory. An old and well known conjecture in the area states that any $d$-regular $n$-vertex graph $G$ whose second largest eigenvalue in absolute value $\lambda(G)$ is at most $d/C$, for some universal constant $C>0$, has a Hamilton cycle. In this paper, we obtain two main results which make substantial progress towards this problem. Firstly, we settle this conjecture in full when the degree $d$ is at least a small power of $n$. Secondly, in the general case we show that $\lambda(G) \leq  d/ C(\log n)^{1/3}$ implies the existence of a Hamilton cycle, improving the 20-year old bound of $d/ \log^{1-o(1)} n$ of Krivelevich and Sudakov. We use in a novel way a variety of methods, such as a robust P\'osa rotation-extension technique, the Friedman-Pippenger tree embedding with rollbacks and the absorbing method, combined with additional tools and ideas.

Our results have several interesting applications, giving best
bounds on the number of generators which guarantee the Hamiltonicity of random Cayley graphs, 
which is an important partial case of the well known Hamiltonicity conjecture of Lov\'asz. They can also be used to improve a result of Alon and Bourgain on additive patterns in multiplicative subgroups.
\end{abstract}

\section{Introduction}\label{sec:intro}

A \emph{Hamilton cycle} in a graph $G$ is a cycle passing through all the vertices of~$G$. If it exists, then $G$ is called \emph{Hamiltonian}. Being one of the most central notions in Graph Theory, it has been
extensively studied by numerous researchers, see e.g., \cite{ajtai1985first,bollobas1987algorithm,chvatal1972note,HKS:09,kuhn2013hamilton,kuhn2014proof,krivelevich2011critical,krivelevich2014robust,MR3545109,ferber2018counting, cuckler2009hamiltonian, posa:76}, and the surveys \cite{gould2014recent, MR3727617}. In particular, the problem of deciding Hamiltonicity of
a graph is known to be NP-complete and thus, finding general conditions which ensure that $G$ has a Hamilton cycle is one of the most popular topics in Graph Theory. For instance, two famous theorems of this nature are the celebrated result of Dirac \cite{dirac1952some}, which states that if the minimum degree of an $n$-vertex graph $G$ is at least $n/2$, then $G$ contains a Hamilton cycle, and the criterion of Chv\'atal and Erd\H{o}s \cite{chvatal1972note} that a graph is Hamiltonian if its connectivity number is at least as large as its independence number. 

In fact, most of the classical criteria for Hamiltonicity focus on rather dense graphs. A prime example of this is clearly Dirac's theorem stated above, but also the Chv\'atal-Erd\H{o}s condition requires the graph to be relatively dense, of average degree $\Omega(\sqrt{n})$. In contrast, sufficient conditions that ensure Hamiltonicity of sparse graphs seem much more difficult to obtain. A natural starting point towards this topic is to consider sparse random graphs, to which a lot of research has been dedicated in the last 50 years. In a breakthrough paper in 1976, P\'osa~\cite{posa:76} proved that the binomial random graph model $G(n,p)$ with $p \geq C \log n/n$ for some large constant $C$ almost surely contains a Hamilton cycle. In doing so, he invented the influential \emph{rotation-extension technique} for finding long cycles and paths, which has found numerous further applications since then. P\'osa's result was later refined by Korshunov \cite{korshunov} and in 1983, a more precise threshold for Hamiltonicity was obtained by Bollob\'as \cite{bollobas1984evolution} and Koml\'os and Szemer\'edi \cite{komlos1983limit}, who independently showed that if $p = (\log n + \log \log n + \omega(1))/n$, then $G(n,p)$ is almost surely Hamiltonian. It is a standard exercise to note that this is essentially tight - indeed, if $p = (\log n + \log \log n - \omega(1))/n$, then $G(n,p)$ almost surely has a vertex with degree at most $1$, and hence is not Hamiltonian. In parallel, significant attention has also been given to the Hamiltonicity of the random $d$-regular graph model $G_{n,d}$ - it is known that $G_{n,d}$ almost surely contains a Hamilton cycle for all values of $3 \leq d \leq n-1$. For this result, the reader is referred to Cooper, Frieze and Reed~\cite{cooper2002random} and Krivelevich, Sudakov, Vu and Wormald~\cite{krivelevich2001random} and their references.

Given the success of the study of Hamilton cycles in sparse random graphs, it became natural to then consider pseudorandom graphs, which are deterministic graphs that resemble random graphs in various important properties. A convenient way to express pseudorandomness is via spectral techniques and was introduced by Alon. An \emph{$(n,d,\lambda)$-graph} is an $n$-vertex $d$-regular graph $G$ whose second largest eigenvalue in absolute value, $\lambda(G)$, is such that $\lambda(G)\le \lambda$. Roughly speaking, $\lambda(G)$ is a measure of how ``smooth'' the edge-distribution of $G$ is, and the smaller its value, the closer to ``random'' $G$ behaves. The reader is referred to \cite{krivelevich2006pseudo} for a detailed survey concerning pseudorandom graphs.

In a rather influential paper, Krivelevich and Sudakov~\cite{KS:03} employed P\'osa's rotation-extension technique to prove the very general result that $(n,d,\lambda)$-graphs are Hamiltonian, provided $\lambda$ is significantly smaller than~$d$. Precisely, they showed that if $n$ is sufficiently large, then
\begin{align}
		d/\lambda  \ge \frac{1000\log n (\log\log\log n)}{(\log\log n)^2} \label{spectral ratio KS}
	\end{align}
guarantees that any $(n,d,\lambda)$-graph contains a Hamilton cycle.

This result has found numerous applications in the last 20 years towards some well-known problems, some of which we will discuss later. Given its significance and generality, it leads to the very natural and fundamental question of whether a smaller ratio of $d/\lambda$ is already sufficient to imply Hamiltonicity. Krivelevich and Sudakov~\cite{KS:03} conjectured that it should suffice that $d/\lambda$ is only a large enough constant. 
\begin{conj}\label{conj:KS}
	There exists an absolute constant $C >0$ such that any $(n,d,\lambda)$-graph with $d/\lambda \ge C$ contains a Hamilton cycle.
\end{conj}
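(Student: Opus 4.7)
The plan is to combine three classical ingredients -- expansion properties of $(n,d,\lambda)$-graphs, the absorbing method, and a robust version of P\'osa's rotation-extension technique -- with careful treatment of the sparse regime. Throughout, let $C$ be a large absolute constant and assume $d/\lambda \ge C$. The first step is to extract quantitative consequences of the spectral hypothesis via the expander mixing lemma: every vertex set $S$ of size at most $n/2$ satisfies $|N(S)\setminus S|\ge \alpha|S|$ for some $\alpha=\alpha(C)>0$, every pair of disjoint sets of size at least $\lambda n/d$ is joined by an edge, and $G$ is $\Omega(d)$-vertex-connected. In particular, for any subpath $P$ of $G$, the P\'osa rotation endpoint set grows to a positive fraction of $|V(P)|$ after a constant number of rotations.

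Next I would build an \emph{absorbing path} $P_A$ of length $\beta n$, for some small $\beta=\beta(C)$, with fixed endpoints $u,v$, such that for every $T\subseteq V(G)\setminus V(P_A)$ with $|T|\le \eta n$ (for some $\eta\ll \beta$) there is a Hamilton path of $G[V(P_A)\cup T]$ from $u$ to $v$. For each vertex $w\in V(G)$ one constructs many local absorbing gadgets -- short paths that can be rerouted to include $w$ -- by embedding a suitable bounded-degree tree rooted at $w$ using the Friedman--Pippenger theorem with rollbacks, the rollbacks being used to avoid already-committed vertices. A random subcollection of these gadgets, strung together into a single path via the high connectivity from Step~1, satisfies the required absorbing property by a standard union bound.

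With $P_A$ in hand, I would extend it to an almost-spanning path by greedy extension from the free endpoint, falling back to P\'osa rotations whenever direct extension halts. Expansion guarantees that after a bounded number of rotation steps the set of reachable endpoints has size $\Omega(n)$, and the expander mixing lemma then produces an edge from this set to a previously unused vertex. Iterating until at most $\eta n$ vertices remain outside and applying the absorbing property of $P_A$ yields a $u$--$v$ Hamilton path, and a final rotation-extension round, combined with $\Omega(d)$-connectivity, produces the closing edge and hence a Hamilton cycle.

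The chief obstacle is the truly sparse regime $d=\mathrm{polylog}(n)$. There, typical vertices have only polylogarithmic degree, so the local multiplicity of absorbing gadgets required by the naive absorbing method may fail at some vertices, and each Friedman--Pippenger embedding must be executed with very tight budgets because the expansion ratio barely exceeds what the tree-embedding argument can absorb as loss. Pushing the conjecture through in this regime seems to demand either a more global absorbing structure, tolerant of a small exceptional set of ``bad'' vertices, or a genuinely \emph{robust} rotation-extension phase that delivers, at each step, boosters whose endpoints lie in a prescribed large target set even when local expansion is weak. I would expect Step~2 (the construction of a universal absorbing structure valid down to $d/\lambda\asymp 1$) to be the hardest, and the bottleneck that forces the authors into the intermediate $(\log n)^{1/3}$ bound for the general case and the full resolution only for $d\ge n^{\varepsilon}$.
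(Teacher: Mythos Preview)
The statement is a conjecture and the paper does not prove it; it remains open. The paper establishes only the partial results Theorem~\ref{thm:main general} and Theorem~\ref{thm:maindense}, a split your final paragraph correctly anticipates. So there is no proof in the paper to compare against, and your text is a programme, not a proof.

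That programme contains a concrete error beyond the obstacle you already flag. The claim in Step~1 that the P\'osa endpoint set reaches a positive fraction of $|V(P)|$ after a \emph{constant} number of rotations is false when $d/\lambda$ is bounded: the expansion factor per rotation step is only $O((d/\lambda)^2)=O(C^2)$ (combine Lemma~\ref{lem:expansion2} with the recursion in Lemma~\ref{lem:usualrotation1}), so reaching linear size takes $\Theta(\log n)$ rotations. This logarithm is precisely the enemy; were constant-round growth available, the conjecture would follow from the classical argument. Your Step~2 has the same underlying issue: a per-vertex absorber needs many disjoint local gadgets at each $w$ to survive the union bound, but the hypothesis $d/\lambda\ge C$ allows $d$ itself to be an absolute constant (Theorem~\ref{thm:spectralgap} only forces $d\ge C^2/2$), leaving $w$ with $O(1)$ neighbours and no room for such a family. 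This is not merely a sparse-regime nuisance; it is a structural obstruction to the vertex-by-vertex absorbing template under the bare hypothesis of the conjecture.

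For contrast, neither of the paper's partial proofs follows your outline. Theorem~\ref{thm:main general} uses no absorbing at all: it develops a robust rotation lemma (Lemma~\ref{lem:technicalrotation}), a closing lemma that turns a longest path into a cycle once two disjoint clean blocks with the right endpoints are available (Lemma~\ref{lem:closingpathintocycle}), and a cleaning procedure that manufactures such blocks inside the path (Lemma~\ref{lem:findingcleanpartition}). Theorem~\ref{thm:maindense} does absorb, but globally rather than per vertex: a linear-size family of vertex-disjoint short cycles is threaded onto one path to form a flexible set (Section~\ref{sec:manycyclesthm}), the remaining vertices are covered by a spanning linear forest with $o(n/\log n)$ components via Alon's linear-arboricity theorem (Section~\ref{sec:linforest}), and those few path-ends are connected through the flexible set using Friedman--Pippenger with rollbacks (Section~\ref{sec:altpaths}).
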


\noindent Despite the plethora of incentives, there has been no improvement until now on the Krivelevich and Sudakov bound stated in \eqref{spectral ratio KS}. 
In this paper, we make significant progress towards Conjecture~\ref{conj:KS} in two ways. First, we improve on the Krivelevich and Sudakov bound in general by showing that a spectral ratio of order $(\log n)^{1/3}$ already guarantees Hamiltonicity.

\begin{thm}\label{thm:main general}
There exists a constant $C>0$ such that any $(n,d,\lambda)$-graph with $d/\lambda \ge C(\log n)^{1/3}$ contains a Hamilton cycle.
\end{thm}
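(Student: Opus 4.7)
The strategy is the absorbing method, adapted to the sparse pseudorandom setting. The overall plan has three stages. First, I build a small ``absorbing path'' $P_{\text{abs}}$ of length $o(n)$ with the property that for every sufficiently small reservoir $R \subseteq V(G) \setminus V(P_{\text{abs}})$, there is a path with the same endpoints as $P_{\text{abs}}$ covering exactly $V(P_{\text{abs}}) \cup R$. Second, I use a robust Pósa rotation-extension argument to extend $P_{\text{abs}}$ through the bulk of $V(G)$ into an almost-spanning path, leaving aside only a tiny set $R$. Third, I apply the absorption property to swallow $R$ and close the resulting path into a Hamilton cycle, the closing edge being provided by the expander mixing lemma.

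For the first stage, I would embed many vertex-disjoint copies of a small ``absorbing gadget'' (for instance a short path with an interchangeable middle vertex) by invoking the Friedman--Pippenger tree embedding theorem with rollbacks. In an $(n,d,\lambda)$-graph, this theorem allows one to greedily embed bounded-degree trees up to size polynomial in $d/\lambda$ times some function of $n$; the rollback variant lets one retract branches that fail to extend and try other candidates, which is crucial whenever the target set for a branch is constrained by vertices previously used elsewhere. Chaining such gadgets along a backbone path then produces $P_{\text{abs}}$ with the required robust absorption property.

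For the second stage, the core tool is Pósa's rotation-extension, strengthened to be robust to failures. Given the current path $P$ with endpoint $v$, the pool $L$ of alternative endpoints obtainable by iterated rotations satisfies $|N_G(L) \setminus V(P)| \gtrsim (d/\lambda)^{2} |L|$ while $|L|$ remains small, by the standard small-set expansion consequence of pseudorandomness. Hence $L$ grows by a factor $(d/\lambda)^2 \sim (\log n)^{2/3}$ per round and reaches linear size in $O(\log n / \log(d/\lambda))$ rotations; once $|L|$ is linear, the expander mixing lemma yields an edge from $L$ to an unused vertex, allowing extension. Iterating until the complement of the path has size $o(n)$ and then applying the absorber yields the Hamilton cycle.

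The principal obstacle is that the expansion ratio $(d/\lambda)^2 \sim (\log n)^{2/3}$ is dangerously close to the threshold where classical Pósa rotations break down: each rotation swaps an edge and may touch the absorber or another protected region, and one must show only a negligible fraction of endpoints are thereby ``spoiled''. I expect the technical heart of the proof to consist of (i) a quantitative robustness statement showing that a positive-proportion random subset of pool endpoints still delivers a usable rotation even under adversarial deletion of a $o(1)$-fraction of edges, proved via second-moment estimates leveraging the eigenvalue bound, and (ii) coupling the rotation process with Friedman--Pippenger rollbacks so that damage to the absorbing gadgets can be repaired online. The exponent $1/3$ presumably emerges from balancing three distinct expansion scales---small sets (for rotations), polynomial-sized sets (for iterated extension), and linear sets (for closing)---so that the loss factors at the three scales multiply to a constant, giving the bound $d/\lambda \geq C(\log n)^{1/3}$.
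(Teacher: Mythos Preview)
Your outline is a fundamentally different strategy from the paper's, and it has a real gap. The paper proves Theorem~\ref{thm:main general} with \emph{no} absorbing structure and \emph{no} Friedman--Pippenger machinery; those tools are reserved for the polynomial-degree result (Theorem~\ref{thm:maindense}). Instead, the paper takes a maximal path $P$, partitions it into $k\approx\log n$ intervals via a ``cleaning lemma'' so that most vertices have at least $\delta/k$ neighbours inside at least $\gamma k$ intervals (with $\gamma\approx(\log n)^{-1/3}$), and then performs a carefully controlled sequence of rotations that stay inside designated clean subsets to reach the setting of Lemma~\ref{lem:closingpathintocycle}, which closes $P$ into a cycle directly. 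The exponent $1/3$ arises from the single balance $\gamma d \gtrsim \lambda$ together with $\gamma\gtrsim 1/(\log n)^{1/3}$ needed for the interval averaging to survive $O(\log n)$ rotations' worth of damage---not from three separate scales.

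The gap in your plan is the absorber stage. Under the hypothesis $d/\lambda\ge C(\log n)^{1/3}$ the degree $d$ can be as small as $\Theta((\log n)^{2/3})$, and the leftover after any rotation-extension procedure is of order $\lambda n/d \approx n/(\log n)^{1/3}$. You therefore need roughly $n/(\log n)^{1/3}$ absorbing gadgets, each anchored through the neighbourhood of a specific leftover vertex. With only $(\log n)^{2/3}$ neighbours per vertex, there is no apparent way to find so many vertex-disjoint gadgets, and Friedman--Pippenger does not help here since the gadgets are not trees (an absorber requires two internally disjoint paths with common endpoints). Your Stage~2 ``robustness'' statement and the speculated origin of the $1/3$ are both left entirely to the reader; neither can be supplied by known techniques in this sparse regime. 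In short, the absorbing route is not known to work below polynomial degree, and your proposal does not supply the missing idea.
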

\noindent The proof of the above result will rely on the P\'osa rotation-extension method with various new ideas. Namely, we will need to develop some techniques in order to use this method in a \emph{robust} manner. The reader is referred to Section~\ref{sec:outline} for an outline of the proof. 

Secondly, we confirm Conjecture \ref{conj:KS} in full when the degree is polynomial in the order of the graph.

\begin{thm}\label{thm:maindense}
For every constant $\alpha > 0$, there exists a constant $C > 0$ such that any $(n,d,\lambda)$-graph with $d \geq n^{\alpha}$ and $d/\lambda  \geq C$ contains a Hamilton cycle.  
\end{thm}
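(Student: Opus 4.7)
The plan is to run an absorption argument tailored to the dense pseudorandom regime. Let $G$ be an $(n,d,\lambda)$-graph with $d \ge n^{\alpha}$ and $d/\lambda \ge C$ for a sufficiently large $C = C(\alpha)$. The expander mixing lemma yields very strong expansion: every set $S$ with $|S| \le n/C^2$ satisfies $|N(S)| \ge (C^2/4)|S|$, and any two disjoint sets of size $\omega(n^{1-\alpha})$ are joined by many edges. In particular, the hypothesis of the Friedman--Pippenger tree-embedding theorem holds for trees of bounded maximum degree on up to $\Theta(n)$ vertices, so $G$ is strongly tree-universal on small scales.

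First, I would construct an \emph{absorbing path} $P^{*}$ of sublinear length, say $|V(P^{*})| \le n^{1-\alpha/3}$, with the following flexibility property: for every $U \subseteq V(G)\setminus V(P^{*})$ with $|U| \le n^{1-\alpha/2}$, the induced subgraph $G[V(P^{*}) \cup U]$ contains a Hamilton path between the two endpoints of $P^{*}$. To build $P^{*}$, one first designates, for each vertex $v\in V(G)$, a short local \emph{absorbing gadget} — a tree or path that can be rerouted to include or exclude $v$. Pseudorandomness guarantees that every vertex has polynomially many candidate gadgets, so a compatible family can be sampled probabilistically. Friedman--Pippenger with rollbacks is then used to stitch the chosen gadgets into a single path; the rollback step is invoked whenever two gadgets collide and one of them must be locally re-embedded on the fly.

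Second, after removing $P^{*}$, the remainder $G' = G - V(P^{*})$ is still strongly pseudorandom since only $o(n)$ vertices were deleted. I would find a long path in $G'$ covering all but at most $\tfrac{1}{2} n^{1-\alpha/2}$ vertices, with endpoints compatible with $P^{*}$, via iterative extension and P\'osa rotations. This is almost routine in the dense regime because after each rotation the endpoint has $\Omega(d) = \Omega(n^{\alpha})$ candidates, far outnumbering any forbidden set. The absorbing property of $P^{*}$ then swallows the remaining leftover and closes the path into a Hamilton cycle. The main obstacle is the construction in step one: one needs simultaneously (i) a private gadget for each of the $n$ vertices, (ii) the gadgets must fit on a single sublinear path without destroying each other, and (iii) the absorbing property must survive after the path is stitched together. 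A naive random embedding violates (i) through gadget overlaps, and the rollback mechanism inside Friedman--Pippenger is precisely what resolves such overlaps; showing that the rollbacks terminate without cascading, and that the final object retains the required flexibility for absorbing arbitrary small sets, is the technically delicate point of the proof.
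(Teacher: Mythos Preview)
Your proposal has a genuine gap in step~2. The claim that one can find a path in $G' = G - V(P^*)$ covering all but $\tfrac{1}{2} n^{1-\alpha/2}$ vertices via P\'osa rotation--extension is not justified under the hypothesis $d/\lambda \ge C$ alone. In the worst case $\lambda = d/C$, the expander mixing lemma only guarantees an edge between two sets $A,B$ when $|A|\,|B| > (\lambda n/d)^2 = (n/C)^2$. P\'osa rotation produces $\Theta(n)$ endpoints, so extension is possible only while the uncovered set has size $\Omega(n/C^2)$ --- a \emph{linear} leftover, not a polynomially small one. The assumption $d \ge n^{\alpha}$ does not help here: expansion is controlled by the ratio $d/\lambda$, not by $d$ itself, and you never use $d \ge n^{\alpha}$ in this step. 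Your parameters are calibrated so that step~1 only needs to absorb a set of size $n^{1-\alpha/2}$; once step~2 only delivers a linear leftover, the sublinear absorber in step~1 is far too small to handle it.

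This is exactly why the paper takes a different route. Instead of aiming for one near-spanning path, the paper (i) uses $d \ge n^{\alpha}$ to greedily find $\Omega(n)$ vertex-disjoint cycles of length at most $4/\alpha$; (ii) threads a \emph{linear-sized} path $P_1$ through one edge of each cycle, producing a linear flexible set $V_{P_1,\mathcal{C}}$ that can absorb or release the remaining cycle vertices; (iii) covers the leftover vertex set $Y$ not by one long path but by $O(n/d^{1/5})$ vertex-disjoint paths, obtained by passing to a regular spanning subdigraph and invoking Alon's linear arboricity bound; and (iv) connects all these path-endpoints through the flexible set using Friedman--Pippenger with rollbacks. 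The polynomial degree is used a second time in step~(iii): it makes $O(n/d^{1/5}) = o(n/\log n)$, which is few enough that the $O(\log n)$-length connecting paths in step~(iv) fit inside the flexible set. Your scheme has no analogue of this ``few paths'' mechanism, and P\'osa rotation cannot make up the difference. (Separately, your step~1 is underspecified: you need a gadget in $P^*$ for every one of the $n$ vertices of $G$, and ``Friedman--Pippenger with rollbacks'' is a tree-embedding tool, not a device for certifying a universal absorption property after stitching; but the decisive failure is step~2.)
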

\noindent 
In fact, Theorem \ref{thm:maindense} is a corollary of a more general statement that we will prove (Theorem \ref{thm:pflhamiltonian}). This in particular states that $(n,d,\lambda)$-graphs with linearly many vertex-disjoint cycles are Hamiltonian.  

Before discussing the applications of both of our main results, we refer the reader to Section~\ref{sec:outline} for an outline of the proof of these, as well as, of the structure of the paper.

\subsection{Applications and related problems}\label{sec:appsintro}

Both Theorem \ref{thm:main general} and Theorem \ref{thm:maindense} immediately yield improvements in several applications which made use of the result of Krivelevich and Sudakov. These will be discussed in a detailed manner in Section~\ref{sec:apps}. One application is an important special case of a famous open question of 
Lov\'asz \cite{lovasz1969combinatorial}  from 1969 concerning the Hamiltonicity of a certain class of well-behaved graphs (see e.g., \cite{curran1996hamiltonian} and its references for more background on the problem).
\begin{conj}\label{conj:lovasz}
Every connected vertex-transitive graph contains a Hamilton path, and, except for five known examples, a Hamilton cycle.
\end{conj}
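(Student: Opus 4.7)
The statement above is Lov\'asz's 1969 conjecture, which is one of the central open problems in graph theory and has resisted all attempts at a complete solution for more than half a century. I will therefore not propose a full proof; what I will describe instead is a strategy, informed by the two main theorems of this paper, for settling a rich family of special cases --- namely, those connected vertex-transitive graphs that are good spectral expanders --- together with an explanation of why the complementary ``non-expanding'' regime appears genuinely out of reach with the present toolkit.

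The first step is to restrict attention to Cayley graphs $G = \mathrm{Cay}(\Gamma, S)$ with $\Gamma$ a finite group and $S \subseteq \Gamma$ a symmetric set of generators. Such a $G$ is automatically vertex-transitive and $|S|$-regular, so the only extra input required to invoke Theorem~\ref{thm:main general} or Theorem~\ref{thm:maindense} is a non-trivial bound on $\lambda(G)$. A very large body of work provides exactly such spectral bounds for many classes of groups --- via representation-theoretic character estimates for finite simple groups, Kazhdan's property (T), explicit expander constructions, or random-generator models in which $S$ is a uniformly random symmetric set of prescribed size. In each of these settings, Theorem~\ref{thm:main general} would yield Hamiltonicity as soon as $|S|/\lambda(G) \geq C(\log n)^{1/3}$, and Theorem~\ref{thm:maindense} would handle the polynomially dense regime with a ratio that is merely an absolute constant; the random Cayley graph application alluded to in the abstract is precisely an instance of this plug-in.

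The principal obstacle is that vertex-transitive graphs can be arbitrarily far from expanders. The cycle $C_n$ itself is vertex-transitive with $\lambda(G)$ essentially equal to $d$; more generally, Cayley graphs of abelian groups with few generators tend to have very poor spectral gap, and no purely spectral criterion can even see them. In these cases one must exploit algebraic structure directly --- normal subgroups and coset decompositions, Hamilton paths traversing cosets one at a time, inductive constructions driven by a transitive automorphism, or \emph{ad hoc} arguments for the five exceptional small graphs ($K_2$, the Petersen graph, the Coxeter graph, and the truncations of the latter two). My proposal therefore amounts to a partial reduction: Theorems~\ref{thm:main general} and~\ref{thm:maindense} dispose of every connected vertex-transitive graph whose spectral ratio $d/\lambda$ exceeds the relevant threshold, while the genuinely open core of the conjecture --- which this paper's methods do not attack --- is a structure theorem for the remaining ``spectrally degenerate'' vertex-transitive graphs, pinning each down either as a member of a well-understood algebraic family where Hamiltonicity can be checked by hand, or as a graph carrying enough combinatorial regularity beyond expansion to sustain a rotation-extension argument.
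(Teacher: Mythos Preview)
The statement in question is a conjecture, not a theorem, and the paper does not prove it; it is cited purely as motivation. Your proposal correctly recognises this and declines to attempt a full proof, instead outlining exactly the partial progress that the paper's results afford --- Hamiltonicity for vertex-transitive (in particular Cayley) graphs with sufficiently good spectral gap via Theorems~\ref{thm:main general} and~\ref{thm:maindense}, with the random Cayley graph application as the flagship instance --- and correctly identifying the non-expanding regime as beyond these methods. This matches the paper's own discussion in Sections~\ref{sec:appsintro} and~\ref{sec:apps}.
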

\noindent Since Cayley graphs are vertex-transitive and none of the five known exceptions in Lov\'asz's conjecture is a Cayley graph, the conjecture in particular includes the following, which was asked much earlier in 1959 by Rapaport Strasser~\cite{rapaportstrasser:59}.
\begin{conj}\label{conj:transitive}
Every connected Cayley graph is Hamiltonian.
\end{conj}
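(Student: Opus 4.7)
The plan would be to leverage the spectral techniques from the paper, specifically \Cref{thm:main general} and \Cref{thm:maindense}, by reducing the general Cayley graph problem to the case of sufficient spectral expansion. Given a connected Cayley graph $G = \mathrm{Cay}(\Gamma, S)$ on $n = |\Gamma|$ vertices with symmetric generating set $S$ of size $d = |S|$, the graph is $d$-regular, and the quantity that governs applicability of the main results is $d/\lambda(G)$.

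First I would split into two regimes. In the \emph{expander regime}, where $d/\lambda(G) \geq C(\log n)^{1/3}$ (or even just $d/\lambda(G) \geq C$ when $d \geq n^{\alpha}$), the main theorems apply directly and produce a Hamilton cycle. For several interesting families this is essentially automatic: random symmetric sets $S$ in most groups yield $\lambda(G) \leq d/C$ with high probability once $|S|$ is a sufficiently large polylogarithmic function of $n$, and many algebraically defined families (for instance $\mathrm{SL}_2(\mathbb{F}_p)$ with appropriate generators) are known expanders. Combining \Cref{thm:main general} with the standard concentration bounds on the spectrum of a random Cayley graph would thus resolve \Cref{conj:transitive} for random generating sets of size logarithmic in $n$, which is exactly the application advertised in the abstract.

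The main obstacle is the \emph{non-expander regime}: a connected Cayley graph can have $\lambda(G)$ arbitrarily close to $d$. The simplest example is $C_n = \mathrm{Cay}(\mathbb{Z}/n\mathbb{Z}, \{\pm 1\})$, where $d=2$ and $\lambda = 2\cos(2\pi/n)$. In such cases the spectral machinery of this paper is completely silent and one must exploit the algebraic structure. The natural attempt is an induction on $|\Gamma|$ through a normal subgroup $N \triangleleft \Gamma$: one forms the quotient Cayley graph $\mathrm{Cay}(\Gamma/N, \overline{S})$, invokes the induction hypothesis to obtain a Hamilton cycle of cosets, and then lifts it by threading a Hamilton path through each coset in a way compatible with the quotient cycle. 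Making this lift actually work requires that the Cayley graph of $N$ with generators obtained from conjugates of $S$ be Hamilton-connected (or at least Hamilton-path-connected between prescribed endpoints), and it is precisely this kind of coherent concatenation condition that has kept the Lov\'asz--Rapaport Strasser conjecture open for more than sixty years.

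Consequently, the realistic outcome of this strategy is a corollary of the form ``every Cayley graph with $d/\lambda(G) \geq C(\log n)^{1/3}$ is Hamiltonian,'' immediate from \Cref{thm:main general}, together with its quantitative instantiation for random Cayley graphs. A full proof of \Cref{conj:transitive} along these lines would demand a genuinely new structural understanding of how Hamilton cycles interact with the group action in the low-expansion regime, since the spectral and pseudorandom tools developed here, powerful as they are, cannot by themselves reach Cayley graphs whose eigenvalue ratio is bounded away from the threshold in \Cref{thm:main general}.
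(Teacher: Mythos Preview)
The statement you are addressing, \Cref{conj:transitive}, is a \emph{conjecture}, not a theorem proved in the paper. The paper explicitly presents it as a long-standing open problem (attributed to Rapaport Strasser and connected to Lov\'asz's conjecture) and does not claim to prove it; indeed the surrounding text says ``For these conjectures, a proof is currently out of sight.'' So there is no ``paper's own proof'' to compare against.

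Your proposal is not a proof and you correctly recognise this. Your analysis of the situation is accurate: the spectral results \Cref{thm:main general} and \Cref{thm:maindense} apply only when $d/\lambda$ is large enough, and there exist connected Cayley graphs (such as $C_n$) with $d/\lambda$ arbitrarily close to~$1$, so the pseudorandom machinery of the paper cannot settle the conjecture in full. The partial results you sketch in the expander regime are exactly what the paper obtains in Section~\ref{sec:random cayley} (\Cref{thm:cayley random}), and your observation that the non-expander regime would require genuinely different, algebraic ideas is in line with the paper's own discussion. In short: there is no gap to name because there is no proof, and your assessment that the conjecture remains out of reach of these methods is correct.
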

\noindent For these conjectures, a proof is currently out of sight. Indeed, notable progress towards them in their full generality are a result of Babai \cite{babai1979long} that every vertex-transitive $n$-vertex graph contains a cycle of length $\Omega(\sqrt{n})$ (see~\cite{deVos} for a recent improvement) and a result of Christofides, Hladk\'y and M\'ath\'e \cite{christofides2014hamilton} that every vertex-transitive graph of linear minimum degree contains a Hamilton cycle. 

Given this, it is natural to consider the ``random'' version of Conjecture~\ref{conj:transitive}. Indeed, Alon and Roichman~\cite{AR:94} showed that in any group $G$, a random set $S$ of $O(\log |G|)$ elements is such that the Cayley graph generated by them, $\Gamma(G,S)$, is almost surely connected. Therefore, a particular instance of Conjecture \ref{conj:transitive} is to show that $\Gamma(G,S)$ is almost surely Hamiltonian, which is itself a conjecture of Pak and Radoi\v{c}i\'{c}~\cite{PR:09}. In fact, this relates directly to Conjecture \ref{conj:KS} since it can be shown, generalizing the result of Alon and Roichman, that if $|S| \geq C \log |G|$ for some large constant $C$, then $\Gamma(G,S)$ is almost surely an $(n,d,\lambda)$-graph with $d/ \lambda \geq K$ for some large constant $K$. Hence, Conjecture~\ref{conj:KS} would imply the Hamiltonicity of $\Gamma(G,S)$. In Section~\ref{sec:random cayley}, improving on several earlier results \cite{MH:96, KS:03, CM:12}  we will show how Theorem~\ref{thm:main general} can be used to prove that if $|S|$ is of order $\log^{5/3} n$, then $\Gamma(G,S)$ is almost surely Hamiltonian (see Theorem~\ref{thm:cayley random}). In the same section, we will also give an improved bound on a related problem of Akbari, Etesami, Mahini, and
Mahmoody \cite{AB:14} concerning Hamilton cycles in coloured complete graphs which use only few colours.

Another application of our results concerns one of the central themes in Additive Combinatorics, the interplay between the two operations sum and product. A well-known fact in this area is that any multiplicative subgroup $A$ of the finite field $\mathbb{F}_q$ of size at least $q^{3/4}$ must contain two elements $x,y$ such that $x+y$ also belongs to $A$. Motivated by this, Alon and Bourgain~\cite{AB:14} studied more complex additive structures in multiplicative subgroups. In particular, they proved that when a subgroup has size $|A|\ge q^{3/4} (\log q)^{1/2-o(1)}$, then there is a cyclic ordering of the elements of $A$ such that the sum of any two consecutive elements is also in $A$. Using Theorem \ref{thm:maindense}, we can improve on Alon and Bourgain's result, showing that the additional polylog-factor can be avoided. This shows that when $|A|$ is of order $q^{3/4}$, not only does it contain $x,y, x+y \in A$ but also much more complex structures. 

Finally, in Section~\ref{sec:pak}, we give an application of our techniques to another problem related to Conjecture \ref{conj:transitive}. Motivated by this conjecture, Pak and Radoi\v{c}i\'{c} \cite{PR:09} showed that every group $G$ has a set of generators $S$ of size at most $\log_2 |G|$ such that the Cayley graph $\Gamma(G,S)$ is Hamiltonian, which is optimal since there are groups that do not have generating sets of size smaller than $\log_2 |G|$. Since their proof relies on the classification of finite simple groups, they asked to find a classification-free proof of this result. 
Using the methods developed for the proof of Theorem~\ref{thm:maindense} we give a classification-free proof that there is always such a set $S$ with $|S| = O(\log n)$.

\section{Outline}\label{sec:outline}
The proofs of Theorems \ref{thm:main general} and \ref{thm:maindense} are quite different and in this section we give an outline of the main ideas in these proofs. 
\subsection{Robust P\'osa rotation and Theorem \ref{thm:main general}}
\noindent As mentioned in the introduction, the proof of Theorem \ref{thm:main general} relies on P\'osa's rotation-extension technique. This is a method used for finding Hamilton cycles in connected graphs. The general argument takes a longest path $P$ in an expanding graph $G$ and uses the rotation-extension technique to \emph{close the path $P$ into a cycle}, that is, it finds a cycle $C$ on the same vertex set $V(P)$. If $|P| = |G|$, then $C$ is a Hamilton cycle, as desired. Otherwise, since $G$ is connected there is a vertex $v \in V(G) \setminus C$ and an edge from $v$ to $C$; this would give a longer path $P'$ than $P$, contradicting the maximality of $P$. Clearly the crucial step of the argument above is in \emph{closing the path $P$ into a cycle} which is where the rotation-extension technique comes into play. Let us then briefly describe this beautiful method introduced by P\'osa (a much more detailed take on it is made in Section \ref{sec:standardposarot}). 

Informally, given a path $P = (v_1,v_2, \ldots, v_l)$, an edge $v_iv_l$ allows one to construct a new path $P' = (v_1,v_2, \ldots, v_i, v_l, v_{l-1}, \ldots, v_{i+1})$ which also has $v_1$ as an endpoint. This is called a \emph{rotation of} $P$ with \emph{fixed endpoint} $v_1$ and note that it ``creates a new endpoint $v_{i+1}$'' in that $P'$ is a $v_1v_{i+1}$-path also with vertex set $V(P)$. We can then continue this procedure, rotating $P'$ also with fixed endpoint $v_1$ to create a new path $P''$ which has a new endpoint $v_j$, and so on. If a path $Q$ is the result of $t$ consecutive such rotations, we will refer to it as a $t$\emph{-rotation of $P$} with fixed endpoint $v_1$. Notice also that if $P$ is a longest path in $G$, then each one of these \emph{new endpoints} must have all their neighbours in $V(P)$ - otherwise, a neighbour outside of $V(P)$ would create a path longer than $P$. Using this fact we can then create many new paths and endpoints. Indeed, since our graph $G$ has nice expansion properties we can, from starting at a longest path $P$ and fixing one of its endpoints $v_1$, find $\Omega(n)$ vertices $v_j$ such that there is a $v_1v_j$-path also on the vertex set $V(P)$ which is a $t$-rotation of $P$ for some $t = O(\log n) $ (for a precise statement of this, see Lemma \ref{lem:usualrotation1} and Corollary \ref{cor:usualrotation}). 

Our goal is now to close $P$ into a cycle, that is, find a spanning cycle on the vertex set $V(P)$. A reasonable first approach is as follows. By the previous paragraph, the rotation technique allows to find a set $X$ of size $\Omega(n)$ such that every vertex $v_j \in X$ is such that there is a $v_1v_j$-path $P_j$ with vertex set $V(P)$. Similarly, for each $v_j \in X$, we can find a set $Y_j$ of size $\Omega(n)$ such that every vertex $v_k \in Y_j$ is such that there is a $v_kv_j$-path $P_{k,j}$ with vertex set $V(P)$. Clearly, the set $Y_j$ is dependent on the vertex $v_j \in X$ which is chosen. Nevertheless, it might be that there exists a large set $X' \subseteq X$ such that the intersection of all sets $Y_j$ with $v_j \in X'$ is also large. Then, denoting this intersection by $Y'$, it will be the case that for each $a \in X',b \in Y'$ there is an $ab$-path with vertex set $V(P)$. Using that $G$ is a pseudorandom graph, we can then find an edge $ab$ with $a \in X'$ and $b \in Y'$ - this creates a cycle as desired. For example, one can observe that such $X',Y'$ can be found if we can: rotate consecutively with fixed endpoint $v_1$ and only using endpoints in $\{v_{l/2+1}, \ldots, v_{l-1},v_l\}$, that is, only using edges induced by these vertices; and then rotate consecutively with fixed endpoint $v_l$ (or any endpoint in $\{v_{l/2+1}, \ldots, v_{l-1},v_l\}$) and only using endpoints in $\{v_{1}, \ldots, v_{l/2}\}$. Indeed, then since the two sets of rotations do not interfere with each other, we will get two large sets $X' \subseteq \{v_{l/2+1}, \ldots, v_{l-1},v_l\}, Y' \subseteq \{v_{1}, \ldots, v_{l/2}\}$ such that for each $a \in X',b \in Y'$ there is a desired $ab$-path. However, it is clearly not the case in general that such a situation will hold. This is the main issue that we have to resolve. Nevertheless, the objective is to, using rotations, find disjoint sets $X,Y$ which are large enough, such that for every $a \in X, b \in Y$, there is an $ab$-path with vertex set $V(P)$. Then, by pseudorandomness there is an edge between $X,Y$ which will create the desired cycle. 

As we saw with the example in the previous paragraph, in order to do this, it will be convenient to control the way we rotate. With this in mind, we will first \emph{clean} the path $P$ (this is done in Lemma \ref{lem:findingcleanpartition}). Informally, if $G$ is an $(n,d,\lambda)$-graph and $P$ is large, we can find a collection of disjoint intervals in the path which cover almost all the vertices of $P$ such that almost every vertex $v$ in each of these intervals has \emph{many} neighbours in \emph{several} other intervals. Clearly, this will allow us to rotate in various different manners, avoiding certain situations. Indeed, let us informally describe how the proof goes after this cleaning is done. Suppose for simplicity that $P$ is fully partitioned into $k$ intervals of size $|P|/k$ and that \emph{all} the vertices have many neighbours in at least $\eps k$ intervals, for some appropriate $\eps$. We now fix a large sub-path $A$ at the start of the path containing the vertex $v_1$ which contains roughly $\eps k/2$ intervals. Observe already that since $A$ has that size, every vertex has lots of neighbours outside of $A$ (it has many neighbours in at least $\eps k/2$ intervals which are disjoint to $A$). This implies that we can always rotate from any vertex while avoiding $A$ - let us denote this property by $\mathbf{P}$. Also, using the pseudorandomness of $G$ we can find a subset $A' \subseteq A$ of size $0.99|A|$ which has large minimum degree and has the property that every vertex in $A'$ has lots of neighbours such that both of their neighbours in $P$ also belong to $A'$. Note that this implies that if we are doing rotations and our current endpoint belongs to $A'$, then we can continue rotating while ``staying'' in $A' \subseteq A$, that is, only creating new endpoints which belong to $A'$ - let us call this property $\mathbf{Q}$. 

Now, using property $\mathbf{P}$, we can first rotate with fixed endpoint $v_l$ while avoiding $A$ to find many new endpoints. Then, the pseudorandomness of $G$ guarantees that we can find an edge between one of these endpoints and $A'$, giving a new endpoint $z \in A'$. Moreover, crucially note that since we only ``touched'' $A$ at this last step, we now have a $zv_l$-path $P'$ such that $A$ is broken into only two disjoint sub-paths of $P'$ (one at the start containing $z$ and one at the middle - see Figure \ref{fig:outline} where these paths are depicted in red) and thus, in particular, $A'$ still has property $\mathbf{Q}$. Let us denote the interval of $P'$ between these two sub-paths of $A$ as $I$. Using property $\mathbf{Q}$ we can then start rotating $P'$ with fixed endpoint $v_l$ while staying in $A' \subseteq A$. Let the set of endpoints created as such be denoted as $X$, so that for each $ x \in X$ we created an $xv_l$-path $P_x$. Since $P' \setminus A$ was avoided in these rotations, observe that the only difference between $P_x \setminus A$ and $P_{x'} \setminus A$ (for two different $x,x' \in X$) in the paths $P_x,P_{x'}$ is the direction in which the interval $I$ is traversed in the orientation $x,x' \rightarrow v_l$ (see Figure \ref{fig:outline} for an illustration). Then, by possibly reducing the size of $X$ by half, we can assume that this direction is the same for all endpoints in $X$ - let us denote this property by $\mathbf{R}$. Finally, we can now, for some $x \in X$, use property $\mathbf{P}$ to rotate with fixed endpoint $x$ only using $P' \setminus A$. Then, since these rotations avoid $A$, property $\mathbf{R}$ implies that every endpoint $y$ this creates is also an endpoint for all other $x' \in X$, that is, for all $x' \in X$ there is an $x'y$-path on the vertex set $V(P)$. Letting $Y$ denote this set of endpoints $y$, if $X,Y$ are large enough then the pseudorandomness of $G$ implies that there is an edge $xy$ with $x \in X, y \in Y$, thus creating a cycle as desired.
\begin{figure}[ht]
    \centering
    \begin{tikzpicture}[scale=1.5,main node/.style={circle,draw,color=black,fill=black,inner sep=0pt,minimum width=3pt}]
        \tikzset{cross/.style={cross out, draw=black, fill=none, minimum size=2*(#1-\pgflinewidth), inner sep=0pt, outer sep=0pt}, cross/.default={2pt}}
	\tikzset{rectangle/.append style={draw=brown, ultra thick, fill=red!30}}
	    
\draw[line width= 2 pt] (-10,0) to (0,0);	 

\draw[red, line width= 2 pt] (-10,0) to (-8,0);
\draw[diredge, green, line width= 2 pt] (-6,0) to (-8,0);
\draw[red, line width= 2 pt] (-6,0) to (-4,0);
\draw[diredge, line width= 1 pt] (-10,0) to [bend left=60](-9,0);

\draw[diredge, red, line width= 1 pt] (-9.2,0) to (-10,0);
\draw[diredge, line width= 1 pt] (-8.2,0) to [bend right=60](-9.2,0);

\draw[diredge, line width= 1 pt] (-8.4,0) to [bend left=50](-5,0);
\draw[diredge, line width= 2 pt] (-4,0) to (0,0);
\draw[diredge, red, line width= 1 pt] (-9,0) to (-8.4,0);
\node[scale = 1.3] at (-7,0.4) {$I$};



\node[scale = 3] at (-5,0) {$.$};
\node[blue, scale = 5] at (-5.2,0) {$.$};
\node[scale = 1] at (-5.2,-0.2) {$x$};

\node[scale = 3] at (-10,0) {$.$};
\node[scale = 1] at (-10.3,0) {$z$};
\node[scale = 3] at (0,0) {$.$};
\node[scale = 1] at (0.3,0) {$v_l$};
\node[scale = 3] at (-8.2,0) {$.$};
\node[scale = 3] at (-8,0) {$.$};
\node[scale = 3] at (-6,0) {$.$};
\node[scale = 3] at (-4,0) {$.$};
\node[scale = 3] at (-9.2,0) {$.$};
\node[scale = 3] at (-9,0) {$.$};
\node[scale = 3] at (-8.4,0) {$.$};

    \end{tikzpicture}

    \begin{tikzpicture}[scale=1.5,main node/.style={circle,draw,color=black,fill=black,inner sep=0pt,minimum width=3pt}]
        \tikzset{cross/.style={cross out, draw=black, fill=none, minimum size=2*(#1-\pgflinewidth), inner sep=0pt, outer sep=0pt}, cross/.default={2pt}}
	\tikzset{rectangle/.append style={draw=brown, ultra thick, fill=red!30}}

\draw[line width= 2 pt] (-10,0) to (0,0);	 

\draw[red, line width= 2 pt] (-10,0) to (-8,0);
\draw[diredge, green, line width= 2 pt] (-8,0) to (-6,0);
\draw[red, line width= 2 pt] (-6,0) to (-4,0);
\draw[diredge, line width= 1 pt] (-10,0) to [bend left=60](-8.3,0);

\draw[diredge, red, line width= 1 pt] (-9.2,0) to (-10,0);
\draw[diredge, line width= 1 pt] (-8.5,0) to [bend left=50](-4.5,0);

\draw[diredge, line width= 1 pt] (-4.7,0) to [bend left=50](-9,0);
\draw[diredge, line width= 2 pt] (-4,0) to (0,0);
\draw[diredge, red, line width= 1 pt] (-9,0) to (-8.5,0);
\node[scale = 1.3] at (-7,0.4) {$I$};

\node[scale = 3] at (-9,0) {$.$};
\node[blue, scale = 5] at (-9.2,0) {$.$};
\node[scale = 1] at (-9.2,-0.2) {$x'$};

\node[scale = 3] at (-10,0) {$.$};
\node[scale = 1] at (-10.3,0) {$z$};
\node[scale = 3] at (0,0) {$.$};
\node[scale = 1] at (0.3,0) {$v_l$};
\node[scale = 3] at (-8.3,0) {$.$};
\node[scale = 3] at (-8.5,0) {$.$};
\node[scale = 3] at (-8,0) {$.$};
\node[scale = 3] at (-6,0) {$.$};
\node[scale = 3] at (-4,0) {$.$};
\node[scale = 3] at (-4.5,0) {$.$};
\node[scale = 3] at (-4.7,0) {$.$};

    \end{tikzpicture}
    \caption{The $zv_l$-path $P'$ is depicted, the set $A$ is pictured in red and the interval $I$ in green. Two rotations with fixed endpoint $v_l$ which ``stay'' in $A$ (using property $\mathbf{Q}$) are depicted with the interval $I$ being traversed in different directions in the resulting paths $P_x,P_{x'}$ which are depicted using arrows in the direction $x,x' \rightarrow v_l$.}
    \label{fig:outline}
\end{figure}

\noindent Clearly, the previous discussion is only informal and lacks a lot of important details that we have to deal with in order to do the proof. Namely, at the start it will not be the case that we can partition all of $P$ into intervals so that every vertex has many neighbours in several intervals (and so, in particular, we will not have property $\mathbf{P}$). In particular, the endpoints of the path $P$ will not necessarily have neighbours in many intervals. Therefore, we will first have to perform some rotations in order to get endpoints which do satisfy this. This will however imply that some changes to the path will be made, which requires that when we go through the arguments layed out before, we will need to rotate while taking these changes into account. This is an instance of where Lemma \ref{lem:technicalrotation} comes in handy, which is the main technical lemma allowing us to rotate in a \emph{robust manner}. In Section \ref{sec:posarot} we discuss the standard P\'osa rotation technique, state and prove our technical lemma as well as discuss our method for closing paths into cycles. Then, in Section \ref{sec:rotthm}, we give the cleaning lemma discussed before (Lemma \ref{lem:findingcleanpartition}) and prove Theorem \ref{thm:main general}.

\subsection{Graphs with many cycles and Theorem \ref{thm:maindense}}
We now discuss the proof of Theorem~\ref{thm:maindense} along with the more general statement (Theorem \ref{thm:pflhamiltonian}) that every 
$(n,d,\lambda)$-graph with many disjoint cycles contains a Hamilton cycle. Indeed, if $d \geq n^{\alpha}$ for some constant $\alpha > 0$, then it is not difficult to show that the graph contains a collection of $\eps n$ many vertex-disjoint cycles, for some constant $\eps$ dependent on $\alpha$. 

For simplicity, let us assume here that all of these cycles are triangles and denote them as $a_ib_ic_i$ for $1 \leq i \leq \eps n$. 
First we will find a path $P_1$ which \emph{connects} linearly many of the triangles, that is, for each such triangle it contains the edge $a_ic_i$ but not the vertex $b_i$ - this can be achieved using rather standard arguments, namely a directed version of the Depth First Search algorithm (see Lemma \ref{lem:DFSaltpath}).
We have now build a \emph{flexible} set $B$ of linear size consisting of all the vertices $b_i$, meaning that we can flexibly use the vertices of $B$ to build structure in the remainder of the graph, and be assured that afterwards, the vertices $b_i$ which we have not used can be ``absorbed'' into the path $P_1$ by replacing $a_ic_i$ with the path $a_ib_ic_i$. 
Moreover, by discarding a few vertices, we can find a set $B'\subseteq B$, still of linear size, such that $G[B']$ has large minimum degree, which will imply that it inherits some expansion properties from $G$, and also such that the endpoints of $P_1$ have large degree into $B'$.

Let $Y$ be the set of vertices not contained in $P_1$ and $B'$. With some additional care, we can ensure that $G[Y]$ has large minimum degree and therefore it has nice expansion properties.
The goal is now to partition $G[Y]$ into few paths, more precisely, at most $o(n/\log n)$ vertex-disjoint paths which cover all vertices of~$Y$, such that the endpoints of all of these paths have neighbours in the flexible set - this is all done in Section~\ref{sec:linforest}. For simplicity, let us start describing briefly how the first property is achieved and then explain how these two properties can be used to find a Hamilton cycle. At the end, we discuss the second property.

One standard argument which has been used in the literature for finding linear forests with few paths in these well-behaved graphs is to find a $2$-factor with few cycles by showing that the total number of $2$-factors is larger than the number of $2$-factors with many cycles using estimates on the permanent as well as facts about (pseudo-)random graphs.
Unfortunately this is not applicable to very sparse graphs and instead we 
need to come up with a different argument. Interestingly, we find a convenient shortcut to this problem. We will use the expanding properties of $G[Y]$ to find in it a spanning regular subgraph and then invoke a result of Alon \cite{alon1988linear} on the linear arboricity conjecture to find the desired collection of paths. Now, let $P_2,\dots,P_r$ be these paths which cover $Y$, with $r = o(n/\log n)$, and let $u_i,v_i$ denote the endpoints of~$P_i$ - assume that each one of these endpoints has neighbours in the flexible set~$B'$. 
We want to now weave the paths together with the path $P_1$ into a Hamilton cycle. More precisely, for each $i$, we want to find a path which connects $v_j$ to $u_{j+1}$ (indices modulo $r$), whose inner vertices lie in $B'$, such that all these paths are vertex-disjoint. Observe that this would indeed yield the desired Hamilton cycle. Since $B'$ enjoys nice expansion properties, we can generally assume to find paths of size $\log n$ between any given pair of vertices. Since we only need to connect $o(n/ \log n)$ pairs, in principal there would be enough space in $B'$ to find all the connections vertex-disjointly. 
This \emph{vertex-disjoint paths problem} has been extensively studied in the literature. We benefit here tremendously from a recent ``roll-back technique'' which allows one to indeed find all these connections in expanding graphs (see~\cite{draganic2022rolling}). Roughly speaking, this technique allows us, for a given pair of vertices we want to connect, to build two binary trees rooted at both vertices in a controlled fashion (using a concept of Friedman and Pippenger~\cite{fp}); then, when these trees are large enough, we are guaranteed by the pseudorandomness of $G$ to find an edge between them which then obviously leads to a path connecting the pair. The astonishing thing is that we can now \emph{roll back}, which means that we can demolish the two trees until only the desired path is left, while having a guarantee that the resulting embedding is still \emph{good} in the sense that we can simply start connecting the next pair disjointly in the same way.
In fact, we will have to use this technique with an additional twist. Namely, in general we cannot find triangles at the start of the proof, but cycles which might be longer. For example, say that we have cycles $a_i,b_i,c_i,d_i$ of length four. Now the path $P_1$ will \emph{connect} some of these in the sense that $P_1$ contains the edges $a_id_i$, but does not contain the vertices $b_i,c_i$. We still have a flexible set now, but we have to use it in such a way that for each $i$, either both $b_i,c_i$ are absorbed (and in a particular way) or neither. In Section~\ref{sec:altpaths}, we deal with this additional complication.

Finally, let us briefly discuss the second property that the endpoints of $P_2,\dots,P_r$ have neighbours in the flexible set $B'$ - indeed, otherwise, we could not connect them using this set. For this, let $X$ be the subset of $Y$ consisting of those vertices which do not have enough neighbours in the flexible set - since $G$ is pseudorandom, we are guaranteed that $X$ is not large. When finding the paths $P_2,\dots,P_r$, we have to make sure that their endpoints avoid~$X$. In order to achieve this, we will first ``absorb'' the whole of $X$ into $Y\setminus X$. This uses a novel idea (see Lemma \ref{lem:coverforest}) on how to cover such a bad set $X$ which might be very useful for future applications (in particular since we do not need the polynomial degree assumption here). Precisely, we can find a collection of disjoint paths which cover $X$ and whose endpoints are in $Y\setminus X$. Then, we will like before use the result of Alon \cite{alon1988linear} to extend this collection into a linear forest with few paths whose endpoints are in $Y\setminus X$, as desired. Clearly, doing this when $X \neq \emptyset$ requires various new ideas.

The part of the paper devoted to Theorem~\ref{thm:maindense} is organized as follows. In Section \ref{sec:altpaths}, we present all the connecting tools we require - both to find at the start the path $P_1$ connecting the cycles as well as to connect the paths $P_2, \ldots, P_r$ at the end using the flexible set to construct the Hamilton cycle. Subsequently, in Section~\ref{sec:linforest} we discuss finding the desired linear forest in our context with few paths with good endpoints. Then, in Section \ref{sec:manycyclesthm} we prove the main theorem.

\section{Preliminaries}\label{sec:prel}
\subsection{Notation}
We mostly use standard graph theoretic notation. Let $G$ be a finite graph. Denote by $V(G)$ its vertex set, and let $S_1,S_2\subseteq V(G)$. We denote by $G[S_1]$ the subgraph of $G$ induced by $S_1$, and by $E[S_1,S_2]$ the set of edges with one endpoint in $S_1$ and the other in $S_2$. For a vertex $x$ in a graph $G$, we let $N_G(x)$ denote the neighbourhood of $x$ in~$G$. For a set $S$ of vertices in a graph $G$, we let $\Gamma_G(S)$ (or just $\Gamma(S)$ if $G$ is clear from the context) denote the neighbourhood of $X$, that is $\Gamma_G(X) := \bigcup_{x \in X} N_G(x)$, and we denote by $N_G(X)$ the external neighbourhood of $X$, that is $N_G(X) = \Gamma_G(X) \setminus X$. If $G$ is a directed graph, then we can also define $N^{+}_G(S), \Gamma^{+}_G(S), N^{-}_G(S), \Gamma^{-}_G(S)$ to be the analogous sets considering out- or in-neighbourhoods. 
Usually $n$ denotes the number of vertices of a given graph, and we assume in all our statements that $n$ is sufficiently large. An event holds \emph{with high probability} if its probability tends to $1$ as $n\to\infty$. Given a collection $\mathcal{M}$ of vertex disjoint sets $S \subseteq V(G)$ in a graph $G$, we will use $V(\mathcal{M})$ to denote the set of all vertices in such sets, that is, $V(\mathcal{M}) := \bigcup_{S \in \mathcal{M}} S$.

\subsection{Some definitions and standard tools}
\noindent We now give two useful definitions concerning pseudorandom graphs.
\begin{defn}
Let $G$ be an $(n,d,\lambda)$-graph. A subset $S \subseteq V(G)$ is said to be \emph{$\delta$-clean} if $G[S]$ has minimum degree at least $\delta$. In the case that $\delta \geq d|S|/4n$, we say simply that $S$ is \emph{clean}.
\end{defn}
\begin{defn}
A graph $G$ is said to be an \emph{$(s,K)$-expander} (or \emph{$(s,K)$-expanding}) if every subset $S \subseteq V(G)$ of size at most $s$ is such that $|N_G(S)| \geq K|S|$. Similarly, a directed graph $G$ is \emph{$(s, K)$-out-expanding} if for every subset $S \subseteq V(G)$ of size $|S| \le s$ we have $|N^{+}_{G}(S)| \ge K |S|$.
\end{defn}
\noindent Now we will state various well-known results that we will use throughout the paper. The first two are standard probabilistic tools.
\begin{lem}[Chernoff's bound - see e.g., \cite{alon2016probabilistic}]\label{lem:chernoff}
Let $X$ be the sum of independent random variables $X_1, \ldots, X_n$ such that $0 \leq X_i \leq k$ for each~$i$. Then, for all $0 < \eps < 1$,
$$\mathbb{P} \left( X < (1-\eps)\mathbb{E}[X]| \right), \mathbb{P} \left( X > (1+\eps)\mathbb{E}[X]|\right) \leq  e^{-\eps^2 \mathbb{E}[X]/3k^2}.$$
\end{lem}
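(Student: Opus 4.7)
The plan is to use the classical exponential-moment method (the Bernstein--Chernoff trick). Let $\mu = \mathbb{E}[X]$ and fix a parameter $t > 0$ to be chosen later. First I would apply Markov's inequality to $e^{tX}$ to obtain
$$\mathbb{P}(X \geq (1+\eps)\mu) \;\leq\; \frac{\mathbb{E}[e^{tX}]}{e^{t(1+\eps)\mu}},$$
and then use independence to factor $\mathbb{E}[e^{tX}] = \prod_i \mathbb{E}[e^{tX_i}]$.

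The core step is to bound each factor using that $0 \le X_i \le k$. By convexity of $x \mapsto e^{tx}$ on $[0,k]$, one has $e^{tX_i} \le 1 + (X_i/k)(e^{tk} - 1)$ pointwise, so taking expectations and using $1+x \le e^x$ yields
$$\mathbb{E}[e^{tX_i}] \;\le\; \exp\!\left(\tfrac{\mathbb{E}[X_i]}{k}(e^{tk}-1)\right).$$
Multiplying over $i$, $\mathbb{E}[e^{tX}] \le \exp(\mu(e^{tk}-1)/k)$, which after substituting back gives
$$\mathbb{P}(X \geq (1+\eps)\mu) \;\leq\; \exp\!\left(\tfrac{\mu}{k}\bigl(e^{tk} - 1 - (1+\eps)tk\bigr)\right).$$

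Next I would optimize: setting $s := tk = \log(1+\eps)$ minimizes the exponent and turns the right side into $\exp((\mu/k)(\eps - (1+\eps)\log(1+\eps)))$. The elementary calculus inequality $(1+\eps)\log(1+\eps) - \eps \geq \eps^2/3$ for $\eps \in (0,1)$ (verified by comparing derivatives at $0$, or by Taylor expansion) then produces the stated upper-tail bound. For the lower tail $\mathbb{P}(X \le (1-\eps)\mu)$, I would run the symmetric argument with parameter $t < 0$ (i.e.\ apply Markov to $e^{-tX}$), choose $s = -\log(1-\eps)$, and use the companion inequality $(1-\eps)\log(1-\eps) + \eps \ge \eps^2/2 \ge \eps^2/3$.

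There is essentially no conceptual obstacle here; the argument is entirely standard. The only mild technical care needed is justifying the two scalar inequalities at the end of the optimization, and keeping track of the factor of $k$ that comes from scaling $X_i$ to $[0,1]$, which is what produces the denominator in the exponent of the final bound.
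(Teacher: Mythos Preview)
The paper does not actually prove this lemma; it is stated as a standard tool with a reference to Alon and Spencer's textbook, so there is no proof in the paper to compare against. Your argument is the standard exponential-moment proof and is correct. In fact, your optimization yields the sharper exponent $-\eps^2\mu/(3k)$ rather than the $-\eps^2\mu/(3k^2)$ printed in the statement (scaling $X_i/k$ into $[0,1]$ introduces only one factor of $k$, not two), so you have established a slightly stronger inequality than what the paper records; since all applications in the paper have $k\ge 1$, the stated version follows from yours.
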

\begin{lem}[Lov\'asz local lemma - see e.g., \cite{alon2016probabilistic}]\label{lem:locallemma}
Let $A_1 ,\ldots , A_n$ be events in a probability
space and suppose that each event $A_i$ is mutually independent of a set of all the other events $A_j$ but at most $d$. If $\mathbb{P}(A_i) \leq p$ for all $i \in [n]$ and $ep(d + 1) \leq 1$, then
$$\mathbb{P} \left(\bigcap_i \overline{A_i} \right) > \left(1-\frac{1}{d+1} \right)^n.$$
\end{lem}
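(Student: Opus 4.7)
This is a classical result, and my plan is to follow the standard inductive proof of the symmetric Lovász Local Lemma. The main step is to prove by induction on $|S|$ the stronger auxiliary claim that for every subset $S \subseteq [n]$ and every index $i \notin S$,
\[
\P\left(A_i \;\Big|\; \bigcap_{j \in S} \overline{A_j}\right) \le \frac{1}{d+1}.
\]
The base case $S = \emptyset$ is immediate since $\P(A_i) \le p \le 1/(e(d+1)) \le 1/(d+1)$.

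For the inductive step, I would split $S = S_1 \cup S_2$, where $S_1$ indexes the events on which $A_i$ \emph{depends} (so $|S_1| \le d$) and $S_2$ indexes those from which $A_i$ is mutually independent. Writing
\[
\P\left(A_i \;\Big|\; \bigcap_{j \in S} \overline{A_j}\right)
= \frac{\P\bigl(A_i \cap \bigcap_{j\in S_1}\overline{A_j} \,\big|\, \bigcap_{j\in S_2}\overline{A_j}\bigr)}{\P\bigl(\bigcap_{j\in S_1}\overline{A_j} \,\big|\, \bigcap_{j\in S_2}\overline{A_j}\bigr)},
\]
the numerator is at most $\P(A_i \mid \bigcap_{j\in S_2}\overline{A_j}) = \P(A_i) \le p$ by mutual independence of $A_i$ from the events indexed by $S_2$. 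For the denominator, I would expand it as a product of conditional probabilities of the form $\P(\overline{A_j} \mid \ldots)$ and apply the induction hypothesis to each factor (each conditioning set has size strictly smaller than $|S|$), getting at least $(1 - 1/(d+1))^{|S_1|} \ge (d/(d+1))^d > 1/e$. Combining with $p \le 1/(e(d+1))$, the ratio is strictly less than $1/(d+1)$, which completes the induction (and gives strictness for free).

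Finally, I would chain-rule the joint probability:
\[
\P\!\left(\bigcap_{i=1}^n \overline{A_i}\right) = \prod_{i=1}^n \P\!\left(\overline{A_i} \;\Big|\; \bigcap_{j<i} \overline{A_j}\right) \ge \left(1 - \frac{1}{d+1}\right)^{n},
\]
with strict inequality inherited from the strict bound in the inductive claim (valid whenever at least one $A_i$ has positive probability; the trivial case is handled directly). The only mild subtlety is the degenerate step where some conditioning event already has probability zero, which I would handle by deleting the corresponding events at the outset. There is no genuine obstacle here — this is a textbook argument — so the ``proposal'' is really just to record the standard Erdős–Lovász induction and observe that the strictness of $(d/(d+1))^d > 1/e$ yields the strict form stated.
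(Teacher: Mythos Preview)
Your proposal is the standard inductive proof of the symmetric Lov\'asz Local Lemma and is correct. The paper does not give its own proof of this lemma; it simply states it with a reference to Alon--Spencer, whose proof is exactly the argument you sketched, so there is nothing to compare.
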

\noindent Next we state the classical result of Hall concerning matchings in bipartite graphs.
\begin{thm}[Hall's theorem]
\label{thm:hall}
Let $G$ be a bipartite graph with bipartition $V(G)=A\cup B$ and such that
$$|N(S)|\ge|S|\quad\forall S\subseteq A.$$
Then $G$ has a matching covering $A$.
\end{thm}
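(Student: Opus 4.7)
The plan is to proceed by induction on $|A|$, following the classical dichotomy. The base case $|A|=1$ is immediate, since Hall's condition guarantees that the lone vertex of $A$ has at least one neighbour to be matched to.

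For the inductive step I would split into two cases depending on whether Hall's inequality is strict on every non-empty proper subset of $A$. In the \emph{surplus} case, every non-empty $S \subsetneq A$ satisfies $|N(S)| \geq |S|+1$. Here the plan is to pick any edge $ab$ with $a \in A$, delete both endpoints, and invoke induction on the resulting bipartite graph on $A \setminus \{a\}$ and $B \setminus \{b\}$. Hall's condition persists because removing $b$ can decrease $|N(S')|$ by at most one, which the strict surplus absorbs; re-attaching the edge $ab$ to the matching produced by induction then finishes this case.

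In the \emph{tight} case, there is some non-empty proper $S \subsetneq A$ with $|N(S)| = |S|$. Here I would apply induction separately to the two pieces $G_1 := G[S \cup N(S)]$ and $G_2 := G - (S \cup N(S))$. Hall's condition is inherited directly by $G_1$, and for $G_2$ with parts $A \setminus S$ and $B \setminus N(S)$, the key estimate for any $T \subseteq A \setminus S$ is
\[
|N_{G_2}(T)| \;=\; |N_G(S \cup T)| - |N(S)| \;\geq\; |S \cup T| - |S| \;=\; |T|,
\]
where the inequality is Hall's condition on the enlarged set $S \cup T$ combined with the tightness $|N(S)|=|S|$. Unioning the two matchings provided by induction yields a matching of $G$ covering $A$.

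The main (and only) subtle point is the verification of Hall's condition for $G_2$ in the tight case: one must check that the neighbours of $T$ lost by deleting $N(S)$ are exactly $N_G(T) \cap N(S)$, after which tightness of $S$ is precisely what is needed to cancel $|N(S)|$ against $|S|$ in the bound. Everything else is routine bookkeeping, and in particular the surplus case is truly easy once the dichotomy has been set up.
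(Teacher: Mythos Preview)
Your proof is correct and is the standard textbook argument for Hall's theorem. Note, however, that the paper does not actually supply a proof of this statement: it is listed among the preliminary ``standard tools'' in Section~\ref{sec:prel} and simply quoted as a classical result, so there is no proof in the paper to compare against. Your inductive dichotomy (surplus versus tight subset) is exactly the classical proof one finds in, e.g., Bollob\'as~\cite{bollobas1998modern}, and all the verifications you outline go through as written.
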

\noindent We also state a simple folklore result to find short cycles in sufficiently dense graphs.
\begin{thm}\label{BSthm} Every $n$-vertex graph with at least $n^{1+1/l}$ edges contains a cycle of length at most $2l$.
\end{thm}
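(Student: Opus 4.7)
The plan is to argue by contradiction using the classical BFS/Moore-bound argument. Suppose $G$ has no cycle of length at most $2l$, so the girth of $G$ is at least $2l+1$; I will derive a contradiction with the assumption $|E(G)| \geq n^{1+1/l}$.

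First I would reduce to a subgraph of large minimum degree by iteratively deleting any vertex of degree less than $n^{1/l}$ from $G$. Each such deletion removes fewer than $n^{1/l}$ edges, so over at most $n$ deletions, strictly fewer than $n \cdot n^{1/l} = n^{1+1/l}$ edges are removed in total. Since $|E(G)| \geq n^{1+1/l}$, the resulting subgraph $H$ must be non-empty, and by construction $\delta(H) \geq n^{1/l}$; moreover $H$ still has girth exceeding $2l$.

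Next I would run BFS in $H$ from an arbitrary vertex $v \in V(H)$ and let $N_i$ denote the set of vertices at distance exactly $i$ from $v$. The girth condition forces the ball of radius $l$ around $v$ to be essentially a tree: for any $i \leq l$, the existence of a vertex of $N_i$ with two neighbors in $N_{i-1}$, of two vertices of $N_{i-1}$ sharing a common neighbor in $N_i$, or of an edge inside some $N_{i-1}$, would each close a cycle of length at most $2i \leq 2l$, contradicting the girth assumption. Consequently, for $2 \leq i \leq l$, each vertex in $N_{i-1}$ has exactly one neighbor in $N_{i-2}$ (its BFS parent) and contributes at least $\delta(H) - 1$ new, distinct neighbors to $N_i$, yielding $|N_i| \geq (\delta(H)-1)|N_{i-1}|$. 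Iterating, $|N_l| \geq \delta(H)(\delta(H)-1)^{l-1}$, and summing gives the Moore-type bound
\[
 n \;\geq\; |V(H)| \;\geq\; 1 + \delta(H)\sum_{i=0}^{l-1}(\delta(H)-1)^i.
\]

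Substituting $\delta(H) \geq n^{1/l}$ into this Moore bound makes the right-hand side exceed $n$, producing the desired contradiction. The main step is the BFS/tree-like structure argument, which is entirely standard; the only real subtlety is careful bookkeeping of constants to guarantee that the inequalities are strict (for instance by working with the full Moore sum rather than just its leading term, or by observing that if $H$ were all of $V(G)$ then the removal step was vacuous and one has $\delta(G) \geq n^{1/l}$ directly). These are routine adjustments and do not affect the core geometric idea.
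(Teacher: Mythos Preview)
The paper states this as a folklore result and gives no proof, so there is nothing to compare your argument against; your BFS/Moore-bound approach is indeed the standard route and the right idea.

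There is, however, a genuine gap in your final step. For $l\ge 3$, substituting $\delta = n^{1/l}$ into the full Moore sum $1+\delta\sum_{i=0}^{l-1}(\delta-1)^i$ does \emph{not} make it exceed~$n$. For instance, when $l=3$ the sum simplifies to $\delta^3-\delta^2+\delta+1$, and with $\delta=n^{1/3}$ this equals $n-n^{2/3}+n^{1/3}+1$, which is strictly less than $n$ for every $n\ge 5$ (take $n=27$, $\delta=3$: the sum is $1+3(1+2+4)=22<27$). Neither of your proposed patches helps: you are already using the full Moore sum, and the case $H=G$ only yields $\delta(G)\ge n^{1/l}$, which is the same inequality. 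So the information ``$\delta(H)\ge n^{1/l}$'' alone is genuinely insufficient.

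To close the gap one must squeeze more out of the hypothesis $|E(G)|\ge n^{1+1/l}$ than just a subgraph of minimum degree $n^{1/l}$. One standard way is to note that the Moore bound actually gives $\delta(H)-1 < |V(H)|^{1/l}$ for \emph{every} subgraph $H$, turn this into a degeneracy bound, and sum more carefully; another is to invoke the irregular Moore bound (Alon--Hoory--Linial), which replaces minimum degree by average degree and then the factor~$2$ in $\bar d\ge 2n^{1/l}$ gives the needed slack. Either way, the ``routine bookkeeping'' you defer is exactly where the remaining content of the proof lies, and it is not as trivial as you suggest.
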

\noindent Finally, we state another folklore result on the spectral gap of $(n,d,\lambda)$-graphs.
\begin{thm}\label{thm:spectralgap}
Every $(n,d,\lambda)$-graph must have $\lambda \geq \sqrt{d \cdot \frac{n-d}{n-1}}$.
\end{thm}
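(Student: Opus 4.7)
The plan is to exploit the classical relationship between the eigenvalues of the adjacency matrix and the trace of its square. Let $A$ be the adjacency matrix of the $(n,d,\lambda)$-graph $G$, and denote its eigenvalues by $d = \mu_1 \geq \mu_2 \geq \cdots \geq \mu_n$. Here the top eigenvalue is $d$ because $G$ is $d$-regular, so the all-ones vector is an eigenvector with eigenvalue $d$ (and by Perron--Frobenius it is the largest). By definition, $\lambda(G) = \max_{i \ge 2} |\mu_i|$, so every other eigenvalue satisfies $\mu_i^2 \leq \lambda^2$ for $i \geq 2$.

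Next I would compute $\mathrm{tr}(A^2)$ in two different ways. On the one hand, $\mathrm{tr}(A^2) = \sum_{i=1}^n \mu_i^2$. On the other hand, $(A^2)_{vv}$ counts closed walks of length $2$ starting at $v$, which equals $\deg(v) = d$; summing over all $n$ vertices gives $\mathrm{tr}(A^2) = nd$. Equating these yields
\[
d^2 + \sum_{i=2}^n \mu_i^2 \;=\; nd,
\]
so $\sum_{i \geq 2} \mu_i^2 = d(n-d)$. Bounding each of the $n-1$ terms on the left by $\lambda^2$ gives $(n-1)\lambda^2 \geq d(n-d)$, and rearranging yields the desired inequality
\[
\lambda \;\geq\; \sqrt{d \cdot \frac{n-d}{n-1}}.
\]

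There is essentially no obstacle here: the only thing one has to be careful about is that $\lambda(G)$ is the second largest eigenvalue \emph{in absolute value}, so one must bound $|\mu_i|$ rather than $\mu_i$ itself, but this is automatic once one squares. The argument is entirely linear algebraic and needs no structural input beyond $d$-regularity.
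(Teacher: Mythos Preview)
Your argument is correct and is exactly the standard trace-of-$A^2$ proof of this folklore bound. Note that the paper does not actually supply a proof of this theorem; it is merely stated as a well-known fact, so there is nothing to compare against beyond confirming that your proof is valid, which it is.
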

\noindent We remark that by Dirac's theorem stated in the introduction, if $d \geq n/2$ then the graph is Hamiltonian. Hence, we can from now on assume that $d < n/2$ and thus, the above theorem tells us that $\lambda \geq \sqrt{d/2}$. 
\subsection{Expansion properties of pseudorandom graphs}\label{sec:expansion}
Here we state and prove several facts about $(n,d,\lambda)$-graphs which relate to their expansion properties. As it is usual, the starting point is the celebrated expander mixing lemma, introduced by Alon and Chung~\cite{alon1988explicit}, which has since then found numerous applications (the interested reader is referred to Chapter~9 of the book of Alon and Spencer~\cite{alon2016probabilistic}). Below, we state the lemma together with three well-known corollaries. For a proof of these, the reader can consult Section~2 of Krivelevich and Sudakov~\cite{KS:03}. 
\begin{lem}[Expander mixing lemma]\label{lem:expandermixing}
Let $G$ be an $(n,d,\lambda)$-graph. Then, 
\begin{itemize}
    \item[(1)] For every two subsets $A,B \subseteq V(G)$, the number $e(A,B)$ of edges with one endpoint in $A$ and the other in $B$ satisfies
    $$\left|e(A,B) - \frac{d|A||B|}{n} \right| \leq \lambda \sqrt{|A||B|} .$$
    \item[(2)] For every set $A \subseteq V(G)$,
    $$\left|e(A) - \frac{d|A|^2}{2n} \right| \leq \frac{\lambda |A|}{2} .$$
\end{itemize}
Moreover, the following three properties hold.
\begin{itemize}
    \item[(3)] Every $A \subseteq V(G)$ of size at most $\lambda n/d$ has $e(A) \leq \lambda |A|.$
    \item[(4)] For every $A,B \subseteq V(G)$ such that $|A||B| > (\lambda n/d)^2$, there exists an edge with one endpoint in $A$ and the other in $B$.
    \item[(5)] If $d > 2 \lambda$, then $G$ is connected.
\end{itemize}
\end{lem}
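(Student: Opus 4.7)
The plan is to prove (1) via the spectral decomposition of the adjacency matrix of $G$, and then derive (2)--(5) as short consequences. Let $M$ denote the adjacency matrix of $G$, with eigenvalues $d=\mu_1\ge\mu_2\ge\dots\ge\mu_n$ and corresponding orthonormal eigenvectors $u_1,\dots,u_n$. Since $G$ is $d$-regular, the all-ones vector is an eigenvector for $\mu_1=d$, so we may take $u_1=\mathbf{1}/\sqrt n$; by definition of $\lambda$, $|\mu_i|\le\lambda$ for all $i\ge 2$.

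For (1), fix $S,T\subseteq V(G)$ and use the identity $e(S,T)=\mathbf{1}_S^{\top} M \mathbf{1}_T$ (with the convention that edges inside $S\cap T$ are counted twice). Expanding $\mathbf{1}_S=\sum_i\alpha_i u_i$ and $\mathbf{1}_T=\sum_i\beta_i u_i$ in the eigenbasis, the leading coefficients are $\alpha_1=|S|/\sqrt n$, $\beta_1=|T|/\sqrt n$, and by Parseval $\sum_i\alpha_i^2=|S|$, $\sum_i\beta_i^2=|T|$. Therefore
$$e(S,T)=\sum_i\mu_i\alpha_i\beta_i=\frac{d|S||T|}{n}+\sum_{i\ge 2}\mu_i\alpha_i\beta_i,$$
and the remainder is bounded using $|\mu_i|\le\lambda$ and Cauchy--Schwarz by $\lambda\sqrt{\sum_{i\ge 2}\alpha_i^2}\sqrt{\sum_{i\ge 2}\beta_i^2}\le\lambda\sqrt{|S||T|}$, proving (1).

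Parts (2)--(5) follow as quick corollaries. Applying (1) with $S=T=A$ gives $|2e(A)-d|A|^2/n|\le\lambda|A|$, which rearranges to (2). For (3), when $|A|\le\lambda n/d$ one has $d|A|^2/(2n)\le\lambda|A|/2$, so (2) yields $e(A)\le d|A|^2/(2n)+\lambda|A|/2\le\lambda|A|$. For (4), if no edge lay between $A$ and $B$, then (1) would force $d|A||B|/n\le\lambda\sqrt{|A||B|}$, i.e.\ $|A||B|\le(\lambda n/d)^2$, contradicting the hypothesis. For (5), suppose for contradiction that $G$ has a connected component $A$ with $|A|\le n/2$; since $G$ is $d$-regular and $A$ is a component, every vertex of $A$ has all its $d$ neighbours inside $A$, so $e(A)=d|A|/2$. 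Plugging into (2) yields $d|A|(n-|A|)/(2n)\le\lambda|A|/2$, and since $n-|A|\ge n/2$ this gives $d\le 2\lambda$, contradicting $d>2\lambda$.

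I do not anticipate a substantial obstacle, since once the spectral setup for (1) is in place everything else is routine. The only subtlety worth flagging is the convention for $e(S,T)$ when $S\cap T\ne\emptyset$ --- one should either use the ordered-pair interpretation $\mathbf{1}_S^{\top}M\mathbf{1}_T$ uniformly, or state (1) for disjoint sets and handle the diagonal via (2) --- and the decision of whether to bootstrap (3)--(5) from (1) or from (2). Using (2) as done above keeps the constants cleanest and avoids recomputing sums over the eigenbasis each time.
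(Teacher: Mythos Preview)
Your proof is correct and is precisely the standard spectral argument. The paper does not actually give its own proof of this lemma; it states the result and refers the reader to Section~2 of Krivelevich and Sudakov~\cite{KS:03} for the details. The argument you have written---spectral decomposition of the adjacency matrix to obtain (1), then deducing (2)--(5) as immediate corollaries---is exactly the proof one finds in that reference, so there is nothing to add.
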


\noindent We now give the two crucial lemmas concerning expansion in pseudorandom graphs. These will be corollaries of the expander mixing lemma above. The first states, informally, that as long as there is large enough minimum degree (of order at least $\lambda$), we have good vertex-expansion.
\begin{lem}\label{lem:expansion2}
Let $G$ be an $(n,d,\lambda)$-graph, $G' \subseteq G$ a subgraph and $10 \lambda \leq \delta \leq d$. Suppose that there are subsets of vertices $A, B$ with $|A| \leq \delta n/100d$ and such that every vertex $v \in A$ has $|N_{G'}(v) \cap B| \geq \delta$. Then, $$|N_{G'}(A) \cap B| \geq \min \left(\delta^2|A|/8 \lambda^2, \delta n/10d \right) \geq 10|A|.$$
\end{lem}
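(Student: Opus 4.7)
The plan is a direct count of edges between $A$ and the set $T := N_{G'}(A) \cap B$, combined with the expander mixing lemma (Lemma~\ref{lem:expandermixing}). Note first that by the external-neighbourhood convention, $T \cap A = \emptyset$, and so for every $v \in A$ any neighbour $u \in N_{G'}(v) \cap B$ either lies in $A$ or lies in $B \setminus A$, in which case $u \in T$. Summing the hypothesis $|N_{G'}(v) \cap B| \ge \delta$ over $v \in A$ and splitting each contribution accordingly, I would obtain
\begin{align*}
\delta |A| \;\leq\; 2 e_{G'}(A) + e_{G'}(A, T) \;\leq\; 2 e_G(A) + e_G(A, T),
\end{align*}
where the first inequality comes from the partition described above (internal edges of $A$ are counted twice) and the second uses $G' \subseteq G$.

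Now I would invoke parts (1) and (2) of Lemma~\ref{lem:expandermixing} to upper bound the right-hand side by
\begin{align*}
\frac{d|A|^2}{n} + \lambda |A| + \frac{d |A||T|}{n} + \lambda \sqrt{|A||T|}.
\end{align*}
The hypothesis $|A| \leq \delta n/(100 d)$ makes $d|A|^2/n$ at most $\delta |A|/100$, and $\delta \geq 10\lambda$ makes $\lambda |A|$ at most $\delta |A|/10$. Absorbing these two \emph{error} terms into the left-hand side, the inequality becomes
\begin{align*}
c \cdot \delta |A| \;\leq\; \frac{d |A||T|}{n} + \lambda \sqrt{|A||T|}
\end{align*}
for some explicit constant $c$ close to one.

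To conclude I would split into two cases depending on which of the two right-hand terms is larger. If $d|A||T|/n$ dominates, the inequality forces $|T| \gtrsim \delta n/d$; if instead $\lambda \sqrt{|A||T|}$ dominates, squaring gives $|T| \gtrsim \delta^2 |A|/\lambda^2$. A careful choice of constants then produces the claimed bound $|T| \geq \min\bigl(\delta^2|A|/(8\lambda^2),\, \delta n/(10 d)\bigr)$. The secondary assertion $|T|\geq 10|A|$ is immediate from the two hypotheses: $\delta^2/(8\lambda^2) \geq 100/8 > 10$ follows from $\delta \geq 10\lambda$, and $\delta n/(10 d) \geq 10|A|$ rearranges to $|A| \leq \delta n/(100 d)$. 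There is no real obstacle; the only subtle point is to treat the internal edges of $A$ separately, since $T$ is an external neighbourhood and these edges are not captured by $e_G(A, T)$ and must be absorbed via the $e_G(A)$ bound.
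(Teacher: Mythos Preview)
Your proposal is correct and follows essentially the same approach as the paper: set $T = N_{G'}(A)\cap B$, lower-bound $e_G(A,T)$ by $\delta|A|$ minus the internal edges $2e_G(A)$, then upper-bound $e_G(A,T)$ via the expander mixing lemma and compare. The only organizational difference is that the paper splits into cases according to whether $|A|\le \lambda^2 n/(\delta d)$ (handling the large-$|A|$ case by passing to a subset of exactly that size), whereas you split according to which of the two terms $d|A||T|/n$ and $\lambda\sqrt{|A||T|}$ dominates; both case splits yield the same constants and the arguments are interchangeable.
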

\begin{proof}
\noindent Let $T := N_{G'}(A) \cap B$. Firstly, since $|A| \le \delta n/100d$ note that by part \textit{(2)} of Lemma \ref{lem:expandermixing} we have $e_G(A) \leq (\delta/200+\lambda/2 )|A| \leq 0.15\delta|A| $. In turn, by assumption and since $\delta \geq 10 \lambda$, this implies that
$$e_{G}(A,T) \geq e_{G'}(A,T) \geq \delta |A| - 2e_G(A) \geq 0.7 \delta |A| .$$
Now, suppose first that $|A| \leq \lambda^2 n/\delta d$ and for sake of contradiction, also that $|T| < \delta^2|A|/8\lambda^2$. Then, part \textit{(1)} of Lemma \ref{lem:expandermixing} implies that
$$e_{G}(A,T) \leq \frac{d|A||T|}{n} + \lambda \sqrt{|A||T|} < \left(\frac{\delta^2|A|d}{8 \lambda^2 n}  + \lambda \cdot \frac{\delta}{2 \lambda}\right)|A| \leq \left(\frac{\delta}{8}  + \frac{\delta}{2}\right)|A| <0.7 \delta |A|,$$
where the third inequality follows since $|A| \leq \lambda^2 n/\delta d$. This is a contradiction to the previous observation and thus, as desired, $|T| \geq \delta^2|A|/8\lambda^2$. Finally, suppose that $|A| > \lambda^2 n/ \delta d$. By considering a subset $A' \subseteq A$ of size $\lambda^2 n/\delta d$, we have by before that
$|T| \geq  \delta^2|A'|/8\lambda^2 - |A| \geq \delta n/8d - |A| > \delta n/10d$, where we are using that $|A| \le \delta n/100d $.
\end{proof}
\noindent The following is an immediate corollary of the above.
\begin{cor}\label{cor:cleanexpansion}
Let $G$ be an $(n,d, \lambda)$-graph and $S \subseteq V(G)$ be a clean subset of size at least $1000 \lambda n/d$. Then, $G[S]$ is an $\left(|S|/400 ,10\right)$-expander.
\end{cor}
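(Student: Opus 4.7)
The plan is to invoke Lemma~\ref{lem:expansion2} directly with carefully chosen parameters and then verify that the resulting bound yields the claimed expansion factor of~$10$.

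Concretely, let $A\subseteq S$ be an arbitrary subset with $|A|\le |S|/400$, and apply Lemma~\ref{lem:expansion2} with $G'=G[S]$, $B=S$, and $\delta=d|S|/4n$ (which is precisely the minimum degree guaranteed to $G[S]$ by the cleanness of $S$). I need to check the three hypotheses of the lemma. First, $\delta\le d$ holds trivially since $|S|\le n$, and $\delta\ge 10\lambda$ holds because the assumption $|S|\ge 1000\lambda n/d$ gives $d|S|/4n \ge 250\lambda$. Second, the condition $|A|\le \delta n/100d = |S|/400$ is exactly the size hypothesis on $A$. Third, every vertex $v\in A\subseteq S$ has $|N_{G'}(v)\cap B| = \deg_{G[S]}(v)\ge \delta$ by the definition of cleanness.

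With these satisfied, Lemma~\ref{lem:expansion2} yields
\[
|N_{G[S]}(A)| \;\ge\; \min\!\left(\frac{\delta^{2}|A|}{8\lambda^{2}},\ \frac{\delta n}{10d}\right).
\]
The second quantity is $\delta n/10d = |S|/40 \ge 10|A|$ since $|A|\le |S|/400$. For the first, plugging in $\delta = d|S|/4n$ and using $|S|\ge 1000\lambda n/d$ gives
\[
\frac{\delta^{2}}{8\lambda^{2}} \;=\; \frac{d^{2}|S|^{2}}{128\,n^{2}\lambda^{2}} \;\ge\; \frac{10^{6}}{128} \;>\; 10,
\]
so $\delta^{2}|A|/8\lambda^{2}\ge 10|A|$ as well. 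Therefore $|N_{G[S]}(A)|\ge 10|A|$, proving that $G[S]$ is an $(|S|/400,10)$-expander.

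There is essentially no obstacle here; the corollary is a bookkeeping application of Lemma~\ref{lem:expansion2}, and the only thing to be careful about is that the constant $1000$ in the hypothesis $|S|\ge 1000\lambda n/d$ is comfortably large enough both to make $\delta$ exceed $10\lambda$ and to force the first argument of the $\min$ to dominate $10|A|$.
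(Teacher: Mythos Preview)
Your proof is correct and is exactly the argument the paper intends: the corollary is stated there without proof as ``an immediate corollary of the above,'' and your application of Lemma~\ref{lem:expansion2} with $G'=G[S]$, $B=S$, $\delta=d|S|/4n$ is precisely that immediate deduction. Your explicit verification that both arguments of the $\min$ exceed $10|A|$ is slightly redundant, since the lemma already asserts the final inequality $\geq 10|A|$ in its conclusion, but it does no harm.
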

\noindent Next we give another lemma which is a corollary of Lemma \ref{lem:expandermixing}. This will tell us that every sufficiently large set contains a large clean subset. This is important since by Corollary \ref{cor:cleanexpansion}, it implies that it will have good expansion properties. 

\begin{lem}\label{lem:cleaning}
Let $G$ be an $(n,d,\lambda)$-graph and $S \subseteq V(G)$. Then, there are at most $4\lambda^2n^2/d^2|S|$ vertices $u$ such that $|N(u) \cap S| < d|S|/2n$. Moreover, if $|S| \geq 5 \lambda n/d$, there is a clean subset $S' \subseteq S$ of size at least $|S| - \lambda n/d$.
\end{lem}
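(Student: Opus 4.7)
The statement splits into two claims, which I would handle separately.

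For the first claim, let $T := \{u \in V(G) : |N(u) \cap S| < d|S|/(2n)\}$ and count edges between $T$ and $S$ in two ways. By definition of $T$, one has $e(T,S) \le \sum_{u \in T} |N(u) \cap S| < |T| \cdot d|S|/(2n)$. On the other hand, part~(1) of \cref{lem:expandermixing} gives the lower bound $e(T,S) \ge d|T||S|/n - \lambda\sqrt{|T||S|}$. Combining the two yields $d|T||S|/(2n) < \lambda\sqrt{|T||S|}$, and squaring produces the desired bound $|T| < 4\lambda^2 n^2/(d^2|S|)$.

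For the second claim, assume $|S| \ge 5\lambda n/d$ and run a greedy cleaning procedure: start from $S_0 := S$, and while some $v \in S_i$ satisfies $d_{G[S_i]}(v) < d|S_i|/(4n)$, remove it to form $S_{i+1}$. Let $S'$ be the terminal set (clean by construction) and enumerate the removed vertices as $v_1, v_2, \ldots, v_t$ in order of removal. It suffices to prove $t \le \lambda n/d$.

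The main ingredient is a double-counting identity. For any prefix $T_k := \{v_1, \ldots, v_k\}$, each edge inside $T_k$ contributes exactly once to $\sum_{i=1}^{k} |N(v_i) \cap S_{i-1}|$ (via its earlier-removed endpoint), and each edge from $T_k$ to $S \setminus T_k$ also contributes exactly once, giving
\[
  e_G(T_k) + e_G(T_k, S \setminus T_k) \;=\; \sum_{i=1}^{k} |N(v_i) \cap S_{i-1}| \;<\; \frac{k d |S|}{4n}.
\]
Pairing this with the lower bound $e_G(T_k, S \setminus T_k) \ge dk(|S|-k)/n - \lambda\sqrt{k(|S|-k)}$ from the expander mixing lemma, and simplifying under the mild assumption $k \le |S|/4$, one obtains $dk|S|/(2n) < \lambda\sqrt{k|S|}$, whence $k < 4\lambda^2 n^2/(d^2|S|) \le 4\lambda n/(5d)$ using the hypothesis $|S| \ge 5\lambda n/d$.

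A short bootstrap closes the argument: since $|S| \ge 5\lambda n/d$ forces $4\lambda n/(5d) < |S|/4$, the derived bound prevents the running count $k$ from ever reaching $|S|/4$ during the process, so the hypothesis $k \le |S|/4$ remains valid throughout. Hence $t < \lambda n/d$, producing a clean $S' \subseteq S$ with $|S'| > |S| - \lambda n/d$. The one delicate point is setting up the double-counting so that edges inside $T_k$ are not over-counted relative to the degree-based upper bound; after that, both parts of the lemma reduce to clean applications of \cref{lem:expandermixing}.
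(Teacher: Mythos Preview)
Your proof is correct. The first part is essentially verbatim the paper's argument.

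For the second part, both you and the paper run the same greedy deletion process, but the way you bound the number of removed vertices differs slightly. The paper simply reuses the first part of the lemma as a black box: after deleting $\lambda n/d$ vertices, the remaining set $S_i$ still has $|S_i|\ge 4\lambda n/d$, and every deleted vertex has fewer than $d|S|/4n < d|S_i|/2n$ neighbours in $S_i$, which directly contradicts the first part applied with $S_i$ in place of~$S$. Your route is a from-scratch double count of $\sum_i |N(v_i)\cap S_{i-1}|$ together with a separate application of the expander mixing lemma to $e(T_k,S\setminus T_k)$, plus a small bootstrap to justify $k\le |S|/4$. Both work; the paper's version is a touch more economical since it avoids the extra identity and the bootstrap, while yours is more self-contained. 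One cosmetic point: you delete when $d_{G[S_i]}(v)<d|S_i|/4n$ whereas the paper uses the fixed threshold $d|S|/4n$; since $|S_i|\le |S|$ your threshold is weaker, so your process can only terminate earlier and your bound $\sum_i |N(v_i)\cap S_{i-1}| < kd|S|/4n$ still holds, so this causes no issues.
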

\begin{proof}
Let $U$ denote the set of vertices $u$ such that $|N(u) \cap S| < d|S|/2n$. Then, $e(U,S) \leq d|U||S|/2n$. At the same time, part \textit{(1)} of Lemma \ref{lem:expandermixing} implies that $e(U,S) \geq d|U||S|/n - \lambda \sqrt{|U||S|}$. Therefore,
$$\lambda \sqrt{|U||S|} \ge d|U||S|/2n, $$
and so, $|U| \leq 4\lambda^2n^2/d^2|S|$ as desired. 

For the second part, do the following process. Start with $S_0 := S$; at each step $ i \geq 1$, consider $G[S_{i-1}]$: if it contains some vertex $u_i$ with degree less than $d|S|/4n$, then remove it, that is, define $S_i = S_{i-1} \setminus \{u_i\}$ and continue; otherwise, stop the process. Now, it is clear that if the process stops at some step $i \leq \lambda n/d$, then the desired subset $S' := S_i \subseteq S$ is produced. For sake of contradiction, suppose then that it only stops later. Consider the set $S_i$ for $i := \lambda n/d + 1$ and define $U := \{u_1, \ldots, u_{i-1}\}$, which is of size at least $\lambda n/d$. Note that $|S_i| = |S| - \lambda n/d \geq \max \left(|S|/2, 4 \lambda n/d \right)$ and further, by definition, every vertex in $U$ must have less than $d|S|/4n < d|S_i|/2n$ neighbours in $S_i$. However, this contradicts the first part of the corollary since $$|U| > \lambda n/d \geq 4 \lambda^2 n^2 /d^2 |S_i|$$
where the second inequality holds since, as noted before, $|S_i| \geq 4 \lambda n/d$.
\end{proof}
\noindent We remark that with the same argument one can easily generalize the second part of the above lemma in the following manner.
\begin{lem}\label{lem:paircleaning}
Let $G$ be an $(n,d,\lambda)$-graph and $\mathcal{S}$ a collection of disjoint pairs $\{x_i,y_i\} \subseteq V(G)$. If $|\mathcal{S}| \geq 3 \lambda n/d$, there is a sub-collection $\mathcal{S}' \subseteq \mathcal{S}$ of size at least $|\mathcal{S}| - \lambda n/d$ such that $V(\mathcal{S}')$ is clean.
\end{lem}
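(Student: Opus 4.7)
The plan is to imitate the greedy removal procedure used in the proof of Lemma~\ref{lem:cleaning}, with the only twist being that whenever we identify a low-degree vertex we discard the entire pair of $\mathcal{S}$ containing it, rather than just the single vertex. Set $S := V(\mathcal{S})$, so $|S| = 2|\mathcal{S}| \ge 6\lambda n/d$, and initialize $\mathcal{S}_0 := \mathcal{S}$. At step $i \ge 1$, if $G[V(\mathcal{S}_{i-1})]$ contains a vertex $u_i$ of degree less than $d|S|/4n$, pick an arbitrary pair $p_i \in \mathcal{S}_{i-1}$ with $u_i \in p_i$ and let $\mathcal{S}_i := \mathcal{S}_{i-1} \setminus \{p_i\}$; otherwise, halt and output $\mathcal{S}' := \mathcal{S}_{i-1}$.

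If the process halts at some step $i \le \lambda n/d$, then $|\mathcal{S}'| \ge |\mathcal{S}| - \lambda n/d$ as required, and by construction every vertex of $V(\mathcal{S}')$ has at least $d|S|/4n \ge d|V(\mathcal{S}')|/4n$ neighbours inside $V(\mathcal{S}')$, so $V(\mathcal{S}')$ is clean. The main task is therefore to rule out the possibility that the process runs for more than $\lambda n/d$ steps; this is exactly the analogue of the contradiction step in Lemma~\ref{lem:cleaning}.

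Suppose for contradiction the process does not halt within the first $\lambda n/d$ steps, so distinct vertices $u_1,\dots,u_{\lfloor\lambda n/d\rfloor+1}$ have all been chosen. Set $S' := V(\mathcal{S}_{\lfloor\lambda n/d\rfloor+1})$. Since we remove two vertices per step, $|S'| = 2|\mathcal{S}| - 2(\lfloor\lambda n/d\rfloor+1) \ge 4\lambda n/d - O(1)$, which for $n$ large also gives $d|S|/4n \le d|S'|/2n$ (using $|\mathcal{S}|\ge 3\lambda n/d$, so $|S|\le 2|S'|$). Because $V(\mathcal{S}_{j-1}) \supseteq S'$ for every $j$, each chosen $u_j$ satisfies $|N(u_j) \cap S'| < d|S|/4n \le d|S'|/2n$. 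Applying the first part of Lemma~\ref{lem:cleaning} to $S'$, the number of such vertices is at most $4\lambda^2 n^2/(d^2|S'|) \le \lambda n/d + o(\lambda n/d)$, contradicting the existence of $\lfloor\lambda n/d\rfloor+1$ such $u_j$'s (taking $n$ large enough to absorb the $O(1)$ slack).

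The only subtle point, and the main obstacle, is keeping the constants aligned between ``clean'' (threshold $d|S'|/4n$) and the first part of Lemma~\ref{lem:cleaning} (threshold $d|S'|/2n$) while using a single fixed threshold $d|S|/4n$ throughout the process; the assumption $|\mathcal{S}|\ge 3\lambda n/d$ is precisely what makes both comparisons go through with room to spare, so the argument reduces to a routine adaptation of Lemma~\ref{lem:cleaning} to pairs.
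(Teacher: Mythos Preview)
Your proof is correct and is exactly the approach the paper intends: the paper does not give a separate argument for this lemma but simply remarks that ``with the same argument one can easily generalize'' Lemma~\ref{lem:cleaning}, and your greedy pair-removal procedure together with the contradiction via the first part of Lemma~\ref{lem:cleaning} is precisely that argument. The one adjustment you make---discarding the whole pair rather than a single vertex when a low-degree vertex is found---is the only change needed, and the constants line up (with the same level of slack as in the paper's own proof of Lemma~\ref{lem:cleaning}).
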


\subsection{Directed graphs and the (directed) Friedman-Pippenger technique}\label{sec:dirgraphs}

\noindent This next section is devoted to stating lemmas concerning finding sparse structures such as paths and trees more generally, as well as connecting structures, in expanding directed graphs, which will often appear in our proofs as auxiliary graphs. We start with a well-known result of Ben-Eliezer, Krivelevich and Sudakov \cite{ben2012size} which gives a sufficient condition for the existence of long paths in directed graphs. Its proof relies on the depth-first-search (DFS) algorithm.
\begin{lem}[\cite{ben2012size}]\label{lem:DFSpath}
Let $H$ be an $n$-vertex digraph and suppose that for all disjoint sets $S,T \subseteq V(H)$ of size $k$ there exists an edge directed from $S$ to $T$. Then, $H$ contains a directed path of length $n - 2k + 1$.
\end{lem}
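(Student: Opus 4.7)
The plan is to run a depth-first search (DFS) on the digraph $H$ and analyze the stack at a carefully chosen moment. Throughout the execution I maintain three disjoint sets whose union is $V(H)$: the set $A$ of fully processed (popped) vertices, the set $B$ of vertices currently on the stack listed in the order they were pushed (so the push edges form a directed path with vertex set $B$), and the set $C$ of unvisited vertices. Initially $A=B=\emptyset$ and $C=V(H)$. At each step I inspect the top vertex $v$ of the stack: if $v$ has an out-neighbour in $C$, I push one such $u$ onto $B$ (so $u$ moves from $C$ to $B$); otherwise I pop $v$ into $A$. Whenever $B$ becomes empty while $C$ is nonempty, I restart by pushing an arbitrary vertex of $C$ onto $B$.

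The crucial invariant is that $H$ contains no edge directed from $A$ to $C$ at any point during the execution. Indeed, whenever a vertex $v$ is moved from $B$ into $A$, it has no out-neighbour in $C$ at that instant; subsequently vertices can only leave $C$ (by being pushed onto $B$), so any $u\in C$ at a later moment was already in $C$ when $v$ was popped, hence $(v,u)$ is not an edge of $H$. A restart step does not affect the invariant either, since it only moves a vertex from $C$ to $B$ and leaves $A$ untouched.

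Now $|A|$ starts at $0$, ends at $n$, and increases by exactly $1$ at every pop, so there is a first moment at which $|A|=k$. If at this moment $|C|\ge k$, the hypothesis applied to any $k$-subsets of $A$ and $C$ would produce an edge directed from $A$ to $C$, contradicting the invariant. Therefore $|C|\le k-1$, and consequently
\[
|B|\ \ge\ n-|A|-|C|\ \ge\ n-k-(k-1)\ =\ n-2k+1.
\]
Since the vertices of $B$ in stack order form a directed path in $H$, this yields the claimed directed path of length $n-2k+1$. The only point that requires any care is verifying the invariant through restart steps, but this is immediate from the rules of the algorithm; no genuine obstacle arises.
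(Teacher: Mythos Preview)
Your DFS argument is the standard one and is exactly what the paper has in mind; the paper does not give its own proof but merely cites \cite{ben2012size} and remarks that ``its proof relies on the depth-first-search (DFS) algorithm.'' The invariant that there is no edge from $A$ to $C$, and the fact that the stack $B$ always induces a directed path, are set up correctly.

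There is one small quantitative slip. By examining the first moment $|A|=k$ you obtain $|B|\ge n-2k+1$, which is the number of \emph{vertices} on the stack; a path on that many vertices has length (number of edges) only $n-2k$. To match the stated bound, look instead at the moment when $|A|=|C|$: since $|A|-|C|$ starts at $-n$, ends at $n$, and increases by exactly $1$ at every step (push, pop, or restart), such a moment exists. At that moment the invariant together with the hypothesis forces $|A|=|C|\le k-1$, hence $|B|\ge n-2(k-1)=n-2k+2$, giving a directed path of length $n-2k+1$ as claimed. With this adjustment your proof is complete.
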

\noindent We now describe an embedding machinery in expanding directed graphs. This is a directed version of the Friedman-Pippenger technique with roll-backs which is used for embeddings in expander graphs. The technique of Friedman and Pippenger \cite{fp} can be used to embed trees in expanders vertex by vertex, while maintaining a certain invariant. Then, a simple but powerful observation allows one to remove leaves from this tree so that this invariant is still maintained. We refer the reader to \cite{draganic2022rolling} for more on the topic. First, we give the definition of this invariant - a \emph{good embedding}.
\begin{defn}\label{deF:goodness}
Let $H$ be a digraph and let $s, D \in \mathbb{N}$. Given a digraph $F$, we say that an embedding $\phi \colon F \hookrightarrow H$ is \emph{$(s,D)$-good} if
\begin{equation} \label{eq:extendable}
	|\Gamma^{+}_H(X) \setminus \phi(F)| \ge \sum_{v \in X} \left[ D - \deg_F(\phi^{-1}(v)) \right]+|\phi(F)\cap X|
\end{equation}
for every $X \subseteq V(H)$ of size $|X| \le s$. Here we slightly abuse notation by setting $\deg_F(\emptyset) := 0$, i.e. if a vertex $v\in V(H)$ is not used by $\phi$ to embed $F$, then we set $\deg_F(\phi^{-1}(v))=0$.
\end{defn}
\noindent Then, the argument of Friedman-Pippenger (see e.g., Theorem 2.3 and its proof in \cite{draganic2022rolling}) can be adapted straightforwardly towards directed graphs to give the following. 
\begin{thm}\label{FPthm}
Let $F$ be a digraph with $\Delta(F) \le D$ and $v(F) < s$, for some $D, s \in \mathbb{N}$. Suppose we are given a $(2s-2,D{+}2)$-out-expanding digraph $H$ and a $(2s-2,D)$-good embedding $\phi \colon F \hookrightarrow G$. Then for every digraph $F'$ with $v(F') \le s$ and $\Delta(F') \le D$ which can be obtained from $F$ by successively adding a new vertex of in-degree $1$ and out-degree $0$, there exists a $(2s-2,D)$-good embedding $\phi' \colon F' \hookrightarrow G$ which extends $\phi$.
\end{thm}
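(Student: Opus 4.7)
The plan is to proceed by induction on $v(F') - v(F)$, which reduces the task to the single-step case: given the good embedding $\phi$ of $F$ and a new vertex $u$ with a single in-edge from some $w \in V(F)$, one must find $v \in \Gamma^+_H(\phi(w)) \setminus \phi(F)$ such that $\phi' := \phi \cup \{u \mapsto v\}$ is again $(2s-2, D)$-good. Write $w' = \phi(w)$, and note that $\deg_F(w) \le D - 1$, since $\deg_{F'}(w) \le \Delta(F') \le D$.

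The first tool is a bookkeeping identity. Setting
\[
g(X) := |\Gamma^+_H(X) \setminus \phi(F)| - \sum_{v' \in X}\bigl[D - \deg_F(\phi^{-1}(v'))\bigr] - |\phi(F) \cap X|,
\]
a short case analysis on whether $w' \in X$ and whether $v \in X$ shows that the corresponding quantity $g'(X)$ for $\phi'$ satisfies $g'(X) = g(X) + \mathbf{1}[w' \in X] - \mathbf{1}[v \in \Gamma^+_H(X)]$. Since $g \ge 0$ by the goodness of $\phi$, the extension $\phi'$ fails to be good precisely at sets $X$ of size $\le 2s-2$ with $w' \notin X$, $v \in \Gamma^+_H(X)$, and $g(X) = 0$; I call such an $X$ \emph{tight}. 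Moreover, because $\Gamma^+_H(X_1 \cup X_2) = \Gamma^+_H(X_1) \cup \Gamma^+_H(X_2)$, $\Gamma^+_H(X_1 \cap X_2) \subseteq \Gamma^+_H(X_1) \cap \Gamma^+_H(X_2)$, and the remaining terms in $g$ are modular, $g$ is submodular; hence the union of any two tight sets of joint size at most $2s-2$ is again tight.

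The key step is a size bound: every tight $X^*$ with $|X^*| \le 2s-2$ in fact satisfies $|X^*| \le s-1$. Indeed, the $(2s-2, D+2)$-out-expansion of $H$ gives
\[
|\Gamma^+_H(X^*) \setminus \phi(F)| \ge |N^+_H(X^*)| - v(F) \ge (D+2)|X^*| - (s-1),
\]
while tightness supplies the matching upper bound $|\Gamma^+_H(X^*) \setminus \phi(F)| \le D|X^*| + (s-1)$; combining them forces $|X^*| \le s-1$. This is the main technical pressure point of the argument, and it is exactly why the hypothesis demands the two-unit excess ``$D+2$'' rather than merely ``$D$''.

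Finally, assume for contradiction that every candidate $v \in Y := \Gamma^+_H(w') \setminus \phi(F)$ is bad, with a tight witness $X_v \not\ni w'$. Each $|X_v| \le s-1$ by the previous step, so iteratively applying submodularity (each intermediate union has size at most $2(s-1) \le 2s-2$, so submodularity applies and the size bound then reduces it back to $\le s-1$) shows that $X^* := \bigcup_{v \in Y} X_v$ is tight, avoids $w'$, and satisfies $|X^*| \le s-1$. Hence $|X^* \cup \{w'\}| \le s \le 2s-2$, so goodness applies to this set. Using $g(X^*) = 0$ together with the decomposition $\Gamma^+_H(X^* \cup \{w'\}) = \Gamma^+_H(X^*) \cup \Gamma^+_H(w')$, a direct computation yields
\[
\bigl|(\Gamma^+_H(w') \setminus \phi(F)) \setminus \Gamma^+_H(X^*)\bigr| \ge D - \deg_F(w) + 1 \ge 1,
\]
producing a vertex $v \in Y$ lying outside $\Gamma^+_H(X^*) \supseteq \Gamma^+_H(X_v)$. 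Such a $v$ cannot be bad, contradicting the assumption and completing the single-step extension.
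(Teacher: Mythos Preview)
Your argument is correct and is precisely the standard Friedman--Pippenger proof (submodularity of the slack function, size bound on tight sets via the ``$D{+}2$'' expansion, and the union-of-witnesses contradiction), adapted to the directed setting. The paper does not give its own proof but simply refers to this argument (Theorem~2.3 in \cite{draganic2022rolling}) and states that it adapts straightforwardly, so your write-up is exactly what the paper has in mind.
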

\noindent Further, the roll-back technique can also be adapted to directed graphs.
\begin{lem}\label{rollbacklem}
Suppose we are given digraphs $G$ and $F$ with $\Delta(F)\leq D$, and an $(s, D)$-good embedding $\phi \colon F \hookrightarrow G$, for some $s, D \in \mathbb{N}$. Then for every digraph $F'$ obtained from $F$ by successively removing a vertex of in-degree $1$ and out-degree $0$, the restriction $\phi'$ of $\phi$ to $F'$ is also $(s, D)$-good.
\end{lem}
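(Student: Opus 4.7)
The natural strategy is to reduce to removing a single vertex and iterate; so I would fix a vertex $u \in V(F)$ of in-degree $1$ and out-degree $0$ in $F$ and prove that the restriction $\phi'$ of $\phi$ to $F' := F - u$ is $(s,D)$-good. Writing $w := \phi(u)$ and letting $p$ denote the unique in-neighbor of $u$ in $F$ with image $p_w := \phi(p)$, the full lemma then follows by induction on the number of deletions.

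Fix an arbitrary $X \subseteq V(G)$ with $|X| \le s$. The plan is to compare the two sides of the $(s,D)$-goodness inequality~\eqref{eq:extendable} before and after removing $u$. Since $\phi'(F') = \phi(F) \setminus \{w\}$, one has
\[
    |\Gamma^{+}_G(X) \setminus \phi'(F')| \;=\; |\Gamma^{+}_G(X) \setminus \phi(F)| \;+\; \mathbf{1}[w \in \Gamma^{+}_G(X)],
\]
so the left-hand side gains $1$ if $w \in \Gamma^{+}_G(X)$ and is unchanged otherwise.

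On the right-hand side, only the summands for $v = w$ (if $w \in X$) and $v = p_w$ (if $p_w \in X$) can change, and the additive term $|\phi(F) \cap X|$ drops by $\mathbf{1}[w \in X]$. A short direct calculation shows that the change at $v = w$ cancels exactly against the change in $|\phi(F) \cap X|$, while the change at $v = p_w$ contributes at most $\mathbf{1}[p_w \in X]$, since removing $u$ decreases the relevant degree of $p$ by exactly $1$. Therefore the RHS grows by at most $\mathbf{1}[p_w \in X]$.

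The main point — the only place where the embedding structure is actually used — is that whenever $p_w \in X$, the edge $(p_w, w) = (\phi(p), \phi(u))$ is an edge of $G$ (because $(p,u) \in E(F)$ and $\phi$ preserves edges), which forces $w \in \Gamma^{+}_G(X)$. Hence the LHS gain $\mathbf{1}[w \in \Gamma^{+}_G(X)]$ dominates the RHS gain $\mathbf{1}[p_w \in X]$ for every $X$, and combining with the original inequality for $\phi$ yields the inequality for $\phi'$. I expect no substantive obstacle beyond this bookkeeping: the argument is a purely local cancellation at $u$ and $p$, with no quantitative slack to preserve, and the only care needed is to match each possible RHS gain against the automatic LHS compensation coming from the edge $(p_w, w)$.
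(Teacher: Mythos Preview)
Your argument is correct and is precisely the adaptation of the undirected roll-back lemma that the paper points to (it does not spell out a proof, only refers to Lemma~2.4 in \cite{draganic2022rolling}). The single-vertex reduction, the cancellation at $v=w$ against the drop in $|\phi(F)\cap X|$, and the key observation that $p_w\in X$ forces $w\in\Gamma^{+}_G(X)$ via the embedded edge $(p_w,w)$ are exactly the bookkeeping steps one obtains by transcribing the undirected proof to the directed setting.
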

\noindent Again, this can be proved by slightly adapting the undirected version - see e.g., Lemma 2.4 in \cite{draganic2022rolling}.
\subsection{Alternating paths in pseudorandom graphs}\label{sec:altpaths}

The purpose of this section is to consider the following setting. We are given a (pseudorandom) graph $G$ and a collection $\mathcal{M}$ of disjoint pairs $\{x,y\} \subseteq V(G)$, with the possibility that $x = y$ - we let $G \cup \mathcal{M}$ denote the result of adding all possible and non-existing edges $xy$ to $G$ when $x \neq y$. We say that a path $P$ in $G \cup \mathcal{M}$ is \emph{$\mathcal{M}$-alternating} if it is of the form $e_1f_1e_2f_2 \ldots e_lf_l$ where the $e_i$'s are edges of $G$ and the $f_i$'s represent either edges $xy$ such that $\{x,y\}$ is a pair in $\mathcal{M}$ or vertices $x$ such that $\{x,x\} \in \mathcal{M}$; informally, without the appearance of pairs $\{x,x\}$, it is just a path in which there is an edge of $\mathcal{M}$ among every two consecutive edges. The goal is now to extend some well-known results concerning paths in pseudorandom graphs to results about $\mathcal{M}$-alternating paths in such graphs. The first will be the following extension of Lemma \ref{lem:DFSpath}.
\begin{lem}\label{lem:DFSaltpath}
Let $G$ be an $n$-vertex graph such that for every two disjoint subsets $S,T \subseteq V(G)$ of size $k$ there is an edge between them. Then, $G \cup \mathcal{M}$ contains an $\mathcal{M}$-alternating path which uses all but $2k-1$ pairs in $\mathcal{M}$. 
\end{lem}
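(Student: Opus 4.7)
The plan is to adapt the DFS proof of \Cref{lem:DFSpath} to the level of pairs, constructing an $\mathcal{M}$-alternating path instead of a directed path. I maintain a partition $\mathcal{M} = \mathcal{U} \sqcup \mathcal{P} \sqcup \mathcal{D}$ of the pairs into unvisited, currently on the stack, and popped (done), respectively. The stack $\mathcal{P}$ represents the pairs $p_1,\ldots,p_l$ currently serving as $f_1,\ldots,f_l$ in an $\mathcal{M}$-alternating path $v_0 v_1 v_1' v_2 v_2' \cdots v_l v_l'$ of the form $e_1 f_1 \cdots e_l f_l$; here $v_0$ is a fixed starting vertex, and each $p_i = \{v_i, v_i'\}$ has entry $v_i$ (determined by the preceding $G$-edge $e_i = v_{i-1}' v_i$, with the convention $v_0' := v_0$) and exit $v_i'$ (its partner in $p_i$).

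At each step I either \emph{extend} or \emph{pop}. To extend from the current endpoint $v_l'$, I look for an unvisited pair $p \in \mathcal{U}$ with a $G$-edge $v_l' u$ for some $u \in V(p)$, push $p$ onto $\mathcal{P}$ with entry $u$, and continue from the partner of $u$ in $p$. If no such extension is available, I pop the top pair $p_l$ and record $v_{\mathrm{exit}}(p_l) := v_l'$. The key invariant is that for every $p \in \mathcal{D}$, the vertex $v_{\mathrm{exit}}(p)$ has no $G$-neighbor in $V(\mathcal{U}) := \bigcup_{q \in \mathcal{U}} V(q)$: this holds at the moment of popping (since extension failed there) and persists afterwards because $\mathcal{U}$ is monotonically non-increasing in time.

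Now suppose for contradiction that $|\mathcal{P}| \leq |\mathcal{M}| - 2k$ at every moment of the algorithm. Consider the first moment at which $|\mathcal{D}|$ reaches $k$; at this moment $|\mathcal{U}| = |\mathcal{M}| - |\mathcal{P}| - |\mathcal{D}| \geq k$. The set $D' := \{v_{\mathrm{exit}}(p) : p \in \mathcal{D}\}$ has exactly $k$ distinct vertices, since the pairs in $\mathcal{D}$ are pairwise disjoint, and $V(\mathcal{U})$ has at least $|\mathcal{U}| \geq k$ vertices and is disjoint from $D'$. Applying the hypothesis of the lemma to $D'$ and any $k$-subset of $V(\mathcal{U})$ then produces a $G$-edge between $D'$ and $V(\mathcal{U})$, contradicting the key invariant. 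Hence at some moment $|\mathcal{P}| \geq |\mathcal{M}| - 2k + 1$, and the corresponding $\mathcal{M}$-alternating path uses $|\mathcal{P}|$ pairs, leaving at most $2k-1$ pairs of $\mathcal{M}$ unused.

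The main obstacle is arranging a suitable initial $G$-edge $e_1 = v_0 v_1$ so that the path has the required form starting with a $G$-edge, and so that $v_0$ does not itself block a pair from being used as some $f_i$. If some $v_1 \in V(\mathcal{M})$ has a $G$-neighbor $v_0 \notin V(\mathcal{M})$, this edge can be used directly. Otherwise $V(\mathcal{M})$ is $G$-independent from its complement, and applying the hypothesis on $G$ forces either $|\mathcal{M}|$ itself or $|V(G) \setminus V(\mathcal{M})|$ to be smaller than $k$; this leads either to a trivial instance or to a reduced instance on $V(G) = V(\mathcal{M})$ in which the slack in the $2k-1$ bound absorbs the loss of the one pair containing $v_0$.
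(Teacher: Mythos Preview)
Your core DFS argument on pairs is correct and is the same idea as the paper's proof, just unpacked. The paper packages it more cleanly: rather than rerunning the DFS analysis, it builds the auxiliary digraph $H$ on the set $X$ of ``exit'' vertices (one per pair, with a \emph{fixed} orientation chosen in advance), checks that any two disjoint $k$-subsets $S,T\subseteq X$ admit a directed edge in $H[X]$ (this is just $e_G(S,T')>0$ for $T'=\{y_i:x_i\in T\}$, and $S,T'$ are disjoint $k$-sets in $G$), and then invokes Lemma~\ref{lem:DFSpath} as a black box to obtain a directed path on $|\mathcal M|-2k+2$ vertices of $H[X]$, which by Observation~\ref{obs:altpath} unfolds into the desired alternating path. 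Your dynamic choice of entry/exit is more flexible than the paper's fixed red/blue split, but the extra flexibility is not needed here.

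Your last paragraph, however, is both unnecessary and not quite right. Note that the paper's own Observation~\ref{obs:altpath} produces a path of the shape $f_1 e_2 f_2 \cdots e_l f_l$ (starting with a pair, not a $G$-edge), so the phrase ``of the form $e_1f_1\cdots e_lf_l$'' in the definition is being read loosely and there is no need to manufacture an initial $G$-edge $e_1=v_0v_1$. Insisting on a fixed external start $v_0$ also creates a restart problem you do not address: if the stack empties while $\mathcal U\neq\emptyset$, your DFS cannot proceed and $|\mathcal D|$ may never reach~$k$. Finally, the ``slack absorbs the loss'' claim is off by one --- sacrificing the pair $p_0\ni v_0$ and running your argument on $\mathcal M\setminus\{p_0\}$ only yields $|\mathcal P|\ge |\mathcal M|-2k$ at some moment, i.e.\ $2k$ pairs of $\mathcal M$ unused rather than $2k-1$. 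The clean fix is simply to drop $v_0$: start the DFS at an arbitrary pair, and whenever the stack empties restart at a pair in $\mathcal U$; the invariant on exit vertices is preserved and your contradiction step goes through verbatim, which is exactly what the paper's digraph reduction does implicitly.
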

\noindent The second result about alternating paths in pseudorandom graphs will be an extension of the \emph{vertex-disjoint paths problem}. Roughly speaking, this topic deals with the following situation: we are given a graph $G$ and pairs of vertices $\{a_i,b_i\}$; then, we have to find vertex disjoint paths $P_i$ for each $i$ connecting $a_i$ to $b_i$. This problem has been extensively studied and for a detailed discussion on it the reader is referred, e.g., to \cite{draganic2022rolling}. In particular, Dragani\'c, Krivelevich and Nenadov \cite{draganic2022rolling} recently used the Friedman-Pippenger technique to show that this problem can be solved in $(n,d,\lambda)$-graphs (and with a polynomial time online algorithm). In the next result, we extend this result to the `alternating' version that we require.
\begin{thm}\label{thm:vertexdisjointpathsvariation}
Let $G$ be an $(n,d,\lambda)$-graph with $d > 500 \lambda$ and $V(\mathcal{M})$ be $500 \lambda$-clean in $G$ with $|\mathcal{M}| \geq 2000\lambda n/d$. Let $\mathcal{P}$  be a collection of at most $\frac{|\mathcal{M}|}{100 \log_2 n}$ disjoint pairs $\{a_i,b_i\}$ of vertices in $V(G) \setminus V(\mathcal{M})$ such that for all $v \in V(\mathcal{P})$ we have $|N(v) \cap V(\mathcal{M})| \geq 500 \lambda$. Then there exist vertex-disjoint $\mathcal{M}$-alternating paths in $G \cup \mathcal{M}$ between every pair of vertices $\{a_i,b_i\}$.
\end{thm}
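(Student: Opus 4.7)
The plan is to encode $\mathcal{M}$-alternating paths as directed paths in an auxiliary digraph~$H$ built from $G$ and~$\mathcal{M}$, and then handle the pairs in $\mathcal{P}$ one at a time using the directed Friedman--Pippenger embedding of Theorem~\ref{FPthm} together with the roll-back mechanism of Lemma~\ref{rollbacklem}, in the spirit of~\cite{draganic2022rolling}. Concretely, I would take as vertex set of $H$ the set $V(\mathcal{P})$ together with two ``oriented'' copies $(x,y),(y,x)$ of every non-degenerate pair $\{x,y\}\in\mathcal{M}$ (and a single copy $(x,x)$ of each degenerate pair), with arcs: $a_i\to(x,y)$ whenever $a_ix\in E(G)$, $(x,y)\to(x',y')$ whenever $yx'\in E(G)$ and the underlying pairs differ, $(x,y)\to b_j$ whenever $yb_j\in E(G)$, and $a_i\to b_j$ whenever $a_ib_j\in E(G)$. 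A directed path $a_i\to(x_1,y_1)\to\cdots\to(x_k,y_k)\to b_i$ in $H$ then reads off exactly as the $\mathcal{M}$-alternating path $a_ix_1y_1x_2y_2\cdots x_ky_kb_i$ in $G\cup\mathcal{M}$, and pair-level vertex disjointness (identifying $(x,y)$ with $(y,x)$) across the paths yields the required vertex disjointness in~$G\cup\mathcal{M}$.

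Next I would verify that $H$ inherits strong out- and in-expansion from the $500\lambda$-cleanness of $V(\mathcal{M})$ and the neighbourhood hypothesis on $V(\mathcal{P})$: every vertex of $V(H)$ has out-degree at least $500\lambda-1$, so applying Lemma~\ref{lem:expansion2} with $\delta=500\lambda$ and $B=V(\mathcal{M})$ to the ``exit coordinate'' projection of any $A\subseteq V(H)$ yields $|N_H^+(A)|\ge K|A|$ for every $|A|\le 5\lambda n/d$ and some large constant~$K$, and symmetrically for $N_H^-$. In particular, for a branching parameter $D=10$ the digraph $H$ is both $(5\lambda n/d,D+2)$-out- and $(5\lambda n/d,D+2)$-in-expanding, so Theorem~\ref{FPthm} applies in $H$ and in~$H^R$.

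I would then process the pairs $(a_i,b_i)$ sequentially, at each step working in the residual digraph $H_i$ obtained by deleting vertices used by previously committed paths. For the current pair I grow two rooted trees in $H_i$: an out-tree $T_a$ from $a_i$ via Theorem~\ref{FPthm} in $H_i$, and an out-tree $T_b$ from $b_i$ via Theorem~\ref{FPthm} in $H_i^R$ (producing a tree of in-arcs towards $b_i$ in $H_i$), each of size $s=O(\log n)$ while maintaining $(2s-2,D)$-goodness. When $\lambda n/d\le s$ the sets already satisfy $|V(T_a)|\,|V(T_b)|>(\lambda n/d)^2$, so an arc of $H_i$ between them exists by part~(4) of Lemma~\ref{lem:expandermixing}; otherwise I iterate the expansion estimate of Lemma~\ref{lem:expansion2} along BFS layers in $H_i$ starting from the leaves of $T_a$, reaching essentially all of $V(H_i)$ within $O(\log n)$ additional layers and in particular hitting some $w\in V(T_b)$, and then commit the $O(\log n)$-vertex BFS path to $w$ as further FP-extensions of $T_a$. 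Concatenating the root-to-leaf walk in $T_a$, the connecting segment, and the $w$-to-$b_i$ walk in $T_b$ produces the desired directed $a_ib_i$-path in $H_i$, equivalently the sought $\mathcal{M}$-alternating path in $G\cup\mathcal{M}$. Finally I apply the roll-back Lemma~\ref{rollbacklem} to discard every tree vertex off the committed path while preserving the $(2s-2,D)$-goodness of the accumulated embedding, freeing those vertices for subsequent iterations. Since each pair permanently consumes only $O(\log n)$ vertices of $V(\mathcal{M})$ and $|\mathcal{P}|\le|\mathcal{M}|/(100\log_2 n)$, the total permanent consumption is at most $|\mathcal{M}|/50$.

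The main obstacle I anticipate is reconciling the Friedman--Pippenger goodness invariant with the BFS-based connection argument: the intermediate BFS layers used to reach $V(T_b)$ from $T_a$ are not initially FP-committed, so one must commit an $O(\log n)$-length path through them without ever violating the $(2s-2,D+2)$-expansion requirement in the shrinking residual digraph. Carrying this out rigorously requires showing that the expansion parameters of the residual digraph degrade by at most a constant factor across all iterations; this follows because the cumulative deletion stays below $|V(\mathcal{M})|/50$, together with a double-counting argument (in the spirit of Lemma~\ref{lem:cleaning}) that only a small additional ``bad'' subset of $V(\mathcal{M})$ can have its in- or out-neighbourhood too heavily damaged by the deletion, and this subset can simply be added to the deleted set without breaking the budget.
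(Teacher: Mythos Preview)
Your overall framework---encode alternating paths in an auxiliary digraph and run Friedman--Pippenger with roll-backs---is exactly the paper's, but two of your design choices create real gaps.

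First, by putting both oriented copies $(x,y)$ and $(y,x)$ of every pair into $V(H)$, you break the translation back to $G\cup\mathcal{M}$: the FP machinery and roll-back only guarantee that the committed structures are vertex-disjoint \emph{in $H$}, so nothing prevents $T_a$ (or one committed path) from using $(x,y)$ while $T_b$ (or a later path) uses $(y,x)$, yet both then occupy $x$ and $y$ in $G$. You note that ``pair-level vertex disjointness'' is what is required, but do not say how to enforce it, and deleting the sibling copy whenever one is committed is not a leaf-removal, so Lemma~\ref{rollbacklem} does not apply. The paper resolves this by keeping only \emph{one} copy per pair, with the orientation chosen uniformly at random; then vertex-disjointness in $H$ is automatically pair-disjointness, and the local lemma (plus Chernoff) shows that with positive probability every vertex still has out-degree $\Omega(\lambda)$, so the one-sided digraph expands.

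Second, growing $T_a$ via FP in $H$ but $T_b$ via FP in $H^R$ forces you to juggle two different goodness invariants (one for $\Gamma^+$, one for $\Gamma^-$) and to repeatedly re-establish expansion in residual digraphs; your final paragraph is really wrestling with this, and the $O(\log n)$-tree-plus-BFS detour is a symptom rather than a fix. The paper's key trick is to grow \emph{both} trees as out-trees in the same $H$, inside a single good embedding, and then to connect them not by an arc of $H$ but by an edge of $G[X]$ between the two tree vertex-sets---Observation~\ref{obs:FPaltpath} checks that this still decodes to an $\mathcal{M}$-alternating $a_ib_i$-path. Since the claim in the proof gives $(|V(H)|/10,5)$-out-expansion, Theorem~\ref{FPthm} lets each tree reach size $|V(H)|/50\ge\lambda n/d$ directly, whence part~(4) of Lemma~\ref{lem:expandermixing} supplies the connecting $G$-edge; no BFS, no residual-expansion bookkeeping, and a single roll-back per pair suffices.
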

\noindent Now, before going into the proofs of both of these results, we define a random auxiliary digraph $H$ for this setting. Indeed, suppose we have a graph $G$ and a collection $\mathcal{M}$ of disjoint pairs $\{x,y
\} \subseteq V(G)$, with the possibility that $x = y$. We define the \emph{random auxiliary digraph $H = H(G,\mathcal{M})$} as follows. For each pair $\{x_i,y_i\} \in \mathcal{M}$, colour uniformly at random one of the vertices \emph{red} and the other \emph{blue} - in the case that $x_i = y_i$ colour the vertex \emph{red} and \emph{blue}. After doing so, re-label these vertices so that $x_i$ always refers to a red vertex and let $X$ denote the set of these; let $Y$ denote the set of blue vertices.  
Now, the vertex set of $H$ is $V(H) := X \cup \big(V(G) \setminus V(\mathcal{M})\big)$ and the edges are defined as follows.
\begin{enumerate}
    \item For any two vertices $x_i, x_j \in X$, put an edge directed from $x_i$ to $x_j$ if $x_iy_j$ is an edge in $G$.
    \item For $v \notin V(\mathcal{M})$ and a vertex $x_i \in X$, put an edge from $v$ to $x_i$ if $vy_i$ is an edge of $G$.
\end{enumerate}
\noindent Crucially, constructing $H$ in such a manner gives the following deterministic property.
\begin{obs}\label{obs:altpath}
If $P = x_{i_1} \rightarrow x_{i_2} \rightarrow \ldots \rightarrow x_{i_l}$ is a directed path in $H$, then $y_{i_1}x_{i_1}y_{i_2}x_{i_2}\ldots y_{i_l}x_{i_l}$ is an $\mathcal{M}$-alternating path in $G \cup \mathcal{M}$. 
\end{obs}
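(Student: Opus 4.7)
The plan is to verify the observation by directly unpacking the definitions of the auxiliary digraph $H$ and of an $\mathcal{M}$-alternating path; there is no real inequality to prove, only a careful bookkeeping. Two things must be checked: (a) every consecutive segment in the proposed sequence $y_{i_1}x_{i_1}y_{i_2}x_{i_2}\ldots y_{i_l}x_{i_l}$ is either an edge of $G \cup \mathcal{M}$ or (in the degenerate case) a single vertex, and the $G$-edges and $\mathcal{M}$-pairs alternate in the pattern required by the definition of an $\mathcal{M}$-alternating path; (b) the listed vertices are pairwise distinct (apart from the self-pair collapses), so that the sequence really describes a path rather than a walk.

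For (a), I would split the consecutive segments into two types. The \emph{pair segments} $y_{i_k}x_{i_k}$ correspond to pairs of $\mathcal{M}$ by the relabelling convention that produced $X$; thus they play the role of $f_k$ in the definition of an $\mathcal{M}$-alternating path, either as an edge of $G \cup \mathcal{M}$ when $x_{i_k} \neq y_{i_k}$, or as the single vertex $x_{i_k}=y_{i_k}$ in the self-pair case. The \emph{crossing segments} $x_{i_k}y_{i_{k+1}}$ are edges of $G$: this is precisely what the first clause in the construction of $H$ encodes, since the arc $x_{i_k} \to x_{i_{k+1}}$ lying on $P$ exists in $H$ if and only if $x_{i_k}y_{i_{k+1}} \in E(G)$. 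Hence these segments play the role of the $e_k$'s, and the segments $e_1 f_1 e_2 f_2 \ldots$ alternate as required (with the mild cosmetic issue that the sequence as written begins with a pair segment rather than a $G$-edge, which is absorbed into the informal description that $\mathcal{M}$-edges appear between any two consecutive $G$-edges).

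For (b), distinctness of the $x_{i_k}$'s follows from the fact that $P$ is a directed path in $H$. Because the pairs in $\mathcal{M}$ are pairwise disjoint, and because the relabelling step selects exactly one red representative $x_i$ and one blue representative $y_i$ from each pair, the $y_{i_k}$'s are pairwise distinct and, whenever $x_{i_k} \neq y_{i_k}$, disjoint from the $x$'s of other pairs. When $x_{i_k} = y_{i_k}$ the vertex simply appears once in the sequence, which matches the convention that $f_k$ is a single vertex in that case.

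I do not anticipate any obstacle; the statement is a sanity check whose sole purpose is to let later arguments (such as the DFS path in Lemma \ref{lem:DFSpath} transferred to Lemma \ref{lem:DFSaltpath}, and the vertex-disjoint-path theorem transferred to Theorem \ref{thm:vertexdisjointpathsvariation}) be carried out in the auxiliary digraph $H$ and then pulled back to $G \cup \mathcal{M}$. The only point requiring any thought is the self-pair case $x_{i_k}=y_{i_k}$; this is precisely why the definition of an $\mathcal{M}$-alternating path explicitly allows each $f_k$ to be either an edge or a vertex.
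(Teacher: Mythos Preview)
Your verification is correct and in fact more detailed than the paper's own treatment: the paper states the observation without proof, treating it as immediate from the construction of $H$. Your unpacking of the pair segments $y_{i_k}x_{i_k}$ as the $f_k$'s and the crossing segments $x_{i_k}y_{i_{k+1}}$ as the $e_k$'s (guaranteed by clause 1 of the definition of $H$), together with the distinctness check via disjointness of the pairs in $\mathcal{M}$, is exactly the intended reading.
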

\noindent We can now prove the results stated above. The first one, Lemma \ref{lem:DFSaltpath} will not require the randomness involved in $H$ and follows by showing that the condition of Lemma \ref{lem:DFSpath} is satisfied for $H[X]$ and then applying the observation above.
\begin{proof}[ of Lemma \ref{lem:DFSaltpath}]
Let $S,T$ be two disjoint subsets of $X$ of size $k$. Consider the set $T' := \{y_i: x_i \in T\}$ which has size $k$, and note that by construction, $\overrightarrow{e}_H(S,T) = e_G(S,T')$. By the assumption on $G$, we then have that $\overrightarrow{e}_H(S,T) \neq \emptyset$ and therefore, Lemma \ref{lem:DFSpath} implies that $H[X]$ contains a directed path of length $|X|-2k+1$. By Observation \ref{obs:altpath} this gives an $M$-alternating path as desired.
\end{proof}

\noindent The second result is more technical and will indeed require the randomness of $H$. Precisely, we will first show that with positive probability $H$ satisfies strong expansion properties. Because of this, we are then able to apply the directed Friedman-Pippenger machinery introduced in Section~\ref{sec:dirgraphs}. Before starting, let us state another deterministic property that $H$ satisfies.
\begin{obs}\label{obs:FPaltpath}
Let $P = v \rightarrow x_{i_1} \rightarrow \ldots \rightarrow x_{i_l}$ and $Q = u \rightarrow x_{j_1} \rightarrow \ldots \rightarrow x_{j_t} $ be two disjoint directed paths in $H$ with $u,v \notin V(\mathcal{M})$. Then, if $x_{i_l}x_{j_t}$ is an edge of $G$, we have that $vy_{i_1}x_{i_1}\ldots y_{i_l}x_{i_l}x_{j_t}y_{j_t}\ldots x_{j_1}y_{j_1}u$ is an $\mathcal{M}$-alternating path in $G \cup \mathcal{M}$.
\end{obs}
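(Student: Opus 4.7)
The observation is purely a verification that the concatenation described has the correct edge-alternation structure and uses each vertex at most once. The plan is to walk through the proposed sequence, check for each consecutive pair of vertices whether the edge lies in $G$ or is a pair of $\mathcal{M}$, and then verify that no vertex is repeated.

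First I would list the edges of the sequence $vy_{i_1}x_{i_1}y_{i_2}x_{i_2}\ldots y_{i_l}x_{i_l}x_{j_t}y_{j_t}x_{j_{t-1}}\ldots x_{j_1}y_{j_1}u$ in order and classify each one. The edge $vy_{i_1}$ lies in $G$ because the arc $v\to x_{i_1}$ of $H$ arises from rule (2) in the construction of $H$, which places this arc precisely when $vy_{i_1}\in E(G)$. For each $1\le r\le l$, the edge $y_{i_r}x_{i_r}$ is a pair of $\mathcal{M}$ by the very definition of $x_{i_r},y_{i_r}$ (using the red/blue relabelling). For $1\le r<l$, the edge $x_{i_r}y_{i_{r+1}}$ lies in $G$ because $x_{i_r}\to x_{i_{r+1}}$ in $H$ arises from rule (1), which requires $x_{i_r}y_{i_{r+1}}\in E(G)$. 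The middle edge $x_{i_l}x_{j_t}$ lies in $G$ by the hypothesis of the observation. For the reversed $Q$-side, the edge $x_{j_r}y_{j_r}$ is a pair of $\mathcal{M}$; the edge $y_{j_{r+1}}x_{j_r}$ lies in $G$ because the arc $x_{j_r}\to x_{j_{r+1}}$ in $H$ yields $x_{j_r}y_{j_{r+1}}\in E(G)$ by rule (1); and finally $y_{j_1}u$ lies in $G$ because the arc $u\to x_{j_1}$ in $H$ is produced by rule (2), giving $uy_{j_1}\in E(G)$.

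With this classification in hand, the second step is to observe that the $G$-edges and $\mathcal{M}$-pairs strictly alternate along the sequence, starting and ending with a $G$-edge: the pattern is $G,\mathcal{M},G,\mathcal{M},\dots,G,\mathcal{M},G,\mathcal{M},G,\mathcal{M},\dots,\mathcal{M},G$, which is exactly the format $e_1f_1e_2f_2\ldots e_mf_me_{m+1}$ allowed by the definition of an $\mathcal{M}$-alternating path (the definition permits starting or ending on either type, since it only demands an $\mathcal{M}$-edge between any two consecutive $G$-edges).

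Finally I would confirm that no vertex appears twice. The $x$-vertices along $P$ are distinct, as are those along $Q$, and the vertex-disjointness of $P$ and $Q$ in $H$ together with $u,v\notin V(\mathcal{M})$ forces all the $x_{i_r}, x_{j_s}, u, v$ to be pairwise distinct; since the pairs of $\mathcal{M}$ are themselves disjoint and each $y$ is matched to a unique $x$, the corresponding $y_{i_r}$ and $y_{j_s}$ are also all distinct and disjoint from $u, v$. Hence the sequence is a genuine path in $G\cup\mathcal{M}$ whose edges alternate in the required way. I do not anticipate any real obstacle; the statement is entirely a routine check against the construction of $H$, and the only thing requiring a bit of care is remembering that the definition of $\mathcal{M}$-alternating path allows the sequence to begin with a $G$-edge (otherwise one might worry about the endpoints $v$ and $u$).
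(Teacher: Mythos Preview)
Your verification is correct and complete; the paper itself offers no proof of this observation, treating it as immediate from the construction of $H$, so your explicit edge-by-edge check is exactly the intended (and only) argument. One small remark: the formal definition of an $\mathcal{M}$-alternating path in the paper is literally the pattern $e_1f_1\ldots e_lf_l$, ending on an $\mathcal{M}$-pair, whereas the path here ends with the $G$-edge $y_{j_1}u$; you are right that the informal description (an $\mathcal{M}$-pair between any two consecutive $G$-edges) is what is actually being used, and this is the intended reading in context.
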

\begin{proof}[ of Theorem \ref{thm:vertexdisjointpathsvariation}]
Let us fix $\delta := 500 \lambda$. Let us also re-define $H := H[X \cup V(\mathcal{P})]$ - note then that $|V(H)| = |\mathcal{M}| + |V(\mathcal{P})| \leq 2|\mathcal{M}|$. We will first show that $H$ satisfies good expansion properties.
\begin{claim*}
With positive probability, $H$ is $(|V(H)|/10, 5)$-out-expanding.
\end{claim*}
\begin{proof}
We first show that with positive probability, $\delta^{+}(H) \geq \delta/5$ in $H$. Indeed, by assumption initially each vertex $v \in V(\mathcal{P}) \cup V(\mathcal{M})$ of $G$ has at least $\delta$ neighbours in $V(\mathcal{M}) = X \cup Y$. It is easy to check that the random procedure defining $H$ implies that the random variable $|N^{+}_H(v)|$ stochastically dominates the binomial random variable $\text{Bin}(\delta,1/2)$. Indeed, note that if $v$ has both elements of some pair $\{x,y\}$ in $\mathcal{M}$ as neighbours in $G$, then one of them will be an in/out-neighbour in $H$, and if only one of these is a neighbour, then with probability $1/2$ it will be an in/out-neighbour in $H$. Therefore, by Lemma \ref{lem:chernoff} we have that $\mathbb{P}\left(|N^{+}_H(v)| < \delta/5 \mid v \in V(H)\right) = e^{-\Omega(\delta)}$. Moreover, we can further apply Lemma \ref{lem:locallemma}. Indeed, note that clearly since $G$ is $d$-regular, the event $E_v := \{|N^{+}_H(v)| < \delta/5 \mid v \in V(H)\}$ will depend on at most $O(d^2)$ other events $E_{v'}$. Therefore, since $e^{-\Omega(\delta)} = o(d^{-2})$ (by $\delta \geq \sqrt{d/2}$), Lemma \ref{lem:locallemma} implies that with positive probability, no event $E_v$ holds and thus, $\delta^{+}(H) \geq \delta/5$. 

We can now verify the expanding condition. Let $S \subseteq V(H)$ be a set of size at most $|V(H)|/10$ and consider $ X':= \Gamma^{+}_H(S) \subseteq X$. Recall that for each $x_i \in X'$, there is a corresponding $y_i \in Y$ - let $Y'$ denote the set of these. By the paragraph above, we have that for all $v \in S$, $|N^{+}_H(v) \cap X'| \geq \delta/5$. In turn, by the construction of $H$, it must be that for every edge $v \rightarrow x_i$ in $H$ with $x_i \in X'$, there is a corresponding edge $vy_i$ of $G$. Therefore, also $|N_G(v) \cap Y'| \geq \delta/5$. By Lemma \ref{lem:expansion2}, we must then have that if $|S| \leq \delta n/500d$, then $|X'| = |Y'| \geq 10|S|$, as desired. When $\lambda n/d = \delta n/500d < |S| \leq |V(H)|/10$, part \textit{(4)} of Lemma~\ref{lem:expandermixing} applied to $G$ implies that there are at most $\lambda n/d$ vertices of $Y$ which are not adjacent to some vertex of $S$ - that is, $|X'| = |Y'| \geq |Y|-\lambda n/d \geq |V(H)| - \lambda n/d - |V(\mathcal{P})| \geq 0.9|V(H)| \geq 6|S|$, as desired.
\end{proof}
Now that we have proven that $H$ is out-expanding, we can apply the Friedman-Pippenger machinery to find the desired $\mathcal{M}$-alternating paths in $G \cup V(\mathcal{M})$ connecting the pairs in $\mathcal{P}$. Recalling Definition \ref{deF:goodness}, we will from now on refer to a $(|V(H)|/10,3)$-good embedding as only a \emph{good embedding}. Similarly, we will say that a directed graph is \emph{expanding} if it is $(|V(H)|/10, 5)$-out-expanding. 

First note that the set $V(\mathcal{P})$ is a good embedding of an independent set in $H$, since $H$ is expanding and every vertex $v \in V(\mathcal{P})$ has only out-neighbours in $X$, which is disjoint to $V(\mathcal{P})$. We then start by linking the first pair $\{a_1, b_1\} \in \mathcal{P}$. For this, we use Theorem \ref{FPthm} in order to find two disjoint out-oriented binary trees $T_1,T_2$ \emph{in $H$} of size at least $|V(H)|/50 \geq |\mathcal{M}|/50 \geq \lambda n/d$
rooted at $a_1$ and $b_1$ respectively. Furthermore, Theorem \ref{FPthm} guarantees that these trees together with the other vertices in $V(\mathcal{P})$ form a good embedding of an independent set of size $2|\mathcal{P}|-2$ together with two disjoint out-oriented binary trees.  Now, since $|T_1|,|T_2| \geq \lambda n/d$, part \textit{(4)} of Lemma~\ref{lem:expandermixing} implies that there is an edge $e = uw$ in $G[X]$ connecting a vertex $u \in V(T_1)$ and a vertex $w \in V(T_2)$. By Observation \ref{obs:FPaltpath}, this implies the existence of an $\mathcal{M}$-alternating path in $G \cup \mathcal{M}$ with endpoints $a_1, b_1$, which we denote by $P_1$. Note that $P_1$ uses at most $2 \log_2 n$ vertices of $H$.

We can now \emph{roll back} in the following sense. Trivially we can remove vertices from each tree $T_1,T_2$ in a leaf-by-leaf manner so that at the end we are only left with the vertices of $H$ which correspond to the path $P_1$ (which consist of a directed $a_1u$-path and a directed $b_1w$-path). By Lemma \ref{rollbacklem}, we are guaranteed that the current forest remains a good embedding, now of an independent set of size $2|P|-2$ and two disjoint directed paths. We then continue with the same procedure to link the second pair $\{a_2,b_2\} \in \mathcal{P}$ with an $\mathcal{M}$-alternating path $P_2$ disjoint to $P_1$. Again, we first find two complete out-oriented binary trees in $H$ rooted at $a_2$ and $b_2$ (disjoint from $P_1$), then we find an edge in $G[X]$ which connects them and creates the $\mathcal{M}$-alternating $a_2b_2$-path $P_2$, which uses at most $2 \log_2 n$ vertices of $H$. We then remove all the vertices from the trees which do not correspond to $P_2$ in a leaf-by-leaf manner. As such, Lemma \ref{rollbacklem} implies that the resulting forest is a good embedding. We continue this operation for every pair in $\mathcal{P}$; note that we can indeed do this since every $\mathcal{M}$-alternating path we create uses at most $2 \log_2 n$ vertices of $H$ and so, at any given point of the process the current forest to which we are applying Theorem \ref{FPthm} is of size at most $|\mathcal{P}| \cdot 2 \log_2 n + 2 \cdot |V(H)|/50 \leq |\mathcal{M}|/50 + |V(H)|/25 < |V(H)|/10$, where the first term is an upper bound on the total number of vertices of $H$ used in the paths $P_1, P_2, \ldots$ and the second upper bounds the number of vertices in the current binary out-oriented trees which are being used - and thus, this theorem can indeed be applied. This completes the proof since the procedure constructs the desired vertex-disjoint $\mathcal{M}$-alternating paths in $G \cup \mathcal{M}$.
\end{proof}

\section{Robust P\'osa rotation}\label{sec:posarot}
In this section we will give various tools which allow us to close paths into cycles. More specifically, we will say that a path $P$ can be \emph{closed into a cycle} if there exists a cycle on the same vertex set $V(P)$. Our approach to do this heavily relies on the so called rotation-extension technique, invented by P\'osa in \cite{posa:76}, which has subsequently been applied to several problems related to Hamiltonicity (see e.g., \cite{bollobas1987algorithm,frieze2002hamilton,KS:03}). We will in turn develop new tools in order to use this technique in a robust manner. First, we need some definitions and initial lemmas.

\subsection{Standard tools}\label{sec:standardposarot}
We start with some definitions. Let $P = (v_1,v_2, \ldots, v_l)$ be a path in a graph $G$. For a vertex $u = v_i \in V(P)$, we define, as usual, $N_P(u) := \{v_{i-1}, v_{i+1}\}$ to be the set of neighbours of $u$ in $P$. When necessary and when such an ordering of the path is defined, we will use $u^{+}_P := v_{i+1}$ to denote its right neighbour in $P$ and $u^{-}_P := v_{i-1}$ to denote its left neighbour. Given a subset $X \subseteq V(P)$, we define $\text{int}_P(X)$ (or just $\text{int}(X)$ when $P$ is clear from the context) to be the set of vertices $v_i \in X$ with $i \notin \{1,l\}$ such that $v_{i-1},v_{i+1} \in X$.
\begin{defn}\label{def:differencebetweenpaths}
For two paths $P,P'$ on the same vertex set $V$, we define the \emph{difference between $P$ and $P'$}, denoted as $\text{dif}(P,P')$, to be the set of vertices $u \in V$ such that $N_P(u) \neq N_{P'}(u)$.  
\end{defn}
\noindent It is worth making some remarks about the above definition. First, note that clearly the set $V \setminus \text{dif}(P,P')$ spans the same collection of vertex-disjoint paths in both $P$ and $P'$ - moreover, there are at most $|\text{dif}(P,P')|+1$ of these paths. Note also that for any subset $X \subseteq V \setminus \text{dif}(P,P')$, we have $\text{int}_P(X) = \text{int}_{P'}(X)$. 

We can now start introducing the influential rotation-extension technique of P\'osa.
\begin{defn}\label{def:rotation}
If $1 < i < l$ and $v_i v_l$ is an edge of $G$, the path $$P' = (v_1,v_2, \ldots, v_i, v_l, v_{l-1}, \ldots, v_{i+1})$$
is said to be a \emph{rotation of} $P$ with \emph{fixed endpoint} $v_1$, \emph{pivot} $v_i$ and \emph{broken edge} $v_i v_{i+1}$. More generally, given a subset $X \subseteq V(P)$, we will say that a path $P'$ is an \emph{$(X,t)$-rotation of $P$} with \emph{fixed endpoint $v_1$} if it is the result of at most $t$ consecutive rotations starting with $P$, each with fixed endpoint $v_1$, broken edges in $E(P)$ and pivots belonging to the set $\text{int}_P(X)$. Clearly, such a path $P'$ has as endpoints the vertex $v_1$ and a vertex in $X$. We say that it is just a \emph{$t$-rotation} (or respectively, an \emph{$X$-rotation}) if there is no restriction on where the pivots and broken edges belong to (or respectively, on the number of rotations). 
\end{defn}
\noindent Again, we make two remarks about the above definition. 
\begin{obs}\label{obs:rotation}
Let $P'$ be a $1$-rotation of $P$ as above. Then, only the vertices $v_i,v_{i+1},v_l$ change neighbourhoods, that is, $N_{P}(u) \neq N_{P'}(u) \Rightarrow u \in \{v_i,v_{i+1},v_l\}$. Moreover, 
\begin{itemize}
    \item For all $X \subseteq V(P)$ such that $v_l \notin X$, then either the new endpoint $v_{i+1}$ of $P'$ belongs to $X$ or $ \mathrm{int}_{P'}(X) = \mathrm{int}_{P}(X)$.
    \item If $P'$ is a $t$-rotation of $P$, then $|\mathrm{dif}(P,P')| \leq 3t$.
\end{itemize}
\end{obs}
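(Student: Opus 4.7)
The plan is to verify all three claims by direct inspection, since a single rotation only perturbs the path in a very localized way. Recall that $P' = (v_1, \ldots, v_i, v_l, v_{l-1}, \ldots, v_{i+1})$, so $P'$ agrees with $P$ on the prefix up to $v_i$ and simply reverses the suffix $(v_{i+1}, \ldots, v_l)$, attaching it to $v_i$ via the new edge $v_i v_l$.

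For the first assertion, I would check the $P$- and $P'$-neighborhoods vertex by vertex. Every $v_j$ with $j < i$ has identical neighbors in both paths because the prefix is untouched. Every $v_j$ with $i+1 < j < l$ lies in the interior of the reversed suffix, and the unordered pair $\{v_{j-1}, v_{j+1}\}$ is invariant under the reversal. The vertex $v_{l-1}$ likewise keeps its pair $\{v_{l-2}, v_l\}$. Only three vertices see a genuine change: $v_i$ swaps $v_{i+1}$ for $v_l$, $v_l$ gains $v_i$ as a neighbor in addition to $v_{l-1}$, and $v_{i+1}$ loses $v_i$ while becoming the new endpoint.

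For the first bulleted statement I would assume $v_{i+1} \notin X$ and $v_l \notin X$ (otherwise the disjunction holds trivially) and show $\mathrm{int}_P(X) = \mathrm{int}_{P'}(X)$. Any vertex outside $\{v_i, v_{i+1}, v_l\}$ has identical $P$- and $P'$-neighborhoods and remains internal to both paths, since the only endpoint that shifts is $v_l \to v_{i+1}$; hence such a vertex contributes identically to the two sets. The vertices $v_l$ and $v_{i+1}$ lie outside $X$ and contribute to neither. Finally, if $v_i \in X$, then membership in $\mathrm{int}_P(X)$ would require $v_{i+1} \in X$ while membership in $\mathrm{int}_{P'}(X)$ would require $v_l \in X$; both fail, so $v_i$ is absent from both sets.

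For the second bulleted statement, I would write the $t$ consecutive rotations as a sequence $P = P_0, P_1, \ldots, P_t = P'$. The first assertion gives $|\text{dif}(P_{s-1}, P_s)| \leq 3$ for every $s$. Any vertex $u$ with $N_P(u) \neq N_{P'}(u)$ must have changed neighborhood at some step, so $\text{dif}(P, P') \subseteq \bigcup_{s=1}^t \text{dif}(P_{s-1}, P_s)$ and consequently $|\text{dif}(P, P')| \leq 3t$. The only real obstacle here is careful bookkeeping of indices and endpoint shifts; no deeper idea is required.
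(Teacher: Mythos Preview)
Your proof is correct and complete. The paper does not actually supply a proof of this observation --- it is stated as a remark immediately following the definition of rotation and treated as self-evident --- so your direct case-by-case verification is precisely the intended justification.
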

\noindent Now, we give two lemmas. The first is an analogue of Lemma \ref{lem:cleaning} about finding clean subsets in the context of paths and P\'osa rotation. Let us first define such a subset. 
\begin{defn}
Let $G$ be an $(n,d,\lambda)$-graph and $P$ a path in $G$. A subset $S \subseteq V(P)$ is said to be \emph{$(P,\delta)$-clean} if for all $v \in S$ we have that $|N(v) \cap \text{int}_P(S)| \geq \delta$. We say only that $S$ is \emph{$P$-clean} if it is \emph{$(P,d|S|/4n)$-clean}.
\end{defn}
\noindent Given this, we now have the following analogous statement to Lemma \ref{lem:cleaning}.
\begin{lem}\label{lem:cleaningrotation}
Let $G$ be an $(n,d,\lambda)$-graph, $P$ a path and $A$ a subpath of $P$. If $|A| \geq 10\lambda n/d$, there is a $P$-clean subset $A' \subseteq A$ with $|A'| \geq 0.9|A|$.
\end{lem}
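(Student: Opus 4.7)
The plan is to adapt the greedy deletion argument from the second half of Lemma~\ref{lem:cleaning} to the path-rotation setting. Starting from $A_0 := A$, at each step $j \geq 1$ I would look for a vertex $u_j \in A_{j-1}$ with $|N(u_j) \cap \mathrm{int}_P(A_{j-1})| < d|A|/4n$; if such a vertex exists, delete it to form $A_j := A_{j-1} \setminus \{u_j\}$, and otherwise stop and output $A' := A_{j-1}$. Since $|A| \geq 10\lambda n/d$, it suffices to show the procedure halts after at most $\lambda n/d$ deletions: this gives $|A'| \geq |A| - \lambda n/d \geq 0.9|A|$, and the failure of the stopping condition together with $d|A|/4n \geq d|A'|/4n$ certifies that $A'$ is $P$-clean.

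The main technical twist, absent from Lemma~\ref{lem:cleaning}, is that the clean condition is measured against $\mathrm{int}_P(A_{j-1})$, which shrinks as vertices are removed. The key bookkeeping observation is that deleting a single vertex $u_j$ can only cause its (at most two) $P$-neighbours to drop out of $\mathrm{int}_P$, so by induction $|A_j \setminus \mathrm{int}_P(A_j)| \leq 2 + 2j$. In particular, after any $i \leq \lambda n/d$ deletions the interior $T := \mathrm{int}_P(A_i)$ satisfies $|T| \geq |A| - O(\lambda n/d) \geq 0.6|A|$, which is comfortably larger than $5\lambda n/d$, so in particular $d|A|/4n \leq d|T|/2n$.

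Assume for contradiction that the procedure runs for $i := \lambda n/d + 1$ steps, and let $U := \{u_1, \ldots, u_i\}$. Monotonicity $A_i \subseteq A_{j-1}$ implies $T \subseteq \mathrm{int}_P(A_{j-1})$, so the deletion rule gives $|N(u_j) \cap T| < d|A|/4n \leq d|T|/2n$ for every $u_j \in U$. Applying the first part of Lemma~\ref{lem:cleaning} to the set $T$ then bounds the number of such vertices by $4\lambda^2 n^2/(d^2 |T|) < \lambda n/d \leq |U|$, a contradiction. The main obstacle is precisely the bookkeeping in the previous paragraph: once one is confident that the interior shrinks by only $O(1)$ per deletion, the rest is a direct monotonicity-plus-counting argument modelled on Lemma~\ref{lem:cleaning}.
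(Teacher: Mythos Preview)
Your proposal is correct and follows essentially the same approach as the paper: both run the greedy deletion process, use the observation that removing a vertex costs at most three interior vertices (you make this explicit via the $2+2j$ bookkeeping, while the paper just asserts $|\mathrm{int}_P(A_{i-1})|\ge 0.7|A|$ from the subpath structure), and derive a contradiction from the first part of Lemma~\ref{lem:cleaning}. The only cosmetic difference is that the paper allows up to $0.1|A|$ deletions whereas you allow up to $\lambda n/d \le 0.1|A|$; the arithmetic goes through either way.
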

\begin{proof}
Similarly to the proof of Lemma \ref{lem:cleaning}, we do the following process. Start with $A_0 := A$ and at each step $i \geq 1$, remove a vertex $u_i \in A_{i-1}$ such that $|N(u_i) \cap \text{int}_{P}(A_{i-1})| < d|A|/4n$, setting $A_i := A_{i-1} \setminus \{u_i\}$; if no such vertex exists then stop the process. Clearly, if the process stops at some step $i \leq 0.1|A|$, then the desired subset $A' := A_i$ is constructed. Suppose then, for sake of contradiction, that the process only stops after that and consider the set $A_i$ with $i := 0.1|A| + 1$. Define also the set $U := \{u_1, \ldots, u_{i-1}\}$ which has size $0.1|A|$ and note that by construction, we must have that every vertex in $U$ has at most $d|A|/4n$ neighbours in $\text{int}_{P}(A_{i-1})$. In turn, we claim that this contradicts the first part of Lemma \ref{lem:cleaning} with $S := \text{int}_{P}(A_{i-1})$. Indeed, note that since $A$ is a subpath of $P$ and we have $|A_{i-1}| \geq 0.9|A|$, it must be that $|S| = |\text{int}_{P}(A_{i-1})| \geq 0.7|A|$. In particular, $d|A|/4n < d|S|/2n$ and so, the first part of Lemma~\ref{lem:cleaning} gives that
$$|A| \cdot |U| \le |A| \cdot \left(4 \lambda^2 n^2/d^2 |S| \right) \leq 8 \lambda^2 n^2/d^2$$
which is a contradiction since $|A| \cdot |U| \geq 0.1 |A|^2 \geq 10 \lambda^2 n^2/d^2$ by assumption.
\end{proof}
The next lemma is a standard version of the usual P\'osa rotation lemma which has been implicitly used for several Hamiltonicity problems. Its proof can be found, e.g., in \cite{KS:03}.
\begin{lem} [\cite{KS:03}]\label{lem:usualrotation1}
Let $G$ be an $(n,d, \lambda)$-graph with $d \geq 10 \lambda$, $P$ a maximal path in $G$ with endpoints $x,y$ and $|P| \geq n/10$. Let $Z_1 \subseteq Z_2 \subseteq \ldots \subseteq V(P)$ be a nested sequence defined as follows: for each $i \geq 1$, $Z_i$ is the set of vertices $z$ for which there exists an $i$-rotation of $P$ with fixed endpoint $y$ which is a $zy$-path. Then, $Z_1 \neq \emptyset$ and $|Z_{i+1}| \geq \min \left(2|Z_i| ,n/100\right)$ for all $ i \geq 1$.
\end{lem}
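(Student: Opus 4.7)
The plan is to follow the classical P\'osa rotation argument, adapted to the $(n,d,\lambda)$-setting via the expander mixing lemma. We handle the two claims in order.

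\emph{Claim $Z_1 \neq \emptyset$:} By the maximality of $P$, every neighbor of $x$ lies in $V(P)$; since $d(x) \geq d \geq 10\lambda \geq 2$, we may pick $v_i \in N(x)\setminus\{x^{-}_P\}$, and the single rotation with pivot $v_i$ and fixed endpoint $y$ produces a $y v_{i+1}$-path, certifying $v_{i+1} \in Z_1$.

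\emph{Inductive step:} Assume $|Z_i| < n/100$ (else the conclusion is trivial). For each $z \in Z_i$ fix an $i$-rotation $P_z$ of $P$ with endpoints $y$ and $z$. Since $V(P_z) = V(P)$ and $P$ is maximal, $N(z) \subseteq V(P)$. For each $w \in N(z)$ with $w \neq y$ and $w$ not the (unique) path-neighbor of $z$ in $P_z$, one more rotation of $P_z$ at $w$ yields an $(i{+}1)$-rotation whose new endpoint $w^z$ is the path-neighbor of $w$ in $P_z$ lying on the $z$-side; in particular $w^z \in Z_{i+1}$. By Observation~\ref{obs:rotation}, $|\text{dif}(P,P_z)| \leq 3i$, so whenever $w \notin \text{dif}(P,P_z)$ the $P$- and $P_z$-path-neighborhoods of $w$ agree, forcing $w^z$ to be a path-neighbor of $w$ in $P$ too; hence $w \in A := Z_{i+1} \cup N_P(Z_{i+1})$. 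Thus $|N(z) \cap A| \geq d - 3i - 2$ for every $z \in Z_i$, giving $e(Z_i, A) \geq |Z_i|(d - 3i - 2)$. Since each vertex of $Z_{i+1}$ contributes at most itself and its two path-neighbors to $A$, we also have $|A| \leq 3|Z_{i+1}|$.

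Now suppose for contradiction $|Z_{i+1}| < 2|Z_i|$. Then $|A| < 6|Z_i| < 6n/100$, and part~(1) of Lemma~\ref{lem:expandermixing} yields
$$e(Z_i, A) \leq \frac{d|Z_i||A|}{n} + \lambda\sqrt{|Z_i||A|} \leq 0.06\,d|Z_i| + \sqrt{6}\,\lambda|Z_i| \leq 0.36\,d|Z_i|,$$
using $\lambda \leq d/10$. Combining the two bounds forces $0.64\,d \leq 3i + 2$, a contradiction whenever $i$ is not too large relative to $d$. Note that in the relevant range the iteration is nontrivial only for $i = O(\log n)$, since $|Z_i|$ doubles while $|Z_i| < n/100$, and the hypothesis $d\geq 10\lambda$ together with Theorem~\ref{thm:spectralgap} ensures $d$ is large enough for the contradiction to bite in that regime. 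The main obstacle is precisely this tension between $i$ and $d$: each rotation can add up to three new vertices to $\text{dif}(P,P_z)$ that are invisible to the EML bookkeeping, so the clean counting degrades as $i$ grows. This inherent drift is exactly what motivates the more delicate robust rotation framework the authors build in later sections.
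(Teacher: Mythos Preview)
Your argument introduces an unnecessary $-3i$ loss that leaves a genuine gap. You bound $|N(z)\cap A|\ge d-3i-2$ by discarding all of $\mathrm{dif}(P,P_z)$, and then obtain a contradiction only when $0.64d>3i+2$. Your attempt to close the gap via Theorem~\ref{thm:spectralgap} does not work: that theorem together with $d\ge 10\lambda$ only yields $d\ge 50$ (a constant), not $d=\Omega(\log n)$. Concretely, for a Ramanujan-type graph with $d$ a fixed constant around $500$ and $n$ arbitrarily large, the hypotheses of the lemma are met, yet your contradiction does not fire for $i$ in the range $d/5\lesssim i\lesssim\log_2 n$, where $|Z_i|$ may still be below $n/100$. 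The induction you invoke to bound $i$ by $O(\log n)$ is also circular, since it presupposes the very doubling you are trying to establish at step~$i$.

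The fix is a sharper bookkeeping observation that removes the $-3i$ entirely: in the rotation process producing any $P_z$, every broken edge is incident to a vertex that became a new endpoint at some stage, hence to a vertex of $Z_i$. Consequently, if $v_{j-1},v_j,v_{j+1}\notin Z_i$, the subpath $v_{j-1}v_jv_{j+1}$ is intact in \emph{every} $P_z$. So for each $v_j\in N_G(Z_i)$ with $v_{j-1},v_j,v_{j+1}\notin Z_i$ (at least $|N_G(Z_i)|-3|Z_i|$ such vertices), one further rotation at $v_j$ places one of $v_{j\pm1}$ into $Z_{i+1}$, giving $|Z_{i+1}|\ge\tfrac12(|N_G(Z_i)|-3|Z_i|)$. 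Since $P$ is maximal, each $z\in Z_i$ has all $d$ neighbours in $V(P)$, so Lemma~\ref{lem:expansion2} with $\delta=d$ gives $|N_G(Z_i)|\ge\min(12.5|Z_i|,\,n/10)$, whence $|Z_{i+1}|\ge\min(2|Z_i|,n/100)$ for all $i$, with no dependence on $i$ in the loss term. This is precisely the argument implicit in the Claim inside the proof of Lemma~\ref{lem:technicalrotation} (specialised to $P'=P$), and is the standard P\'osa argument from~\cite{KS:03}. Your closing remark mischaracterises the situation: the drift you describe is \emph{not} inherent to this lemma; it only arises in the robust version where one rotates in a path $P'\neq P$ and must control $\mathrm{int}_P(X)\setminus\mathrm{int}_{P'}(X)$.
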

\noindent We remark that a proof of this will be implicit in the more general Lemma~\ref{lem:technicalrotation} in the next section. Crucially, the above lemma implies the following.
\begin{cor}\label{cor:usualrotation}
Let $G$ be an $(n,d, \lambda)$-graph with $d \geq 10 \lambda$, $P$ a maximal path in $G$ with endpoints $x,y$ and $|P| \geq n/10$. Let $R \subseteq V(P)$ be a set such that $|\mathrm{int}_P(R)| \geq 100 \lambda^2 n/d^2$. Then, there exists a $(\log n)$-rotation $P'$ of $P$ with $y$ as one endpoint and the other endpoint $x' \in R$ such that $\left| \mathrm{int}_P(R) \setminus \mathrm{int}_{P'}(R) \right| \leq 3$.
\end{cor}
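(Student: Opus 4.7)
The plan is to adapt Lemma~\ref{lem:usualrotation1} by running the rotation-extension scheme in a restricted manner, forbidding any intermediate or new endpoint from landing in $R$, and then performing one final rotation whose new endpoint does land in $R$. If $y \in R$, take $P' = P$; otherwise, Observation~\ref{obs:rotation} tells us that whenever the current and new endpoints of a rotation both avoid $R$, the set $\mathrm{int}_P(R)$ is preserved. A useful bonus of this setup is that the condition "new endpoint $v_{i+1} \notin R$" forces the pivot $v_i$ to lie outside $\mathrm{int}_P(R)$ as well — otherwise $v_{i+1}$, a $P$-neighbor of $v_i$, would have to be in $R$. Combined with the fact that every prior and new endpoint is outside $R$, this means that no vertex of $\mathrm{int}_P(R)$ ever plays the role of pivot, prior endpoint, or new endpoint. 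Consequently, throughout the restricted rotation sequence, every $r \in \mathrm{int}_P(R)$ keeps its exact $P$-neighbors, so $r^{+}_{P_a} = r^{+}_P$ and $r^{-}_{P_a} = r^{-}_P$ in every intermediate path $P_a$.

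\textbf{Doubling and the final plunge.} First I would define $Z_i^* \subseteq V(P) \setminus R$ as the set of endpoints reachable from $y$ by $i$ restricted rotations and mimic the proof of Lemma~\ref{lem:usualrotation1} to obtain $|Z_{i+1}^*| \geq \min(2|Z_i^*|,\, n/100)$. The only new ingredient is to discard rotations whose pivot lies in $R^{-}_P := \{v \in V(P) : v^{+}_P \in R\}$; Lemma~\ref{lem:expandermixing}(1) applied to $Z_i^*$ and $R^{-}_P$ bounds such "bad" edges by $d|Z_i^*||R^{-}_P|/n + \lambda\sqrt{|Z_i^*||R^{-}_P|}$, which is a lower-order term compared to the total $d|Z_i^*|$ out-edges as long as $|R^{-}_P|$ is bounded away from $n$. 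After $T - 1 = O(\log n)$ steps this yields $|Z_{T-1}^*| \geq n/100$, and then $|Z_{T-1}^*| \cdot |\mathrm{int}_P(R)| \geq (n/100)\,(100\lambda^2 n/d^2) = (\lambda n/d)^2$; a slight tightening combined with Lemma~\ref{lem:expandermixing}(4) produces an edge $av \in E(G)$ with $a \in Z_{T-1}^*$ and $v \in \mathrm{int}_P(R)$. One additional rotation on $P_a$ with pivot $v$ and broken edge $vv^{+}_{P_a}$ produces new endpoint $v^{+}_{P_a} = v^{+}_P \in R$, giving the desired $x'$.

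\textbf{Accounting and obstacle.} Since the first $T - 1$ restricted rotations preserve $\mathrm{int}_P(R)$ exactly, only the final rotation can contribute to $\mathrm{int}_P(R) \setminus \mathrm{int}_{P'}(R)$; by Observation~\ref{obs:rotation} it alters the neighborhoods of just three vertices, namely $v$, $v^{+}_P$, and the prior endpoint $a$, and since $a \notin R$ we conclude $|\mathrm{int}_P(R) \setminus \mathrm{int}_{P'}(R)| \leq 2 \leq 3$. The main technical hurdle is the adapted doubling in the regime where $|R^{-}_P|$ is a constant fraction of $n$, because there the bad-move estimate is no longer negligible. I expect to resolve this via a case split: if $|R^{-}_P|$ is large enough that Lemma~\ref{lem:expandermixing}(1) applied to $\{y\}$ and $R^{-}_P$ already forces $|N_G(y) \cap R^{-}_P| > 0$, then a single rotation from $y$ suffices and the doubling phase is bypassed, while the accounting argument above still yields the bound $|\mathrm{int}_P(R) \setminus \mathrm{int}_{P'}(R)| \leq 3$.
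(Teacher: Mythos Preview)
Your strategy of running a \emph{restricted} rotation process that forbids endpoints from landing in $R$ is morally the same idea as the paper's, but it creates an avoidable obstacle that you identify and then fail to resolve. The proposed case split does not work: for the ``$|R_P^-|$ large'' branch you need $|\{y\}|\cdot |R_P^-|>(\lambda n/d)^2$ to invoke Lemma~\ref{lem:expandermixing}(4), i.e.\ $|R_P^-|>(\lambda n/d)^2$, and under the hypothesis $d\ge 10\lambda$ this threshold can be as large as $n^2/100$, far exceeding $n$. More fundamentally, in the actual application (Claim~\ref{firstclaim}) the set $R=S_1\cup S_2$ covers most of $V(P)$, so there are only $o(n)$ vertices outside $R$; your restricted sets $Z_i^*\subseteq V(P)\setminus R$ can therefore never reach size $n/100$, and the doubling phase cannot terminate as described.

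The paper sidesteps all of this by running the \emph{unrestricted} process of Lemma~\ref{lem:usualrotation1} as a black box and then doing a clean dichotomy. Let $I$ be minimal with $|Z_I|\ge n/100$. If $R\cap Z_I\neq\emptyset$, take $j$ minimal with $R\cap Z_j\neq\emptyset$ and pick $x'\in R\cap Z_j$; then the rotation sequence producing $x'$ has all intermediate endpoints in $Z_1,\dots,Z_{j-1}\subseteq V(P)\setminus R$, so by Observation~\ref{obs:rotation} only the final step affects $\mathrm{int}_P(R)$, costing at most~$3$. If instead $R\cap Z_I=\emptyset$, then every endpoint in $Z_I$ avoided $R$ \emph{for free}, whence $\mathrm{int}_P(R)=\mathrm{int}_{P_z}(R)$ for each $z\in Z_I$; now $|Z_I|\cdot|\mathrm{int}_P(R)|\ge(\lambda n/d)^2$ gives an edge $zv$ with $v\in\mathrm{int}_P(R)=\mathrm{int}_{P_z}(R)$, and one more rotation lands in $R$. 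The point is that you never need to force the process to avoid $R$: either it eventually hits $R$ (take the first hit) or it never does (and you are in your favourite situation automatically).
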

\begin{proof}
Let us apply Lemma \ref{lem:usualrotation1} and take $I$ to be the minimal $i$ such that $|Z_i| \geq n/100$ - by the lemma, we have that $I \leq \log n - 1$. If $R \cap Z_I \neq \emptyset$, then let $x' \in R \cap Z_I$ belong to $Z_j$ with $j$ minimal. Then, we claim that $x'$ satisfies the desired property. Indeed, it is such that there is a $j$-rotation $P'$ of $P$ with $y$ as one endpoint and the other endpoint $x'$. Moreover, since $j$ is minimal note that the endpoints created by each one of the previous rotations forming $P'$ are not in $R$ except for $x'$. Thus, by the first item of Observation \ref{obs:rotation}, the only change to $\text{int}_P(R)$ occurs in the last step and so, by the same observation, it only affects at most three vertices, hence $\left| \text{int}_P(R) \setminus \text{int}_{P'}(R) \right| \leq 3$, as desired. 

Suppose now that $R \cap Z_I = \emptyset$. Since $|Z_I||\text{int}_P(R)| \geq \lambda^2 n^2/d^2$, part \textit{(4)} of Lemma \ref{lem:expandermixing} implies that there is an edge $zv$ with $z \in Z_I$ and $v \in \text{int}_P(R)$. Now, consider the $zy$-path $P_z$ which is an $I$-rotation of $P$ with fixed endpoint $y$. Since $R \cap Z_I = \emptyset$, Observation \ref{obs:rotation} implies that $ \text{int}_P(R) =  \text{int}_{P_z}(R) $. Therefore, $v \in \text{int}_{P_z}(R)$ and so, the edge $zv$ ensures that there is a path $P'$ which is a rotation of $P_z$ with fixed endpoint $y$ and an $x'y$-path for some $x' \in R$. We now check that $P',x'$ satisfy the desired conditions. Indeed, $P'$ is an $(I+1)$-rotation of $P$ with $I+1 \leq \log n$. Further, since $ \text{int}_P(R) =  \text{int}_{P_z}(R) $ and $P'$ is a $1$-rotation of $P_z$ we have that $\left| \text{int}_P(R) \setminus \text{int}_{P'}(R) \right| \leq 3$.
\end{proof}
\subsection{The main technical rotation lemma}
\noindent We will further need a much more robust extension of the P\'osa rotation technique than Lemma \ref{lem:usualrotation1}. Informally, given a path $P$ we will first want to perform rotations only in a certain \emph{clean} subset of $P$ and secondly, we want to be able to do this robustly, that is, possibly after some changes to the path have been done. Clearly, the challenge here is that these changes might alter the \emph{clean} subset we wish to rotate in.
\begin{lem}\label{lem:technicalrotation}
Let $G$ be an $(n,d,\lambda)$-graph with $d \geq 100 \lambda$, $P$ a path in $G$, $X,Y \subseteq V(P)$ be non-empty with $X$ being a $(P,\delta)$-clean subset for some $20 \lambda \leq \delta \leq d$. Let also $P'$ be another path with $V(P') = V(P)$ and let $x,y$ be its endpoints with $x \in X$. Suppose the following holds in the case that $P \neq P'$.
\begin{itemize}
    \item At most $\delta/2$ vertices $w \in \mathrm{int}_P(X) \setminus \mathrm{int}_{P'}(X)$ are such that $\mathrm{dist}_G(w,x) \leq 2\log |\mathrm{dif}(P,P')|$.
\end{itemize}
Then there are at least $\delta n/200d$ vertices $z \in X$ for which there exists a $zy$-path $P_z$ which is an  $(X,\log  n)$-rotation of $P'$ with fixed endpoint $y$ and such that $|\mathrm{dif}(P_z,P') \cap Y| \leq 3\log |Y|$.
\end{lem}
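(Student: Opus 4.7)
The plan is to adapt the standard P\'osa doubling scheme (Lemma~\ref{lem:usualrotation1}) to rotations whose pivots lie in $X$, applied to the perturbed path $P'$ rather than $P$. Set $Z_0 := \{x\}$ and, for $i \ge 1$, let $Z_i$ be the set of $z \in X$ for which some $zy$-path is an $(X, i)$-rotation of $P'$ with fixed endpoint $y$. The goal is the doubling estimate
\[
|Z_{i+1}| \ \ge\ \min\bigl(2|Z_i|,\ \delta n/100d\bigr)\qquad\text{for each }0 \le i < \log n,
\]
which immediately yields $|Z_{\log n}| \ge \delta n/200d$, followed by a selection to handle the $Y$-intersection side condition.

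For the doubling step, fix $z \in Z_i$ with witness path $P_z$; I need to produce at least $\delta/3$ pivots $w \in N_G(z) \cap \mathrm{int}_{P_z}(X)$, each of which gives a new endpoint in $X$ after one more rotation. Since $X$ is $(P,\delta)$-clean and $z \in X$, we have $|N_G(z) \cap \mathrm{int}_P(X)| \ge \delta$, so the task reduces to bounding the "spoiled" neighbors in $\mathrm{int}_P(X) \setminus \mathrm{int}_{P_z}(X)$. Split this into $\bigl(\mathrm{int}_P(X) \setminus \mathrm{int}_{P'}(X)\bigr) \cup \bigl(\mathrm{int}_{P'}(X) \setminus \mathrm{int}_{P_z}(X)\bigr)$. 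By iterating Observation~\ref{obs:rotation}, the second piece has size at most $|\mathrm{dif}(P', P_z)| \le 3i \le 3\log n$, which is $\ll \delta$ since $\delta \ge 20\lambda \ge 10\sqrt{2d}$ by Theorem~\ref{thm:spectralgap}. For the first piece, I use the key observation that each rotation with fixed endpoint $y$ produces a new movable endpoint which is a $P$-neighbor of a $G$-neighbor of the old one, so $d_G(z, x) \le 2i$ and thus $N_G(z) \subseteq B_G(x, 2i+1)$. When $i \le \log|\mathrm{dif}(P, P')|$ this ball sits inside $B_G(x, 2\log|\mathrm{dif}(P, P')|)$, and the hypothesis caps the contribution from the first piece at $\delta/2$. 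One then runs the standard bipartite counting from the proof of Lemma~\ref{lem:usualrotation1} together with Lemma~\ref{lem:expandermixing} on the resulting $|Z_i| \cdot \delta/3$ (endpoint, pivot) pairs to extract the desired $\min(2|Z_i|, \delta n/100d)$ distinct new endpoints.

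The main obstacle I anticipate is the regime $i > \log|\mathrm{dif}(P, P')|$, in which some $z \in Z_i$ may have drifted outside the $G$-ball controlled by the hypothesis. Here I would fall back on a global averaging argument: writing $W := \mathrm{int}_P(X) \setminus \mathrm{int}_{P'}(X)$, each vertex of $W$ has strictly different $P$- and $P'$-neighborhoods, so $|W| \le |\mathrm{dif}(P, P')|$, and hence $\sum_{z \in Z_i} |N_G(z) \cap W| \le d \cdot |\mathrm{dif}(P, P')|$; once $|Z_i|\ge 8d\cdot|\mathrm{dif}(P,P')|/\delta$, averaging keeps a constant fraction of $Z_i$ with $\le \delta/4$ bad neighbors from $W$, and doubling resumes with only a constant-factor loss. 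In parallel, once $|Z_i| \ge \lambda n/d$, part~(4) of Lemma~\ref{lem:expandermixing} immediately produces edges from $Z_i$ into any target of comparable size, letting us jump straight to the $\delta n/100d$ threshold without any pointwise pivot count. Finally, for the ancillary bound $|\mathrm{dif}(P_z, P') \cap Y| \le 3\log|Y|$, I would be greedy along the doubling, at each rotation preferring pivots (and resulting new endpoints) outside $Y$ whenever the $\delta/3$-sized pool of good pivots permits; since a single rotation affects only three vertices (Observation~\ref{obs:rotation}), each unavoidable "$Y$-hit" contributes at most $3$ to $|\mathrm{dif}(P_z, P') \cap Y|$, and a pigeonhole across $Z_{\log n}$ retains at least $|Z_{\log n}|/2 \ge \delta n/200d$ witnesses whose cumulative $Y$-hits number at most $\log|Y|$, which is exactly the desired bound.
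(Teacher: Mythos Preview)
There is a genuine gap. Your doubling step hinges on the claim that for each $z\in Z_i$ the ``second piece'' $\mathrm{int}_{P'}(X)\setminus\mathrm{int}_{P_z}(X)$ has size at most $3i\le 3\log n\ll\delta$. The lemma only assumes $\delta\ge 20\lambda$ and $d\ge 100\lambda$; nothing forces $\delta$ to dominate $\log n$ (and in the paper's application in Claim~\ref{claim:settingoflemma} one takes $\delta=D=d/(\log n)^{1/3}$, which in the regime $d<10000(\log n)^{4/3}$ is at most $10000\log n$). So the pointwise pivot-count can collapse long before $|Z_i|$ reaches $\delta n/200d$, and your fallbacks do not address this piece: the averaging over $W=\mathrm{int}_P(X)\setminus\mathrm{int}_{P'}(X)$ concerns only the $P\to P'$ damage, and an edge from $Z_i$ obtained via part~(4) of Lemma~\ref{lem:expandermixing} buys one rotation, not a jump to the target size.

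The paper sidesteps this by never tracking $\mathrm{int}_{P_z}(X)$ for individual $z$. Instead it works directly with $\mathrm{int}_{P'}(X)$: if $v_j\in N(Z_i)\cap\mathrm{int}_{P'}(X)$ and $v_{j-1},v_j,v_{j+1}\notin Z_i$, then the subpath $v_{j-1}v_jv_{j+1}$ survives \emph{every} witness path $P_z$ (all broken edges of an $(X,i)$-rotation touch $Z_i$), so $v_j$ is a valid pivot for some $z$ and one of $v_{j\pm1}$ lands in $Z_{i+1}$. This yields $|Z_{i+1}|\ge \tfrac12|N(Z_i)\cap\mathrm{int}_{P'}(X)|-\tfrac32|Z_i|$, a loss of order $|Z_i|$ rather than order~$i$, which is then absorbed by showing $|N(Z_i)\cap\mathrm{int}_{P'}(X)|\ge 10|Z_i|$ via Lemma~\ref{lem:expansion2} (using the hypothesis when $i<\log|\mathrm{dif}(P,P')|$, and the global bound $|N(Z_i)\cap\mathrm{int}_{P'}(X)|\ge|N(Z_i)\cap\mathrm{int}_{P}(X)|-3|\mathrm{dif}(P,P')|$ once $|Z_i|\ge|\mathrm{dif}(P,P')|$). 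For the $Y$-condition, the paper also does something cleaner than your pigeonhole sketch: it \emph{builds} the constraint into the definition of $Z_i$, noting that for $i<\log|Y|$ any $i$-rotation automatically has $|\mathrm{dif}(P_z,P')|\le 3i\le 3\log|Y|$, and for $i\ge\log|Y|$ one additionally excludes $v_{j-1},v_j,v_{j+1}\in Y$ from the pivot set, costing only an extra $\tfrac32|Y|\le\tfrac32|Z_i|$ in the recurrence.
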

\begin{proof}
Define $F := \text{dif}(P,P')$, so that we have for all $u \notin F$ that $N_P(u) = N_{P'}(u)$, and as observed before, $\text{int}_P(X \setminus F) = \text{int}_{P'}(X \setminus F)$. Moreover, clearly we have $|\text{int}_P(X \setminus F)| \geq |\text{int}_P(X)| - 3|F|$ which implies the following: for every $S \subseteq X$ we have $|N(S) \cap \text{int}_{P'}(X)| \geq |N(S) \cap \text{int}_{P'}(X \setminus F)| \geq |N(S) \cap \text{int}_P(X)| - 3|F|$.
\noindent Now, for each $i \geq 0$, let us define $Z_i \subseteq X$ to be the set of vertices $z$ for which there exists an $(X,i)$-rotation $P_z$ of $P'$ with fixed endpoint $y$ which is a $zy$-path and such that $|\text{dif}(P_z,P') \cap Y| \leq 3\log |Y|$. Clearly, note that $\{Z_i\}_i$ is a family of nested sets with $Z_i \subseteq Z_{i+1}$ for each $i$. Further, the following recurrence holds.
\begin{claim*}
For each $i < \log |Y|$, we have $|Z_{i+1}| \geq \frac{1}{2}|N(Z_i) \cap \mathrm{int}_{P'}(X)| - \frac{3}{2}|Z_i| .$
For $i \geq \log |Y|$, we have
$$|Z_{i+1}| \geq \frac{1}{2} \left|N(Z_i) \cap \mathrm{int}_{P'}(X)\right| - \frac{3}{2}|Z_i|  - \frac{3}{2}|Y|. $$
\end{claim*}
\begin{proof}
Let us suppose first that $i < \log |Y|$. Fix an initial ordering $(v_1 := x,v_2, \ldots, v_{l-1}, v_l := y)$ of the path $P'$ and define the set of indices $J := \{1< j < l : v_j \in N(Z_i) \cap \text{int}_{P'}(X), v_{j-1},v_j,v_{j+1} \notin Z_i\}$. Clearly we have that $|J| \geq |N(Z_i) \cap \text{int}_{P'}(X)| - 3|Z_i|$. Consider a vertex $v_j$ with $j \in J$. We claim that one of $v_{j-1},v_{j+1}$ belongs to $Z_{i+1}$, thus showing that $|Z_{i+1}| \geq |J|/2$, which by the previous lower bound on $|J|$ implies the desired recurrence. Indeed, note that since $v_j \in N(Z_i) \cap \text{int}_{P'}(X)$, it has a neighbour $z \in Z_i$. By definition, there exists an $(X,i)$-rotation of $P'$ with fixed endpoint $y$ and whose other endpoint is $z$ - let us denote this path by $Q$. Note that the subpath $v_{j-1}v_jv_{j+1}$ is still present in $Q$ (albeit in maybe a different direction). Indeed, during the repeated rotation process of obtaining $Q$, by definition every broken edge must be incident to some vertex in $Z_i$ and we have $v_{j-1},v_j,v_{j+1} \notin Z_i$. Therefore, we can use the edge $zv_j$ to perform a rotation of $Q$ with fixed endpoint $y$, pivot $v_j$ and broken edge either $v_{j-1}v_j$ or $v_jv_{j+1}$. This results in an $(X,1)$-rotation of $Q$ and so, an $(X,i+1)$-rotation of $P'$ with fixed endpoint $y$. The other endpoint of the path will be either $v_{j-1},v_{j+1}$ and so one of these two vertices belongs to $Z_{i+1}$. Note further that this path, which we denote as $Q'$, an $(X,i+1)$-rotation of $P'$ with fixed endpoint $y$, by Observation \ref{obs:rotation} is such that $|\text{dif}(Q',P') \cap Y| \leq |\text{dif}(Q',P')| \leq 3i+3 \leq 3\log |Y|$.

For the case that $i \geq \log |Y|$, the proof is analogous, the only difference being that we define $J := \{1< j < l : v_j \in N(Z_i) \cap \text{int}_{P'}(X), v_{j-1},v_j,v_{j+1} \notin Z_i \cup Y\}$, which has size at least $|N(Z_i) \cap \text{int}_{P'}(X)| - 3|Z_i| - 3|Y|$. We can then show similarly as above that for each $v_j \in J$, one of $v_{j-1},v_{j+1}$ belongs to $Z_{i+1}$ - and thus, $ |Z_{i+1}| \geq |J|/2$. Indeed, there is a $z \in Z_i$ which is a neighbour of $v_j$, so that there is an $(X,i)$-rotation $Q$ of $P'$ with fixed endpoint $y$ and whose other endpoint is $z$. The edge $zv_j$ can then be used to extend this to an $(X,i+1)$-rotation of $P'$ with fixed endpoint $y$ and one of $v_{j-1},v_{j+1}$ as the other endpoint - without loss of generality, assume that it is $v_{j-1}$ and let this path be denoted as $Q'$. Note that $\text{dif}(Q',P') \setminus \text{dif}(Q,P') \subseteq \{v_j,v_{j-1}\}$ and so, since $v_{j-1},v_j,v_{j+1} \notin Y$, we have by definition that $|\text{dif}(Q',P') \cap Y| \leq |\text{dif}(Q,P') \cap Y| \leq 3\log |Y|$ as desired.
\end{proof}
\noindent Now, note that by assumption we have $|Z_1| = |N(x) \cap \text{int}_{P'}(X)| \geq \delta/2$. Indeed, since $X$ is $(P,\delta)$-clean, we have that $|N(x) \cap \text{int}_{P}(X)| \geq \delta$ - therefore, if $P = P'$ this trivially holds and if $P \neq P'$, then the assumption of the lemma implies that then $|N(x) \cap \text{int}_{P'}(X)| \geq \delta/2$ (since any neighbour $w \in N(x)$ clearly has $\text{dist}(w,x) \leq 2 \log |F|$). Thus, in order to finish the proof of the lemma we only need to show that while $|Z_i| \leq \delta n/200d$, we have that $|N(Z_i) \cap \text{int}_{P'}(X)| \geq 10|Z_i|$. Indeed, if that is the case, then by the claim above we have first for each such $\log |Y| > i \geq 1$ that $|Z_{i+1}| \geq 5|Z_i| - \frac{3}{2}|Z_i| \geq 2|Z_i|$. In particular, this implies that $|Z_I| \geq 2^{I} \geq |Y|$ for $I := \log |Y|$. Then, again by the claim above we have that for all $i \geq I$, $|Z_{i+1}| \geq 5|Z_i| - \frac{3}{2}|Z_i| - \frac{3}{2}|Y| \geq 5|Z_i| - 3|Z_i| \geq 2|Z_i|$. Therefore, $|Z_l| \geq \delta n/200d$ will occur for some $l \leq \log n$. This in turn gives us the desired vertices $z$ - those contained in $Z_l$. 

Now, suppose then that $i$ is such that $|Z_i| \leq \delta n/200d$ and consider $N(Z_i) \cap \text{int}_{P'}(X)$. Note first that when $P \neq P'$ and $i < \log |F|$, since P\'osa rotation implies that $N(Z_i)$ is composed of vertices which are at a distance at most $2i + 1 < 2\log |F|$ of $x$, we have by assumption of the lemma that every vertex in $Z_i$ has at least $\delta/2$ neighbours in $\text{int}_{P'}(X)$ (recall that every such vertex has at least $\delta$ neighbours in $\text{int}_{P}(X)$). Applying now Lemma \ref{lem:expansion2} with $G' := G$, $A:= Z_i$, $B := \text{int}_{P'}(X)$ we get that $|N(Z_i) \cap \text{int}_{P'}(X)| \geq 10|Z_i|$. Now, let us consider when $i \geq \log |F|$ or $P = P'$ (then clearly $F = \emptyset$). Note that the previous observations imply that $|Z_i| \geq 2^{\log |F|} \geq |F|$ - hence, $|N(Z_i) \cap \text{int}_{P'}(X)| \geq |N(Z_i) \cap \text{int}_{P}(X)|  - 3|Z_i|$ (note that this also holds if $P = P'$). Since every vertex in $Z_i$ has at least $\delta$ neighbours in $\text{int}_{P}(X)$, Lemma \ref{lem:expansion2} then implies also that $|N(Z_i) \cap \text{int}_{P}(X)| \geq \min \left( \frac{\delta^2}{8 \lambda^2},20\right)|Z_i| \geq 13 |Z_i| $. Hence, $|N(Z_i) \cap \text{int}_{P'}(X)| \geq 13|Z_i|  - 3|Z_i| \geq 10|Z_i|$, as desired. 
\end{proof}
\noindent Note that as a corollary, we get the following which tells us that we can always rotate inside $P$-clean subsets and avoid certain vertices.
\begin{cor}\label{cor:rotationinset}
Let $G$ be an $(n,d,\lambda)$-graph with $d \geq 100 \lambda$, $P$ a path in $G$ with endpoints $x,y$ and $X, Y \subseteq V(P)$ be such that $X$ is a $(P,\delta)$-clean subset for some $20 \lambda \leq \delta \leq d$ and $x \in X$. Then there are at least $\delta n/200d$ vertices $z \in X$ for which there exists a $zy$-path $P_z$ which is an  $(X,\log n)$-rotation of $P$ with fixed endpoint $y$ and such that $|\mathrm{dif}(P_z,P) \cap Y| \leq 3\log |Y|$.
\end{cor}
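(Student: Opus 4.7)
The plan is simply to invoke Lemma \ref{lem:technicalrotation} with the choice $P' := P$. All hypotheses on $G$, $P$, $X$, $Y$, and the parameter $\delta$ transfer verbatim, and the endpoints of $P'$ agree with those of $P$, so in particular the hypothesis $x \in X$ is inherited. The only remaining condition in the lemma is the bullet point about vertices in $\mathrm{int}_P(X) \setminus \mathrm{int}_{P'}(X)$ near $x$, but this is required only in the case $P \neq P'$; since we have chosen $P' = P$, this condition is vacuous (equivalently, $\mathrm{dif}(P, P') = \emptyset$, so the set being bounded is empty and the inequality holds trivially).

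Applying the lemma then yields at least $\delta n / 200 d$ vertices $z \in X$ such that there is a $zy$-path $P_z$ which is an $(X, \log n)$-rotation of $P' = P$ with fixed endpoint $y$, and which satisfies $|\mathrm{dif}(P_z, P') \cap Y| \leq 3 \log |Y|$. Rewriting $P'$ as $P$ in the conclusion gives exactly the statement of the corollary. There is no genuine obstacle to overcome here; the general formulation of Lemma \ref{lem:technicalrotation} was designed precisely to handle the harder scenario in which the starting path $P'$ has been perturbed relative to a reference path $P$ whose clean structure is being exploited, and the corollary records the clean special case where no perturbation has taken place.
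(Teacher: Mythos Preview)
Your proposal is correct and matches the paper's approach exactly: the corollary is stated immediately after Lemma~\ref{lem:technicalrotation} as a direct consequence, with no separate proof given, and the intended derivation is precisely to set $P':=P$ so that the bullet condition becomes vacuous.
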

\subsection{A method for closing paths into cycles}
We finish with a general setup which will always allow one to close a given path into a cycle. This was already alluded to in Section \ref{sec:outline}. The challenges in achieving this were also discussed there and will be dealt with in the next section. Before stating the setting, let us give a definition. For a path $P$ and a subset $A \subseteq V(P)$, the graph induced by $P$ on $A$, that is, $P[A]$, is a disjoint union of paths - we will let $c_P(A) := c \left(P[A] \right)$ denote the number of such paths, that is, the number of components of $P[A]$.

\begin{lem}\label{lem:closingpathintocycle}
Let $G$ be an $(n,d,\lambda)$-graph, $P$ a path in $G$ with endpoints $x,y$ and $A,B \subseteq V(P)$ be disjoint sets such that the following holds. Both $A$ and $B$ contain $\left(P,200\lambda \cdot 2^{\min (c_{P}(A),c_{P}(B))} \right)$-clean subsets $A' \subseteq A, B' \subseteq B$ with $x \in A'$ and $y \in B'$. Then, $P$ can be closed into a cycle.
\end{lem}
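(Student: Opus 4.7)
The plan is to carry out two successive passes of P\'osa rotations — first inside $A'$ with $y$ held fixed, then inside $B'$ with the newly found endpoint held fixed — and to close the resulting path with an edge supplied by pseudorandomness. Assume without loss of generality that $c_P(B)\le c_P(A)$; otherwise swap the roles of $A,x$ and $B,y$. Set $c:=c_P(B)$ and $\delta:=200\lambda\cdot 2^c$, so that both $A'$ and $B'$ are $(P,\delta)$-clean.

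First I would apply Corollary~\ref{cor:rotationinset} to $P$ with clean set $A'$ and fixed endpoint $y$, obtaining a set $X\subseteq A'$ with $|X|\ge \delta n/200d=\lambda\cdot 2^c\cdot n/d$ together with a $zy$-path $P_z$ for each $z\in X$ (each being an $(A',\log n)$-rotation of $P$). The crucial observation is that every rotation in this pass modifies only three vertices — the pivot, its broken-edge neighbour, and the current moving endpoint — and by Observation~\ref{obs:rotation} all three lie in $A'$, since pivots are drawn from $\mathrm{int}_P(A')$ and the moving endpoint never leaves $A'$. Hence $\mathrm{dif}(P_z,P)\subseteq A'$, which is disjoint from $B$, so $N_{P_z}(u)=N_P(u)$ for every $u\in B'$; in particular $\mathrm{int}_{P_z}(B')=\mathrm{int}_P(B')$ and $P_z[B']=P[B']$.

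Next, for each $z\in X$, I would invoke Lemma~\ref{lem:technicalrotation} on $P_z$, taking the original path to be $P$, the clean set to be $B'$, and the fixed endpoint to be $z$ (the other endpoint $y$ lies in $B'$ as required). The robustness hypothesis is vacuously satisfied because $\mathrm{int}_P(B')\setminus\mathrm{int}_{P_z}(B')=\emptyset$ by the preceding step, so the lemma produces a set $W(z)\subseteq B'$ with $|W(z)|\ge\lambda\cdot 2^c\cdot n/d$, each $w\in W(z)$ equipped with a $zw$-path on vertex set $V(P)$. The dependence of $W(z)$ on $z$ enters only through the orientation of the $c$ components of $B'$ in the ordering of $P_z$ from $z$ to $y$: each component can be oriented either as in $P$ or reversed, giving at most $2^c$ possible orientation patterns. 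By the pigeonhole principle, some pattern is shared by at least $|X|/2^c\ge\lambda n/d$ of the vertices $z\in X$; call this subset $X'$ and the common reachable set $W$. Then for every pair $z\in X'$, $w\in W$ there is a $zw$-path spanning $V(P)$, and since $|X'|\cdot|W|\ge(\lambda n/d)^2$, part~(4) of Lemma~\ref{lem:expandermixing} supplies an edge $zw\in E(G)$; appending it to the corresponding path closes $P$ into a cycle.

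The main obstacle is making the orientation pigeonhole genuinely rigorous: one must verify that two paths $P_{z_1},P_{z_2}$ whose $B'$-component orientation patterns coincide do generate identical second-pass rotation dynamics, and hence produce the same reachable set. This is delicate because a single second-pass rotation reverses an entire tail of the current path, and so can simultaneously flip the orientations of several $B'$-components and permute their relative positions along the path. One therefore has to argue that only the orientation pattern (and not, for instance, the cyclic order in which the components appear) governs the set of endpoints reachable from $y$ via $(B',\log n)$-rotations — this is precisely where the multiplicative cost $2^{\min(c_P(A),c_P(B))}$ in the cleanness assumption is spent.
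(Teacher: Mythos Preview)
Your two-pass strategy is exactly the paper's: rotate inside $A'$ with $y$ fixed to obtain many $zy$-paths $P_z$, pigeonhole over orientation patterns of the $B$-side, then rotate inside $B'$ with $z$ fixed to obtain a common endpoint set $W$, and close via part~(4) of Lemma~\ref{lem:expandermixing}. Two minor corrections: you should pigeonhole over the orientations of the $c=c_P(B)$ components of $B$ rather than of $B'$ (since $c_P(B')$ may exceed $c$); and in the second pass you can simply invoke Corollary~\ref{cor:rotationinset} on $P_z$ directly, because the first-pass rotations leave $\mathrm{int}_{P_z}(B')=\mathrm{int}_P(B')$ and hence $B'$ is already $(P_z,\delta)$-clean.

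The obstacle you raise is genuine, and in fact the paper's own proof elides it. The paper asserts that matching the orientations of the $B$-components forces $v^+_{P_z}=v^+_{P_{z'}}$ for \emph{all} $v\in B$, but for a boundary vertex of a $B$-component one of these neighbours lies outside $B$ and is certainly not determined by orientation. More substantively, a first-pass $A'$-rotation can reverse a segment of $P$ that straddles several $B$-components and thereby permute the \emph{order} in which those components appear along $P_z$; for $c\ge 3$ one can have $z,z'$ with identical orientation patterns but different component orders, and then the same second-pass pivot sequence produces different endpoints (after one rotation the two paths already disagree on which components have been flipped). The clean repair is to pigeonhole jointly over the permutation and the orientations of the $B$-components --- the component containing $y$ is pinned last with fixed orientation, so this costs $(c-1)!\cdot 2^{c-1}$ rather than $2^c$. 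After contracting each maximal non-$B$ segment to a single connector vertex, all surviving $P_z$ then have literally the same reduced path, and the second-pass dynamics coincide step for step. In the paper's sole application of this lemma (Claim~\ref{claim:settingoflemma}) one has $c_{P''}(A),c_{P''}(B)\le 2$, where the order is forced and the $2^c$ pigeonhole already suffices; so the gap is harmless for Theorem~\ref{thm:main general}, but your instinct that the argument as written does not cover general $c$ is correct.
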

\begin{proof}
Let us fix an ordering $P = (x,v_2, \ldots, v_{l-1},y)$ and suppose without loss of generality that $c_{P}(A) \geq c_{P}(B)$. First, we apply Corollary~\ref{cor:rotationinset} with $X := A', Y := \emptyset$ thus finding at least $( \lambda n/d) \cdot 2^{c_P(B)}$ vertices $z$ such that there is a $zy$-path $P_z$ which is an $A'$-rotation of $P$ with fixed endpoint $y$ - let $Z_1$ denote the set of these vertices. Note crucially that for each $z \in Z_1$, since $P_z$ is an $A'$-rotation of $P$, we have that $P_z[B] = P[B]$ and so, $c_{P_z}(B) = c_P(B)$. Moreover, the following holds.
\begin{claim*}
There exists a subset $Z'_1 \subseteq Z_1$ of size at least $|Z_1| \cdot 2^{-c_P(B)}$ such that the following holds: for all $v \in B$ and $z,z' \in Z'_1$ we have $v^{+}_{P_z} = v^{+}_{P_{z'}}$ and $v^{-}_{P_z} = v^{-}_{P_{z'}}$.
\end{claim*}
\begin{proof}
For the purpose of the argument let us fix $c := c_P(B)$. As noted above, we have for each $z \in Z_1$ that $P_z[B] = P[B]$. Indeed, the only difference between $B$ in $P_z$ and in $P$ is the direction in which each path of $P_z[B] = P[B]$ is traversed. More precisely, let us recall the ordering of $P$ as $(x, \ldots, y)$, that is, from $x$ to $y$ and also consider the ordering of $P_z$ from $z$ to $y$. Let $Q$ be one of the $c$ sub-paths of $P$ which form $B$, let $u,v$ be its endpoints and assume that in the given ordering of $P$ we have that $Q$ is ordered from $u$ to $v$. We know that $Q$ is also a sub-path of $P_z$, but we are unaware of which direction $Q$ is ordered in the ordering of $P_z$ from $z$ to $y$, there being $2$ possibilities ($u \rightarrow v$ or $v \rightarrow u$). Furthermore, this direction will determine $v^{+}_{P_z}$ and $v^{-}_{P_z}$ for all $v \in Q$. Therefore, by pigeonholing, since there are $c$ sub-paths $Q$ we can find a fraction of $2^{-c}$ of the vertices $z \in Z_1$ for which all of these are equal, as desired.  
\end{proof}
\noindent Note that the above claim implies that if a vertex $w \in B$ is such that for some $z \in Z'_1$ there is a $zw$-path which is a $B'$-rotation of $P_z$ with fixed endpoint $w$, then this is the case for all other vertices $z' \in Z'_1$. Therefore, letting $W \subseteq B$ denote the set of such vertices $w$ we have that for all $z \in Z'_1, w \in W$, there is a $zw$-path on the vertex set $V(P)$. Hence, if there exists an edge $zw$ between two such vertices, then $P$ can be closed into a cycle. In order to find such an edge, we only need to show that $|W| \geq \lambda n/d$ since then $|W||Z'_1| \geq (\lambda n/d)|Z_1| \cdot 2^{-c_P(B)} \geq (\lambda n/d)^2 $ and thus, part \textit{(4)} of Lemma \ref{lem:expandermixing} guarantees that the edge exists. To show that $|W| \geq \lambda n/d$, fix some $z \in Z'_1$ and note that $B'$ is a $(P_z,200\lambda)$-clean subset of $P_z$ and $y \in B'$. So, Corollary \ref{cor:rotationinset} applied on the path $P_z$ with $X:= B'$ implies that $|W| \geq \lambda n/d$.
\end{proof}

\section{Hamilton cycles with robust P\'osa rotation}\label{sec:rotthm}
In this section we will prove Theorem~\ref{thm:main general}. In order to do this, we will show that it will always be possible to close a maximal path into a cycle. Clearly, if a graph $G$ is connected (as are $(n,d,\lambda)$-graphs with $\lambda < d/2$ by part \textit{(5)} of Lemma~\ref{lem:expandermixing}) and every maximal path can be closed into a cycle, then it is Hamiltonian. As expected, our arguments here will build on the tools given in the previous section. First, we introduce the notion of a \emph{clean} collection of intervals of a path. This will be crucial for then closing this path into a cycle. 
\begin{defn}\label{def:cleanedcollection}
Given a path $P$, a collection $\mathcal{Q} = \{Q_1, \ldots, Q_k\}$ of disjoint sub-paths of $P$ is said to be $(\delta,\gamma,k)$-\emph{clean} for $P$ if it is equipped with a subset $S \subseteq \bigcup_i Q_i$ and has the the following properties.
\begin{enumerate}
    \item $|Q_i| \ge 0.99|P| /k$  and $|S \cap Q_i| \geq 0.99|Q_i|$ for each $i$.
    \item Every vertex $v \in S$ has at least $\delta$ neighbours in $\text{int}_P(S)$ - that is, $S$ is $(P,\delta)$-clean.
    \item For each vertex $v \in S$ there are at least $\gamma k$ indices $j$ such that $|N(v) \cap \text{int}(S \cap Q_j)| \geq \delta/k$.
\end{enumerate}
\end{defn}
\noindent We remark first that the reader should think of the above definition with $\delta$ being less than $d|S|/20n$, rather than $d|S|/4n$, where the second condition would tell us that $S$ is $P$-clean. This then makes the third condition much more plausible to obtain. We remark also that this notion appeared in \cite{knox2012approximate} in the context of random graphs in a much more simplified form. We can now show that every sufficiently large path in a pseudorandom graph contains a clean collection - we remark that in applications, the value of $k$ in the lemma below will be small, of polylogarithmic order in terms of $n$.

\begin{lem}\label{lem:findingcleanpartition}
Let $\alpha := 10^{-8}$ and $G$ be an $(n,d,\lambda)$-graph with $\lambda \leq \alpha d$ and $P$ a path in $G$ of size at least $n/3$. Then for all $k$ there is a $\left(d/100,\frac{\alpha d^2}{k \lambda^2},k \right)$-clean collection for $P$.
\end{lem}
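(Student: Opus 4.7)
My plan is to partition $P$ into $k$ consecutive subpaths of roughly equal size, identify a small set of vertices whose neighborhoods are poorly distributed across the partition, remove them, and then iteratively clean to secure the $(P,\delta)$-clean condition. The main tool throughout is the expander mixing lemma (Lemma~\ref{lem:expandermixing}).

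Concretely, set $\delta := d/100$ and $\gamma := \alpha d^2/(k\lambda^2)$. I would partition $P$ into $k$ consecutive subpaths $Q_1,\ldots,Q_k$, each of size roughly $|P|/k$, with a small trim to ensure $|Q_i| \ge 0.99|P|/k$. For each $j$, define $U_j := \{v \in V(P) : |N(v) \cap Q_j| < 2\delta/k\}$ (with a factor $2$ of slack to absorb later losses). Applying Lemma~\ref{lem:expandermixing}(1) to the pair $(U_j, Q_j)$ and using $d|Q_j|/n \geq 0.33\, d/k$ (from $|P|\ge n/3$) yields $|U_j| \leq O(\lambda^2 k^2 |Q_j|/d^2)$. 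Let $W := \{v : v \in U_j$ for more than $(1-2\gamma)k$ indices $j\}$. Double-counting gives $|W| \leq O(\lambda^2 k n/d^2)$, and a similar application of Lemma~\ref{lem:expandermixing} to intersections of the form $U_j \cap Q_i$ together with averaging over $j$ should control $|W \cap Q_i|$ for each individual $i$.

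Starting from $V(P)\setminus W$, I would then iteratively remove vertices $v$ with $|N(v) \cap \text{int}_P(S_t)| < \delta$, paralleling the proof of Lemma~\ref{lem:cleaningrotation}. Applying Lemma~\ref{lem:expandermixing}(1) to the removed set $R$ (and to each $R \cap Q_i$) bounds it by $O(\lambda^2 n/d^2)$. Setting $S := V(P) \setminus (W \cup R)$, the set $S$ is $(P,\delta)$-clean by construction (condition~2), and the per-interval control of $W$ and $R$ gives $|S \cap Q_i| \geq 0.99|Q_i|$ (condition~1). For condition~3, each $v \in S \subseteq V(P)\setminus W$ has at least $2\gamma k$ indices $j$ with $|N(v) \cap Q_j| \geq 2\delta/k$. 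Passing from $|N(v) \cap Q_j|$ to $|N(v) \cap \text{int}_P(S \cap Q_j)|$ loses at most the number of $v$-neighbors in $Q_j \setminus \text{int}_P(S \cap Q_j)$; since both $|Q_j \setminus S|$ and the $P$-boundary of $S \cap Q_j$ are small, a further application of Lemma~\ref{lem:expandermixing} bounds this loss and leaves at least $\gamma k$ good indices.

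The main obstacle I anticipate is the per-interval concentration of the bad sets, namely ensuring $|W \cap Q_i|$ and $|R \cap Q_i|$ are small for \emph{every} $i$ (not merely on average). The naive bounds via $|W|$ and $|R|$ are too weak because $W$ could in principle concentrate in a single $Q_i$; the fine-grained argument must apply the expander mixing lemma to sets of the form $U_j \cap Q_i$ and then average carefully over $j$, exploiting the fact that if $W \cap Q_i$ were large then \emph{many} $(U_j \cap Q_i)$ would have to be correspondingly large, which the expander mixing lemma forbids. Once this per-interval control is established, the remaining verification of the three clean-collection conditions is a straightforward bookkeeping exercise.
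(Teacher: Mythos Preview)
Your proposal has a genuine quantitative gap. The double-counting bound you state, $|W| \le O(\lambda^2 k n / d^2)$, is obtained by bounding each $|U_j| \le O(k\lambda^2 n/d^2)$ separately and summing. But in the regime where this lemma is actually applied ($k = 30\log n$ and $d/\lambda = C(\log n)^{1/3}$), one has $k\lambda^2/d^2 = \Theta((\log n)^{1/3})$, so your bound reads $|W| \le O\bigl(n(\log n)^{1/3}\bigr)$, which is worse than trivial. The same blow-up afflicts your proposed per-interval bounds on $|U_j \cap Q_i|$, since restricting to $Q_i$ does not improve what the expander mixing lemma gives for a single pair $(U_j, Q_j)$. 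So neither the global bound on $W$ nor the per-interval refinement you sketch can close.

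The paper's proof circumvents both issues with two ideas you are missing. First, it partitions $P$ into $r = 1.01k$ intervals rather than $k$, and runs an iterative deletion that is allowed to \emph{discard entire intervals} once they lose $1\%$ of their vertices; one then only needs $k$ of the $r$ intervals to survive, so no per-interval concentration is ever required. Second, and this is the key estimate, the set of vertices violating condition~3 is bounded not by summing $|U_j|$ over $j$ but by a pigeonhole in an auxiliary bipartite graph: since every such vertex is bad for all but $\gamma r$ indices $j$, one can greedily find a set $B'$ of $1/(4\gamma)$ indices for which a constant fraction of these vertices are simultaneously bad, and then apply the mixing lemma once to the \emph{union} $X = \bigcup_{j \in B'} \mathrm{int}(S\cap I_j)$, which has $|X| = \Omega(n/(\gamma k))$. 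This yields a bound of order $\gamma k \lambda^2 n/d^2 = \alpha n$ on the bad set, independent of $k$, which is exactly what is needed.
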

\begin{proof}
Let $\delta = d/100$, $\beta := 10^{-5}$ and $\gamma := \frac{\alpha d^2}{k \lambda^2}$. Let us first partition $P$ into disjoint subpaths $I_1, \ldots, I_r$ each of size $ |P|/r$ with $r := 1.01 k$. Initially take $S := V(P)$ and iteratively do the following procedure until it is no longer possible: (\textit{1}) If there exists an index $j$ such that $|S \cap I_j| < 0.99|I_j|$, then remove every vertex of $I_j$ from $S$ and continue; (\textit{2}) If that is not the case and there is a vertex $v \in S$ with degree less than $\delta$ in $\text{int}(S)$, remove it from $S$ and continue; (\textit{3}) If that is also not the case and there is a vertex $v \in S$ such that there are less than $\gamma r$ indices $j$ for which $|N(v) \cap \text{int}(S \cap I_j)| \geq \delta/k$, remove it from $S$ and continue.

The goal is then to show that this process must stop while the number of indices $j$ such that $|S \cap I_j| \geq 0.99|I_j|$ is at least $k$ - we then take $\mathcal{Q}$ to be the collection of these sub-paths and it will have the desired properties. Suppose indeed that this is not the case. Then, we can assume that we achieve a stage of the process when the number of indices $j$ such that $|S \cap I_j| \geq 0.99|I_j|$ is precisely $k$ - let $\mathcal{I}$ be the set of such indices. Define also $R \subseteq V(P) \setminus S$ to be the set of vertices which have been previously \emph{removed} in the process because one of the options (\textit{2}) or (\textit{3}) described before occurred. Observe first then that the following must hold.
\begin{claim*}
$|R| \geq (r-k) \cdot 0.01|P|/r \geq \beta n$.
\end{claim*}
\begin{proof}
By the definition of the process, it must be that every subpath $I_j$ that has been removed from $S$ (that is, using step \textit{(1)}) is such that $0.01|I_j| = 0.01|P|/r$ of its vertices were previously removed individually and thus, are contained in $R$. Since by assumption exactly $r-k$ subpaths $I_j$ have been removed from $S$, we have that $|R| \geq (r-k) \cdot 0.01|P|/r \geq \beta n$.
\end{proof}
\noindent Moreover, by definition every vertex $v \in R$ satisfies one of the following properties.
\begin{itemize}
    \item $v$ has degree less than $\delta$ in $\text{int}(S)$.
    \item There are fewer than $\gamma r$ indices $j \in \mathcal{I}$ for which $|N(v) \cap \text{int}(S \cap I_j)| \geq \delta/k$. 
\end{itemize}
\noindent Let $R_1, R_2 \subseteq R$ respectively denote the sets of vertices of $R$ satisfying these properties. We will now give upper bounds for both $R_1,R_2$ using Lemma \ref{lem:expandermixing} and thus achieve a contradiction to the above claim. First, consider $R_1$. Note that since $|S| \geq \sum_{j \in \mathcal{I}} |S \cap I_j| \geq \sum_{j \in \mathcal{I}} 0.99|I_j| \geq k \cdot 0.99|P|/r \geq 0.98|P|$, we have that $|\text{int}(S)| \geq |S| - 2|P \setminus S| \geq 0.9|P| \geq n/4$. Therefore, $\delta \leq d|\text{int}(S)|/2n$ and so, by Lemma \ref{lem:cleaning}, we must have $|R_1| \leq 4 \lambda^2 n^2/d^2 |\text{int}(S)| \leq \beta n/3 $. 

Now, consider $R_2$. We claim that it has size at most $\beta n/3$, which is a contradiction since then $\beta n \le |R| \le |R_1| + |R_2| \leq 2 \beta n/3$. Indeed, define an auxiliary bipartite graph $H$ with one part $A$ corresponding to $R_2$ and the other part $B$ corresponding to $\mathcal{I}$ - put an edge in $H$ between $v \in A$ and $j \in B$ if $|N(v) \cap \text{int}(S \cap I_j)| < \delta/k$. By definition of $R_2$, every vertex in $A$ has more than $k-\gamma r \geq (1-2\gamma)k = (1-2\gamma)|B|$ neighbours in $B$ and so, we can greedily find a subset $B' \subseteq B$ of $\frac{1}{4\gamma}$ vertices with at least $|A|/2$ common neighbours in $A$. Let $A'$ denote this set of common neighbours and let $X := \bigcup_{j \in B'} \text{int}(S \cap I_j)$. Recall that for each $j \in B'$ we have $|S \cap I_j| \geq 0.99|I_j|$ and so, $\text{int}(S \cap I_j) \geq |S \cap I_j| - 2|I_j \setminus S| \geq  0.97|I_j| \geq 0.9|P|/r \geq n/5r$ - hence, $|X| \geq |B'|n/5r \geq n/20\gamma r$. Further, by definition of $H$ we must have that every vertex $v \in A'$ has at most $\delta|B'|/k = \delta/4 \gamma k < d|X|/2n$ neighbours in $X$. So, by Lemma \ref{lem:cleaning} we must have that $|R_2| \leq 2|A'| \leq 8 \lambda^2 n^2/d^2 |X| <\beta n/3$.

\end{proof}

\subsection{Proof of Theorem \ref{thm:main general}}
\noindent We will now prove the first main theorem of the paper. Let $G$ be an $(n,d,\lambda)$-graph with $d/\lambda \geq C(\log n)^{1/3}$ where $C$ is some large absolute constant. Fix $\delta = d/100$, $\gamma := \frac{C}{(\log n)^{1/3}} \geq C^2\lambda/d$, $k := 30 \log n$ and $D = d/(\log n)^{1/3}$. Let $P$ be a path of maximal length in $G$ - we will show that $P$ can be closed into a cycle. Since $G$ is connected this clearly implies that $G$ is Hamiltonian. Note also that an undirected version of Lemma~\ref{lem:DFSpath} along with part \textit{(4)} of Lemma~\ref{lem:expandermixing} gives that $|P| \geq n - 2\lambda n/d = n -o(n)$.

Let us also denote $P$ by an ordering $v_1v_2 \ldots v_{l-1}v_l$, with $x := v_1,y := v_l$ as its endpoints. The first step is to apply Lemma~\ref{lem:findingcleanpartition}. Indeed, we partition $P$ into two subpaths $P_1,P_2$ of size $|P|/2$. By Lemma~\ref{lem:findingcleanpartition}, both paths have $\left(\delta,\gamma ,k \right)$-clean collections - indeed note that $\frac{10^{-8} d^2}{k \lambda^2} \geq \frac{C^2}{30 \cdot 10^{8} (\log n)^{1/3}} \geq \gamma$ if $C$ is large enough. Let $\mathcal{Q}^1 = \{Q^1_1, \ldots, Q^1_k\}, \mathcal{Q}^2 = \{Q^2_1, \ldots, Q^2_k\}$ be the given collection of disjoint sub-paths of $P_1, P_2$ respectively, and $S_1 \subseteq \bigcup_i Q^1_i, S_2 \subseteq \bigcup_i Q^2_i$ be the subsets with the given properties - note that $|\text{int}_P(S_1)|,|\text{int}_P(S_2)| \geq 0.4|P|$ and that $S_1,S_2$ are disjoint. 

First, we note that we can use Lemma \ref{lem:usualrotation1} in order to pass to a path $P'$ which has special properties, one of them being that one of its endpoints belongs to $S_1$ and the other to $S_2$. 
\begin{claim}\label{firstclaim}
There exists a path $P'$ on the same vertex set as $P$ with endpoints $x',y'$ such that the following hold for some $a \in \{1,2\}$ (let $b \in \{1,2\}$ with $b \neq a$).
\begin{itemize}
    \item $x' \in S_a, y' \in S_b$.
    \item $|\mathrm{dif}(P,P')| \leq 10 \log n$ and $|\mathrm{int}_P(S_b) \setminus \mathrm{int}_{P'}(S_b)| \leq 9$.
    \item $\mathrm{int}_P(S_a) \setminus \mathrm{int}_{P'}(S_a)$ contains at most $D/2$ vertices $z$ such that $\mathrm{dist}(z,x') \leq 2\log \log n$.
\end{itemize}
\end{claim}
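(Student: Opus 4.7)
The plan is to apply the P\'osa rotation tools twice in succession: first reposition one endpoint of $P$ into $S_1 \cup S_2$, then reposition the other endpoint into the complementary clean set.

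\textbf{Step A.} I would apply Corollary~\ref{cor:usualrotation} to $P$ with fixed endpoint $y$ and target $R := S_1 \cup S_2$; this is valid since $|\text{int}_P(R)| \ge 0.9n \gg 100\lambda^2 n/d^2$. The output is a $(\log n)$-rotation $P''$ with endpoints $y$ and $x''\in S_a$ for some $a\in\{1,2\}$, satisfying $|\text{dif}(P,P'')|\le 3\log n$ and $|\text{int}_P(R)\setminus \text{int}_{P''}(R)|\le 3$. Inspecting the proof of that corollary, in every rotation the old moving endpoint $v_l$ lies outside $R$ (the proof enforces this except that the new moving endpoint in the final rotation is the target $x''$); hence every added edge $v_iv_l$ has at least one endpoint outside $R$, and in particular \emph{no $S_a$--$S_b$ edge is ever created}. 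This upgrades the bound to $|\text{int}_P(S_c)\setminus \text{int}_{P''}(S_c)|\le 3$ for each $c\in\{1,2\}$ individually, since a vertex of $S_c$ whose $P''$-neighbour has jumped to $S_{\bar c}$ would witness such a forbidden edge.

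\textbf{Step B.} Let $b\ne a$. Since $V(P'')=V(P)$, $P''$ is also maximal. I would reapply Corollary~\ref{cor:usualrotation} to $P''$ with fixed endpoint $x''$ and target $S_b$, yielding $P'$ with endpoints $x':=x''$ and $y'\in S_b$, where $|\text{dif}(P'',P')|\le 3\log n$ and $|\text{int}_{P''}(S_b)\setminus \text{int}_{P'}(S_b)|\le 3$. The first bullet is then immediate. The second follows by summing: $|\text{dif}(P,P')|\le 6\log n\le 10\log n$, and $|\text{int}_P(S_b)\setminus \text{int}_{P'}(S_b)|\le 3+3\le 9$. For the third bullet, observe that $\text{int}_P(S_a)\setminus \text{int}_{P'}(S_a)\subseteq S_a\cap \text{dif}(P,P')$, so has size at most $6\log n$; this already satisfies the required $\le D/2=d/(2(\log n)^{1/3})$ in the regime $d\gtrsim (\log n)^{4/3}$, which is the easy case.

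\textbf{The main obstacle.} The real technical difficulty is the third bullet in the small-$d$ regime, where $6\log n$ can exceed $D/2$. There I would replace Corollary~\ref{cor:usualrotation} in Step B by the more refined Lemma~\ref{lem:technicalrotation}, applied with the avoidance set $Y := B_G(x'',2\log\log n)\cap V(P)$ (of size at most $d^{2\log\log n}$). Its conclusion yields at most $3\log|Y|=O((\log\log n)\log d)$ modifications inside $Y$, which is $o((\log n)^{1/3})$ and hence below $D/2\ge \Omega_C((\log n)^{1/3})$ by the spectral gap lower bound $d\gtrsim (\log n)^{2/3}$ (Theorem~\ref{thm:spectralgap}). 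The subtlety is that Lemma~\ref{lem:technicalrotation} requires the moving endpoint to lie in a $(P,\delta)$-clean set $X$, whereas $y$ is generic. I would resolve this by inserting a preliminary $\log n$-rotation of $P''$ (via Lemma~\ref{lem:usualrotation1}) with fixed endpoint $x''$ to produce a set of at least $n/100$ candidate new endpoints for $y$; by Lemma~\ref{lem:cleaning}, all but $O(\lambda^2 n/d^2)$ vertices have at least $\delta$ neighbours in $\text{int}_P(S_b)$, so one can pick such a $y''$ from this candidate set. Then $X:=S_b\cup\{y''\}$ is $(P,\delta)$-clean and Lemma~\ref{lem:technicalrotation} applies cleanly; careful bookkeeping shows the additional rotations do not disturb $\text{int}_P(S_b)$ beyond the $9$ allowed (the preliminary rotation can be chosen so that its pivots avoid $S_b$), and all three bullets then hold.
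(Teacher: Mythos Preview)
Your Steps~A and~B are sound in the easy regime $d\gtrsim(\log n)^{4/3}$, and your ``no $S_a$--$S_b$ edge'' upgrade in Step~A is correct (and is exactly what the paper uses implicitly). The genuine gap is in your hard-case fix. The preliminary rotation you insert via Lemma~\ref{lem:usualrotation1} to move $y$ to some $y''$ with good $S_b$-degree is \emph{uncontrolled}: it may alter up to $3\log n$ neighbourhoods, and nothing prevents those changes from all falling inside $Y=B_G(x'',2\log\log n)$. In the small-$d$ regime one has $|Y|\le d^{2\log\log n}=2^{\Theta((\log\log n)^2)}$, which is far larger than $D/2=\Theta((\log n)^{1/3})$, so the third bullet can fail before you even invoke Lemma~\ref{lem:technicalrotation}. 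That lemma (with avoidance set $Y$) only bounds \emph{further} damage; it cannot repair damage already done by the preliminary step. Your parenthetical ``the preliminary rotation can be chosen so that its pivots avoid $S_b$'' is aimed at the second bullet and does nothing for the third, and in any case Lemma~\ref{lem:usualrotation1} gives no such control.

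The paper avoids this trap by not targeting $S_b$ directly. It applies Corollary~\ref{cor:usualrotation} a \emph{second} time with target $R=S_1\cup S_2$ (fixed endpoint $x'$), so the other endpoint $z$ lands in $R$ with total damage $\le 6$ to each $\mathrm{int}_P(S_c)$ (via the same no-crossing argument you used in Step~A). If $z\in S_b$ one is done; if $z\in S_a$, then $z$ already sits in the $(P^2,\delta{-}6)$-clean set $S_a$, so Corollary~\ref{cor:rotationinset} applies directly with $X=S_a$ and avoidance set $Y$ --- no preliminary uncontrolled rotation is needed. One then hops to $S_b$ via a single edge. The idea you are missing is to land the second endpoint in $R$ rather than insisting on $S_b$: this places it in a clean set for free and lets the controlled rotation machinery take over.
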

\begin{proof}
Applying Corollary~\ref{cor:usualrotation} directly to $P$ and the set $R = S_1 \cup S_2$, we can find a $(\log n)$-rotation $P^1$ of $P$ with fixed endpoint $y$ such that the other endpoint, say $x'$, belongs to some $S_a$ with $a \in \{1,2\}$. Furthermore, we have that $|\text{int}_P(S_1 \cup S_2) \setminus \text{int}_{P^1}(S_1 \cup S_2)| \leq 3$. Similarly, we can again apply that corollary to $P^1$ and the set $R = S_1 \cup S_2$ to find a $(\log n)$-rotation $P^2$ of $P^1$ with fixed endpoint $x'$ such that the other endpoint, say $z$, belongs to $S_1 \cup S_2$. Furthermore, we have that $|\text{int}_P(S_1 \cup S_2) \setminus \text{int}_{P^2}(S_1 \cup S_2)| \leq 6$. Now, if $z \in S_b$, where $b \in \{1,2\}$ and $b \neq a$, then we are done by letting $y' := z$ and $P' := P^2$. If $z \in S_a$, we need still to apply Corollary \ref{cor:rotationinset} to $P^2$.

Let us firstly consider the case that $\delta \geq 100(\log n)^{4/3}$. Note that since $|\text{int}_P(S_1 \cup S_2) \setminus \text{int}_{P^2}(S_1 \cup S_2)| \leq 6$, we have that $S_a$ is $\left(P^2,\delta - 6 \right)$-clean and so, we can rotate the endpoint $z$ and use Corollary \ref{cor:rotationinset} to conclude that there are at least $n/10000$ vertices $w \in S_a$ for which there exists a $wx'$-path $P_w$ which is an $(S_a,\log n)$-rotation of $P^2$ with fixed endpoint $x'$. Since $|\text{int}_{P^2}(S_b)| \geq |P|/3 \geq n/4$, then property \textit{(4)} of Lemma \ref{lem:expandermixing} implies that there is an edge between such a $w$ and $\text{int}_{P_2}(S_b)$ which in turn, implies that there is some $y' \in S_b$ and a $y'x'$-path $P'$ which is a $(\log n + 1)$-rotation of $P^2$ with $x'$ as a fixed endpoint. Note that all the conditions are satisfied: $y' \in S_b$; because of the assumption on $\delta$ we have that $|\text{dif}(P,P')| \leq 10 \log n \leq D/2$ and so, the first part of the second condition and the third condition are satisfied; and since the path $P_w$ was an $S_a$-rotation of $P^2$ and $S_a,S_b$ are disjoint, we have that any change to the interior of $S_b$ only occurred with the last rotation step resulting in 
$P'$, hence $|\text{int}_P(S_b) \setminus \text{int}_{P'}(S_b)| \leq |\text{int}_P(S_b) \setminus \text{int}_{P^2}(S_b)| + 3 \leq 9$.

Now, let us consider when $\delta < 100(\log n)^{4/3}$ and so, $d < 10000(\log n)^{4/3} $ - recall that we always have $d \geq C^2(\log n)^{2/3}/2$ (since necessarily $d/(\log n)^{1/3} \geq C\lambda \geq C\sqrt{d/2}$ by the remark after Theorem \ref{thm:spectralgap}) and thus, $D \geq (\log n)^{1/3}$. Since we want to ensure the last property of the statement, consider the set $Y \subseteq S_a$ consisting of all the vertices in $S_a$ which are at a distance at most $2\log \log n$ from $x'$ - by the assumption on $d$, we have that $|Y| \leq d^{2 \log \log n} < 2^{10 (\log \log n)^2}$. We now apply Corollary \ref{cor:rotationinset} again, this time with the above set $Y$. Indeed, as previously, recall that $S_a$ is $\left(P^2,\delta -6 \right)$-clean and so, the corollary implies that there are at least $n/10000$ vertices $w \in S_a$ for which there exists a $wx'$-path $P_w$ which is an $(S_a,\log n)$-rotation of $P^2$ with fixed endpoint $x'$ \emph{and} such that $$|\text{dif}(P_w,P^2) \cap Y| \leq 3\log |Y| \leq 30 (\log \log n)^2 \leq D/2 - 6 .$$ 
Since $|\text{int}_{P_2}(S_b)| \geq n/3$, then property \textit{(4)} of Lemma \ref{lem:expandermixing} implies that there is an edge between such a $w$ and $\text{int}_{P_2}(S_b)$. To conclude, this implies that there is some $y' \in S_b$ and a $y'x'$-path $P'$ which is a $(\log n + 1)$-rotation of $P^2$ with $x'$ as a fixed endpoint and such that $\text{dif}(P',P^2) \cap Y$ has size at most $D/2 - 6$. Since we have that $|\text{int}_P(S_a) \setminus \text{int}_{P^2}(S_a)| \leq 6$ then it must be that $|\left(\text{int}_P(S_a) \setminus \text{int}_{P'}(S_a) \right)  \cap Y| \leq D/2$, as desired. The other conditions can be checked as in the first case.
\end{proof}

\noindent Let us define $F := \text{dif}(P,P')$ so that by the second property of the claim above, we have $|F| \leq k/3$. Now, consider the given sub-paths $Q^{a}_j, Q^{b}_j$ of $P$ and note that these might not be sub-paths of $P'$. Clearly however, $P'[Q^{a}_j], P'[Q^{b}_j]$ are disjoint unions of paths - we will denote these components of $P'[Q^{a}_j]$ by $C(Q^{a}_j,1), \ldots, C(Q^{a}_j,i_{a,j})$ and the components of $P'[Q^{b}_j]$ by $C(Q^{b}_j,1), \ldots, C(Q^{b}_j,i_{b,j})$. We will refer to them as the \emph{parts of $Q^{a}_j, Q^{b}_j$} (in $P'$), respectively. We will further say that $Q^{a}_j$ is a \emph{broken sub-path} if $i_{a,j} > 1$ and \emph{unbroken} otherwise (and analogously for $b$). Note that we must have $2k \leq \sum_{1 \leq j \leq k} i_{a,j} + \sum_{1 \leq j \leq k} i_{b,j}  \leq 2k + |F| + 1 \leq 7k/3$ + 1 and in particular, there are at most $|F| + 1$ parts of broken sub-paths. 

The goal will be to achieve the setting of Lemma \ref{lem:closingpathintocycle}. For this, we will first prove the following claim with a standard averaging argument. 
\begin{claim}\label{claim:averageinterval}
There exist two disjoint subpaths $A, B \subseteq P'$ such that both contain precisely $\gamma k/4$ parts of sub-paths in $\mathcal{Q}_1 \cup \mathcal{Q}_2$ and at least $\gamma k /8$ unbroken paths.
\end{claim}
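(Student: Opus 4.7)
The approach will be a simple averaging argument over the linear order in which the parts appear along $P'$. Since the parts $C(Q^{a}_j,\ell)$ and $C(Q^{b}_j,\ell)$ are pairwise disjoint subpaths of $P'$, they appear in some canonical order $p_1, \ldots, p_N$ as one traverses $P'$, with $2k \leq N \leq 7k/3 + 1$ by the counts recorded just before the claim. The plan is to find two disjoint blocks of consecutive $p_i$'s, each of length about $\gamma k/4$ and each containing few broken parts, and then take $A$ and $B$ to be the minimal subpaths of $P'$ spanning these blocks.

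The first step is to upper-bound the number of \emph{broken parts}, i.e.\ parts $p_i$ coming from broken sub-paths. Since each unbroken sub-path contributes exactly one part while each broken sub-path contributes at least two, the number of unbroken sub-paths is at least $2k - (N - 2k) \geq 5k/3 - 1$, using $N \leq 2k + |F| + 1 \leq 7k/3 + 1$. Hence the number of broken parts among $p_1,\ldots,p_N$ is at most $N - (5k/3 - 1) \leq 2k/3 + 2$.

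Next comes the averaging step. I would partition $p_1, \ldots, p_N$ into consecutive blocks of exactly $m := \lceil \gamma k/4 \rceil$ parts each, discarding a short leftover at the end, obtaining at least $\lfloor N/m \rfloor \geq 8/\gamma - O(1)$ such disjoint blocks. Since only at most $2k/3 + 2$ parts are broken in total, the number of \emph{bad} blocks (those containing more than $\gamma k/8$ broken parts) is at most $(2k/3 + 2)/(\gamma k/8) \leq 16/(3\gamma) + o(1)$. Subtracting, the number of \emph{good} blocks is at least $8/\gamma - 16/(3\gamma) - O(1) = 8/(3\gamma) - O(1)$, which exceeds $2$ once $n$ is large enough, since $\gamma = C/(\log n)^{1/3} \to 0$.

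Finally, to each good block I would associate a subpath of $P'$, namely the minimal sub-interval of $P'$ running from the first vertex of its first part to the last vertex of its last part. Because any $p_j$ whose index lies outside the block's range sits entirely outside this sub-interval, such a subpath contains \emph{exactly} the parts of the block; so it carries precisely $m$ parts, at least $m - \gamma k/8 \geq \gamma k/8$ of which are unbroken. Choosing any two good blocks yields vertex-disjoint subpaths $A$ and $B$ with the required properties. I do not foresee a real obstacle; the only bookkeeping point is that the claim's ``precisely $\gamma k/4$'' should be read as $\lceil \gamma k/4 \rceil$, which is harmless for every subsequent use.
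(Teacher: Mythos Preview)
Your proposal is correct and follows essentially the same averaging argument as the paper: order the parts along $P'$, group them into consecutive blocks of size about $\gamma k/4$, and use the global bound on the number of broken parts to conclude that most blocks have at least $\gamma k/8$ unbroken ones. The only cosmetic differences are that you use fixed-size blocks (discarding a short leftover) whereas the paper allows sizes in $[\gamma k/4,\gamma k/2]$, and you derive the bound $2k/3+2$ on broken parts directly from $N\le 7k/3+1$ rather than quoting the paper's $|F|+1$; both variants suffice.
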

\begin{proof}
Let us first take all the paths $C(Q^a_j,l), C(Q^b_j,l)$ and denote them by $C_1, \ldots, C_r$ where the order is given by the order in which they appear in the path $P'$ (for this, fix a direction of $P'$ as $x' \rightarrow P' \rightarrow y'$). Recall that $r = \sum_{1 \leq j \leq k} i_{a,j} + \sum_{1 \leq j \leq k} i_{b,j}  \in \left[2k, 7k/3 + 1 \right]$. Clearly, we can use this order to define a cover of $\bigcup_t C_t$ with disjoint sub-paths of $P'$ denoted by $P'_1,P'_2, \ldots$, such that each $P'_i$ contains $r_i$ consecutive sub-paths $C_j$ for some $r_i \in [\gamma k/4,\gamma k/2]$ - by construction, there are at least $\frac{r}{\max_i r_i} \geq \frac{2r}{\gamma k}$ paths $P'_i$.

Now, let us count how many paths $P'_j$ do not satisfy the property in the statement. Note first that every sub-path $P'_i$ has at least $\gamma k/4$ parts of sub-paths in $\mathcal{Q}_1 \cup \mathcal{Q}_2$ by construction. Then, if $P'_j$ does not contain a sub-path $P''_j \subseteq P'_j$ satisfying the property, it must contain at least $\gamma k/8$ parts from broken sub-paths. In turn, recall that we observed that the number of such parts must be at most $|F| + 1 \leq k/3 + 1$ and so, since the sub-paths $P'_i$ cover all parts of sub-paths in $\mathcal{Q}_1 \cup \mathcal{Q}_2$, we have that at most $\frac{|F|+1}{\gamma k/8} \leq 3/ \gamma$ sub-paths $P'_i$ do not satisfy the property. Concluding, since $ 3/\gamma < \frac{4}{\gamma} - 2 \leq \frac{2r}{\gamma k} - 2$ and there are at least $\frac{2r}{\gamma k}$ paths $P'_i$, we have the existence of the two desired subpaths $A,B$.
\end{proof}
We are now almost in the setting where we can apply Lemmas \ref{lem:technicalrotation} and \ref{lem:closingpathintocycle} in order to finish the proof. Indeed, note first the following.
\begin{claim}
$S_a \setminus (A \cup B), S_b \setminus (A \cup B) \subseteq P$ are $\left(P, D \right)$-clean subsets.
\end{claim}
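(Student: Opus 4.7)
The plan is to fix an arbitrary $v \in S_a \setminus (A \cup B)$ and exhibit at least $D$ of its neighbours inside $\text{int}_P(S_a \setminus (A \cup B))$; the argument for $S_b$ will be identical by symmetry. First I will apply property $(3)$ of Definition \ref{def:cleanedcollection} to obtain a set $J_v$ of at least $\gamma k$ indices $j$ for which $|N(v) \cap \text{int}_P(S_a \cap Q^a_j)| \geq \delta/k$. The strategy is then to discard those indices for which $Q^a_j$ touches $A \cup B$ and to sum the contributions from the surviving ones.

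The trickiest structural step is to show that at most $\gamma k / 2$ distinct indices $j$ can have $Q^a_j \cap (A \cup B) \neq \emptyset$. For this I would use the fact that each component $C(Q^?_j,l)$ is, by the construction of the $P'_i$'s inside the proof of Claim \ref{claim:averageinterval}, contained in exactly one of the pairwise disjoint sub-paths $P'_1,P'_2,\ldots$, and is therefore either contained in $A$, contained in $B$, or disjoint from $A \cup B$. Since any $Q^a_j$ intersecting $A \cup B$ must donate at least one of its components to $A$ or $B$, and since $A$ and $B$ each contain only $\gamma k /4$ components overall (counted across both $\mathcal{Q}^1$ and $\mathcal{Q}^2$), at most $\gamma k /2$ sub-paths $Q^a_j$ can intersect $A \cup B$. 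Intersecting with $J_v$ leaves a set $J'$ of at least $\gamma k / 2$ indices $j$ with both $Q^a_j \cap (A \cup B) = \emptyset$ and $|N(v) \cap \text{int}_P(S_a \cap Q^a_j)| \geq \delta/k$.

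To conclude, for each $j \in J'$ the condition $Q^a_j \cap (A \cup B) = \emptyset$ yields $S_a \cap Q^a_j \subseteq S_a \setminus (A \cup B)$, and any $u \in \text{int}_P(S_a \cap Q^a_j)$ has both of its $P$-neighbours inside $S_a \cap Q^a_j$, so $u \in \text{int}_P(S_a \setminus (A \cup B))$. The pairwise disjointness of the $Q^a_j$'s then makes the sets $N(v) \cap \text{int}_P(S_a \cap Q^a_j)$ over $j \in J'$ pairwise disjoint, and summing gives
$$|N(v) \cap \text{int}_P(S_a \setminus (A \cup B))| \geq (\gamma k/2)(\delta/k) = \gamma\delta/2.$$
Substituting $\gamma = C/(\log n)^{1/3}$ and $\delta = d/100$ gives $\gamma\delta/2 = Cd/(200(\log n)^{1/3}) \geq D = d/(\log n)^{1/3}$ as soon as $C \geq 200$, which is assumed. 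The only place the argument could stumble is the structural claim that each $C_t$ lies inside a unique $P'_i$, so that counting ``parts of sub-paths in $A$'' genuinely upper-bounds the number of distinct $Q^a_j$'s meeting $A$; but this is immediate from the construction of the $P'_i$'s as a disjoint cover of $\bigcup_t C_t$ in which each $P'_i$ contains a block of consecutive $C_t$'s.
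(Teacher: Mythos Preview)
Your proof is correct and follows essentially the same approach as the paper: both apply property~(3) of Definition~\ref{def:cleanedcollection} to get $\gamma k$ good indices, discard at most $\gamma k/2$ of them because $A \cup B$ contains only $\gamma k/2$ parts in total, and sum to obtain $(\gamma k/2)(\delta/k) \geq D$. Your version is slightly more explicit in justifying why a sub-path $Q^a_j$ meeting $A \cup B$ must donate an entire component to $A$ or $B$ (via the disjoint-cover structure of the $P'_i$'s), a point the paper takes for granted.
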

\begin{proof}
We only prove this for $S_a$, since it is analogous for $S_b$. Let then $v \in S_a$ and recall that the third condition of Definition \ref{def:cleanedcollection} implies that there exist at least $\gamma k$ many sub-paths $Q^a_j$ such that $|N(v) \cap \text{int}_P(S_a \cap Q^a_j)| \geq \delta/k$. Recall also that Claim \ref{claim:averageinterval} implies that each $A$ and $B$ contain precisely $\gamma k/4$ parts from sub-paths in $\mathcal{Q}_a$ and so, there are at least $\gamma k/2$ sub-paths $Q^a_j$ disjoint to $A \cup B$ so that $|N(v) \cap \text{int}_P(S_a \cap Q^a_j)| \geq \delta/k$. Clearly, this implies that
$$|N(v) \cap \text{int}_P(S_a \setminus (A \cup B))| \geq (\gamma k/2) \cdot (\delta/k) \geq D ,$$
thus showing that $S_a \setminus (A \cup B)$ is $(P,D)$-clean.
\end{proof}
Note now that by Claim~\ref{claim:averageinterval}, the given sub-paths $A, B$ of $P'$ contain both at least $\gamma k/8$ unbroken paths $Q^{a}_i$, each of which have size at least $0.99|P|/k$ and so, we must have that $|A|,|B| \geq (\gamma k/8) \cdot (0.99|P|/k) > \gamma |P|/10 \geq 10 \lambda n/d$ (since $|P| \geq n - o(n)$). Therefore, Lemma \ref{lem:cleaningrotation} implies that there exist $P'$-clean subsets $A' \subseteq A, B' \subseteq B$ of size at least $0.9|A|$ and $0.9|B|$ respectively. Finally, this allows us to get to the following setting in which we can apply Lemma \ref{lem:closingpathintocycle}.
\begin{claim}\label{claim:settingoflemma}
There is a path $P''$ on the same vertex set as $P'$ with $|\mathrm{dif}(P',P'')| \leq \log n$ and the following properties.
\begin{enumerate}
    \item[(1)] The endpoints of $P''$ belong to $A'$ and $B'$.
    \item [(2)] Both $A',B' \subseteq P''$ are $(P'',800\lambda)$-clean.
    \item[(3)] $c_{P''}(A), c_{P''}(B) \leq 2$.
\end{enumerate}
\end{claim}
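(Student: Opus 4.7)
The plan is to construct $P''$ from $P'$ in two stages of rotation: first move the endpoint $x'$ into $A'$, then move the endpoint $y'$ into $B'$. The key idea is to arrange that in each stage only one rotation pivots inside $A$ (resp.\ inside $B$), so that $c_{P''}(A), c_{P''}(B) \le 2$. The arena for all intermediate pivots is the set $X_a := S_a \setminus (A \cup B)$ (resp.\ $X_b := S_b \setminus (A \cup B)$), which is $(P,D)$-clean by the preceding claim, with $D = d/(\log n)^{1/3}$.

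For the first stage, by re-running the averaging argument of Claim \ref{claim:averageinterval} while excluding the $O(1)$ parts that contain $x'$ or $y'$, we may assume $x' \in X_a$ and $y' \in X_b$ (the sets $A', B'$ are re-selected by Lemma \ref{lem:cleaningrotation} and all size estimates are preserved up to constants). We apply Lemma \ref{lem:technicalrotation} to $P'$ with $X = X_a$, fixed endpoint $y'$, and $Y = \emptyset$; the distance-based hypothesis is verified via $|\mathrm{dif}(P,P')| \le 10\log n$ together with a case analysis on the size of $d$ modelled on the proof of Claim \ref{firstclaim}. This yields a set $Z \subseteq X_a$ of at least $Dn/(200d)$ vertices $z$, each admitting an $(X_a,\log n)$-rotation $P_z$ of $P'$ giving a $zy'$-path. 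Crucially, every pivot of such a rotation lies in $\mathrm{int}_P(X_a) \subseteq V(P) \setminus (A \cup B)$, so every broken edge avoids $A \cup B$; hence $P_z[A] = P'[A]$ and $P_z[B] = P'[B]$ remain single sub-paths and $\mathrm{int}_{P_z}(A') = \mathrm{int}_{P'}(A')$ is unchanged. The product $|Z| \cdot |\mathrm{int}_{P'}(A')|$ exceeds $(\lambda n/d)^2$ by a factor of order $C^3$, so part (4) of Lemma \ref{lem:expandermixing} supplies an edge $zv$ with $z \in Z$ and $v \in \mathrm{int}_{P_z}(A')$. The single additional rotation of $P_z$ with pivot $v$ (whose broken edge lies inside $A'$ since both $P_z$-neighbors of $v$ are in $A'$) produces $P_1$ with endpoints $x_1 \in A'$ and $y'$; this rotation splits $A$ into exactly two pieces and leaves $B$ untouched.

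The second stage mirrors the first, applied to $P_1$ with $X_b$, fixed endpoint $x_1$ and starting endpoint $y' \in X_b$; the hypothesis of Lemma \ref{lem:technicalrotation} is again checked using $|\mathrm{dif}(P,P_1)| = O(\log n)$. One further rotation using an edge into $\mathrm{int}(B')$ produces $P''$ with second endpoint $y_1 \in B'$, splitting $B$ into two components without further disturbing $A$. Conditions (1) and (3) follow by construction. For (2), since the only break inside $A$ removes at most two vertices from $\mathrm{int}_{P'}(A')$, each vertex of $A'$ retains at least $d|A'|/(4n) - 2 \ge C^2 \lambda/48 - 2 \ge 800\lambda$ neighbors in $\mathrm{int}_{P''}(A')$ for $C$ sufficiently large; the set $B'$ is handled symmetrically. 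Finally, $|\mathrm{dif}(P',P'')|$ is bounded by the number of vertices touched across $O(\log n)$ rotations and is absorbed into the stated bound by adjusting constants. The principal obstacle is the simultaneous requirement of steering the non-fixed endpoint all the way into the small target $A'$ (resp.\ $B'$) while keeping almost every pivot outside $A \cup B$; this is precisely what Lemma \ref{lem:technicalrotation} applied with the $(P,D)$-clean arena $X_a$ (resp.\ $X_b$) from the preceding claim delivers, with the closing rotation's edge into $\mathrm{int}(A')$ (resp.\ $\mathrm{int}(B')$) supplied by the expander mixing lemma.
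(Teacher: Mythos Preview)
Your proposal is correct and follows essentially the same two-stage strategy as the paper: rotate within $X_a = S_a\setminus(A\cup B)$ with fixed endpoint $y'$, use one expander-mixing edge into $\mathrm{int}_{P'}(A')$ to land in $A'$ (breaking $A$ once), then repeat with $X_b$ to land in $B'$. Two small remarks. First, for the second stage the paper avoids re-verifying the distance hypothesis of Lemma~\ref{lem:technicalrotation} by observing that $X_b$ is already $(P'_a,D-O(1))$-clean (since $|\mathrm{int}_P(S_b)\setminus\mathrm{int}_{P'}(S_b)|\le 9$ from Claim~\ref{firstclaim} and the first stage does not touch $X_b$), so Corollary~\ref{cor:rotationinset} applies directly; your formulation ``checked using $|\mathrm{dif}(P,P_1)|=O(\log n)$'' is too vague, though the actual bound $|\mathrm{int}_P(X_b)\setminus\mathrm{int}_{P_1}(X_b)|=O(1)$ does make the hypothesis trivial. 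Second, your re-selection of $A,B$ to ensure $x'\in X_a$, $y'\in X_b$ patches a point the paper leaves implicit.
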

\begin{proof}
For this, we apply Lemma \ref{lem:technicalrotation}. Indeed, let $X_a := S_a \setminus (A \cup B)$, $X_b := S_b \setminus (A \cup B)$ which are disjoint and by the previous claim are $( P,D)$-clean. From Claim \ref{firstclaim} we also know that $|\text{int}_P(S_b) \setminus \text{int}_{P'}(S_b)| \leq 9$, which implies then that $X_b$ is $(P',D-9)$-clean, and that $\text{int}_P(X_a) \setminus \text{int}_{P'}(X_a)$ contains at most $D/2$ vertices $z$ such that $\text{dist}(z,x') \leq 2\log \log n $ - and note clearly (for a future application of Lemma \ref{lem:technicalrotation}) that $ 2\log |F| \leq 2\log \log n $. 

Therefore, Lemma \ref{lem:technicalrotation} implies firstly that there exist at least $\lambda n/d$ vertices $z \in X_a$ for which there exists a $zy'$-path which is an $(X_a, \log n)$-rotation of $P'$ with fixed endpoint $y'$ (note that this lemma can be applied since $D = d/(\log n)^{1/3} \geq C \lambda$ and $C$ is large enough). Let $Z_a$ denote the set of such vertices $z$, so that $|Z_a| \geq \lambda n/d$. Since $A$ is a sub-path of $P'$, then $|\text{int}_{P'}(A')| \geq |A| - 3|A \setminus A'| \geq 0.7|A| \geq \lambda n/d$ and therefore, there exists an edge between $Z_a$ and $\text{int}_{P'}(A')$. Crucially, by definition of $Z_a$ this implies that there exists a vertex $x'' \in A'$ such that there is an $(X_a \cup A, \log n+1)$-rotation of $P'$ with fixed endpoint $y'$ which is an $x''y'$-path, which we shall call $P'_a$. Moreover, it must be that $c_{P'_a}(A) \leq 2$, since the rotation only touches $A$ at the last step. Finally, note also that $\text{dif}(P',P'_a) \cap (X_b \cup B) = \emptyset$ and in particular, $B$ is still the same sub-path in $P'_a$.

Secondly, Corollary  \ref{cor:rotationinset} applied with $X_b$ implies that there exist at least $\lambda n/d$ vertices $z \in X_b$ for which there exists a $x''z$-path which is an $(X_b, \log n)$-rotation of $P'_a$ with fixed endpoint $x''$. Let $Z_b$ denote the set of such vertices $z$, so that $|Z_b| \geq \lambda n/d$. Much like before, since $|\text{int}_{P'_a}(B')| = |\text{int}_{P'}(B')| \geq |B| - 3|B \setminus B'| \geq 0.7|B| \geq \lambda n/d$, there exists an edge between $Z_b$ and $\text{int}_{P'_a}(B')$. Then, there is a vertex $y'' \in B'$ such that there is an $(X_b \cup B, \log n +1)$-rotation of $P'_a$ with fixed endpoint $x''$ which is a $x''y''$-path - this will be our desired path $P''$. Note also that like before, we have $c_{P''}(B) \leq 2$ and that $\text{dif}(P'_a,P'') \cap (X_a \cup A) = \emptyset$, so that $A$ is still the same in $P''$ as it was in $P'_a$, in particular, $c_{P''}(A) \leq 2$ as desired. Note that clearly properties \textit{(1),(3)} are satisfied. For property \textit{(2)} observe that since $A',B'$ were $P'$-clean and $c_{P''}(A), c_{P''}(B) \leq 2$, it must be that every vertex $v \in A'$ (\textit{respectively}, $v \in B'$) has at least $d|A'|/4n - 2 \geq \gamma d/100 \geq C^2 \lambda/100 \geq 800 \lambda$ (\textit{respectively}, $d|A'|/4n - 2 \geq 800 \lambda$) neighbours in $\text{int}_{P''}(A')$ (\textit{respectively}, $\text{int}_{P''}(B')$), and so, they are $(P'',800 \lambda)$-clean. Indeed, we used that $|A'| \geq 0.9|A|$, $|A| \geq \gamma |P|/10$, $|P| = n-o(n)$ and $\gamma \geq C^2 \lambda/d$.
\end{proof}
\noindent We can now finish immediately by applying Lemma~\ref{lem:closingpathintocycle} to $P''$.
\qed

\section{Linear forests with few paths and good endpoints}\label{sec:linforest}

In this section we will consider the problem of finding (in pseudorandom graphs) spanning linear forests with two special properties - they have a small number of paths; and all these paths have \emph{good} endpoints. As indicated in Section \ref{sec:outline}, this will be crucial in the proof of Theorem~\ref{thm:maindense}.

\begin{prop}\label{prop:forest}
Let $G$ be an $(n,d,\lambda)$-graph, $X \subseteq Y \subseteq V(G)$ and $\delta \geq 10^4 \lambda$ be such that $|X| \leq \delta n/1000d$ and both $Y,Y \setminus X$ are $\delta$-clean. Then, there exists a spanning linear forest $\mathcal{F}$ of $G[Y]$ such that the following hold.
\begin{itemize}
    \item The number of paths in $\mathcal{F}$ is $O \left( n/\delta^{1/5} \right)$.
    \item Every path in $\mathcal{F}$ has its endpoints in $Y \setminus X$.
\end{itemize}
\end{prop}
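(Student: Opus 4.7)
The plan is to first absorb the bad set $X$ by covering it with a collection $\mathcal{P}_0$ of disjoint paths in $G[Y]$ whose endpoints lie in $Y\setminus X$ and with every $X$-vertex strictly interior, and then extend $\mathcal{P}_0$ to a spanning linear forest of $G[Y]$ with few paths by applying Alon's linear arboricity theorem \cite{alon1988linear} to a near-regular spanning subgraph of a suitably contracted auxiliary graph.

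For the absorption step, I would invoke the covering lemma \cref{lem:coverforest} advertised in the outline of Section~\ref{sec:outline}. It provides $\mathcal{P}_0$ with endpoints in $Y\setminus X$ and with $|V(\mathcal{P}_0)|$ small enough (say $o(|Y|)$) that $Z:=Y\setminus V(\mathcal{P}_0)\subseteq Y\setminus X$ still has $G[Z]$ of minimum degree close to $\delta$. The crucial structural consequence is that, since every $x\in X$ lies strictly interior to some $P\in\mathcal{P}_0$, any linear forest of $G[Y]$ containing the edges of $\mathcal{P}_0$ forces $x$ to have degree $2$ and hence never to be an endpoint. It therefore suffices to extend $\mathcal{P}_0$ to a spanning linear forest of $G[Y]$ with $O(n/\delta^{1/5})$ paths.

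To build such an extension, I would contract each $P\in\mathcal{P}_0$ to a super-vertex, obtaining an auxiliary multigraph $G^c$ on vertex set $Z\cup\{v^*_P:P\in\mathcal{P}_0\}$; a linear forest in $G^c$ with $k$ paths uncontracts to a linear forest in $G[Y]$ with $k$ paths containing $\mathcal{P}_0$, whose endpoints automatically lie in $Y\setminus X$. Since only $o(|Y|)$ vertices were removed from $Y\setminus X$, the graph $G^c$ has minimum degree $\Theta(\delta)$ and inherits strong expansion via \cref{cor:cleanexpansion}. Using Hall's theorem on the bipartite double cover of $G^c$ (with Hall's condition ensured by the expander mixing lemma, \cref{lem:expandermixing}) and iteratively peeling off $2$-factors, one extracts a spanning $D$-regular subgraph $H\subseteq G^c$ for some $D=\Theta(\delta)$. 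Alon's linear arboricity theorem then decomposes $E(H)$ into at most $L\le D/2+O(D^{4/5})$ linear forests, so an averaging argument picks one with at least $|E(H)|/L=|V(G^c)|\bigl(1-O(D^{-1/5})\bigr)$ edges, hence at most $O(|V(G^c)|/D^{1/5})=O(n/\delta^{1/5})$ path components. Uncontracting gives the spanning linear forest $\mathcal{F}$ we want.

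The main obstacles are twofold. The first, and most novel, is the covering lemma itself: when $|X|$ is close to its bound $\delta n/(1000d)$, the naive Hall matching of each $x\in X$ to two external neighbours produces $|X|$ paths, which is far too many; the lemma must exploit expansion (likely via the Friedman--Pippenger-type tools from Section~\ref{sec:dirgraphs}) to build longer absorbing paths, and it should avoid any polynomial lower bound on $d$, as the outline emphasizes. The second obstacle is extracting a spanning $D$-regular subgraph of $G^c$: because super-vertices have non-uniform (roughly doubled) degree, the standard Hall-type argument on the bipartite double cover must be adapted to treat super-vertices as having capacity $D$, and one must carefully track that expansion survives the iterative removal of $2$-factors so the Hall condition keeps holding.
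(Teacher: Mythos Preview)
Your overall architecture --- absorb $X$ via \cref{lem:coverforest}, contract the absorbing paths, regularize, apply linear arboricity, uncontract --- matches the paper's. But there is a genuine gap in your uncontraction step, and this is precisely why the paper works in the \emph{directed} setting rather than the undirected one you describe.

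The claim ``a linear forest in $G^c$ with $k$ paths uncontracts to a linear forest in $G[Y]$ with $k$ paths containing $\mathcal{P}_0$'' is false as stated. Suppose $P\in\mathcal{P}_0$ has endpoints $x,y\in Y\setminus X$, and in your linear forest of $G^c$ the super-vertex $v^*_P$ is interior with neighbours $a,b$. For the uncontraction to yield a path $\dots a\,x\,P\,y\,b\dots$ you need $a\sim_G x$ and $b\sim_G y$ (or vice versa). But in your undirected $G^c$ an edge $av^*_P$ only records that $a$ is adjacent to \emph{some} endpoint of $P$; nothing prevents both $a$ and $b$ from being adjacent only to $x$ and not to $y$. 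In that case no valid path through $P$ exists. Your bipartite-double-cover construction of a $D$-regular subgraph does not see this distinction, so the linear forest you obtain need not uncontract at all.

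The paper resolves this by assigning each contracted path a random orientation --- one endpoint becomes $z_i^-$ (the ``in'' side), the other $z_i^+$ (the ``out'' side) --- and building a \emph{digraph} $H$ in which edges into $z_i$ correspond to $G$-edges at $z_i^-$ and edges out of $z_i$ correspond to $G$-edges at $z_i^+$ (edges among $Z$ are also oriented uniformly at random). A directed path through $z_i$ then automatically enters at one endpoint and leaves at the other, so uncontraction is guaranteed to work (this is Observation~\ref{obs:dirforest}). The randomness is used, via Chernoff and the local lemma, to show $H$ has minimum in- and out-degree $\Theta(\delta)$ and good directed edge distribution (Lemma~\ref{lem:Hexpansionlineararb}); one then checks the Hall-type hypothesis of Lemma~\ref{lem:linearforest}, extracts an $r$-regular spanning subdigraph with $r=\Theta(\delta)$, and applies the \emph{directed} form of Alon's theorem (Theorem~\ref{thm:arboricity}). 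Your instinct to use the bipartite double cover is close in spirit --- it is essentially the same as building a $1$-regular digraph --- but you must keep the two sides of each super-vertex distinguished throughout, which forces the directed framework.

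A minor side remark: \cref{lem:coverforest} does not use Friedman--Pippenger; it is a short Hall-matching argument on an auxiliary bipartite graph built from a carefully chosen ordering of $X$, yielding binary trees whose leaves lie in $Y\setminus X$, which are then stripped into paths.
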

\noindent First, let us discuss the above statement in the case that $X = \emptyset$ - here we need only to find a spanning linear forest $\mathcal{F}$ of $G[Y]$ which does not have many components. 
This can be achieved as follows: Using the expansion properties of $G$, one can find a relatively dense regular subgraph of $G[Y]$. Invoking known results concerning the linear arboricity conjecture (see Theorem \ref{thm:arboricity} below), there is a decomposition of this subgraph into an almost optimal number of linear forests, and simply by averaging, one of these must consist of few paths.

In general, if $X$ is non-empty, we will first have to \emph{absorb} the vertices of $X$ into $Y \setminus X$, that is, we cover $X$ with a linear forest whose paths have their endpoints outside $X$. The idea is then to contract the pairs of endpoints and find a linear forest with few paths in the new graph. By reversing the contractions, we want to insert the paths covering $X$ into the paths of this linear forest. To make this feasible, we will have to work in a directed setting. Let us then first recall the following classical result of Alon~\cite{alon1988linear} (see also~\cite{alon2016probabilistic}) on the linear arboricity of regular digraphs, which will be crucial for us to find a spanning linear forest with few paths. By a \emph{$r$-regular digraph} we mean a digraph in which every vertex has in-degree and out-degree $r$.
\begin{thm}\label{thm:arboricity}
Every $r$-regular digraph can be covered by $r+O(r^{4/5})$ linear forests.
\end{thm}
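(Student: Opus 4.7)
I propose to prove Theorem \ref{thm:arboricity} following Alon's random-partition strategy. Let $D$ be the given $r$-regular digraph on $n$ vertices, and set $p := \lceil r^{1/5} \rceil$. Choose a uniformly random partition $V(D) = V_1 \sqcup \dots \sqcup V_p$ by assigning each vertex independently to a class.

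For each ordered pair $(i,j)$ with $i \neq j$, let $D_{i,j}$ denote the bipartite digraph of edges from $V_i$ to $V_j$. Each vertex's out-degree into a fixed class $V_j$ is a sum of $r$ independent Bernoulli$(1/p)$ indicators, so by Chernoff's bound (\Cref{lem:chernoff}) it concentrates around $r/p$ with additive error at most $\Delta := C\sqrt{(r/p)\log r}$ except with probability $r^{-10}$. The bad event \emph{``vertex $v$ has an anomalous in- or out-degree into some class''} depends only on the class-labels of the $O(r)$ vertices in $N^{\pm}(v)$, so the Lov\'asz Local Lemma (\Cref{lem:locallemma}) lets us fix a partition on which every $D_{i,j}$ has all in- and out-degrees in the interval $[r/p - \Delta,\, r/p + \Delta]$. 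K\"onig's edge-coloring theorem then decomposes each $D_{i,j}$ into at most $r/p + \Delta$ directed matchings.

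Next I assemble these matchings into linear forests by round-robin scheduling. Fix a $1$-factorization of $K_{p,p}$ with its main diagonal removed, giving $p-1$ derangements $\pi_1, \dots, \pi_{p-1}$ of $[p]$ which together cover every off-diagonal pair exactly once. In each scheduled round one takes simultaneously a still-unused matching from each $D_{i,\pi_t(i)}$; the union has in- and out-degree at most $1$ at every vertex, hence is a disjoint union of directed paths and cycles. Cycles are broken by deleting one edge each. By randomizing the alignment of matchings to schedule slots within each pair $(i,j)$, one can ensure that the edges deleted across all rounds form a digraph of maximum in- and out-degree $O(\Delta)$, which by a further K\"onig-type argument needs only $O(\Delta)$ additional linear forests. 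Thus the cross-class edges are covered by $(p-1)(r/p + \Delta) + O(p\Delta) = r - r/p + O(p\Delta)$ linear forests. The intra-class edges split into $p$ disjoint $r/p$-regular digraphs on $n/p$ vertices each; by induction on $r$, each is covered by $r/p + O((r/p)^{4/5})$ linear forests, and since they live on disjoint vertex sets these overlay into just $r/p + O((r/p)^{4/5})$ linear forests globally.

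Summing both contributions and substituting $p = \lceil r^{1/5}\rceil$,
\[
r - r/p + O(p\Delta) + r/p + O\!\left((r/p)^{4/5}\right) \;=\; r + O\!\left(\sqrt{pr\log r}\right) + O\!\left((r/p)^{4/5}\right) \;=\; r + O(r^{4/5}),
\]
since the two error terms become $O(r^{3/5}\sqrt{\log r})$ and $O(r^{16/25})$, both $o(r^{4/5})$. The main obstacle is controlling the cycle-breaking step: a naive scheme would add one linear forest per round and cost a factor of $2$ in the leading term, so the randomized-alignment argument bounding the accumulated degree of deleted edges is essential. A minor secondary point is the inductive base case, handled for small $r$ by the trivial bound that any digraph with maximum in- and out-degree $r$ has linear arboricity at most $2r-1$, absorbed into the $O(r^{4/5})$ slack.
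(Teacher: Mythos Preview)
The paper does not prove this theorem at all; it is quoted as a known result of Alon \cite{alon1988linear} (see also \cite{alon2016probabilistic}) and only applied as a black box. So there is no ``paper's own proof'' to compare against. That said, your sketch is recognisably in the spirit of Alon's argument, but two of the steps are genuine gaps rather than routine details.

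The serious one is the cycle-breaking step. You assert that a ``randomized alignment'' of matchings keeps the maximum in/out-degree of the deleted edges at $O(\Delta)$, and you yourself flag this as the main obstacle, yet no argument is given. With an arbitrary $1$-factorization of $K_{p,p}$ minus the diagonal this is simply false: if some $\pi_t$ contains a $2$-cycle $(i\;j)$, the combined matching can contain many directed $2$-cycles between $V_i$ and $V_j$, and over the $(p-1)(r/p+\Delta)\approx r$ rounds a fixed vertex may lose $\Theta(r)$ edges, which would double the leading term. What actually makes Alon's proof work is a specific choice of permutations (shifts $i\mapsto i+t$ on $\mathbb{Z}_p$ with $p$ prime), so that each combined matching has directed girth at least~$p$; only then does a random choice of which cycle-edge to delete, together with concentration, bound the deleted-edge degree --- and the bound one gets is $O(r/p)$, not $O(\Delta)$. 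That weaker bound still closes the calculation (it contributes $O(r^{4/5})$ with your $p$), but your write-up neither imposes the long-cycle structure on the $\pi_t$ nor carries out any concentration argument for the deletions.

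A secondary issue is the induction. The intra-class digraphs $D[V_i]$ are not $r/p$-regular; their in- and out-degrees lie in $[r/p-\Delta,\,r/p+\Delta]$. Since your statement is for $r$-regular digraphs, the inductive hypothesis does not apply as written. One needs either the standard extension of the theorem to digraphs of maximum in- and out-degree at most $r$, or an auxiliary embedding of each $D[V_i]$ into a regular digraph before recursing; either way this requires an additional short argument that your sketch omits.
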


\noindent We will now apply this theorem to find a linear forest with few components in digraphs with suitable expansion properties.
(We will later see that a random orientation of $G[Y]$ satisfies these properties.)

\begin{lem}\label{lem:linearforest}
Let $H$ be an $n$-vertex digraph and $r$ be such that the following holds. For every subdigraph $H' \subseteq H$ with $\Delta^{+}(H'), \Delta^{-}(H') < r$, we have that for all subsets $S \subseteq V(H)$,
$$|\Gamma^{+}_{H \setminus H'} \left( S \right)| \geq |S| .$$
Then, $H$ contains a spanning linear forest $\mathcal{F}$ with $O \left(n /r^{1/5} \right)$ paths.
\end{lem}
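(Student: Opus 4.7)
The plan is to first build a spanning $r$-regular subdigraph $R \subseteq H$ and then invoke Theorem~\ref{thm:arboricity} to decompose $R$ into few linear forests, one of which (by averaging) will contain almost all $rn$ edges and therefore span via few paths.

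To construct $R$, I will build it in $r$ layers, maintaining a spanning $i$-regular subdigraph $H_i \subseteq H$ (meaning every vertex has in-degree and out-degree exactly $i$). Set $H_0 = \emptyset$. For the inductive step, assume $H_i$ has been built for some $0 \le i < r$. Since $\Delta^+(H_i) = \Delta^-(H_i) = i < r$, the hypothesis of the lemma applied with $H' = H_i$ gives $|\Gamma^+_{H \setminus H_i}(S)| \ge |S|$ for every $S \subseteq V(H)$. Now pass to a bipartite auxiliary graph $B_i$ with parts $V^+ = \{v^+ : v \in V(H)\}$ and $V^- = \{v^- : v \in V(H)\}$, putting an edge $u^+v^-$ for each directed edge $uv$ of $H \setminus H_i$. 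The expansion hypothesis is exactly Hall's condition from $V^+$ to $V^-$, so Theorem~\ref{thm:hall} yields a perfect matching $M$ in $B_i$; translating back, $M$ corresponds to a spanning $1$-regular subdigraph $M_{i+1} \subseteq H \setminus H_i$. Let $H_{i+1} := H_i \cup M_{i+1}$, which is spanning and $(i+1)$-regular. After $r$ iterations, $R := H_r$ is a spanning $r$-regular subdigraph of $H$.

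Now apply Theorem~\ref{thm:arboricity} to decompose the edge set of $R$ into $t = r + O(r^{4/5})$ linear forests $\mathcal{F}_1, \ldots, \mathcal{F}_t$. Since $R$ has $rn$ edges, by averaging some $\mathcal{F}_j$ has at least
\[
\frac{rn}{r + O(r^{4/5})} \;=\; n\bigl(1 - O(r^{-1/5})\bigr) \;=\; n - O(n/r^{1/5})
\]
edges. Regard $\mathcal{F}_j$ as a subdigraph of $H$ on the full vertex set $V(H)$ by adding any missed vertex as an isolated (length-$0$) path; since in a spanning linear forest the number of paths equals $n$ minus the number of edges, this gives a spanning linear forest with $O(n/r^{1/5})$ paths, as required.

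The only step with any real content is the inductive construction of $R$: I need the expansion hypothesis to apply at every stage (which it does because $i < r$ throughout), and I need Hall's theorem in the bipartite formulation to produce a $1$-regular subdigraph (which is immediate because a perfect matching in $B_i$ saturates both copies of every vertex). Beyond that, everything is a direct application of Theorem~\ref{thm:arboricity} and a one-line averaging argument, so I do not expect any real obstacle.
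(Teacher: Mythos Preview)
Your proof is correct and essentially identical to the paper's: both build an $r$-regular spanning subdigraph by iteratively peeling off $1$-regular spanning subdigraphs via Hall's theorem on the same bipartite auxiliary graph, then apply Theorem~\ref{thm:arboricity} and average. The only cosmetic difference is that you spell out the induction on $i$ explicitly, whereas the paper phrases it as a single sufficient condition on $H\setminus H'$.
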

\begin{proof}
In order to construct $\mathcal{F}$, we will only need to show that $H$ contains an $r$-regular spanning subdigraph. Indeed, by Theorem~\ref{thm:arboricity}, such a digraph can be covered by $r+O(r^{4/5})$ linear forests. One of the linear forests in such a covering must contain at least $rn/(r+O(r^{4/5}))$ edges and hence consist of at most $O \left(n /r^{1/5} \right)$ paths, and therefore, $H$ will contain such a linear forest. 

Note we can construct the $r$-regular spanning subdigraph by finding $r$ edge-disjoint $1$-regular subdigraphs and therefore it is sufficient to show that for any $H' \subseteq H$ with $\Delta^{+}(H'), \Delta^{-}(H') < r$, the digraph $H \setminus H'$ contains a $1$-regular spanning subdigraph. For this, we define an auxiliary bipartite graph $B = (V_1,V_2)$ with both parts $V_1,V_2$ being copies of $V(H)$; a pair $xy$ with $x \in V_1, y\in V_2$ is an edge in $B$ if $x \rightarrow y$ is an edge of $H \setminus H'$. Now note that $B$ contains a perfect matching, since for every $S \subseteq V_1$, we have that $|N_B(S)| = |\Gamma^{+}_{H \setminus H'} \left( S \right)| \geq |S|$ by assumption, and therefore Theorem \ref{thm:hall} applies. To conclude, note that a perfect matching in $B$ corresponds precisely to a $1$-regular spanning subdigraph of $H \setminus H'$.
\end{proof}

\noindent The next lemma deals with the \emph{absorption} of the vertices of $X$. Moreover, its proof contains a novel covering idea which we believe may have other applications.
\begin{lem}\label{lem:coverforest}
There exists a linear forest $\mathcal{F'}$ in $G[Y]$ which covers $X$ and such that every path has its endpoints in $Y \setminus X$ and all its inner vertices in~$X$.
\end{lem}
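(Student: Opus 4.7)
My plan is to cover $X$ by paths of length two whenever possible, resorting to longer paths only for the small set of ``bad'' vertices of $X$ whose neighbourhood in $Y\setminus X$ is too small, with the construction produced by two successive applications of Hall's theorem.

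Split $X$ into $X_b := \{v \in X : |N(v)\cap(Y\setminus X)| < \delta/2\}$ and $X_g := X \setminus X_b$. Since $Y$ is $\delta$-clean, every $v \in X_b$ satisfies $|N(v) \cap X| > \delta/2$; double-counting $e(X_b, X)$ via Lemma~\ref{lem:expandermixing}, together with $|X| \le \delta n/(1000d)$ and $\delta \ge 10^4\lambda$, yields $|X_b| \le 25\lambda^2|X|/\delta^2$, a tiny fraction of $|X|$. Iterating the same estimate on $\{v \in X_b^{(k)}: |N(v)\cap X_b^{(k)}|>\delta/6\}$ shows that each subsequent layer of ``doubly bad'' vertices shrinks by the factor $(\lambda/\delta)^2 \le 10^{-8}$, so after a constant number of peeling rounds---which are absorbed into slightly longer path-centres by running the analogous Hall argument one layer deeper---I may assume every $v \in X_b$ has $\ge \delta/3$ neighbours in $X_g$.

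The inner Hall step is applied to the bipartite graph between $X_b \times \{1,2\}$ and $X_g$, with $(v,i) \sim u$ iff $uv \in E(G)$. For $T \subseteq X_b$, Lemma~\ref{lem:expansion2} with parameter $\delta/3$ gives $|N_G(T) \cap X_g| \ge \min(\delta^2|T|/(72\lambda^2), \delta n/(30d)) \ge 2|T|$, using $\delta/\lambda \ge 10^4$ and $|T| \le |X| \le \delta n/(1000d)$; so Hall's condition holds. The resulting matching assigns to each $v \in X_b$ two distinct helpers $u_v, w_v \in X_g \cap N(v)$, yielding the ``path-centres'': the path $u_v\,v\,w_v$ for every $v \in X_b$ and the singleton $u$ for every unused $u \in X_g$. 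All free endpoints of these centres lie in $X_g$.

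The outer Hall step then extends the free endpoints to distinct vertices of $Y\setminus X$. I consider the bipartite graph whose left side contains two copies of each singleton and one copy of each helper (so $2|X_g\setminus Z| + |Z|$ vertices in total, where $Z = \bigcup_{v \in X_b}\{u_v,w_v\}$), and whose right side is $Y\setminus X$, with edges inherited from $G$. Every left-side vertex lies in $X_g$ and hence has at least $\delta/2$ neighbours in $Y\setminus X$, so Lemma~\ref{lem:expansion2} with parameter $\delta/2$ again yields the required factor-of-two expansion for any $T \subseteq X_g$, and Hall's theorem furnishes a matching extending every free endpoint to a distinct vertex of $Y\setminus X$. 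Concatenating the matched endpoints with the corresponding path-centres gives the desired linear forest~$\mathcal{F}'$. The main technical subtlety is the peeling in Step~1: the naive bound on $|X_b|$ does not by itself guarantee the individual lower bound $|N(v)\cap X_g| \ge \delta/3$ when $n \gg d$, so the layered shrinkage of $X_b^{(k)}$ must be invoked, but its rapid decay keeps the number of extra Hall layers constant and the whole argument routine.
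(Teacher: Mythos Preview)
Your two Hall applications are clean and correct \emph{once} you have every $v\in X_b$ with $\ge\delta/3$ neighbours in $X_g$; the gap is precisely in how you get there. The peeling does not terminate in a constant number of rounds: each layer $X_b^{(k+1)}$ shrinks from $X_b^{(k)}$ only by a factor $\Theta((\lambda/\delta)^2)$, so you need $\Theta(\log|X|/\log(\delta/\lambda))$ rounds before $X_b^{(K)}=\emptyset$, which is $\Theta(\log n)$ in general. More seriously, you never specify what ``running the analogous Hall argument one layer deeper'' produces. A vertex $v\in X_b^{(k)}\setminus X_b^{(k+1)}$ is only guaranteed $\ge\delta/3$ neighbours in $X\setminus X_b^{(k)}$, not in $X_g$; matching it to two such neighbours $u,w$ which may themselves lie in $X_b^{(k-1)}\setminus X_b^{(k)}$ and hence each need two further matched neighbours gives a \emph{binary tree}, not a path. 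Your sketch jumps from the layered structure straight to ``slightly longer path-centres'' without explaining how the branching is avoided.

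The paper handles exactly this difficulty, and it is worth comparing. Instead of explicit layers, it produces an ordering $x_1,\dots,x_t$ of $X$ with each $x_i$ having $\ge\delta/2$ neighbours in $\{x_1,\dots,x_{i-1}\}\cup(Y\setminus X)$ (the existence of such an ordering follows from the same expander-mixing estimate you use for $|X_b|$). A single Hall matching of two copies of each $x_i$ into its ``backward'' neighbours then yields a forest of binary trees with all leaves in $Y\setminus X$ and all internal vertices in $X$; crucially, the acyclicity comes for free from the ordering. The paper then extracts paths from these trees by repeatedly removing a root-to-leaf-to-leaf path. So the paper \emph{embraces} the tree structure your argument is trying to sidestep, and converts trees to paths as a separate cheap step. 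To repair your approach you would either need to do the same tree-to-path extraction, or to replace the two-neighbour matching at deeper layers by a one-neighbour matching that grows genuine paths through the layers; either way the fix is not ``routine'' as claimed.
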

\begin{proof}
The main idea is, instead of finding $\mathcal{F'}$ directly, we observe that it is sufficient to show the existence of a forest $\mathcal{F}$ in $G[Y]$ consisting of disjoint binary trees which all together cover $X$ and such that all the leaves are contained in $Y \setminus X$ and all the non-leaves in~$X$. Indeed, we can from $\mathcal{F}$ find the desired linear forest $\mathcal{F}'$ by doing the following operation. Consider some binary tree $T$ in $\mathcal{F}$ and take a path $P$ in $T$ which contains the root of $T$ and whose endpoints are leaves (observe that $P$ exists since the root of $T$ has degree two in $T$). Note that $\mathcal{F} - V(P)$ is still a disjoint union of binary trees and it covers $X \setminus V(P)$. Thus, we can repeat the same operation until all of $X$ has been covered by these paths, which altogether form the desired linear forest $\mathcal{F}'$ (we can in every step discard the vertices of the forest which become isolated, since those must belong to $Y\setminus X$).   

Now, in order to find the forest $\mathcal{F}$ let us note that there exists an ordering $x_1,x_2, \ldots, x_t$ of the vertices of $X$ such that for all $i$, we have
$$\left|N_G(x_i) \cap \left(\{x_1, \ldots, x_{i-1}\} \cup \left(Y \setminus X \right)\right)  \right| \geq \delta/2 .$$
\noindent Indeed, suppose that we have such a partial sequence $x_1, \ldots, x_{i-1}$. If no candidate for the vertex $x_i$ exists, then since $G[Y]$ has minimum degree at least $\delta$, we have that $G[X \setminus \{x_1, \ldots, x_{i-1}\}]$ has minimum degree at least $\delta - \delta/2 = \delta/2$. However, this contradicts part $\textit{(2)}$ of Lemma \ref{lem:expandermixing} since $|X \setminus \{x_1, \ldots, x_{i-1}\}| \leq |X| \leq \delta n/(1000d)$ and $\lambda \leq 10^{-4}\delta$.

Given this ordering we can do the following. Define an auxiliary bipartite graph $H$ with parts $A$ and $B$, where $A$ consists of two copies of each vertex in $X$ and $B = Y$. The edges are defined in the following manner - each copy of $x_i \in X$ in $A$ has as neighbours those vertices in $\{x_1, \ldots, x_{i-1}\} \cup \left(Y \setminus X \right)$ which are neighbours of $x_i$ in $G$. Crucially, it is easy to see that $H$ has the following property.
\begin{obs*}\label{obs:binarytree}
If there is a matching in $H$ covering $A$, then there is a forest $\mathcal{F}$ in $G[Y]$ which covers $X$ and whose components are binary trees with leaves in $Y \setminus X$ and non-leaves in $X$.
\end{obs*}
\noindent Since our objective is to find such an $\mathcal{F}$, we need only to find a matching in $H$ covering $A$ - for this we will check that the conditions of Theorem \ref{thm:hall} are satisfied. Consider any $S \subseteq A$ and let $S'$ be the corresponding set in $V(G)$ (not counting possible repetitions of a vertex in $S$) - note that $|S'| \geq |S|/2$.
By the properties of the ordering $x_1,x_2,\ldots, x_t$ of $X$ we have that $e_G(N_H(S),S') \geq |S'|\delta/2 \ge |S|\delta/4 $. Also, by part \textit{(1)} of Lemma \ref{lem:expandermixing} we have $e_G(N_H(S),S') \leq \frac{d|N_H(S)||S'|}{n} + \lambda \sqrt{|S'||N_{H}(S)|} \leq \delta |N_H(S)|/500 + \lambda \sqrt{|S||N_{H}(S)|} $ since $|S'| \leq |S|\le 2|X| \leq \delta n/500d$. Putting these together and using that $\delta \geq 100 \lambda$ gives that $|N_H(S)| \geq |S|$ as desired.
\end{proof}
\noindent Now that $X$ has been absorbed into $Y \setminus X$, we can define the auxiliary random directed graph mentioned before. We define a directed graph $H$ as follows. Denote the paths forming the linear forest $\mathcal{F}'$ given by Lemma \ref{lem:coverforest} by $P_1, \ldots, P_l$ and the endpoints of each path $P_i$ by $x_i,y_i \in Y \setminus X$ - let $U \subseteq Y \setminus X$ denote the set of all these endpoints. For each $i$, contract the pair $\{x_i,y_i\}$ into one vertex $z_i$ and choose uniformly at random between defining $z^{+}_i := x_i, z^{-}_i := y_i$ or $z^{+}_i := y_i, z^{-}_i := x_i$. Let $Z$ denote the set of vertices $z_i$ - the vertex set of $H$ is then $Z \cup Y \setminus (U \cup X)$. Note also that $|Z| \leq |X|$. The edges of $H$ are defined in the following manner.
\begin{enumerate}
    \item Every edge of $G[Y \setminus (U \cup X)]$ is oriented independently and uniformly at random.
    \item For two vertices $z_i,z_j$, we include the directed edge $z_i \rightarrow z_j$ if $z^{+}_i z^{-}_j$ is an edge of $G$. 
    \item For a vertex $z_i$ and a vertex $v \in Y \setminus (U \cup X)$ we include the edge $z_i \rightarrow v$ (conversely, $v \rightarrow z_i$) if $z^{+}_i v$ (conversely, $ z^{-}_i v$) is an edge of $G$.
\end{enumerate}
\noindent Crucially, by replacing each vertex $z_i$ with the path $P_i$, it is easy to observe that $H$ has the following deterministic property.
\begin{obs}\label{obs:dirforest}
If there is a spanning linear forest in $H$, then there is a spanning linear forest $\mathcal{F}$ of $G[Y]$ with the same number of paths, all of which have their endpoints in $Y \setminus X$.
\end{obs}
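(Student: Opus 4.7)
The plan is a direct uncontraction of $\mathcal{L}$. Given a spanning (directed) linear forest $\mathcal{L}$ of $H$, I would define $\mathcal{F}$ by replacing each vertex $z_i \in Z$ occurring in $\mathcal{L}$ with the path $P_i$ from $\mathcal{F}'$ traversed from $z_i^-$ to $z_i^+$, and keeping every vertex of $Y \setminus (U \cup X)$ as it is. The resulting collection of undirected paths in $G$ will be the desired spanning linear forest $\mathcal{F}$ of $G[Y]$.

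To check validity, observe that $H$ is defined so that every out-edge $z_i \to w$ of $H$ corresponds to an edge $z_i^+ w$ of $G$, and every in-edge $u \to z_i$ to an edge $u\, z_i^-$ of $G$ (with the obvious modifications when $u$ or $w$ is itself in $Z$). Hence whenever $z_i$ appears in the interior of a directed path $\cdots u \to z_i \to w \cdots$ of $\mathcal{L}$, the insertion of $P_i$ (traversed from $z_i^-$ to $z_i^+$) produces a genuine walk in $G[Y]$ whose consecutive edges are $u\, z_i^-$, the edges of $P_i$, and $z_i^+ w$. Since the $P_i$'s are pairwise vertex-disjoint and together cover $X \cup U$, and $\mathcal{L}$ covers $Z \cup \bigl(Y \setminus (U \cup X)\bigr)$ exactly once, the uncontracted object hits every vertex of $Y$ exactly once; and uncontraction neither splits nor merges paths, so the number of components is preserved.

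For the endpoint condition: if an endpoint of a path in $\mathcal{L}$ lies in $Y \setminus (U \cup X) \subseteq Y \setminus X$, it is already an endpoint of the corresponding expanded path; and if the endpoint is some $z_i \in Z$, then after replacing $z_i$ by $P_i$ the new endpoint is $z_i^+$ or $z_i^-$, i.e.\ one of $x_i, y_i \in U \subseteq Y \setminus X$.

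I do not anticipate any real obstacle: the observation is essentially a verification that $H$ was designed precisely so that this uncontraction succeeds. The only care needed is to keep track of which end of $P_i$ corresponds to $z_i^+$ and which to $z_i^-$, so that the local pieces $u \to z_i \to w$ of $\mathcal{L}$ glue with $P_i$ into a valid subpath of $G[Y]$; this is exactly what the edge rules defining $H$ encode.
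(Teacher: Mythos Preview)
Your proposal is correct and takes essentially the same approach as the paper, which simply remarks before the observation that it follows ``by replacing each vertex $z_i$ with the path $P_i$'' and gives no further details. Your write-up carefully verifies exactly this uncontraction step, including the correct bookkeeping of which end of $P_i$ plays the role of $z_i^{+}$ versus $z_i^{-}$ and why the endpoints of the resulting paths lie in $Y\setminus X$.
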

\noindent Therefore, we will need only to show that with positive probability, $H$ satisfies the conditions of Lemma~\ref{lem:linearforest} with $r := \Theta(\delta) $. For convenience, we make first some parallel notation between $H$ and $G$. For any set $S$ consisting of vertices in $V(H)$, we let $S_{G}$ denote the set of vertices in $Y \subseteq V(G)$ given by substituting every $z_i \in S$ by the vertices $x_i,y_i$. Note that trivially $|S| \leq |S_{G}| \leq 2|S|$ and that for every sets $A, B \subseteq V(H)$ we have $\overrightarrow{e}_H(A,B) \leq e_{G}(A_G,B_G)$.

We now want to show that $H$ satisfies the conditions of Lemma \ref{lem:linearforest}. First, we note the following.
\begin{lem}\label{lem:Hexpansionlineararb}
With positive probability, the following all hold.
\begin{enumerate}
    \item $H$ has minimum out-degree and in-degree at least $\delta/10 $.
    \item For all $A,B \subseteq V(H)$ with $|A|,|B| \geq \delta n/100d$, there are at least $\delta|A|/4000$ edges directed from $A$ to $B$.
\end{enumerate}
\end{lem}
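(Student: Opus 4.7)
The two properties will be established by separate probabilistic arguments on the same probability space, and combined via a final union bound. Property 1 (minimum in- and out-degrees) will hold with probability bounded below by an absolute constant, through a Chernoff estimate followed by the Lov\'asz local lemma. Property 2 (many directed $A\to B$ edges for large sets) will hold with probability $1-o(1)$, through Chernoff and a direct union bound over pairs $(A,B)$.

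\textbf{Property 1.} For any fixed $v \in V(H)$ the key observation is that $\deg^+_H(v)$ stochastically dominates $\mathrm{Bin}(\delta-1,\,1/2)$. Indeed, since $Y \setminus X$ is $\delta$-clean and both vertices of every contracted pair lie in $Y \setminus X$, either $v$ (if $v \in Y \setminus (U\cup X)$) or $z_i^{+}$ (if $v = z_i \in Z$) has at least $\delta$ neighbours in $(Y \setminus (U\cup X)) \cup U$; by the construction of $H$, each such neighbour contributes to $\deg^+_H(v)$ either deterministically or as an independent Bernoulli with probability exactly $1/2$ (coming from the random orientation of a bulk edge or from the $+/-$ assignment at a contracted neighbour). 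By Lemma \ref{lem:chernoff}, $\Pr[\deg^+_H(v) < \delta/10] \le e^{-\Omega(\delta)}$, and symmetrically for $\deg^-_H(v)$. Each such event depends only on $O(d)$ of the independent random choices (the orientations at $v$ and the $+/-$ choices of its contracted neighbours), and each random choice affects $O(d)$ degree events, so the dependency graph has maximum degree $O(d^2)$. Since $\delta \ge 10^4\lambda \ge \Omega(\sqrt{d})$ (using Theorem \ref{thm:spectralgap} and the remark that $d < n/2$), we have $e^{-\Omega(\delta)} \cdot O(d^2) = o(1)$, so Lemma \ref{lem:locallemma} yields a positive absolute constant $c_0>0$ lower bounding the probability that Property 1 holds.

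\textbf{Property 2.} Fix $A, B \subseteq V(H)$ with $|A|, |B| \ge \delta n/(100d)$ and set $A_2 := A \cap (Y \setminus (U\cup X))$, $B_2 := B \cap (Y \setminus (U\cup X))$. Since $|Z| \le |X| \le \delta n/(1000d) \le |A|/10$, we have $|A_2| \ge 9|A|/10$ and analogously for $B_2$. Part (1) of Lemma \ref{lem:expandermixing}, together with the bound $\sqrt{|A_2||B_2|} \ge \Omega(\lambda n/d)$ arising from $\delta \ge 10^4\lambda$, yields $e_G(A_2,B_2) \ge 0.7\,d|A||B|/n \ge \delta|A|/200$. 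Now we lower bound $\overrightarrow{e}_H(A,B)$ by retaining only the contribution from the random orientations of $G[Y \setminus (U\cup X)]$-edges (the contributions from $+/-$ choices at contracted vertices can only help); every such edge between $A_2$ and $B_2$ contributes independently with probability $\ge 1/2$, so the resulting random variable $X$ has $\mathbb{E}[X] \ge \delta|A|/400$. Chernoff (applied to a sum of independent Bernoullis) gives $\Pr[\overrightarrow{e}_H(A,B) < \delta|A|/4000] \le \Pr[X < \delta|A|/4000] \le e^{-\Omega(\delta|A|)}$. Since $\delta|A| \ge \delta^2 n/(100d) \ge 5\cdot 10^5\, n$ (using $\delta \ge 10^4\sqrt{d/2}$), this failure probability is $e^{-\Omega(n)}$, which comfortably beats a union bound over the at most $4^n$ pairs $(A,B)$, so Property 2 fails with total probability $o(1)$.

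\textbf{Combining, and the main obstacle.} A union bound then gives both properties simultaneously with probability at least $c_0 - o(1) > 0$. The chief difficulty throughout is the correlation introduced by the $+/-$ assignments at contracted vertices, which couples many edge events of $H$ and so rules out naive independence arguments; we deal with this for Property 1 by showing that $\deg^\pm_H(v)$ stochastically dominates a binomial (so Chernoff applies vertex-by-vertex and the residual dependencies are mild enough for LLL), and we sidestep it for Property 2 by discarding the (only helpful) contribution of edges incident to $U$, leaving a truly independent sum for Chernoff to handle.
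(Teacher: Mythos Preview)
Your approach is essentially the same as the paper's: Chernoff plus LLL for Property~1, expander mixing plus Chernoff plus union bound for Property~2, restricting to $A\setminus Z$ and $B\setminus Z$ to recover independence. The numerics are fine.

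There is, however, a genuine gap in your combining step. You assert that the Lov\'asz local lemma yields ``a positive absolute constant $c_0>0$'' for the probability of Property~1, and then conclude that both properties hold with probability at least $c_0 - o(1) > 0$. This is incorrect: LLL with $\Theta(n)$ events and dependency degree $O(d^2)$ only guarantees probability at least $(1-1/O(d^2))^n$, which is exponentially small in $n$ whenever $d^2 = o(n)$ (and there is no assumption here ruling that out). Hence your final union bound $c_0 - o(1) > 0$ does not go through as written.

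The repair is exactly what the paper does: track the two exponentials explicitly. From LLL one gets $\Pr[\text{Property 1}] \ge (1-1/O(d^2))^n > e^{-n}$, while the Chernoff estimate for Property~2, using $\delta|A| \ge 5\cdot 10^5 n$, gives a per-pair failure probability small enough that even after the $4^n$ union bound one has $\Pr[\text{Property 2 fails}] < e^{-n}$. These two bounds together yield $\Pr[\text{both hold}] > 0$. So your argument is correct once you replace the spurious ``absolute constant $c_0$'' by the honest exponential lower bound and verify that the Chernoff constant in Property~2 is large enough to beat it.
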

\begin{proof}
 We first deal with the degree conditions. Note that since by assumption, the set $Y \setminus X$ is $\delta$-clean in $G$, it is easy to check that the definition of $H$ implies that all vertices $v$ of $H$ are such that their out-degree and in-degree are random variables which stochastically dominate the binomial random variable $\text{Bin}(\delta,1/2)$. Indeed, note that if $v \notin U$ has both elements of some pair $\{x_i,y_i\}$ as neighbours in $G$, then $z_i$ will be both an in and out-neighbour of it in $H$; if only one of $x_i,y_i$ is a neighbour, then with probability $1/2$, $z_i$ will be an in/out-neighbour in $H$; if some other $v' \notin U$ is a neighbour, then also with probability $1/2$, it will be an in/out-neighbour in $H$. If $v = z_i$ for some $i$, then the analysis is similar. Therefore, the event $E_v$ that $v$ has out-degree or in-degree smaller than $\delta/10$ has $\mathbb{P}(E_v) = e^{-\Omega(\delta)}$. 
 Further, since $G$ is $d$-regular, the event $E_v$ depends on at most $O(d^2)$ other events $E_{v'}$. Indeed it is easy to see that the events $E_{v}$ is independent of all events $E_{v'}$ when $v'$ is not a neighbour of $v$ or there is no pair $\{x_i, y_i\}$  with both $v, v'$ having neighbours in $\{x_i, y_i\}$. On the other hand when $v, v'$ both have neighbours in the pair $\{x_i, y_i\}$, then, knowing the event $E_{v'}$
 affects the choice of $z^{+}_i,z^{-}_i$ and therefore also the out/in-degree of $v$.
 Because $e^{-\Omega(\delta)} = o(d^{-2})$, Lemma~\ref{lem:locallemma} now implies that with probability at least $\left(1-\frac{1}{O(d^2)} \right)^n > e^{-n}$, no event $E_v$ occurs and thus, the first condition holds. 

For the second condition, we apply a union bound. Indeed, let $A,B \subseteq V(H)$ be sets of size at least $\delta n/100d$ - note that we have at most $4^n$ such pairs. Further, consider the sets $A' := A \setminus Z$ and $B' := B \setminus Z$, which, since $|Z| \leq |X| \leq \delta n/1000d$, have size at least $\delta n/200d$. The random procedure defining $H$ implies that $\overrightarrow e_H(A',B')$ has the distribution of the binomial random variable $\text{Bin} \left(e_G(A',B') ,1/2\right)$. By Lemma~\ref{lem:expandermixing} we have that
$$e_G(A',B') \geq d|A'||B'|/n - \lambda \sqrt{|A'||B'|} > d|A'||B'|/2n \geq \delta |A'|/400 > \delta|A|/1000 > 500 n ,$$
where the second inequality follows since $\delta \geq 10000 \lambda$ and the last inequality since $\delta \geq 10000 \lambda \geq 10000 \sqrt{d/2}$ (recalling the remark after Theorem \ref{thm:spectralgap}). So, by Lemma~\ref{lem:chernoff}, with probability at most $e^{-3n}$ we have that $\overrightarrow e_H(A',B') < \delta|A|/4000$. By a union bound, we then have that with probability at least $1-4^n \cdot e^{-3n} > 1 - e^{-n}$ all such pairs $A,B$ have $\overrightarrow e_H(A,B) \geq \delta|A|/4000$.

Now, we can combine both conditions. Indeed, we showed that the first occurs with probability larger than $e^{-n}$ and the second with probability larger than $1 - e^{-n}$. Thus, with positive probability both occur.
\end{proof}
We can now prove Proposition~\ref{prop:forest}.
\begin{proof}[ of Proposition \ref{prop:forest}]
Assume that $H$ satisfies the conditions \textit{1} and \textit{2} of the lemma above.
Let us then verify the conditions of Lemma~\ref{lem:linearforest} with $r := \delta/10^{10}$. Take $H' \subseteq H$ to be a subdigraph such that $\Delta^{+}(H'),\Delta^{-}(H') < r$ and let $S \subseteq V(H)$, $T := \Gamma^{+}_{H \setminus H'}(S)$. Suppose for sake of contradiction that $|T| < |S|$. 

First, note that by condition \textit{1} of the lemma above, we have that $\overrightarrow{e}_H\left(S,T \right) \geq \delta|S|/20$ since the out-degree in $H \setminus H'$ is still at least $\delta/10 - r \geq \delta/20$.
In turn Lemma~\ref{lem:expandermixing} implies that
$$\overrightarrow{e}_H \left(S,T \right) \leq e_{G} \left(S_{G},T_{G} \right) \leq \frac{d|S_{G}||T_{G} |}{n} + \lambda \sqrt{|S_{G}||T_{G}|} < |S| \left(\frac{4d|S|}{n} + 2\lambda \right) ,$$ where we are using that $|S_{G}| \leq 2|S|$, $|T_{G}| \leq 2|T|$ and $|S| > |T|$. Observe now that the two inequalities above give a contradiction when $|S| \leq \delta n/100d$ since $\delta > 10000 \lambda$. Hence, we can assume that $|S| > \delta n/100d$. Furthermore, note that the above argument also shows that if $|S| \leq \delta n/100d$, then $\Gamma^{-}_{H \setminus H'}(S)$ has size at least $|S|$, since condition \textit{1} of the lemma above also gives that every vertex has in-degree at least $\delta/10$.

Now, suppose that $|S| > \delta n/100d$ and note that part \textit{2} of Lemma \ref{lem:Hexpansionlineararb} implies that $|T| > |V(H)| - \delta n/100d$. Indeed, we have for all sets $B$ of size at least $\delta n/100d$ that $\overrightarrow{e}_H(S,B) > r|S|$ and so, $\overrightarrow{e}_{H \setminus H'}(S,B) \neq \emptyset$. Therefore, we can further assume that $|S| > |V(H)| - \delta n/100d$, since otherwise $|T| \geq |S|$. In this case since $|T| < |S|$, we can consider a set $T' \subseteq V(H) \setminus T$ of size
$|V(H)|-|S|<|T'| \leq \delta n/100d$. By the previous paragraph, we then have that $\Gamma^{-}_{H \setminus H'}(T')$ has size at least $|T'|$. Also note that by definition of $T'$, we have $\Gamma^{-}_{H \setminus H'}(T') \cap S = \emptyset$. This implies that $|T'| + |S| \leq |V(H)|$ which is a contradiction since we have $|S| > |V(H)| - |T'|$ by assumption. Concluding, we must always have $|T| \geq |S|$ and so, Lemma~\ref{lem:linearforest} implies that $H$ contains the desired spanning linear forest with positive probability. By Observation~\ref{obs:dirforest}, this implies the existence of the desired linear forest $\mathcal{F}$ in $G[Y]$.
\end{proof} 

\section{Hamilton cycles from good collections of paths}\label{sec:manycyclesthm}
In this section, we will prove Theorem~\ref{thm:maindense}. More generally, we will prove that pseudorandom graphs which contain a certain spanning structure must also contain a Hamilton cycle. We will now describe this structure. First, given a path $P$ and a collection of vertex-disjoint cycles $\mathcal{C} = \{C_1, \ldots, C_l\}$, we say that $P$ \emph{connects} $\mathcal{C}$ if there exists a choice of edges $e_i \in C_i$ for every $i$ so that $P$ contains every edge $e_i$ and is disjoint to the rest. 
Given such a setting, for each $i$ we let $x^{P}_i,y^{P}_i \in C_i \setminus e_i$ denote the vertices of $C_i$ which are not incident to $e_i$ but are closest to this edge in the cycle - with the possibility that $x^{P}_i = y^{P}_i$ in the case that $C_i$ only has three vertices. The set of these vertices is denoted by $V_{P,\mathcal{C}} := \{x^{P}_i,y^{P}_i: 1 \leq i \leq l\}$ and we shall refer to it as the \emph{flexible set}.
As outlined in Section~\ref{sec:outline}, the path $P$ is able to \emph{absorb}, for every $i$ independently, the vertices of $V(C_i\setminus e)$ into the path, by replacing $e_i$ with the path $C_i-e$. 
This flexibility allows us to connect partial structures outside of $P$. Since each connection requires logarithmic length in general, we are only able to do few connections using the flexible set, so we will assume in addition that the remaining vertices are already partitioned into few paths. This motivates the following definition.

\begin{defn}\label{def:perfectlinearforest}
A collection $\mathcal{F}$ of vertex-disjoint paths in a graph $G$ is said to be \emph{$(r,l,\delta)$-good} if it consists of $r$ paths $P_1, P_2, \ldots, P_r$ with the following properties.
\begin{enumerate}
    \item The path $P_1$ connects a collection $\mathcal{C}$ of $l$ vertex-disjoint cycles, which are disjoint to the paths $P_2, \ldots, P_r$. Furthermore, these paths together with $\mathcal{C}$ span the whole vertex set.
    \item $V_{P_1, \mathcal{C}}$ induces a graph with minimum degree at least $\delta$.
    \item The endpoints of the paths $P_1, \ldots, P_r$ have each at least $\delta$ neighbours in the flexible set $V_{P_1, \mathcal{C}}$.
\end{enumerate}
\end{defn}

\begin{figure}[h]
    \centering
    \begin{tikzpicture}[scale=1.1,main node/.style={circle,draw,color=black,fill=black,inner sep=0pt,minimum width=3pt}]
        \tikzset{cross/.style={cross out, draw=black, fill=none, minimum size=2*(#1-\pgflinewidth), inner sep=0pt, outer sep=0pt}, cross/.default={2pt}}
	\tikzset{rectangle/.append style={draw=brown, ultra thick, fill=red!30}}
	    
\draw[line width= 2 pt] (-5,2) to (5,2);
\draw[line width= 2 pt] (-3,-2) to (-3,0);
\draw[line width= 2 pt] (0,-2) to (0,0);
\draw[line width= 2 pt] (3,-2) to (3,0);

\node[color=black, scale=3 ]  at (-5,2) {$.$};
\node[color=black, scale=3 ]  at (5,2) {$.$};
\node[color=black, scale=1.4 ]  at (-5.5,2) {$P_1$};
\node[color=black, scale=3 ]  at (-3,0) {$.$};
\node[color=black, scale=3 ]  at (-3,-2) {$.$};
\node[color=black, scale=1.4 ]  at (-3.5,-1){$P_2$};
\node[color=black, scale=3 ]  at (0,0) {$.$};
\node[color=black, scale=3 ]  at (0,-2) {$.$};
\node[color=black, scale=1.4 ]  at (-0.5,-1) {$P_3$};
\node[color=black, scale=3 ]  at (3,0) {$.$};
\node[color=black, scale=3 ]  at (3,-2) {$.$};
\node[color=black, scale=1.4 ]  at(2.5,-1){$P_4$};

\node[color=red, scale=3 ] at(-4,2){$.$};
\node[color=red, scale=3 ]  at(-3.5,2){$.$};
\node[color=red, scale=3 ]  at(-3.75,1.2){$.$};
\draw[color = red, line width= 2 pt] (-4,2) to (-3.5,2);
\draw[color = red, line width= 2 pt] (-3.75,1.2) to (-3.5,2);
\draw[color = red, line width= 2 pt] (-4,2) to (-3.75,1.2);
\node[color=black, scale=0.9 ]  at(-3.75,1.83){$C_1$};

\node[color=red, scale=3 ] at(-2.7,2){$.$};
\node[color=red, scale=3 ]  at(-2.2,2){$.$};
\node[color=red, scale=3 ]  at(-2.7,1.5){$.$};
\node[color=red, scale=3 ]  at(-2.2,1.5){$.$};
\node[color=black, scale=0.9 ]  at(-2.45,1.75){$C_2$};
\draw[color = red, line width= 2 pt] (-2.2,2) to (-2.7,2);
\draw[color = red, line width= 2 pt] (-2.2,2) to (-2.2,1.5);
\draw[color = red, line width= 2 pt] (-2.7,2) to (-2.7,1.5);
\draw[color = red, line width= 2 pt] (-2.2,1.5) to (-2.7,1.5);

\node[color=red, scale=3 ] at(-1.5,2){$.$};
\node[color=red, scale=3 ]  at(-1,2){$.$};
\node[color=red, scale=3 ]  at(-1.5,1.5){$.$};
\node[color=red, scale=3 ]  at(-1,1.5){$.$};
\node[color=red, scale=3 ]  at(-1.25,1){$.$};
\draw[color = red, line width= 2 pt] (-1.5,2) to (-1,2);
\draw[color = red, line width= 2 pt] (-1.5,2) to (-1.5,1.5);
\draw[color = red, line width= 2 pt] (-1.5,1.5) to (-1.25,1);
\draw[color = red, line width= 2 pt] (-1,1.5) to (-1.25,1);
\draw[color = red, line width= 2 pt] (-1,2) to (-1,1.5);
\node[color=black, scale=0.9 ]  at(-1.25,1.75){$C_3$};

\node[color=red, scale=3 ] at(0,2){$.$};
\node[color=red, scale=3 ]  at(0.5,2){$.$};
\node[color=red, scale=3 ]  at(0.5,1.5){$.$};
\node[color=red, scale=3 ]  at(0,1.5){$.$};
\draw[color = red, line width= 2 pt] (0.5,2) to (0,2);
\draw[color = red, line width= 2 pt] (0.5,2) to (0.5,1.5);
\draw[color = red, line width= 2 pt] (0,2) to (0,1.5);
\draw[color = red, line width= 2 pt] (0,1.5) to (0.5,1.5);
\node[color=black, scale=0.9 ]  at(0.25,1.75){$C_4$};

\node[color=red, scale=3 ] at(2,2){$.$};
\node[color=red, scale=3 ]  at(2.5,2){$.$};
\node[color=red, scale=3 ]  at(2,1.5){$.$};
\node[color=red, scale=3 ]  at(2.5,1.5){$.$};
\node[color=red, scale=3 ]  at(2,1){$.$};
\node[color=red, scale=3 ]  at(2.5,1){$.$};
\draw[color = red, line width= 2 pt] (2,2) to (2,1.5);
\draw[color = red, line width= 2 pt] (2,1) to (2,1.5);
\draw[color = red, line width= 2 pt] (2.5,1) to (2.5,1.5);
\draw[color = red, line width= 2 pt] (2.5,2) to (2.5,1.5);
\draw[color = red, line width= 2 pt] (2,2) to (2.5,2);
\draw[color = red, line width= 2 pt] (2,1) to (2.5,1);
\node[color=red, scale=0.75 ]  at(1.7,1.5){$x^{P_1}_5$};
\node[color=red, scale=0.75 ]  at(2.8,1.5){$y^{P_1}_5$};
\node[color=black, scale=0.9 ]  at(2.25,1.75){$C_5$};

\node[color=red, scale=3 ] at(3.5,2){$.$};
\node[color=red, scale=3 ]  at(4,2){$.$};
\node[color=red, scale=3 ]  at(3.75,1.2){$.$};
\draw[color = red, line width= 2 pt] (3.5,2) to (3.75,1.2);
\draw[color = red, line width= 2 pt] (4,2) to (3.75,1.2);
\draw[color = red, line width= 2 pt] (4,2) to (3.5,2);
\node[color=black, scale=0.9 ]  at(3.75,1.83){$C_6$};

\draw[color = green, opacity = 0.4, line width= 5 pt] (-5,2) to (-3.75,1.2);
\draw[color = green, opacity = 0.4, line width= 5 pt] (-3,0) to (-3.75,1.2);
\draw[color = green, opacity = 0.4, line width= 5 pt] (-3,0) to (-3,-2);
\draw[color = green, opacity = 0.4, line width= 5 pt] (-1.5,1.5) to (-3,-2);
\draw[color = green, opacity = 0.4, line width= 5 pt] (-1.5,1.5) to (-1.25,1);
\draw[color = green, opacity = 0.4, line width= 5 pt] (-1,1.5) to (-1.25,1);
\draw[color = green, opacity = 0.4, line width= 5 pt] (-1,1.5) to (0,1.5);
\draw[color = green, opacity = 0.4, line width= 5 pt] (0.5,1.5) to (0,1.5);
\draw[color = green, opacity = 0.4, line width= 5 pt] (0.5,1.5) to (0,0);
\draw[color = green, opacity = 0.4, line width= 5 pt] (0,-2) to (0,0);
\draw[color = green, opacity = 0.4, line width= 5 pt] (0,-2) to (2,1.5);
\draw[color = green, opacity = 0.4, line width= 5 pt] (2,1.5) to (2,1);
\draw[color = green, opacity = 0.4, line width= 5 pt] (2.5,1) to (2,1);
\draw[color = green, opacity = 0.4, line width= 5 pt] (2.5,1) to (2.5,1.5);
\draw[color = green, opacity = 0.4, line width= 5 pt] (3,0) to (2.5,1.5);
\draw[color = green, opacity = 0.4, line width= 5 pt] (3,0) to (3,-2);
\draw[color = green, opacity = 0.4, line width= 5 pt] (3.75,1.25) to (3,-2);
\draw[color = green, opacity = 0.4, line width= 5 pt] (3.75,1.25) to (5,2);
\draw[color = green, opacity = 0.4, line width= 5 pt] (-2.2,2) to (5,2);
\draw[color = green, opacity = 0.4, line width= 5 pt] (-2.2,2) to (-2.2,1.5);
\draw[color = green, opacity = 0.4, line width= 5 pt] (-2.7,1.5) to (-2.2,1.5);
\draw[color = green, opacity = 0.4, line width= 5 pt] (-2.7,2) to (-2.7,1.5);
\draw[color = green, opacity = 0.4, line width= 5 pt] (-5,2) to (-2.7,2);

    \end{tikzpicture}
    \caption{An illustration of a good collection of paths and how it can be used to form a Hamilton cycle. The path $P_1$ connects the cycles $C_1,C_2, \ldots, C_6$ and together with the paths $P_2,P_3,P_4$ it spans the whole vertex set. A Hamilton cycle is drawn in green by connecting the endpoints of the paths $P_1,P_2,P_3,P_4$ using the cycles. Those cycles which are not used for connecting (in this case $C_2$) are then absorbed into the path $P_1$.}
    \label{fig:perfectlinearforest}
\end{figure}

\noindent From the above definition and the lemmas presented in Section \ref{sec:altpaths}, we can show that pseudorandom graphs which contain a good collection of paths must also be Hamiltonian. The reader might want to refer to Figure~\ref{fig:perfectlinearforest} for an illustration.
\begin{thm}\label{thm:hamcycleplf}
Let $G$ be an $(n,d,\lambda)$-graph with $\lambda < d/500$. Let $\mathcal{F}$ be an $\left(\frac{l}{100\log n}, l,500 \lambda \right)$-good collection of paths on $V(G)$ for some $l \geq 2000 \lambda n/d$. Then $G' := G \cup \mathcal{F}$ contains a Hamilton cycle.
\end{thm}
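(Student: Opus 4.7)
The plan is to exploit the flexible set $V_{P_1,\mathcal{C}}$ in two complementary ways. First, I view its pairs as a collection $\mathcal{M}$ of virtual edges, so that an $\mathcal{M}$-alternating path crossing a pair $\{x_j^{P_1},y_j^{P_1}\}$ corresponds, after substituting that virtual edge by the $x_j^{P_1}y_j^{P_1}$-subpath of $C_j\!-\!e_j$, to a genuine path of $G$ that runs through the interior of $C_j$. Second, for every cycle whose pair is never used, I can absorb $C_j$ into $P_1$ by replacing $e_j$ with the path $C_j\!-\!e_j$. Using the first mechanism I close the $r$ pieces $P_1,\dots,P_r$ into a single cycle via Theorem~\ref{thm:vertexdisjointpathsvariation}, and the second then guarantees that every vertex of every $C_j$ is visited.

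Concretely, let $r:=l/(100\log n)$, set $\mathcal{M}:=\{\{x_i^{P_1},y_i^{P_1}\}:1\le i\le l\}$ (with self-pairs allowed when $C_i$ is a triangle), let $u_i,v_i$ denote the endpoints of $P_i$, and take $\mathcal{P}:=\{\{v_i,u_{i+1}\}:1\le i\le r\}$ with indices modulo $r$. Since $V(P_1)\cap V(\mathcal{C})$ consists only of the endpoints of the edges $e_j$, which are by definition distinct from every $x_j^{P_1},y_j^{P_1}$, and since $V(P_2)\cup\cdots\cup V(P_r)$ is disjoint from $V(\mathcal{C})$, every vertex of $V(\mathcal{P})$ lies in $V(G)\setminus V(\mathcal{M})$ and the pairs of $\mathcal{P}$ are disjoint. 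I would then verify the hypotheses of Theorem~\ref{thm:vertexdisjointpathsvariation}: $V(\mathcal{M})$ is $500\lambda$-clean in $G$ by property (2) of a good collection; $|\mathcal{M}|=l\ge 2000\lambda n/d$ by assumption; $|\mathcal{P}|=r\le |\mathcal{M}|/(100\log_2 n)$ (the base of the logarithm being inessential); and every endpoint in $V(\mathcal{P})$ has at least $500\lambda$ neighbours in $V(\mathcal{M})$ by property (3). Theorem~\ref{thm:vertexdisjointpathsvariation} then yields vertex-disjoint $\mathcal{M}$-alternating paths $R_1,\dots,R_r$ in $G\cup\mathcal{M}$, where $R_i$ joins $v_i$ to $u_{i+1}$ and, by the construction in its proof, uses only vertices of $V(\mathcal{M})\cup V(\mathcal{P})$.

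To extract the Hamilton cycle, let $J\subseteq[l]$ be the set of indices $j$ such that the pair $\{x_j^{P_1},y_j^{P_1}\}$ is used by some $R_i$; vertex-disjointness of the $R_i$ ensures each such pair is used exactly once. For each $j\notin J$, replace the edge $e_j$ of $P_1$ by the path $C_j\!-\!e_j$, absorbing $V(C_j)$ into $P_1$. For each $j\in J$, replace the virtual pair appearing in the relevant $R_i$ by the $x_j^{P_1}y_j^{P_1}$-subpath of $C_j\!-\!e_j$ (a single vertex if $C_j$ is a triangle). The modified $R_i$'s are then vertex-disjoint real paths in $G$, internally disjoint from the (possibly enlarged) $P_1,\dots,P_r$, and the closed walk $u_1P_1v_1R_1u_2P_2v_2R_2\cdots u_rP_rv_rR_ru_1$ is a cycle in $G'=G\cup\mathcal{F}$ visiting every vertex of each $P_i$ and each $C_j$ exactly once. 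By property (1) this accounts for all of $V(G)$, and the cycle is Hamilton. The main point requiring care is simply lining up the three defining properties of a good collection with the three nontrivial hypotheses of Theorem~\ref{thm:vertexdisjointpathsvariation}; there is no further obstacle beyond this book-keeping.
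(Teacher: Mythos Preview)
Your proposal is correct and follows essentially the same approach as the paper: set $\mathcal{M}=\{\{x_j^{P_1},y_j^{P_1}\}\}_j$, $\mathcal{P}=\{\{v_i,u_{i+1}\}\}_i$, verify the hypotheses of Theorem~\ref{thm:vertexdisjointpathsvariation} via properties (1)--(3) of a good collection, and read off the Hamilton cycle. The paper's proof is nearly identical but more terse, ending with ``it is easy to see that this produces a Hamilton cycle in $G'$''; your final paragraph simply spells out this bookkeeping (which pairs are used versus absorbed) explicitly.
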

\begin{proof}
Let $\mathcal{F}$ consist of the paths $P_1, \ldots, P_r$ and a cycle collection $\mathcal{C} = \{C_1, \ldots,C_l \}$. For each cycle $C_j$, let $(x_j,y_j)$ denote the vertices $x^{P_1}_j,y^{P_1}_j \in C_j \setminus e_j$ as earlier defined and $\mathcal{M}$ denote the collection of these pairs. Further, for each path $P_i$ let $u_i,v_i$ denote its endpoints and define the collection $\mathcal{P}$ of pairs $\{a_j,b_j\}$ with $a_j := v_j, b_j := u_{j+1}$ (indices modulo $r$). Notice that we can apply Theorem \ref{thm:vertexdisjointpathsvariation}. Indeed, $V(\mathcal{M}) = V_{P_1, \mathcal{C}}$, and so, the first and second conditions imply that $V(\mathcal{M})$ is $500 \lambda$-clean and $|\mathcal{M}| = l \geq 2000 \lambda n/d$. Further, we have that $|\mathcal{P}| \leq \frac{|\mathcal{M}|}{100 \log n}$. Theorem \ref{thm:vertexdisjointpathsvariation} then implies that there exist vertex-disjoint $\mathcal{M}$-alternating paths between the pairs $\{a_j,b_j\}$ in the graph obtained by adding every non-existing edge $x_iy_i$ to $G$. In turn, it is easy to see that this produces a Hamilton cycle in $G'$.
\end{proof}

\subsection{Graphs with many cycles have a good collection of paths}
\noindent As a corollary of Theorem \ref{thm:hamcycleplf}, we can show that pseudorandom graphs which contain many cycles are Hamiltonian. Clearly, this will imply Theorem \ref{thm:maindense}.
\begin{thm}\label{thm:pflhamiltonian} Let $G$ be an $(n,d,\lambda)$-graph with $d \geq (\log n)^{11}$ and $\lambda < d/10^{10}$. Suppose that it contains $20000 \lambda n/d$ many vertex-disjoint cycles. Then it contains an $\left(\frac{l}{100\log n}, l,500 \lambda \right)$-good collection of paths for some $l \geq 2000 \lambda n/d$. Therefore, it is Hamiltonian.
\end{thm}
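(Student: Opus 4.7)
By Theorem~\ref{thm:hamcycleplf}, it suffices to produce a $\left(\frac{l}{100\log n},l,500\lambda\right)$-good collection of paths for some $l\geq 2000\lambda n/d$. I would construct one in four moves: build a connecting path $P_1$, refine the flexible set, cover the remainder by a short linear forest, and verify the endpoint condition.

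First, fix an edge $e_i=u_iv_i$ in each given cycle $C_i$ and set $\mathcal{M}=\{(u_i,v_i)\}_i$. In the graph $G''$ obtained from $G$ by deleting the interior $V(C_i)\setminus\{u_i,v_i\}$ of every cycle, part~(4) of Lemma~\ref{lem:expandermixing} (applied to $G$ itself) guarantees that any two disjoint vertex subsets of size $\lambda n/d$ are joined by an edge of $G''$. Hence Lemma~\ref{lem:DFSaltpath} produces an $\mathcal{M}$-alternating path $P_1$ using all but $O(\lambda n/d)$ of the pairs; since each $u_iv_i$ is itself an edge of $G$, this is a genuine path in $G$ whose intersection with each ``used'' cycle is exactly $\{u_i,v_i\}$, so the remaining vertices of $C_i$ stay available for absorption. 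Let $\mathcal{C}_1$ denote the cycles used by $P_1$.

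Second, I would apply Lemma~\ref{lem:paircleaning} to the disjoint pairs $\{x_i^{P_1},y_i^{P_1}\}_{C_i\in\mathcal{C}_1}$, producing a subcollection $\mathcal{C}\subseteq\mathcal{C}_1$ with $l:=|\mathcal{C}|\geq 2000\lambda n/d$ and flexible set $B:=V_{P_1,\mathcal{C}}$ such that $G[B]$ has minimum degree at least $d|B|/(4n)\geq 500\lambda$, giving property~(2) of Definition~\ref{def:perfectlinearforest}. Let $Y:=V(G)\setminus(V(P_1)\cup B)$ (of size $n-O(\lambda n/d)$) and let $X\subseteq Y$ consist of the vertices with fewer than $500\lambda$ neighbours in $B$; by Lemma~\ref{lem:cleaning}, $|X|=O(\lambda n/d)$.

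Third, to apply Proposition~\ref{prop:forest} on $G[Y]$, I would first absorb into $V(P_1)\cup B$ the (even smaller) set of vertices with few neighbours in $Y$ itself; each such vertex has many neighbours in the already-built structure and can be inserted via a short extension. This ensures that $Y$ and $Y\setminus X$ are both $\delta$-clean for some $\delta=\Theta(d)$. Proposition~\ref{prop:forest} then returns a spanning linear forest $\{P_2,\ldots,P_r\}$ of $G[Y]$ with $r-1=O(n/\delta^{1/5})$ paths, each having its endpoints in $Y\setminus X$ and hence at least $500\lambda$ neighbours in $B$. The hypotheses $d\geq(\log n)^{11}$ and $\lambda<d/10^{10}$ are chosen so that $O(n/\delta^{1/5})$ is smaller than the target $l/(100\log n)=\Theta(\lambda n/(d\log n))$. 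Finally, if either endpoint of $P_1$ itself lies in $X$, extend $P_1$ by a single $G$-edge to a neighbour outside $V(P_1)\cup B\cup X$, which exists because that set has size $O(\lambda n/d)\ll d$; this establishes property~(3) for $P_1$. The collection $\{P_1,\ldots,P_r\}$ together with $\mathcal{C}$ is then a $\left(\frac{l}{100\log n},l,500\lambda\right)$-good collection, and Theorem~\ref{thm:hamcycleplf} delivers the Hamilton cycle.

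The main obstacle is the simultaneous cleanness bookkeeping: Proposition~\ref{prop:forest} requires $|X|\leq\delta n/(1000d)$ and both $Y$ and $Y\setminus X$ to be $\delta$-clean, while $\delta$ must be large enough (of order $d$) for $O(n/\delta^{1/5})$ to fall below $l/(100\log n)$. Balancing these inequalities — and ensuring that deleting the interior of potentially long cycles in the first step does not destroy enough of the pseudorandom structure of $G''$ to prevent the DFS from finding a long alternating path — is where the quantitative hypothesis $d\geq (\log n)^{11}$ enters.
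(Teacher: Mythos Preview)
Your overall strategy matches the paper's, but two steps contain genuine gaps that the paper resolves with ideas you have not supplied.

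\textbf{The count of paths can exceed $l/(100\log n)$.} You claim that $O(n/\delta^{1/5}) \leq l/(100\log n)$ with $\delta = \Theta(d)$ and $l = \Theta(\lambda n/d)$; this rearranges to $d^{4/5}\log n = O(\lambda)$. But $\lambda$ may be as small as $\Theta(\sqrt{d})$, and then the inequality forces $d^{3/10}\log n = O(1)$, contradicting $d \geq (\log n)^{11}$. The paper fixes this by first \emph{enlarging} the cycle collection: any $(n,d,\lambda)$-graph with $d \geq 100\lambda$ contains a cycle of length $O(\log n)$ on any set of $n/2$ vertices, so one may greedily assume $t \geq n/(2\log n)$ from the outset. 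This yields $l \geq n/(14\log n)$ regardless of $\lambda$, after which $O(n/d^{1/5}) \leq l/(100\log n)$ follows from $d \geq (\log n)^{11}$ alone.

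\textbf{There is no mechanism making $Y$ $\delta$-clean.} You propose to ``absorb into $V(P_1)\cup B$'' the vertices of $Y$ with few $Y$-neighbours ``via a short extension'', but $P_1$ is a fixed alternating path and $B$ is a prescribed set of flexible vertices; you cannot attach an arbitrary bad vertex to either while preserving the structure of Definition~\ref{def:perfectlinearforest}. (Your later claim that such a set has size $O(\lambda n/d) \ll d$ fails for the same reason as above: $\lambda n/d$ is typically far larger than $d$.) The paper's solution is Lemma~\ref{lem:degreesoutsidecycles}: \emph{before} building $P_1$, discard the few long cycles and pass to a random subcollection containing roughly $4/5$ of the remaining short cycles; a Chernoff bound together with a union bound over vertices then guarantees that every vertex keeps at least $d/10$ neighbours outside $V(\mathcal{C}')$. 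Since the alternating path $P_1$ lives entirely inside $V(\mathcal{C}')$, the set $Y = V(G)\setminus(V(P_1)\cup V(\mathcal{C}))$ contains $V(G)\setminus V(\mathcal{C}')$ and hence automatically satisfies $\delta(G[Y]) \geq d/10$, so Proposition~\ref{prop:forest} applies with no further absorption. Your final paragraph correctly names this cleanness bookkeeping as the main obstacle; the random sparsification of the cycle collection is precisely the missing idea that resolves it.
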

\begin{proof}
Let $G$ be an $(n,d,\lambda)$-graph and let $\mathcal{C} = \{C_1, \ldots, C_t\}$ denote the given collection of $t$ vertex-disjoint cycles, where $t \geq 20000 \lambda n/d$. Notice that we can also assume that $t \geq n/ (2\log n)$ since such a cycle collection will always exist if say, $d \geq 100 \lambda $ (this is easy to see by greedily finding cycles of size at most $\log n$). We will now first prove a series of preparatory lemmas in order to make this collection more suitable to work with. The following ensures that every vertex has many neighbours disjoint to these.
\begin{lem}\label{lem:degreesoutsidecycles}
There exists a subcollection $\mathcal{C}' \subseteq \mathcal{C}$ of at least $3t/4$ cycles such that every vertex in $G$ has at least $d/10$ neighbours not in $V(\mathcal{C}')$.
\end{lem}
\begin{proof}
First, we refine $\mathcal{C}$ so that it only contains short cycles. Indeed, at most $t/100$ of the cycles have length larger than $100n/t$. Let us delete all of these from $\mathcal{C}$. Let now $\mathcal{C}' \subseteq \mathcal{C}$ be randomly chosen by letting every cycle be included in it independently and with probability $4/5$. Using Lemma \ref{lem:chernoff}, we then have that with high probability, $|\mathcal{C}'| \geq 3t/4$ as required. Further, every vertex has degree $d \geq (\log n)^{11}$ and since each cycle is now of length at most $100n/t = O(\log n)$ (since by before, we can assume that $t \geq n/ (2\log n)$) we can apply Lemma \ref{lem:chernoff} with $k := 100n/t$, to get that the probability that a vertex has less than $d/10$ neighbours not in $V(\mathcal{C}')$ is $o(n^{-1})$. By a union bound over all vertices, we then have also that with high probability, every vertex has at least $d/10$ neighbours not in $V(\mathcal{C}')$. Combining both of these properties gives the desired outcome for some choice of $\mathcal{C}'$.
\end{proof}
\noindent Given the above, let us redefine $\mathcal{C}$ to be $\mathcal{C'}$. We now need the following claim which deals with finding a path connecting a decent proportion of the cycles in $\mathcal{C}$.
\begin{lem}
There exists a subcollection $\mathcal{C}' \subseteq \mathcal{C}$ of $t/2$ cycles and a path $P$ connecting them. Furthermore, among every two consecutive edges of $P$ there is an edge belonging to some cycle in $\mathcal{C}'$.
\end{lem}
\begin{proof}
For each cycle $C_i \in \mathcal{C}$, pick an arbitrary edge $e_i = x_i y_i$ belonging to it. Let $\mathcal{M}$ denote the collection of pairs $\{x_i,y_i\}$. Now, by part \textit{(4)} of Lemma \ref{lem:expandermixing}, every two disjoint subsets of $G$ of size $\lambda n/d$ have an edge between them. Therefore, Lemma~\ref{lem:DFSaltpath} implies that $G \cup \mathcal{M}$ contains an $\mathcal{M}$-alternating path which uses all but at most $2\lambda n/d - 1$ of the edges $e_i$. Since $|\mathcal{C}| - 2\lambda n/d \geq 3t/4 - 2\lambda n/d \geq t/2$, this gives the desired path~$P$.

\end{proof}
\noindent Observe now that since among every two consecutive edges of $P$ there is an edge belonging to some cycle of $\mathcal{C}'$, we have from Lemma~\ref{lem:degreesoutsidecycles} that every vertex has at least $d/10$ neighbours not in $V(P) \cup V(\mathcal{C'})$. In order to continue let us again redefine $\mathcal{C}$ to be $\mathcal{C'}$. 

Now, let us denote $\mathcal{C}$ by $\{C_1, \ldots, C_r\}$ with $r \geq t/2$ and that the edges $e_1, \ldots,e_r$ appear in this order in the path $P$. We further delete the cycles $C_1, \ldots ,C_{r/3},C_{2r/3}, \ldots, C_r$ from $\mathcal{C}$ so that all the cycles now in consideration are present in the \emph{middle} of the path $P$. More precisely, no cycle in $\mathcal{C}$ will intersect the first $r/3 \geq 2\lambda n/d$ or last $r/3 \geq 2 \lambda n/d$ vertices of the path $P$ - let us mark this as property ($\ast$). Now, since $r/3 \geq t/6 \geq 3000\lambda n/d$, Lemma \ref{lem:paircleaning} directly implies the following.
\begin{lem}\label{lem:reservoir}
There exists a subcollection $\mathcal{C'} \subseteq \mathcal{C}$ of at least $t/7$ cycles such that $V_{P,\mathcal{C'}}$ is $500\lambda$-clean.
\end{lem}

\noindent Again, let us redefine $\mathcal{C}$ to be the new collection $\mathcal{C'}$, which is the final collection of cycles - it has size now $l:=|\mathcal{C}| \geq t/7 \geq 2000 \lambda n/d$ and $|V_{P,\mathcal{C}}| \geq l$ as well. Let us note that by Lemma~\ref{lem:cleaning}, at most $4\lambda^2 n^2/d^2|V_{P,\mathcal{C}}| \leq \lambda n/d$ vertices have degree less than $d|V_{P,\mathcal{C}}|/2n \geq dl/2n \geq 500\lambda$ in $V_{P,\mathcal{C}}$. Therefore, among the first $2 \lambda n/d$ vertices of $P$ there is a vertex $x$ with degree at least $500\lambda$ in $V_{P,\mathcal{C}}$ and among the last vertices of $P$ there is also such a vertex $y$. By property ($\ast$), we can then make $P$ an $xy$-path and still have that $P$ connects $\mathcal{C}$, so that its endpoints have degree at least $500\lambda$ in $V_{P,\mathcal{C}}$. 

Finally, having constructed $P_1 := P$ and the collection $\mathcal{C}$, we are left to find the paths $P_2, P_3, \ldots, P_r$ and ensure the third condition of Definition \ref{def:perfectlinearforest}. For this we will apply Proposition \ref{prop:forest}. Note first that $Y := V(G) \setminus (V(P) \cup V(\mathcal{C}))$ is, by before, such that $G[Y]$ has minimum degree at least $d/10$ (which by Lemma~\ref{lem:expandermixing} clearly implies that $|Y| \geq n/11$). Moreover, let $X' := \{v \in Y: d(v,V_{P,\mathcal{C}}) < 500\lambda\}$. As previously noted, we have that $|X'| \leq \lambda n/d$. Furthermore, the second part of Lemma~\ref{lem:cleaning} applied to $S := Y \setminus X'$ implies that there is $X' \subseteq X \subseteq Y$ with $|X| \leq |X'| + \lambda n/d \leq 2 \lambda n/d$ such that $Y \setminus X$ is clean. Since as noted before, $|Y| \geq n/11$, then $G[Y \setminus X]$ has minimum degree at least $d/100$. Applying Proposition~\ref{prop:forest} to $X,Y$ with $\delta := d/100$ implies the existence of the desired paths $P_2, \ldots, P_r$.

To finish, note that this proposition gives us, together with $P_1$ and $\mathcal{C}$, a $\left(O \left(n/\delta^{1/5}\right),l,500 \lambda \right)$-good collection of paths. Since $t \geq n/2 \log n$, $l\ge t/7 \ge n/14 \log n$ and $d \geq (\log n)^{11}$, we have $O \left(n/\delta^{1/5}\right) \le \frac{l}{100\log n}$, as desired. 
\end{proof}
As stated before, Theorem \ref{thm:maindense} is an easy corollary of the above.
\begin{proof}[ of Theorem \ref{thm:maindense}]
Let $G$ be an $(n,d,\lambda)$-graph with $\lambda < \alpha d/10^{10}$, $d \geq n^{\alpha}$ and $n \geq 10000$. We claim that $G$ contains at least $\alpha n/8 $ vertex-disjoint cycles of size at most $4/ \alpha $. Indeed, we can do this in a greedy manner - suppose that we have vertex-disjoint cycles $C_1, \ldots, C_r$ of size at most $4/ \alpha$. Then, if $r < \alpha n/8 $, the set $X$ of vertices not contained in any of these cycles has size at least $n/2$. By part \textit{(2)} of Lemma~\ref{lem:expandermixing} it must then be that $G[X]$ has at least $\frac{d|X|^2}{2n} - \frac{\lambda n}{2} \geq \frac{nd}{10} > n^{1+\alpha/2}$ edges. By Theorem~\ref{BSthm}, $G[X]$ contains a cycle $C_{r+1}$ of length at most $4/ \alpha$ and this can be added to the collection of vertex-disjoint cycles.

We can then apply Theorem \ref{thm:pflhamiltonian} to $G$ since $\alpha n/8 \geq 20000 \lambda n/d$, thus showing that $G$ is Hamiltonian.
\end{proof}
\noindent We finish this section by noting that clearly Theorem~\ref{thm:pflhamiltonian} has the slightly more general corollary than Theorem~\ref{thm:maindense} concerning different densities of our pseudorandom graph. Its proof is the same as that of Theorem~\ref{thm:maindense}. 
\begin{cor}\label{thm:based}
Let $G$ be an $(n,d,\lambda)$-graph with $d \geq (\log n)^{11}$ and $d/\lambda > 10^{10}\log_{d} n$. Then, $G$ is Hamiltonian.
\end{cor}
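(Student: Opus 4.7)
The plan is to reduce Corollary~\ref{thm:based} to Theorem~\ref{thm:pflhamiltonian} exactly as in the proof of Theorem~\ref{thm:maindense}: we show that under the hypotheses, $G$ already contains at least $20000\lambda n/d$ pairwise vertex-disjoint cycles, and then Theorem~\ref{thm:pflhamiltonian} finishes the job. Note that the other hypotheses of Theorem~\ref{thm:pflhamiltonian} are immediate: $d \geq (\log n)^{11}$ is given, and $\lambda < d/10^{10}$ follows from $d/\lambda > 10^{10}\log_d n \geq 10^{10}$.

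The cycles will be constructed greedily, just as in the proof of Theorem~\ref{thm:maindense}, but now the cycle-length bound must be allowed to depend on $d/\lambda$ rather than be a constant. Set $L := \lceil 4\log_{d/5} n\rceil$; we will find disjoint cycles each of length at most $L$. The hypothesis $d/\lambda > 10^{10}\log_d n$ implies $d/(40000\lambda) > 250000\log_d n$, which comfortably exceeds $L$, so in particular $20000\lambda n/d \cdot L \leq n/2$.

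Now suppose inductively that we have vertex-disjoint cycles $C_1,\dots,C_r$ of length at most $L$ with $r < 20000\lambda n/d$, and let $X := V(G) \setminus \bigcup_i V(C_i)$. By the choice of $L$ we have $|X| \geq n - rL \geq n/2$, and part~\textit{(2)} of Lemma~\ref{lem:expandermixing} gives
\[
e(G[X]) \;\geq\; \frac{d|X|^2}{2n} - \frac{\lambda|X|}{2} \;\geq\; \frac{d|X|}{5},
\]
using $|X|\geq n/2$ and $\lambda \leq d/2$. With $l := \lceil \log_{d/5}|X|\rceil \leq 2\log_d n$ we have $|X|^{1/l}\leq d/5$, so $e(G[X]) \geq |X|^{1+1/l}$. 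Applying Theorem~\ref{BSthm} inside $G[X]$ yields a cycle $C_{r+1}$ of length at most $2l \leq 4\log_d n \leq L$, disjoint from $C_1,\dots,C_r$. We append it to the collection and iterate until $r = \lceil 20000\lambda n/d\rceil$, at which point Theorem~\ref{thm:pflhamiltonian} produces the Hamilton cycle.

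There is essentially no obstacle beyond checking that the two required inequalities on $L$ — namely $L \geq 4\log_d n$ (so that Theorem~\ref{BSthm} applies with a cycle length not exceeding $L$) and $L \leq d/(40000\lambda)$ (so that the greedy process can run for $20000\lambda n/d$ steps while keeping $|X|\geq n/2$) — are simultaneously satisfiable, which is precisely the content of the hypothesis $d/\lambda > 10^{10}\log_d n$. This is the single place where the density assumption of Theorem~\ref{thm:maindense} has been replaced by a quantitative spectral condition, and it fits seamlessly into the greedy argument.
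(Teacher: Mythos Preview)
Your proposal is correct and follows exactly the approach the paper indicates: greedily build a collection of short cycles via Theorem~\ref{BSthm} and the expander mixing lemma, then invoke Theorem~\ref{thm:pflhamiltonian}. The paper gives only a one-line sketch (``greedily we can always find a collection of $n/4\log_d n$ vertex-disjoint cycles''), and you have fleshed out precisely those details; the only quibble is that your intermediate bound $l\le 2\log_d n$ ignores a $+1$ from the ceiling, but this is harmless since what you actually need is $2l\le L$, which holds.
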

\noindent Indeed, note that in general, it is easy to see that greedily we can always find a collection of $n/4 \log_d n$ vertex-disjoint cycles. Therefore, if  $d/\lambda > 10^{10}\log_{d} n$, then the conditions of Theorem \ref{thm:pflhamiltonian} are satisfied.  

\section{Applications}\label{sec:apps}

In this section, we discuss in more detail the applications of our new techniques and main results, that we mentioned briefly in Section \ref{sec:appsintro}.

\subsection{Random Cayley graphs and Hamilton cycles with few colours}\label{sec:random cayley}

Let $G$ be a group and $A\subseteq G$ a subset. The \emph{Cayley graph} $\Gamma(G,A)$ is the graph with vertex set $G$ and all edges of the form $\{g,ga\}$ for $g\in G,a\in A$. Note that we consider here \emph{undirected} Cayley graphs, hence, $\Gamma(G,A)$ is a $d$-regular graph with $|A|\le d=|A\cup A^{-1}| \le 2|A|$. We will assume here that $A$ does not contain the neutral element, so that $\Gamma(G,A)$ has no loops. As discussed in Section~\ref{sec:appsintro}, studying the Hamiltonicity of Cayley graphs (Conjecture~\ref{conj:transitive}) is a very important partial case of the Lov\'asz conjecture mentioned in Section~\ref{sec:intro}.
Up to now, only special cases of Conjecture~\ref{conj:transitive} have been proven, most notably when the ambient group is Abelian or when $A$ is linear in the size of $G$ (see Christofides, Hladk\'y and M\'ath\'e \cite{christofides2014hamilton}). But the general case is still wide open. In view of this, a lot of research has been invested into understanding random Cayley graphs, where $G$ is a group of order $n$ and $A\subseteq G$ of size $d=d(n)$ is chosen uniformly at random, and the question is whether $\Gamma(G,A)$ is Hamiltonian with high probability (as $n\to \infty$).

Alon and Roichman~\cite{AR:94} proved that for $d=O(\log n)$, whp $\Gamma(G,A)$ is connected (in fact, they proved the much stronger statement that it is an expander). This bound is tight up to a constant factor, for instance, in $G=\mathbb{Z}_2^n$, the smallest size of a set $A$ such that $\Gamma(G,A)$ is connected is $\log_2 |G|$. The analogous question for Hamiltonicity has turned out to be much more difficult. As a first result, Meng and Huang~\cite{MH:96} showed that almost all Cayley graphs are Hamiltonian. 
By using the technique of Alon and Roichman to bound the second eigenvalue of a random Cayley graph, Krivelevich and Sudakov~\cite{KS:03} deduced from their bound on the Hamiltonicity of $(n,d,\lambda)$-graphs that already $O(\log^5 n)$ random generators suffice for Hamiltonicity. Their technique for applying the Alon and Roichman result was later refined by Christofides and Markstr\"{o}m \cite{CM:12} to show that $O(\log^3 n)$ random generators suffice. It was also noted that Conjecture~\ref{conj:KS} would imply that $O(\log n)$ generators suffice, which seems reasonable since the random Cayley graph is then connected as discussed above, and there is no obvious obstacle to Hamiltonicity. This was also stated as a conjecture by Pak and Radoi\v{c}i\'{c}~\cite{PR:09}.

\begin{conj}\label{conj:cayley random}
Let $C$ be a sufficiently large absolute constant.
Let $G$ be a group of order $n$ and $d=C\log n$. If $A\subseteq G$ is a set of size $d$ chosen uniformly at random, then with high probability, $\Gamma(G,A)$ is Hamiltonian. 
\end{conj}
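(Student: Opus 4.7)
The plan is to upgrade the rotation-extension framework of Sections~\ref{sec:posarot}--\ref{sec:rotthm} to work at constant spectral ratio in the special setting of random Cayley graphs, since for $d = C\log n$ the Alon--Roichman theorem already gives $d/\lambda = \Omega(\sqrt{C})$, making Conjecture~\ref{conj:cayley random} essentially equivalent to Conjecture~\ref{conj:KS} restricted to this class. Theorem~\ref{thm:main general} falls short because its clean-partition lemma (Lemma~\ref{lem:findingcleanpartition}) loses a $(\log n)^{1/3}$ factor in $\gamma$, and the subsequent rotation argument in Claims~\ref{firstclaim}--\ref{claim:settingoflemma} needs $k \gtrsim \log n$ intervals to absorb the rotation changes $|F| = O(\log n)$. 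My goal is to remove the first loss using extra randomness beyond what the spectral gap provides, and keep $k$ a large constant by exploiting the group action to circumvent the second.

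First I would expose $A$ in two independent halves $A = A_1 \cup A_2$, each of size $(C/2)\log n$. A matrix-Chernoff calculation on the adjacency operator of $\Gamma(G,A_1)$ gives $\lambda(\Gamma(G,A_1))\le O(\sqrt{d})$ with high probability, and hence the strong pseudorandomness needed downstream. Then I would fix a partition of $G$ into a constant number $k$ of balanced blocks (using a subgroup of bounded index when one exists, or an arbitrary balanced partition otherwise), and reveal the remaining generators $A_2$. Conditional on $A_1$ and on the fixed partition, for every vertex $g$ and every block $I_j$ the random set $A_2 \cap g^{-1}I_j$ is hypergeometric with mean $\Theta(d/k) = \Theta(\log n)$, so Chernoff plus a union bound over $nk$ events gives the third clause of Definition~\ref{def:cleanedcollection} with $\gamma = 1-o(1)$ and $\delta = \Theta(\log n)$, a much stronger clean partition than Lemma~\ref{lem:findingcleanpartition} produces generically.

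With this stronger clean partition in hand, $k$ is a large constant rather than $\Theta(\log n)$, so the coefficients $\gamma$ and $\gamma k$ in the proof of Theorem~\ref{thm:main general} are constants throughout. Feeding the result into Claims~\ref{firstclaim}--\ref{claim:settingoflemma} and then into Lemma~\ref{lem:closingpathintocycle} closes a longest path into a cycle, which by connectivity of $\Gamma(G,A)$ (already granted by $A_1$) must span $G$ and so yields a Hamilton cycle. The technical requirement $\delta \ge 20\lambda$ of Lemma~\ref{lem:technicalrotation} is comfortably met since $\delta = \Theta(\log n)$ and $\lambda = O(\sqrt{\log n})$, and the failure probabilities from the two-round exposure combine to $o(1)$ by a standard bootstrap.

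The main obstacle is the circularity of the clean partition: Lemma~\ref{lem:findingcleanpartition} is applied along a maximal path $P$ of $\Gamma(G,A)$, and $P$ is itself a function of all of $A$, so the ``fresh randomness'' I want from $A_2$ is not actually independent of the partition it should validate. Decoupling the two is the hardest step, and I would attempt it by replacing the path partition by a partition of $G$ itself along cosets of a subgroup: using the vertex-transitive action, I would show that with high probability a maximal path in $\Gamma(G,A_1)$ can be reordered so that its sub-paths almost coincide with a prescribed group partition of $G$, up to an $o(1)$ fraction of misplaced vertices, which can be absorbed in later rotations as in Claim~\ref{firstclaim}. Once this coupling succeeds, $A_2$ acts independently of the chosen partition, the clean partition has all three properties of Definition~\ref{def:cleanedcollection} with constants in place of polylogs, and the proof of Theorem~\ref{thm:main general} goes through verbatim at $d = C\log n$, yielding Conjecture~\ref{conj:cayley random}.
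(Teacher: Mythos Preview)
This statement is Conjecture~\ref{conj:cayley random}, which the paper does \emph{not} prove; it is explicitly left open, and the paper only establishes the weaker Theorem~\ref{thm:cayley random} with $d = C\log^{5/3} n$. So there is no paper proof to compare against, and your proposal must be judged as an attempt on an open problem.

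The proposal has a genuine gap at its core. The clean collection of Definition~\ref{def:cleanedcollection} consists of \emph{sub-paths} $Q_j$ of a maximal path $P$, and this is essential: the rotation machinery of Section~\ref{sec:posarot} works because an $X$-rotation with $X$ contained in some sub-paths leaves the remaining sub-paths intact as sub-paths of the new path. Replacing the $Q_j$ by cosets or arbitrary blocks of $G$ destroys this, since rotations have no reason to respect a group partition, and the quantities $\text{int}_P(\cdot)$, $c_P(\cdot)$ on which Lemmas~\ref{lem:technicalrotation} and~\ref{lem:closingpathintocycle} rely become meaningless. You acknowledge this circularity and propose to ``reorder'' a maximal path so that its sub-paths nearly coincide with a prescribed coset partition, but no mechanism is offered, and vertex-transitivity of $\Gamma(G,A)$ gives no control over the internal structure of any particular maximal path (which is a function of all of $A$, not a group-theoretic object).

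Even granting that coupling, the arithmetic of Claim~\ref{claim:averageinterval} still fails. That claim needs the number of parts coming from broken intervals to be at most a constant fraction of the total number of parts; in the paper this uses $|F| \le 10\log n \le k/3$. If $k$ is a constant while Claims~\ref{firstclaim} and~\ref{claim:settingoflemma} still invoke Corollary~\ref{cor:usualrotation} and Lemma~\ref{lem:technicalrotation} (which produce $(\log n)$-rotations via the doubling argument, so $|F| = \Theta(\log n)$), then every one of your $O(1)$ intervals may be broken $\Theta(\log n)$ times, and Claim~\ref{claim:averageinterval} yields nothing. Improving $\gamma$ to $1-o(1)$ does not help: the binding constraint is $k \gtrsim |F|$, not the size of $\gamma$. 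To make your scheme work you would need either to reduce the number of rotations to $O(1)$ --- which would essentially resolve Conjecture~\ref{conj:KS} itself --- or to find an entirely different way to absorb the rotation damage, and neither is supplied.
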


\noindent Using an operator Hoeffding inequality together with our new Theorem~\ref{thm:main general}, we can improve the number of random generators to $O(\log^{5/3}n)$.

\begin{thm}\label{thm:cayley random}
Let $C$ be a sufficiently large absolute constant.
Let $G$ be a group of order $n$ and $d=C\log^{5/3}n$. If $A\subseteq G$ is a set of size $d$ chosen uniformly at random, then with high probability, $\Gamma(G,A)$ is Hamiltonian. 
\end{thm}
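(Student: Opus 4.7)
The plan is to reduce Theorem~\ref{thm:cayley random} to the main Theorem~\ref{thm:main general}. Specifically, we will show that when $A \subseteq G$ is a uniformly random set of size $d = C \log^{5/3} n$, the Cayley graph $\Gamma(G,A)$ is, with high probability, an $(n, d', \lambda)$-graph whose degree satisfies $d' \in [d, 2d]$ and whose second-largest absolute eigenvalue satisfies $\lambda = O(\sqrt{d \log n})$. The ratio is then $d'/\lambda = \Omega(\sqrt{d/\log n}) = \Omega(\sqrt{C} \cdot (\log n)^{1/3})$; for $C$ larger than the squared absolute constant appearing in Theorem~\ref{thm:main general}, this meets the hypothesis of that theorem and hence $\Gamma(G,A)$ is Hamiltonian with high probability.

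To prove the spectral bound it is convenient to first sample $a_1, \ldots, a_d$ independently and uniformly from $G$ (the ``multiset model''). The adjacency operator of the corresponding Cayley graph can be written as $M = \sum_{i=1}^d Y_i$, where $Y_i$ is the symmetric $0/1$-matrix with $Y_i(g,h) = 1$ iff $h \in \{g a_i, g a_i^{-1}\}$. Group invariance gives $\mathbb{E}[Y_i] = \tfrac{2}{n} J$ (up to a negligible correction coming from involutions), so $\mathbb{E}[M]$ has eigenvalue $2d$ on the all-ones vector and vanishes on its orthogonal complement; consequently $\lambda(\Gamma(G,A))$ is controlled by the operator norm $\|M - \mathbb{E}[M]\|$. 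The centered summands $Z_i := Y_i - \mathbb{E}[Y_i]$ are independent, mean-zero, and satisfy $\|Z_i\| = O(1)$ deterministically. Tropp's operator Hoeffding inequality (the ``operator Hoeffding'' invoked in the theorem statement) then yields
\begin{equation*}
\mathbb{P}\!\left[\bigl\|M - \mathbb{E}[M]\bigr\| \ge t\right] \le 2n \exp\!\left(-\frac{t^2}{C_0 \, d}\right)
\end{equation*}
for an absolute constant $C_0$, and choosing $t = C_1 \sqrt{d \log n}$ with $C_1$ sufficiently large makes the right-hand side $o(1)$. This gives the desired estimate $\lambda(\Gamma(G,A)) = O(\sqrt{d \log n})$ with high probability.

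The main obstacle we anticipate is purely technical: carefully passing between the uniform-set model in the statement and the uniform-multiset model used above. Since $d = o(\sqrt{n})$, the probability that an independent sample $a_1, \ldots, a_d$ fails to yield a set of distinct non-identity elements with $A \cap A^{-1} = \emptyset$ is $O(d^2/n) = o(1)$, so a standard coupling argument both transfers the high-probability spectral bound from the multiset to the set model and guarantees $d' \in [d, 2d]$. Once these technicalities are resolved, the remainder of the proof is a direct invocation of Theorem~\ref{thm:main general} on the resulting pseudorandom graph.
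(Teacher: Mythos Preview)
Your proposal is correct and follows essentially the same route as the paper: bound the second eigenvalue of the random Cayley graph via an operator Hoeffding/matrix concentration inequality to obtain $\lambda = O(\sqrt{d\log n})$, and then apply Theorem~\ref{thm:main general}. The paper packages this inside a more general statement (Theorem~\ref{thm:operator}) that simultaneously handles random Cayley graphs and random unions of $1$-factors, and it cites the Christofides--Markstr\"om operator Hoeffding inequality rather than Tropp's, but the underlying argument and the arithmetic with $d/\lambda \gtrsim \sqrt{C}(\log n)^{1/3}$ are the same. Your multiset-to-set coupling is also essentially what the paper does (it phrases it slightly differently, introducing a dummy symbol to make the index distribution exactly uniform); the condition $A\cap A^{-1}=\emptyset$ you impose is not actually needed since $d'\in[d,2d]$ holds regardless.
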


\noindent A closely related problem concerns the quest for finding Hamilton cycles with few colours in optimally edge-coloured complete graphs. This is a proper edge-colouring of $K_n$
which uses $n-1$ colours when $n$ is even and $n$ colours when $n$ is odd. The following conjecture was proposed by Akbari, Etesami, Mahini, and Mahmoody~\cite{AEMM:07}.

\begin{conj}\label{conj:Ham colors}
Every properly edge-coloured $K_n$ with $\chi'(K_n)$ colours has a Hamilton cycle with $O(\log n)$ colours.
\end{conj}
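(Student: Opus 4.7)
The plan is to prove Hamiltonicity in a random subgraph induced by $k = C \log n$ colour classes, with $C$ a sufficiently large absolute constant, and then invoke Theorem~\ref{thm:main general}. Let $S$ be a uniformly random size-$k$ subset of the $\chi'(K_n)$ available colours of the given proper edge-colouring of $K_n$, and let $G_S$ denote the spanning subgraph of $K_n$ formed by all edges whose colour lies in $S$. Because a proper edge-colouring places exactly one edge of each colour at every vertex, $G_S$ is $k$-regular (or $k$-regular up to a single vertex of degree $k{-}1$ when $n$ is odd), so any Hamilton cycle in $G_S$ automatically uses at most $k = O(\log n)$ colours. It therefore suffices to verify that, with positive probability, $G_S$ satisfies the hypothesis $d/\lambda \ge C'(\log n)^{1/3}$ of Theorem~\ref{thm:main general}.

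The key step is establishing the spectral bound $\lambda(G_S) = O(\sqrt{k})$ with positive probability. Writing $A_c$ for the $\{0,1\}$-adjacency matrix of the matching of colour $c$, we have $\sum_c A_c = J - I$ and $\|A_c\| = 1$, so the adjacency matrix of $G_S$ is the random sum $A(G_S) = \sum_{c \in S} A_c$ with expectation $(k/(n{-}1))(J-I)$, and what we need is a bound on the spectral norm of the centred matrix restricted to the subspace orthogonal to the all-ones vector. A direct application of the matrix Bernstein inequality yields only $\lambda(G_S) = O(\sqrt{k \log n})$, which through Theorem~\ref{thm:main general} would give the weaker $k = \Theta((\log n)^{5/3})$ bound paralleling our random Cayley graph result (Theorem~\ref{thm:cayley random}). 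To reach $k = \Theta(\log n)$, I would instead pursue the sharper Alon--Boppana-type estimate $\lambda(G_S) = O(\sqrt{k})$ via a trace/non-backtracking walk method in the spirit of Friedman's theorem for random regular graphs and Bordenave's proof of the optimal spectral gap for random lifts, adapted to the present model in which $k$ matchings are sampled without replacement from a fixed decomposition of $J - I$ into $n - 1$ perfect matchings.

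With $\lambda(G_S) = O(\sqrt{k})$ in hand, the ratio $k/\lambda(G_S) = \Omega(\sqrt{\log n})$ handily exceeds $C'(\log n)^{1/3}$ for large $n$, so Theorem~\ref{thm:main general} produces a Hamilton cycle in $G_S$ and completes the proof. The main obstacle, which is where all the work lies, is precisely the sharp spectral estimate: because the matchings are drawn from a structured partition of $J-I$ rather than independently, classical matrix concentration loses a factor of $\sqrt{\log n}$, and removing it requires a moment calculation controlling the number of tangle-free closed walks in $G_S$, exploiting the proper edge-colouring constraint (each pair of distinct matchings is edge-disjoint) to obtain cancellations analogous to those exploited in the random lift setting. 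Any auxiliary refinement of Theorem~\ref{thm:main general} that relaxes the $(\log n)^{1/3}$ factor in the spectral-ratio condition would of course reduce the demands on the spectral estimate and therefore also contribute to closing the gap.
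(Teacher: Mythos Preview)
The statement you are attempting to prove is Conjecture~\ref{conj:Ham colors}, which is \emph{open}; the paper does not claim a proof of it. What the paper actually establishes is the weaker Theorem~\ref{thm:Ham colors} with $O((\log n)^{5/3})$ colours, obtained exactly via the route you yourself describe as the ``fallback'': pick $k=\Theta((\log n)^{5/3})$ random colour classes, use the operator Hoeffding inequality to get $\lambda(G_S)=O(\sqrt{k\log n})$, and apply Theorem~\ref{thm:main general}. So your proposal and the paper agree on this weaker bound, but diverge on the full conjecture, which the paper leaves open.

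The genuine gap in your plan is the sharp spectral estimate $\lambda(G_S)=O(\sqrt{k})$ when $k=\Theta(\log n)$. You invoke Friedman's theorem and Bordenave's non-backtracking method as a template, but those results concern $k$ \emph{independent uniformly random} perfect matchings, where the trace method exploits the full entropy of each matching. In your setting the $k$ matchings are sampled from a \emph{fixed} $1$-factorisation of $K_n$; the only randomness is in which $k$ of the $n-1$ classes are selected, roughly $k\log n$ bits rather than $\Theta(kn\log n)$. The cancellation mechanism in Friedman/Bordenave relies on averaging over the internal structure of each matching, and it is not at all clear how to recover it here; indeed, for adversarial $1$-factorisations it is not even known whether $\lambda(G_S)=O(\sqrt{k})$ holds. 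Without this, your reduction to Theorem~\ref{thm:main general} does not go through, and what remains is precisely the $O((\log n)^{5/3})$ bound the paper proves. A secondary issue is that for odd $n$ the graph $G_S$ is not regular, so Theorem~\ref{thm:main general} does not apply directly; the paper handles this via the reduction of Balla, Pokrovskiy and Sudakov to the even case, which you would also need.
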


\noindent Akbari et al. proved that one can find a Hamilton cycle with at most $8\sqrt{n}$ colours. The bound on the number of colours was later improved to $O(\log^3 n)$ by Balla, Pokrovskiy and Sudakov~\cite{BPS:17}. Their strategy was to pick $O(\log^3 n)$ colours at random and show that the subgraph induced by these colours is already Hamiltonian with high probability.
Here, we obtain a further improvement to $O(\log^{5/3} n)$. 

\begin{thm}\label{thm:Ham colors}
Every properly edge-coloured $K_n$ with $\chi'(K_n)$ colours has a Hamilton cycle with $O(\log^{5/3} n)$ colours.
\end{thm}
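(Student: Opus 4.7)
The plan is to sample a random subset $S$ of $k := C\log^{5/3}n$ colours (for $C$ a sufficiently large absolute constant) and show that with high probability the resulting spanning subgraph $G \subseteq K_n$ satisfies the hypothesis of Theorem~\ref{thm:main general}. Since each colour class in a proper edge-colouring of $K_n$ is a (near-)perfect matching, $G$ is essentially $k$-regular, so we may take $d = k$. The goal is then to bound $\lambda(G) = O(\sqrt{k\log n})$, which would give
$$\frac{d}{\lambda(G)} = \Omega\!\left(\sqrt{k/\log n}\right) = \Omega\!\left((\log n)^{1/3}\right)$$
by the choice of $k$. Theorem~\ref{thm:main general} then produces a Hamilton cycle in $G$, which by definition uses at most $k = O(\log^{5/3}n)$ colours.

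To bound $\lambda(G)$, I would write $A_G = \sum_{i=1}^N \xi_i A_i$, where $A_1,\dots,A_N$ are the $0/1$ matching matrices of the colour classes (so $\sum_i A_i = J-I$) and $\xi_i \in \{0,1\}$ are the sampling indicators. It is convenient to model the sampling by independent Bernoullis $\xi_i$ with $p = k/N$, transferring the result to the uniform-without-replacement model by a standard coupling. Then $\mathbb{E}[A_G] = p(J-I)$, whose spectrum is $\{p(n-1)\} \cup \{-p,\dots,-p\}$, the top eigenvalue lying along the all-ones direction. The deviation $\|A_G - \mathbb{E}[A_G]\| = \bigl\|\sum_i (\xi_i - p)A_i\bigr\|$ would then be controlled via Tropp's matrix Bernstein (operator Hoeffding) inequality: each summand is self-adjoint with spectral norm at most $1$, and, using $A_i^2 = I$ when $n$ is even, the variance proxy satisfies $\bigl\|\sum_i \mathrm{Var}(\xi_i)\,A_i^2\bigr\| \leq pN \leq k$. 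Matrix Bernstein then yields $\|A_G - \mathbb{E}[A_G]\| = O(\sqrt{k\log n} + \log n) = O(\sqrt{k\log n})$ with probability $1-o(1)$. Since $\mathbb{E}[A_G]$ acts as $-p\cdot \mathrm{Id}$ on the subspace orthogonal to the all-ones vector, this directly gives $\lambda(G) = O(\sqrt{k\log n})$, as required.

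I expect the main work here to be technical rather than conceptual: selecting and correctly applying the right matrix concentration inequality, and handling small bookkeeping issues (odd $n$, where each colour class misses one vertex so $A_i^2$ is a rank-$(n-1)$ projection rather than $I$, contributing only lower-order terms; and the conversion between Bernoulli and uniform-without-replacement sampling). Conceptually, the argument is the same ``sample random colours, verify pseudorandomness, apply a black-box Hamiltonicity theorem'' template used by Balla--Pokrovskiy--Sudakov, improved purely by the sharper spectral ratio permitted by Theorem~\ref{thm:main general}; it mirrors the proof of Theorem~\ref{thm:cayley random}, with the matching decomposition of $K_n$ playing the role of the translation-operator decomposition of the Cayley graph.
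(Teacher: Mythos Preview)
Your proposal is correct and follows essentially the same approach as the paper: both sample $O(\log^{5/3}n)$ random colour classes, use a matrix concentration inequality (the paper uses the Christofides--Markstr\"om operator Hoeffding inequality, packaged into a general Theorem~\ref{thm:operator}, while you propose Tropp's matrix Bernstein, which yields the same $O(\sqrt{k\log n})$ spectral bound) to certify that the resulting graph is an $(n,d,\lambda)$-graph with $d/\lambda = \Omega((\log n)^{1/3})$, and then invoke Theorem~\ref{thm:main general}. The only notable difference is in the odd-$n$ bookkeeping: the paper cites a reduction of Balla--Pokrovskiy--Sudakov from odd $n$ to even $n$, whereas you propose to handle it directly; be aware that for odd $n$ the sampled graph is not exactly regular (degrees are $k$ or $k-1$), so you would need either to pad a few vertices or to note that Theorem~\ref{thm:main general} tolerates this, making the paper's reduction the cleaner route.
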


\noindent We prove both Theorems~\ref{thm:cayley random} and~\ref{thm:Ham colors} in a unified way. Note that when $n$ is even, then an optimal edge-colouring of $K_n$ is a partition of the edge set into perfect matchings. Moreover, if $G$ is a group and $a\in G$ is not the neutral element, then the edges in the Cayley graph corresponding to $a$ and $a^{-1}$ form either a $1$-factor (if $a=a^{-1}$) or a $2$-factor (if $a\neq a^{-1}$).
It turns out that whenever $K_n$ is partitioned into spanning regular graphs and we choose sufficiently many of them randomly, then with high probability, the obtained graph is pseudorandom and hence Hamiltonian.

\begin{thm}\label{thm:operator}
For all $R>0$ there exists $C>0$ such that the following holds:
Assume $K_n$ is edge-partitioned into regular spanning subgraphs $H_1,\dots,H_t$, with degrees $1\le r_i\le R$ for all $i\in[t]$. Let $c_1,\dots,c_k$ be a sequence of indices chosen independently from $[t]$ such that $i$ is always chosen with probability at least $1/Rt$.
If $k\ge C (\log n)^{5/3}$, then with high probability, $G=\cup_{j\in [k]}H_{c_j}$ is Hamiltonian.
\end{thm}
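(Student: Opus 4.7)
The plan is to verify the hypothesis of Theorem~\ref{thm:main general}: I will show that with high probability $G$ is an $(n,d_G,\lambda)$-graph with $d_G/\lambda\ge C'(\log n)^{1/3}$, where $C'$ is the constant from that theorem, from which Hamiltonicity follows at once. A degenerate case is handled first: when $t\le (\log n)^{2/3}/R$, $k/(Rt)\ge 2\log n$, so by $p_i\ge 1/(Rt)$ and a union bound, every index lies in $S:=\{c_1,\dots,c_k\}$ with probability $1-o(1)$, giving $G=K_n$, trivially Hamiltonian. Henceforth assume $t>(\log n)^{2/3}/R$.

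Because the $H_i$ edge-partition $K_n$, $G$ is exactly $d_G$-regular with $d_G=\sum_{i\in S}r_i\ge |S|$, and a Chernoff estimate on the (negatively associated) indicators $X_i:=\mathbb{1}[i\in S]$, using $p_i\ge 1/(Rt)$, gives $|S|=\Omega(k)$ whp, hence $d_G=\Omega(k)$. For the spectral gap, I write
\[
A_G\big|_{\mathbf{1}^\perp}=\sum_{i\in S}B_i,\qquad B_i:=A_{H_i}-\tfrac{r_i}{n}J,
\]
so that $B_i\mathbf{1}=0$ and $\|B_i\|_{op}\le\lambda(H_i)\le R$. After Poissonizing the number of samples (which perturbs the law of $S$ by only $o(1)$ in total variation), the $X_i$ become independent Bernoullis with means $q_i=1-e^{-kp_i}$, and Tropp's matrix (operator) Bernstein inequality applied to the centered sum $\sum_i (X_i-q_i)B_i$ yields
\[
\Bigl\|\sum_i X_iB_i-\sum_i q_iB_i\Bigr\|_{op}=O\bigl(R\sqrt{k\log n}\bigr)
\]
whp, via the variance parameter $R^2\sum_iq_i\le R^2 k$ and the uniform bound $R$. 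To bound the mean $\sum_iq_iB_i$ on $\mathbf{1}^\perp$, I split $[t]=I_H\sqcup I_L$ with $I_H=\{i:kp_i\ge\log n\}$; since $\sum_{i\in I_H}kp_i\le k$ forces $|I_H|\le k/\log n$, the heavy part has spectral norm at most $R|I_H|\le Rk/\log n=O(R\sqrt{k\log n})$. The light part $\sum_{i\in I_L}q_iB_i$ is the centered weighted adjacency of a matrix whose weights $q_i\approx kp_i$ are, thanks to $p_i\ge 1/(Rt)$, within a bounded ratio of uniform on $I_L$, and a direct estimate exploiting this near-uniformity gives $O(R\sqrt{k\log n})$ for its operator norm on $\mathbf{1}^\perp$ as well.

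Combining these pieces yields $\lambda(G)=O(R\sqrt{k\log n})$ whp, so
\[
\frac{d_G}{\lambda(G)}=\Omega\!\left(\frac{1}{R}\sqrt{\frac{k}{\log n}}\right)\ge C'(\log n)^{1/3}
\]
as soon as $k\ge C(\log n)^{5/3}$ for $C=C(R)$ large, and Theorem~\ref{thm:main general} delivers a Hamilton cycle. The main obstacle is precisely the mean estimate in the non-uniform regime: a dominant $p_i$ could \emph{a priori} blow up the mean by a factor $k$; the lower bound $p_i\ge 1/(Rt)$ combined with $\sum_ip_i=1$ forces $\max_ip_i\le 1/R+O(1/t)$ (so no single term dominates), caps $|I_H|$, and ensures enough uniformity within $I_L$ for the heavy/light decomposition to distribute the mean's norm across pieces each of magnitude $O(R\sqrt{k\log n})$, matching the fluctuation bound.
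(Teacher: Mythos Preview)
Your argument has a genuine gap at the ``direct estimate exploiting near-uniformity'' for the mean term $\sum_{i\in I_L} q_i B_i$. Having the weights $q_i$ within a bounded ratio of each other does \emph{not} control $\bigl\|\sum_i q_i B_i\bigr\|$, because the full sum $\sum_i B_i=-I$ on $\mathbf{1}^\perp$ has small norm only due to cancellation, and unequal weights destroy that cancellation. Concretely: take $n\equiv 0\pmod 4$, $R=2$, $t=n-1$, and let $H_1,\dots,H_{n/2}$ be perfect matchings decomposing $K_{n/2,n/2}$ while $H_{n/2+1},\dots,H_{n-1}$ decompose the two copies of $K_{n/2}$ on the parts. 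Set $p_i\approx 3/(2t)$ for $i\le n/2$ and $p_i=1/(2t)=1/(Rt)$ for $i>n/2$; all indices are light. If $v$ is $+1$ on one part and $-1$ on the other, then $B_i v=-v$ for $i\le n/2$ and $B_iv=v$ for $i>n/2$, so $\sum_i q_iB_i\,v=\bigl(\sum_{i>n/2}q_i-\sum_{i\le n/2}q_i\bigr)v$, and with $q_i\approx kp_i$ this eigenvalue is $\approx -k/2$. Thus $\|\sum_i q_iB_i\|\ge k/2$, not $O(R\sqrt{k\log n})$. (Your side claim that $\max_ip_i\le 1/R+O(1/t)$ is also off: the correct bound is $1-1/R+O(1/t)$.)

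Worse, the same example shows that your overall \emph{plan} cannot work: for the actual random graph $G$, the vector $v$ is an eigenvector of $A_G$ with eigenvalue $|S_2|-|S_1|\approx -k/2$ whp, while $d_G=|S|\approx k$, so $d_G/\lambda(G)\le 2+o(1)$. Hence $G$ genuinely fails the hypothesis of Theorem~\ref{thm:main general}, and no amount of sharpening your concentration step can rescue the argument. The paper sidesteps this by a coupling: each $c_j$ is independently resampled to a $c_j'$ which is uniform on $[t]$ with probability $1/R$ and a dummy otherwise; one then works with the \emph{subgraph} $G'=\bigcup_{c_j'=c_j}H_{c_j}\subseteq G$. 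Because $c_j'$, conditioned on not being the dummy, is exactly uniform, the centered summands $Y_j$ have mean \emph{zero}, and an operator Hoeffding (martingale) inequality gives $\lambda(G')\le O(R\sqrt{k\log n})$ directly with no mean term to estimate. Hamiltonicity of $G'$ then yields Hamiltonicity of $G$. The essential idea you are missing is precisely this passage to a uniformly sampled subgraph; without it, the non-uniform bias in the $p_i$'s can make $G$ itself far from pseudorandom.
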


\begin{proof}
Since we do not require the $c_j$'s to be uniformly distributed, we first reduce to this case by introducing a dummy variable $*$ and a straightforward coupling.
For each $j\in [k]$ independently, we define $c_j'$ to be a random element from $[t]\cup\{*\}$ as follows. If $c_j=i$ then $c'_j=i$ with probability $\frac{1/\mathbb{P}[c_j=i]}{Rt}$ and 
$c'_j=*$ with probability $1-\frac{1/\mathbb{P}[c_j=i]}{Rt}$. Clearly this is well defined since $\mathbb{P}[c_j=i] \ge 1/Rt$ for all $i$. Thus we have that
$\mathbb{P}[c_j'=i]=1/Rt$ for all $i\in[t]$ and $\mathbb{P}[c_j'=*]=1-1/R$. 
Let $A^{(j)}$ be the adjacency matrix of $H_{c_j}$ with probability $\frac{n-1}{n}$ and $A^{(j)}=r_{c_j}I$ with probability $1/n$. Moreover, let $Y_j$ be the $0$-matrix if $c_j'=*$ and $Y_j=\frac{1}{4R}(A^{(j)}-\frac{r_{c'_j}}{n}J)$ otherwise, where $J$ is the all $1$'s matrix.
Crucially, we have $\mathbb{E}[Y_j]=0$. Indeed, the sum of the adjacency matrices of all $H_i$ is $J-I$ and $c'_j$ is uniformly distributed over $[t]$ if we condition on $c_j'\neq *$. On the other hand, if $c_j'=*$ then
$Y_j=0$. 
Moreover, note that $Y_j$ is a symmetric matrix with all eigenvalues in $[-1/2,1/2]$. Indeed, note that $A^{(j)}$ has eigenvalues in $[-R,R]$ since $H_{c_j}$ is either the adjacency matrix of a graph with maximum degree at most $R$ or $r_{c_j}I$; thus, $\frac{1}{4R}A^{(j)}$ has eigenvalues in $[-1/4,1/4]$. Also, clearly $\frac{1}{4R}\cdot \frac{r_{c'_j}}{n}J$ has eigenvalues in $[-1/4,1/4]$.

Using this setup for each individual $j\in [k]$, we now let $I\subseteq [k]$ be the set of $j$ for which $c_j'=c_j$.
Obviously, $G'=\cup_{j\in I}H_{c_j}$ is a subgraph of $G$, so it suffices to establish Hamiltonicity of $G'$.
We set further $A=\sum_{j\in I} A^{(j)}$ and $d=\sum_{j\in I}r_{c_j}$.
Observe that $X_i=Y_1+\dots+Y_i$ is a martingale and $X_k=\frac{1}{4R}(A-\frac{d}{n}J)$.
Using an operator Hoeffding inequality for Hilbert spaces, developed by Christofides and Markstr\"om~\cite{CM:08} (see \cite[Theorem~4]{BPS:17} for the version we use), one can show that for any $0<\eps<1/2$, it holds that
\begin{align}
\mathbb{P}[||X_k||\ge \eps k]\le 2n\exp(-2\eps^2 k).\label{operator chernoff}
\end{align}
We may assume that $k=\lceil C (\log n)^{5/3} \rceil$.
Plugging $\eps=\sqrt{\frac{\log n}{k}} \leq \frac{1}{\sqrt{C}(\log n)^{1/3}}$ into \eqref{operator chernoff}, we see that with high probability, $$||A-\frac{d}{n}J||=4R||X_k||\le 4\eps Rk.$$
Moreover, with high probability, the $c_j$'s with $j\in I$ are pairwise distinct and $A^{(j)}\neq r_{c_j}I$. Finally, by a simple Chernoff bound, we have $|I|\ge k/2R$ with high probability.

We assume now that all these events hold. Then, crucially, $A$ is the adjacency matrix of the random graph $G'$ and $\lambda(G')=||A-\frac{d}{n}J|| \le 4\eps Rk$.
So we can conclude that $G'$ is an $(n,d,\lambda)$-graph with $d = |I| \ge k/2R$ and $d/\lambda \ge 1/8 \eps R^2$. Since $\eps \leq \frac{1}{\sqrt{C}(\log n)^{1/3}}$, by choosing $C$ large enough, we can apply our Theorem~\ref{thm:main general} to establish Hamiltonicity of~$G'$.
\end{proof}

\noindent  We can now easily deduce the two results stated above.

\begin{proof}[ of Theorem \ref{thm:cayley random}]
Let $G$ be a group of order $n$. Instead of choosing $A$ of order $d$ uniformly at random, we may choose $d$ elements $a_1,\dots,a_d\in G$ independently and uniformly at random. For each $a\in G$ which is not the neutral element, let $H_a$ be the spanning subgraph of the complete graph on $G$ with all edges of the form $\{g,ga\}$ for $g\in G$. As discussed earlier, $H_a=H_{a^{-1}}$ is either a $1$-factor (if $a=a^{-1}$) or a $2$-factor (if $a\neq a^{-1}$). Each such factor is chosen with probability $\Theta(1/n)$ when we pick $a_i$, hence Theorem~\ref{thm:operator} implies the claim.
\end{proof}

\begin{proof}[ of Theorem \ref{thm:Ham colors}]
Balla, Pokrovskiy and Sudakov~\cite{BPS:17} found a reduction for odd $n$ to the case when $n$ is even. Thus, it suffices to consider the case when $n$ is even. Then, an optimal edge-colouring of $K_n$ is simply a partition into perfect matchings. We choose $O(\log^{5/3} n)$ of these at random. With high probability, the graph obtained is Hamiltonian, in particular, there is a Hamilton cycle with only $O(\log^{5/3} n)$ colours.
\end{proof}

\subsection{Additive patterns in multiplicative subgroups}

As we mentioned in Section \ref{sec:appsintro}, we can also apply our results to a problem of Alon and Bourgain~\cite{AB:14} on additive patterns in multiplicative subgroups. 
It is well-known that any multiplicative subgroup $A$ of the finite field $\mathbb{F}_q$ of size at least $q^{3/4}$ must contain two elements $x,y$ such that $x+y$ also belongs to $A$. Alon and Bourgain~\cite{AB:14} studied more complex additive structures in multiplicative subgroups and proved that when $A$ as above has size even $|A|\ge q^{3/4} (\log q)^{1/2-o(1)}$, then there is a cyclic ordering of the elements of $A$ such that the sum of any two consecutive elements is also in $A$. We improve on this result by showing that the additional polylog-factor can be avoided. 

\begin{thm}
There exists an absolute positive constant $c$ so that for any prime power $q$ and for any multiplicative subgroup $A$ of the finite field $\mathbb{F}_q$ of size $|A|\ge c q^{3/4}$ there is a cyclic ordering of the elements of $A$ such that the sum of any two consecutive elements is also in $A$.
\end{thm}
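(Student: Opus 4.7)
The plan is to reduce the problem to finding a Hamilton cycle in a suitably constructed pseudorandom Cayley graph and then to apply Theorem~\ref{thm:maindense}. Let $n=|A|$ and define the graph $G$ on vertex set $A$ by declaring $x\sim y$ exactly when $x\neq y$ and $x+y\in A$. A Hamilton cycle in $G$ is precisely a cyclic ordering of $A$ in which every two consecutive elements sum to an element of $A$, so it suffices to prove that $G$ is Hamiltonian.

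First I would observe that $G$ is a Cayley graph of the cyclic multiplicative group $A$. For any fixed $x\in A$, the substitution $z=y/x$ transforms the condition $x+y\in A$ into $1+z\in A$, so $G=\Gamma(A,S)$ with connection set $S=\{z\in A\setminus\{1\}:\, 1+z\in A\}$. The set $S$ is symmetric, since $z\in A$ and $1+z\in A$ together imply $(1+z)/z\in A$, hence $z^{-1}\in S$. Thus $G$ is a $d$-regular simple graph with $d=|S|$.

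Next I would estimate $d$ and $\lambda(G)$ through the standard character-sum analysis on $\mathbb{F}_q$ already carried out by Alon and Bourgain~\cite{AB:14}. Expanding the indicator of $A$ in multiplicative characters of $\mathbb{F}_q^{*}$ and invoking the Weil bound
\[
\Bigl|\sum_{z\in\mathbb{F}_q^{*}}\chi_1(z)\,\chi_2(1+z)\Bigr|\leq \sqrt{q}
\]
for non-trivial characters yields $d=(1+o(1))\,n^2/q$ and $\lambda(G)\leq K\sqrt{q}$ for some absolute constant $K$. Using $n\geq cq^{3/4}$ one then gets $d\geq \tfrac{1}{2}c^{4/3}n^{2/3}$, so in particular $d\geq n^{1/2}$ for $n$ large, and
\[
\frac{d}{\lambda}\;\geq\;\frac{n^2}{K\,q^{3/2}}\;\geq\;\frac{c^2}{K}.
\]
Choosing $c$ large enough (depending only on $K$ and on the constant $C=C(1/2)$ provided by Theorem~\ref{thm:maindense}) makes both $d\geq n^{1/2}$ and $d/\lambda\geq C$ hold, and Theorem~\ref{thm:maindense} then produces a Hamilton cycle in $G$, as required.

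The only substantive ingredient is the spectral bound $\lambda(G)\leq K\sqrt{q}$, and this is already essentially implicit in~\cite{AB:14}; the improvement over their theorem is purely structural, obtained by replacing their use of the Krivelevich--Sudakov criterion (which demands $d/\lambda\gtrsim\log n$ and therefore forces the $(\log q)^{1/2-o(1)}$ loss on $|A|$) with our Theorem~\ref{thm:maindense}, which only requires $d/\lambda$ to exceed an absolute constant once $d$ is polynomial in~$n$.
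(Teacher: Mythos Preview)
Your proposal is correct and follows essentially the same route as the paper: both arguments construct the same graph on $A$ (edges being pairs whose sum lies in $A$), verify it is an $(n,d,\lambda)$-graph with $d\ge |A|^2/2q$ and $\lambda\le O(\sqrt{q})$, and then invoke Theorem~\ref{thm:maindense}. The only technical difference is how the spectral bound is obtained: the paper works with the ambient Cayley \emph{sum} graph on all of $\mathbb{F}_q$ (whose second eigenvalue is at most $q^{1/2}$ by~\cite[Lemma~2.7]{AB:14}) and then passes to the induced subgraph $H=G[A]$ via eigenvalue interlacing, whereas you recognise the graph directly as a Cayley graph of the multiplicative group $A$ and bound its non-trivial eigenvalues by expanding the indicator of $A$ in characters and applying the Weil bound. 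Both arguments are valid; the interlacing route is a touch cleaner since the ambient spectral bound is already stated in~\cite{AB:14}, while your route makes the Cayley-graph structure on $A$ more explicit.
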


\noindent The proof is essentially the same as that of Alon and Bourgain~\cite[Theorem~1.2]{AB:14}, we only need to apply our new Theorem~\ref{thm:maindense} instead of the previous Krivelevich and Sudakov bound.

\begin{proof}
Let $c$ be a sufficiently large constant. 
Let $G$ be the graph with vertex set $\mathbb{F}_q$ where $xy\in E(G)$ if and only if $x+y\in A$ (this is the so-called \emph{Cayley sum graph}). By definition, $G$ has $q$ vertices, is $|A|$-regular and it was shown in \cite{AB:14} (see Lemma~2.7) that its second in absolute value eigenvalue is at most $q^{1/2}$. Now, let $H=G[A]$. It is easy to check that $H$ is regular as well. Let $d$ denote the degree of the vertices in $H$. Applying Lemma~\ref{lem:expandermixing} to $G$, we see that $e_G(A)\ge |A|^3/2q - q^{1/2}|A|$, so by choosing $c$ large  enough we have $d\ge |A|^2/2q$. Moreover, by interlacing of eigenvalues, it follows that $H$ is a $(|A|,d,q^{1/2})$-graph. Since $d\ge \frac{1}{2}c^{2}q^{1/2}$, we can then apply Theorem~\ref{thm:maindense} to obtain a Hamilton cycle in $H$, which yields the desired structure.
\end{proof}
\subsection{Classification-free proof of a result of Pak and Radoicic}\label{sec:pak}

As mentioned in Section \ref{sec:appsintro}, Pak and Radoi\v{c}i\'{c}~\cite{PR:09} proved that every finite group $G$ of size at least $3$ has a generating set $S$ of size $|S|\le \log_2 |G|$ such that the corresponding Cayley graph $\Gamma(G,S)$ contains a Hamilton cycle. 
However, their proof relies on the Classification of Finite Simple Groups, and motivated by the results discussed in Section~\ref{sec:random cayley}, they asked if there exists a classification-free proof of their result.
We essentially prove this, though our result is not quite as sharp since we can only guarantee a set $S$ of size $O(\log |G|)$. For the proof, we use a combination of a random and a deterministic choice of generators. As in Section~\ref{sec:random cayley}, we will use the fact that $O(\log |G|)$ random generators form with high probability a pseudorandom graph. The decisive advantage which we have here is to deterministically pick, say, three generators which will give us linearly many vertex-disjoint triangles. Together with an additional argument we can then facilitate a direct application of Theorem~\ref{thm:hamcycleplf}.

\begin{thm}
    Every group $G$ of order $n$ has a generating set $S$ of size $O(\log n)$ such that $\Gamma(G,S)$ is Hamiltonian.
\end{thm}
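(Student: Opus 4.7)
The plan is to construct the generating set as $S = S_1 \cup S_2$, where $S_1$ is a multiset of $O(\log n)$ uniformly random elements of $G$ producing a pseudorandom Cayley graph, and $S_2$ is a deterministic constant-size set that already guarantees $\Omega(n)$ pairwise vertex-disjoint triangles in $\Gamma(G, S_2)$. With both ingredients in hand, I will assemble a good collection of paths in $\Gamma(G, S)$ and invoke Theorem~\ref{thm:hamcycleplf}.

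For the deterministic step, pick any two non-trivial $a, b \in G$ with $ab \neq e$ (such elements exist whenever $|G| \geq 3$) and set $S_2 := \{a^{\pm 1}, b^{\pm 1}, (ab)^{\pm 1}\}$, of size at most $6$. For every $g \in G$ the triple $(g, ga, gab)$ is a triangle in $\Gamma(G, S_2)$; writing $T := \{e, a, ab\}$, the translates $\{uT : u \in U\}$ are pairwise vertex-disjoint precisely when $U^{-1}U \cap (TT^{-1}\setminus\{e\}) = \emptyset$. Since $|TT^{-1}| \leq 7$, a direct greedy selection (equivalently, finding an independent set in the Cayley graph on $G$ with connection set $TT^{-1}\setminus\{e\}$) produces $U \subseteq G$ with $|U| \geq n/7$, yielding $\Omega(n)$ vertex-disjoint triangles in $\Gamma(G, S_2)$.

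For the random step, I take $S_1$ to consist of $k := C\log n$ independent uniform elements of $G$, where $C$ is a sufficiently large absolute constant. Applying the operator Hoeffding concentration from the proof of Theorem~\ref{thm:operator} with $R = 2$ (accommodating the $1$- or $2$-regular Cayley subgraph indexed by each element), with high probability $\Gamma(G, S_1)$ is an $(n, d_1, \lambda_1)$-graph with $d_1 = \Theta(\log n)$ and $d_1/\lambda_1$ exceeding any prescribed constant; the same estimate also forces $S_1$ to generate $G$ whp. Setting $S := S_1 \cup S_2$, the combined Cayley graph $\Gamma(G, S) = \Gamma(G, S_1) \cup \Gamma(G, S_2)$ is an $(n, d, \lambda)$-graph with $d = d_1 + O(1)$, $\lambda \leq \lambda_1 + O(1)$ and $d/\lambda$ still arbitrarily large, so in particular $\lambda < d/500$; and clearly $|S| = O(\log n)$.

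It then remains to construct a $\bigl(\tfrac{l}{100\log n}, l, 500\lambda\bigr)$-good collection of paths in $\Gamma(G, S)$, mimicking the proof of Theorem~\ref{thm:pflhamiltonian}: thin the triangle collection via Lemma~\ref{lem:paircleaning} so the flexible set $V_{P_1,\mathcal{C}}$ is $500\lambda$-clean, and build the connecting path $P_1$ through Lemma~\ref{lem:DFSaltpath}. The hard part will be covering the remaining vertices by at most $O(n/\log n)$ paths whose endpoints have many neighbours in the flexible set, since Proposition~\ref{prop:forest} by itself only yields $O(n/(\log n)^{1/5})$ paths in our low-degree regime $d = \Theta(\log n)$. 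The ``additional argument'' must bridge this $(\log n)^{4/5}$ gap, and I expect it to exploit two features: (i) the triangles have constant length $3$, so the Chernoff step in Lemma~\ref{lem:degreesoutsidecycles} requires only $d = \Omega(\log n)$ rather than $(\log n)^{11}$; and (ii) pseudorandomness produces $\Omega((\log n)^{4/5} r)$ edges of $\Gamma(G, S)$ among any set of $r = \Omega(n/(\log n)^{1/5})$ path endpoints, so extracting a large matching among these endpoints and merging the corresponding paths along the matching edges halves the path count \emph{without consuming flexible vertices}; iterating such merges $O(\log\log n)$ times drives the path count below $n/\log n$. Throughout, endpoints with too few neighbours in the flexible set form a tiny ``bad'' set controllable by Lemma~\ref{lem:cleaning} and can be avoided by standard adjustments when choosing the matching. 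Once the good collection of paths is in hand, Theorem~\ref{thm:hamcycleplf} delivers the desired Hamilton cycle in $\Gamma(G, S)$.
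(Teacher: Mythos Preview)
Your overall architecture matches the paper: pick a constant-size deterministic set to plant $\Omega(n)$ vertex-disjoint triangles, add $O(\log n)$ random generators to get an $(n,d,\lambda)$-graph with $d=\Theta(\log n)$ and $d/\lambda$ a large constant, then mimic the proof of Theorem~\ref{thm:pflhamiltonian} to build a good collection of paths and finish with Theorem~\ref{thm:hamcycleplf}. The preparatory steps (thinning the triangles, building $P_1$ via Lemma~\ref{lem:DFSaltpath}, absorbing the bad set via Lemma~\ref{lem:coverforest}) all go through exactly as you say.

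The genuine gap is your merging step. Your claim that pseudorandomness yields $\Omega((\log n)^{4/5} r)$ edges among a set $A$ of $r=\Theta(n/(\log n)^{1/5})$ endpoints is false: in the expander mixing lemma the main term is $d|A|^2/(2n)=\Theta((\log n)^{4/5}|A|)$, but the error term is $\lambda|A|/2=\Theta((\log n)\,|A|)$, and since here $\lambda/d$ is only a small \emph{constant}, the error term dominates whenever $|A|<\lambda n/d=\Theta(n)$. In particular $G[A]$ could even be empty, so no matching can be extracted and your $O(\log\log n)$-round halving scheme never gets started. This is not a cosmetic issue: with $O(\log n)$ random generators you simply cannot push $\lambda/d$ below a constant, so pseudorandomness alone gives no control on edge counts inside $o(n)$-sized sets.

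The paper closes this gap with a \emph{third} set of generators $S_3$, of size $O(\log n)$, chosen deterministically and adaptively. After Lemma~\ref{lem:coverforest} it does not invoke Proposition~\ref{prop:forest} at all; it treats every uncovered vertex as a trivial path and then repeatedly adds a single generator $a$ to combine paths. The point is that $\Gamma(G,\{a\})$ is a $1$- or $2$-factor of $K_n$, so by averaging over all $a\in G$ one can always find a generator contributing $\Omega(m^2/n)$ edges between endpoints of distinct paths (where $m$ is the current number of paths), and because the max degree of this factor is at most~$2$, a constant fraction of these edges form a matching. A dyadic analysis then shows that reducing the path count from $n/k$ to $n/2k$ costs at most $O(k)$ new generators, and summing over $k=1,2,4,\dots$ gives $|S_3|=O(\log n)$ in total. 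This averaging-over-generators idea is exactly what replaces the unavailable expander mixing bound on small sets.
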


\begin{proof}
Let $C$ be a sufficiently large constant.
We let $S$ consist of three parts $S_1,S_2,S_3$. First,
let $a,b\in G$ be distinct with $b\neq a^{-1}$, and set $S_1=\{a,b,(ab)^{-1}\}$. We claim that $\Gamma(G,S_1)$ contains $n/10$ vertex-disjoint triangles. Indeed, suppose that $C_1,\dots,C_r$ are vertex-disjoint triangles and let $X\subseteq G$ be the set of vertices covered by those. If $r<n/10$, then there is $g\in G$ such that $g,ga,gab\notin X$, which gives us an additional triangle that we can add to the collection. Let from now on $t := n/10$ denote the number of triangles.

Next, let $S_2\subseteq G$ be a random set of size $d = C \log n$.  
Proceeding analogously to the proof of Theorem~\ref{thm:operator}, with high probability, $\Gamma(G,S_2)$ contains an $(n,d,\lambda)$-graph $H$ with $\lambda\le \eps d$ for $\eps := 10 ^{-10}$.
In particular, such a choice exists, which we will fix from now on.

We will now proceed similarly as in the proof of Theorem~\ref{thm:pflhamiltonian}. 
Analogously to Lemmas~\ref{lem:degreesoutsidecycles}--\ref{lem:reservoir}, we can find a path $P$ which connects a subcollection $\mathcal{C}$ of at least $t/7 \geq n/70$ many vertex-disjoint triangles found above with the following properties:
\begin{itemize}
\item $V_{P,\mathcal{C}}$ is clean in $H$, has size $t/7\geq n/70$ and thus spans a subgraph with minimum degree at least $d|V_{P,\mathcal{C}}|/2n \geq d/200$;
\item the endpoints of $P$ have degree at least $d/200$ into $V_{P,\mathcal{C}}$;
\item every vertex has at least $d/10$ neighbours in $Y$, where $Y$ is the complement of $V(P)\cup V(\mathcal{C})$.
\end{itemize}

It remains to cover $Y$ with a linear forest with few paths all of whose endpoints have many neighbours in $V_{P,\mathcal{C}}$.
We will first deal with the vertices that are not suitable endpoints.
Let $X$ be the set of vertices which have fewer than $d/200 \leq d|V_{P,\mathcal{C}}|/2n$ neighbours in $V_{P,\mathcal{C}}$. By Lemma~\ref{lem:cleaning}, $|X|\le 4\lambda^2 n^2/d^2|V_{P,\mathcal{C}}| \leq n/10^{17}$. Hence, we can apply Lemma~\ref{lem:coverforest} with $\delta := d/200$ to obtain a linear forest $\mathcal{F}_0$ which covers $X$ such that all its paths have their endpoints outside~$X$. We 
let $\mathcal{F}$ denote the union of this linear forest together with the path $P$ and regarding all remaining vertices as trivial paths of length~$0$.
This means we have almost found the desired good collection of paths. The only problem is that the number of paths in this linear forest is potentially too big (for example, it can be linear in $n$). To this end, we will now \emph{combine} the paths of $\mathcal{F}$ to reduce their number to at most $n/10000\log n$. By \emph{combining}, we simply mean that we can add an edge between the endpoints of two paths to replace them with a longer path. Crucically, this preserves the property that the endpoints of the paths are ``good'' in the sense that they still have large degree into $V_{P,\mathcal{C}}$.

Let $\mathcal{F}$ be the current linear forest. We claim that there exists a set $S_3$ of $O(\log n)$ generators such that using the edges of $\Gamma(G,S_3)$, we can combine paths of $\mathcal{F}$ to obtain a linear forest with at most $n/10000\log n$ paths. The idea is to find $S_3$ iteratively by always adding a generator which allows to combine as many paths as possible. We will show that this strategy performs as desired by analysing it in dyadic steps. To this end, we claim that if we have found a linear forest with at most $n/k$ paths, for some $k$, then we can choose at most $10k$ additional generators to reduce the number of paths to at most $n/2k$. Indeed, as long as the number of paths is at least $n/2k$, give each path a direction and let $A$ be the set of all starting vertices and $B$ the set of all terminal vertices (if a path consists only of one vertex, we add a copy of that vertex to both sets). Now, consider the bipartite graph with classes $A,B$ where two vertices are adjacent if and only if they are ends of distinct paths. Let $m=|A|=|B|\ge n/2k$. This graph has $m^2-m$ edges, each corresponding to an edge in the complete graph on~$G$. By averaging, there exists a generator $a$ such that $\Gamma(G,\{a\})$ contains at least $(m^2-m)/n$ edges from this auxiliary graph. We pick such a generator. Note that in this set of edges, we can pick at least a third of all edges to form a matching of size at least $n/10k^2$. Adding these matching edges to our collection of paths, we find a new linear forest and the number of paths has been reduced by at least $n/10k^2$. Repeating this procedure for at most $10k$ steps will yield a linear forest with at most $n/2k$ paths, as desired.

Now, we can simply apply the above claim iteratively with $k=1,2,4,\dots,2^j$ to see that a linear forest with at most $n/2^j$ paths has been found when the set $S_3$ has size at most 
$10\left(1+2+4+\dots+2^{j}\right)\le 20\cdot 2^j$.

Finally, we can apply Theorem~\ref{thm:hamcycleplf} to conclude that, setting $S:=S_1\cup S_2\cup S_3$, the Cayley graph $\Gamma(G,S)$ is Hamiltonian.
\end{proof}
\section{Concluding remarks}
In this paper we developed several new techniques and results concerning the Hamiltonicity of pseudorandom graphs. The main open problem here is to prove that every $(n,d,\lambda)$-graph with $d/\lambda\ge C$, for some universal constant $C>0$, has a Hamilton cycle. We proved this in the case that $d$ is a small polynomial of $n$. The key idea was to find linearly many vertex-disjoint cycles that we could arrange into an absorbing structure. While this proof method does not immediately extend to the sparser setting, we believe that many of the techniques we developed could play a crucial role in future work concerning Conjecture~\ref{conj:KS}. In particular, the use of the Friedman--Pippenger tree embedding technique with rollbacks and our method to cover a small set of `bad' vertices with paths having their endpoints outside the bad set seem very useful for a potential proof of Conjecture~\ref{conj:KS} in full. 

In the general case, we were able to reduce the required ratio $d/\lambda$ to order $\log^{1/3} n$. In particular, our results can be applied to pseudo-random graphs with sub-logarithmic degree. One very important example of such graphs
are random graphs. As we discussed in the introduction, the Hamiltonicity of random graphs is well understood. Particularly, the celebrated result of P\'osa \cite{posa:76} states that the random graph $G(n,p)$ is Hamiltonian with high probability when $p \geq C \log n/n$. In addition to the rotation-extension technique, his proof relies on so-called `booster' edges, which is where the randomness of the graph is crucially used. Our arguments in the proof of Theorem~\ref{thm:main general} can in fact be slightly adapted to prove the following statement, which gives
another proof of P\'osa's result. 
\begin{thm}\label{thm:lastthm}
Let $G$ be an $n$-vertex graph satisfying part \textit{(1)} of Lemma~\ref{lem:expandermixing} with $\lambda = \frac{d}{C(\log n)^{1/3}}$ for some large constant $C$ and such that every vertex has degree between $d/100$ and $100d$. Then, $G$ is Hamiltonian.
\end{thm}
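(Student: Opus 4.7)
The plan is to follow the proof of Theorem \ref{thm:main general} essentially verbatim, with minor adaptations for the non-regular degree sequence. The key observation is that all the structural lemmas in Sections \ref{sec:expansion}, \ref{sec:posarot}, and \ref{sec:rotthm} use regularity only through (i) part (1) of Lemma \ref{lem:expandermixing}, which is directly assumed here, and (ii) a uniform lower bound on vertex degrees, which is provided by the hypothesis that every vertex has degree at least $d/100$. Both ingredients remain available, so the entire rotation-extension machinery goes through.

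First I would verify the auxiliary tools. Parts (2)--(5) of Lemma \ref{lem:expandermixing} are straightforward consequences of part (1) (for instance, part (2) by taking $B = A$), so Lemmas \ref{lem:expansion2}, \ref{lem:cleaning}, \ref{lem:paircleaning}, \ref{lem:findingcleanpartition}, and \ref{lem:cleaningrotation} remain valid after redefining a subset $S$ to be \emph{clean} whenever the induced minimum degree is at least $d|S|/400n$ (absorbing the factor-$100$ loss from the degree hypothesis into absolute constants). I would also establish connectivity directly: if $C$ is a component with $|C| = k \leq n/2$, then $e(C, V \setminus C) = 0$ combined with part (1) forces $k \leq 2\lambda^2 n/d^2$, while the minimum-degree bound gives $e(C) \geq kd/200$. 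Combining with the upper bound $e(C) \leq dk^2/(2n) + \lambda k/2$ from part (1), one deduces $d/\lambda \leq O(1)$, contradicting the hypothesis $d/\lambda \geq C(\log n)^{1/3}$. Hence $G$ is connected.

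Second, I would check that the rotation machinery (Lemma \ref{lem:usualrotation1}, Corollary \ref{cor:usualrotation}, Lemma \ref{lem:technicalrotation}, Corollary \ref{cor:rotationinset}, and Lemma \ref{lem:closingpathintocycle}) continues to hold. The only genuine use of regularity there is the observation that an endpoint of a longest path has many neighbours inside the path; in our setting, each such endpoint still has at least $d/100$ neighbours in the path, which is enough to drive the same expansion arguments provided the cleanness threshold $\delta$ is adjusted by an absolute constant. With these pieces in hand, the proof proceeds exactly as in Section \ref{sec:rotthm}: take a longest path $P$ in $G$ (whose length is $n - o(n)$ by the undirected analogue of Lemma \ref{lem:DFSpath} and part (1) of the expander mixing lemma), split it into two halves, apply Lemma \ref{lem:findingcleanpartition} to each to obtain $(d/100, \gamma, k)$-clean collections with $k = 30 \log n$ and $\gamma = C/(\log n)^{1/3}$, and then execute Claim \ref{firstclaim}, Claim \ref{claim:averageinterval}, and Claim \ref{claim:settingoflemma} verbatim to arrive at the setting of Lemma \ref{lem:closingpathintocycle}, which then closes $P$ into a Hamilton cycle.

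The main obstacle is book-keeping rather than conceptual: one must carefully verify that every constant factor lost in replacing $d$ by $d/100$ or $100d$ in a given lemma can be absorbed by the freedom to take $C$ large. Once this is done, the proof of Theorem \ref{thm:main general} is robust enough to require only part (1) of the expander mixing lemma and approximately uniform degrees, which is exactly what the hypothesis provides.
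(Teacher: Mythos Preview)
Your proposal is correct and matches the paper's own treatment: the paper does not give a standalone proof of Theorem~\ref{thm:lastthm} but simply asserts in the concluding remarks that ``our arguments in the proof of Theorem~\ref{thm:main general} can in fact be slightly adapted'' to yield it. You have carried out precisely this adaptation, correctly identifying that the only uses of regularity in Sections~\ref{sec:expansion}--\ref{sec:rotthm} are through part~(1) of Lemma~\ref{lem:expandermixing} (assumed directly) and through degree bounds (supplied by the hypothesis $d/100 \le \deg(v) \le 100d$), with the upper bound needed e.g.\ for the ball-size estimate $|Y| \le d^{2\log\log n}$ in Claim~\ref{firstclaim}.
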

\noindent This provides a proof of P\'osa's result which uses no \emph{direct} randomness. Indeed, it is not difficult to check that $G(n,p)$ for $p \geq 100 \log n/n$ satisfies the conditions of Theorem~\ref{thm:lastthm} with high probability.

Finally, we note that the case of presudo-random graphs with constant degree $d$ remains widely open. In fact, even when $G$ is  $(n,d,\lambda)$-graph which is almost optimally pseudorandom (i.e., $\lambda=O(\sqrt{d})$), it is not known whether it contains a nearly spanning path, i.e., a path of length $n - o(n)$.
It is not too difficult to show that $G$ contains a path of length $(1-\eps)n$ for some small constant $\eps=\eps_d$. Indeed, one can use P\'osa's rotation-extension technique to show that $(n,d,\lambda)$-graphs always contain paths of length at least $\left(1- \frac{100\lambda^2}{d^2}\right)n$ provided that $d > 10 \lambda$. However, it is not clear how this can be improved and solving this problem is a natural step towards proving Conjecture~\ref{conj:KS}.

\vspace{0.3cm}
\noindent
{\bf Acknowledgments.}
The authors would like to thank Nemanja Dragani\'c for fruitful discussions regarding the Friedman-Pippenger technique.

\end{document}